\theoremstyle{plain}
\newtheorem{proposition}{Proposition}
\newtheorem{corollary}{Corollary}
\newtheorem{theorem}{Theorem}
\newtheorem{lemma}{Lemma}
\theoremstyle{remark}
\newtheorem{definition}{Definition}
\newtheorem{assumption}{Assumption}
\newtheorem{remark}{Remark}
\newtheorem{example}{Example}
\newcommand{\rom}[1]{\lowercase\expandafter{\romannumeral #1\relax}}
\DeclareMathOperator*{\argmin}{arg\,min}
\DeclareMathOperator*{\argmax}{arg\,max}
\newcommand{\card}{\textrm{Card}}
\newcommand{\N}{\mathbf{\mathbb{N}}}
\newcommand{\PP}{\mathbf{\mathbb{P}}}
\newcommand{\EE}{\mathbf{\mathbb{E}}}
\newcommand{\1}{\mathbf{1}}
\newcommand{\intd}{\mathrm{d}}
\newcommand{\ClusterRisk}{\mathcal{R}_n^{\mathrm{clust}}}
\newcommand{\ArianneRisk}{\mathcal{R}_n^{\mathrm{MRSS}}}
\newcommand{\ClassifRisk}{\mathcal{R}_n^{\mathrm{class}}}
\begin{document}

\begin{bibunit}[imsart-number]

\begin{frontmatter}
\title{Clustering risk in Non-parametric Hidden Markov and I.I.D. Models}
\runtitle{Model-based Clustering using Non-parametric Hidden Markov Models}

\begin{aug}
\author[A]{\fnms{{\'E}lisabeth}~\snm{Gassiat}\ead[label=e1]{elisabeth.gassiat@universite-paris-saclay.fr}},
\author[A]{\fnms{Ibrahim}~\snm{Kaddouri}\ead[label=e2]{ibrahim.kaddouri@universite-paris-saclay.fr}}
\and
\author[B]{\fnms{Zacharie}~\snm{Naulet}\ead[label=e3]{zacharie.naulet@inrae.fr}}
\address[A]{Université Paris-Saclay, CNRS, Laboratoire de mathématiques d’Orsay, 91405, Orsay, France\printead[presep={,\ }]{e1,e2}}
\address[B]{Université Paris-Saclay, INRAE, MaIAGE, 78350, Jouy-en-Josas, France\printead[presep={,\ }]{e3}}

\end{aug}

\begin{abstract}
We conduct an in-depth analysis of the Bayes risk of clustering in the context of Hidden Markov and i.i.d. models. In both settings, we identify the situations where this risk is comparable to the Bayes risk of classification and those where its minimizer, the Bayes clusterer, can be derived from the Bayes classifier. While we demonstrate that clustering based on the Bayes classifier does not always match the optimal Bayes clusterer, we show that this difference is primarily theoretical and that the Bayes classifier remains nearly optimal for clustering. A key quantity emerges, capturing the fundamental difficulty of both classification and clustering tasks. Furthermore, by leveraging the identifiability of HMMs, we establish bounds on the clustering excess risk of a plug-in Bayes classifier in the general nonparametric setting, offering theoretical justification for its widespread use in practice. Simulations further illustrate our findings.
\end{abstract}

\begin{keyword}[class=MSC]
\kwd[Primary ]{62M05}
\kwd[; secondary ]{62G99}
\end{keyword}

\begin{keyword}
\kwd{Clustering}
\kwd{Nonparametric populations}
\kwd{Hidden Markov models}
\kwd{Mixture models}
\end{keyword}

\end{frontmatter}


\section{Introduction}
Clustering is the problem of organizing a collection of objects into groups, ensuring that elements within each group exhibit greater \textit{similarity} to each other than to those in other groups. Clustering plays a crucial role in various domains such as machine learning, pattern recognition and image processing, helping to reveal the hidden structure of data without requiring prior labels of the observations. In contrast, classification, while closely related to clustering, seeks not only to group similar objects together but also to assign class labels to them.

Mixture models are a common framework in which the two problems can be formally defined. In these models, observations $\mathbf{Y} = (Y_1,Y_2,\dots)$ are independent conditional on unobserved random variables $\mathbf{X} = (X_1,X_2,\dots)$ taking values in $\mathbb{X} = \{1,\dots,J\}$ that represent the labels of the classes in which observations originated, with $J$ being the total number of classes. In this work, we consider the settings where the latent variables $\mathbf{X}$  are i.i.d. or form a Markov Chain. Since the i.i.d. setting is a strict subcase of the Markovian case, without loss of generality we consider the general model
\begin{equation}
\label{eq:general-model}
\begin{split}
Y_i \mid \mathbf{X}  &\overset{\mathrm{ind}}{\sim} F_{X_i}\qquad i=1,2,\dots\\
\mathbf{X} &\sim \mathrm{Markov(\nu,Q)}
\end{split}
\end{equation}
with parameter $\theta = (\nu,Q,(F_x)_{x\in \mathbb{X}})$ where $\nu$ is the initial distribution of the chain (in the next identified with probability vectors on $\mathbb{X}$), $Q$ the transition matrix of the hidden chain and $(F_x)_{x\in \mathbb{X}}$ are the emission distributions (\textit{aka}. populations). In particular, the case of i.i.d. latent variables is obtained by restricting the parameters of the previous model to transition matrices $Q$ having identical lines equal to $\nu$. The reason why we emphasize the i.i.d. subcase in the paper is not only because of its wide use, but also because some interesting phenomena emerge only in the absence of dependencies. More details on the model are given in Section~\ref{sec:model}.

In this context, clustering is the task of recovering the partition $\{A_1,\dots,A_m\} $ of $\{1,\dots,n\}$ induced by the labels (\textit{aka}. groups), namely $i,j \in A_k \iff X_i = X_j$.  Measuring the loss incured by a clusterer $g$ requires defining a notion of similiarity between two partitions, which can be achieved in many ways. Here we make the choice of using the  misclassification error metric \cite{meila2001experimental, meila2005comparing} which is based on measuring the best overlap achievable by matching the elements of the two partitions. A formal definition of the loss is stated later in Section~\ref{sec:-1}. Pursuing the goal of establishing solid decision-theoretic foundations for clustering, we aim at characterizing the \textit{Bayes clusterer}, namely the oracle clusterer $g_{\theta}$ that minimizes the risk of clustering $g\mapsto \ClusterRisk(\theta,g)$ when the parameter $\theta$ is known. The Bayes clusterer remains relatively unexplored, computationally demanding, and generally does not have an explicit formula. A widely held belief among statisticians is that using the \textit{Bayes classifier} for clustering is a reasonnable approach. The Bayes classifier solves the seemingly related problem of minimizing the risk of classification $h\mapsto \ClassifRisk(\theta,h)$, defined using the loss function that counts the number of missclassified observations. Note that classification aims at completely identifying the hidden variables $X_1,\dots,X_n$ based on the observed variables $Y_1,\dots,Y_n$, which is infeasible without prior knowledge of some labels (supervised learning). In contrast with the Bayes clusterer, though, the Bayes classifier has a well-defined closed-form solution  and has been extensively studied \cite{DGL96}. Since a clusterer can be obtained from a classifier by retaining only the partition induced by the groups, it is common-sense to use the Bayes classifier as a clusterer. This practice, however, while common amongst HMM practictionners, does not have solid decision-theoretic grounds. To the best of our knowledge, the relation between Bayes clusterer and Bayes classifier has never been investigated in the past. We fill this gap in the literature, by establishing somewhat curious results. In particular, we address the following questions:
\begin{enumerate}
    \item\label{q:i} Is there any clear link between the Bayes classifier and the Bayes clusterer? If so, under what condition?
    \item\label{q:ii} When can the Bayes risk of clustering be comparable to that of classification? What is the proper measure of separation that quantifies the difficulty of both classification and clustering problems in a general nonparametric context?
    \item\label{q:iii} Given that practitioners of Hidden Markov Models commonly use the plug-in Bayes classifier for clustering, is there any theoretical justification for this approach?
\end{enumerate}
We answer these questions in the context of the model \eqref{eq:general-model} with a focus on both the i.i.d. and dependent subcases. We voluntary establish a dichotomy between those two cases, to emphasize certain phenomena that occur only in the dependent case.

To answer Question~\ref{q:i}, we show that surprisingly, the clustering using the Bayes classifier equals that of the Bayes clusterer if and only if observations are i.i.d. from a mixture of two components. In the remaining situations (more than two components or dependent HMMs), there is always a set of parameters for which the clusterer obtained from the Bayes classifier and the Bayes clusterer differ with non-zero probability. See Theorems \ref{thm:bayes-risk:min:iid:J=2}, \ref{thm:bayes-risk:min:iid:J>2}, \ref{thm:bayes-risk:min:hmm:J=2} and \ref{thm:bayes-risk:min:hmm:J>2}. Below is an informal statement of these theorems:
\begin{theorem}[Informal]
  The Bayes classifier and clusterer coincides in the i.i.d. setting with $J=2$. If $J\geq 3$ or the labels are dependent, then there exist distributions for which Bayes classifier and Bayes clusterer differ.
\end{theorem}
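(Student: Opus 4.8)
The plan is to split the statement into a \emph{positive} direction (coincidence in the i.i.d., $J=2$ case) and a \emph{negative} direction (existence of a separating parameter whenever $J\geq 3$ or the chain is non-trivial), and in both to reduce everything to the analysis of the \emph{posterior risk}: by conditioning on $Y=(Y_1,\dots,Y_n)$ and the tower property, both $g_\theta$ and the clusterer induced by the Bayes classifier $h_\theta$ minimize, for a.e. realization of $Y$, a risk that is an explicit functional of the posterior law of $X$ given $Y$. The decisive structural fact I would exploit throughout is that classification depends only on the \emph{marginal} posteriors $\PP(X_i=\cdot\mid Y)$ (coordinatewise MAP), whereas clustering depends on the \emph{joint} posterior through the co-membership events $\{X_i=X_j\}$; the two agree exactly when the former determines the latter, which is precisely the i.i.d.\ $J=2$ regime.

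For the positive direction I would first use that in the i.i.d.\ model the posterior factorizes, $\PP(X=\cdot\mid Y)=\prod_i\PP(X_i=\cdot\mid Y_i)$, and write $\eta(y):=\PP(X_i=1\mid Y_i=y)$, so that $h_\theta$ produces the partition $\{i:\eta(Y_i)>1/2\}$ versus its complement. With $J=2$ the clustering loss against a labelling $g$ is $\min(d,n-d)$ where $d=\sum_i\1\{g_i\neq X_i\}$, and the identity $\min(d,n-d)=\tfrac n2-\tfrac12\,|\sum_i W_i|$ with $W_i:=2\cdot\1\{g_i\neq X_i\}-1\in\{\pm1\}$ turns minimization of the posterior clustering risk into maximization of $\EE\bigl[\,\bigl|\sum_i W_i\bigr|\ \big|\ Y\bigr]$; choosing $g_i$ merely flips the sign of $\EE[W_i\mid Y]=\pm|2\eta(Y_i)-1|$, the $W_i$ being conditionally independent. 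The crux is then a clean \emph{alignment lemma}: among all sign choices, taking all biases in the same direction maximizes $\EE|\sum_i W_i|$. I would prove it by decomposing each aligned variable as ``$+1$ surely with probability $|2\eta(Y_i)-1|$, an independent symmetric $\pm1$ otherwise''; conditioning on the set of ``sure'' coordinates reduces the objective to $\EE|a+B|$ with $B$ symmetric and $a$ the signed count of sure coordinates, and since $t\mapsto\EE|t+B|$ is nondecreasing in $|t|$, aligning all signs (which maximizes $|a|$ pointwise) is optimal. The maximizing partition is exactly $\{i:\eta(Y_i)>1/2\}$, i.e.\ that of $h_\theta$, giving a.s.\ coincidence.

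For the negative direction I would exhibit explicit parameters realizing a positive-probability disagreement, isolating the mechanism on small $n$. In the i.i.d.\ $J=3$ case take $n=2$: here the Bayes clusterer declares ``same cluster'' iff $\sum_a p_a r_a>1/2$, where $p,r$ are the two marginal posteriors, while $h_\theta$ declares ``same'' iff $\argmax_a p_a=\argmax_a r_a$. Choosing $p=r=(0.4,0.35,0.25)$ gives equal argmaxes but $\sum_a p_a^2<1/2$, so the two disagree; this is impossible for $J=2$, where $\sum_a p_a r_a\geq 1/2$ whenever the argmaxes agree, which is exactly the source of the dichotomy. In the dependent $J=2$ case take $n=2$ with a joint posterior proportional to $(\pi_{11},\pi_{12},\pi_{21},\pi_{22})=(0.3,0.3,0.1,0.3)$: the marginal MAPs give $X_1\mapsto1$, $X_2\mapsto2$ (classifier ``different''), yet $\PP(X_1=X_2\mid Y)=\pi_{11}+\pi_{22}=0.6>1/2$ (clusterer ``same''); this posterior is non-factorizable, hence unreachable under i.i.d.\ labels but produced by a suitable $Q$, and the $J\geq 3$ dependent case follows by embedding either construction. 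In each case the target posterior is proportional to $\nu_a Q_{ab}f_a(Y_1)f_b(Y_2)$, so one selects emission densities, weights and transitions and a positive-measure set of $Y$ hitting it.

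The main obstacle I anticipate is the alignment lemma in the positive direction: the posterior clustering risk is \emph{not} separable across coordinates because of the label-matching minimum, so a naive coordinatewise argument fails and single-coordinate exchange moves need not improve the objective monotonically; the ``sure-part/symmetric-noise'' decomposition above is what makes the global optimality transparent. The negative direction is comparatively routine once the co-membership-versus-marginal mechanism is identified, reducing to a finite verification that the chosen posteriors are attainable with positive probability and, for the dichotomy, that they cannot arise in the i.i.d.\ $J=2$ model.
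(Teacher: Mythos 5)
Your proposal is correct, and while the positive direction follows the paper's reduction exactly, both halves are proved by genuinely different means. For the coincidence claim the paper also rewrites the posterior clustering risk as $\frac{1}{2}-\frac{1}{n}\EE_{\theta}\bigl[\bigl|\sum_{i}\1_{h_i(Y_{1:n})\ne X_i}-\frac{n}{2}\bigr| \,\big|\, Y_{1:n}\bigr]$ and invokes your alignment lemma verbatim (its Lemma S1.1, due to Scully), but proves it by single-coordinate bias flips via an expansion of $\EE|S_n|$ in terms of $S_{\ne k}$, which requires two auxiliary facts (distinctness of the values $\EE[\mathrm{sign}(S_{\ne j})]$ under a genericity assumption on the $\alpha_i$, plus a stochastic-dominance comparison) and a closing continuity argument to remove the genericity. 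Your sure-part/symmetric-noise decomposition, coupling over the sure set $T$ and using that $t\mapsto\EE|t+B|$ is convex and even (hence nondecreasing in $|t|$) for symmetric $B$, gets global optimality in one stroke, avoids the exchange machinery and the limiting step, and the event $T=[n]$ even gives strictness when all posteriors are bounded away from $1/2$, which is what pins down the maximizing partition (ties are the degenerate case the paper also treats separately). For the negative direction your route is more minimal: the paper proves all-$n$ statements (a checkable condition on $\theta$ for i.i.d.\ $J>2$ via a combinatorial comparison of the all-ones partition against splitting off one point; explicit transition matrices with support events for HMM $J=2$; a perturbation argument for HMM $J>2$), whereas you isolate the $n=2$ mechanism through $\PP_{\theta}(X_1=X_2\mid Y_{1:2})$ versus marginal MAPs, and your numerics check out: $\sum_a p_a^2=0.345<\frac12$ with agreeing argmaxes, the identity $p_1r_1+(1-p_1)(1-r_1)=\frac12+2(p_1-\frac12)(r_1-\frac12)$ explaining the $J=2$ rigidity, and the joint $(0.3,0.3,0.1,0.3)$ attainable since diagonal scalings preserve the cross-ratio $\pi_{11}\pi_{22}/(\pi_{12}\pi_{21})=3$, realizable by a suitable $Q$. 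This buys transparency at the cost of generality: $n=2$ suffices for the informal statement, but the paper's formal theorems assert disagreement for every $n\ge 2$, and your ``embedding'' of the i.i.d.\ example into the dependent $J\ge 3$ case is the one step you should not leave implicit — i.i.d.\ parameters lie outside $\Theta^{\mathrm{dep}}$ and zero-weight extra states are degenerate, so a continuity/perturbation argument in the style of the paper (uniform continuity of the posterior risks in $\theta$, with the disagreement event stable on an open neighborhood intersected with $\Theta^{\mathrm{dep}}$) is genuinely needed there.
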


The Question~\ref{q:ii} is of interest because, thanks to the closed formula of the Bayes risk of classification, it can be easily analyzed and when the two risks are comparable, any bound on the risk of classification can be extrapolated to the risk of clustering. We show that surprisingly, the Bayes risk of clustering is comparable to the Bayes risk of classification in only two situations: when there are two clusters or when the risk of classification does not decrease exponentially fast in $n$. These results are proved for i.i.d. and HMM observations. See Corollary \ref{cor:equiv:class:clust} and Theorem \ref{thm:bayes-risk:lb:iid:J>2} for the i.i.d. model, Theorem  \ref{thm:bayes-risk:lb:hmm:J=2} and Theorem \ref{thm:bayes-risk:lb:hmm:J>2} for the HMM model. Thanks to this relationship between the risks studied, we can perform a precise analysis of the Bayes risk of clustering based on the simpler Bayes risk of classification. Understanding the dependence of the Bayes risks with respect to the model parameters (mainly the population densities) is important because this will clearly identify the appropriate notion of separation measuring the difficulty of each problem. From a practical point of view, guaranteeing that the Bayes risk is of a certain magnitude will therefore translate into a simple condition on the separation between densities, which is easily interpretable. In Theorem \ref{thm:bayes-risk:key_quantity} and Corollary \ref{cor:key_quantity}, we identify the key quantity driving the difficulty of clustering and classification which turns out to be
$$
\Lambda:=\int_{\mathbb{Y}}\min_{x_{0}\in\mathbb{X}}\left(\sum_{x\neq x_0}f_{x}(y)\right)d\mathcal{L}(y)
$$
in both HMMs and i.i.d. models. Here, $\left(f_{x}\right)_{x\in\mathbb{X}}$ are the population densities of the distributions $(F_x)_{x\in\mathbb{X}}$ with respect to a dominating measure $\mathcal{L}$ over the observation space $\mathbb{Y}$, that is $dF_{x} = f_{x}d\mathcal{L}$. Notice that when $J=2$, $\Lambda = 1-\|F_{1}-F_{2}\|_{\mathrm{TV}}$, where $\|F_{1}-F_{2}\|_{\mathrm{TV}}$ is the total variation distance between the two distributions, unsurprisingly showing that the difficulty of the clustering tasks is governed by the difficulty of the hypothesis testing between $F_0$ and $F_1$. An informal answer to Question \ref{q:ii} can be summarized in the following theorem:
\begin{theorem}[Informal]
    In both the i.i.d. and the HMM settings, when $J=2$ or when $J > 2$ and $\Lambda \gtrsim e^{-c n}$ for a positive constant $c$:
        \begin{equation*}
            \inf_{g}\ClusterRisk(\theta,g) \approx \inf_{h}\ClassifRisk(\theta,h) \asymp \Lambda
        \end{equation*}
    where $\inf_{g \in \mathcal{G}_n}\ClusterRisk(\theta,g)$ and $\inf_{h \in \mathcal{H}_n}\ClassifRisk(\theta,h)$ are respectively the Bayes risks of clustering and classification.
\end{theorem}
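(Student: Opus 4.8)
The plan is to separate the claim into a statement about classification, $\inf_h\ClassifRisk(\theta,h)\asymp\Lambda$, and a comparison $\inf_g\ClusterRisk(\theta,g)\asymp\inf_h\ClassifRisk(\theta,h)$. One inequality of the comparison is free: for any classifier $h$ let $g^h$ be the clusterer that reports only the partition induced by $h$; taking the identity relabeling in the misclassification-error metric gives clustering loss $\le$ classification loss, so $\inf_g\ClusterRisk(\theta,g)\le\ClusterRisk(\theta,g^{h^\star})\le\ClassifRisk(\theta,h^\star)=\inf_h\ClassifRisk(\theta,h)$, where $h^\star$ is the Bayes classifier. All the substance is therefore in the two-sided bound for classification and in the matching \emph{lower} bound for clustering.

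For classification in the i.i.d.\ model the Bayes (MAP) rule has per-observation error rate $\int_{\mathbb{Y}}\bigl(\sum_x\pi_x f_x-\max_x\pi_x f_x\bigr)\,d\mathcal{L}$. Writing $\sum_x\pi_x f_x-\max_x\pi_x f_x=\sum_{x\neq\hat x}\pi_x f_x\ge\pi_{\min}\min_{x_0}\sum_{x\neq x_0}f_x$ and integrating gives the lower bound $\ge\pi_{\min}\Lambda$; for the upper bound I compare the Bayes rule to the suboptimal density-argmax rule $\tilde x(y)=\argmax_x f_x(y)$, whose error rate is $\int_{\mathbb{Y}}\sum_{x\neq\tilde x}\pi_x f_x\,d\mathcal{L}\le\pi_{\max}\Lambda$ since $f_{\tilde x}=\max_x f_x$. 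Hence $\inf_h\ClassifRisk(\theta,h)\asymp\Lambda$, unconditionally in $n$. In the HMM model the coordinatewise Bayes rule uses the smoothing marginals $\PP(X_i=x\mid Y_{1:n})$; under the standing mixing hypothesis on $Q$ (entries bounded below, hence exponential forgetting) the conditional label priors stay bounded away from $0$ and $1$, and the same sandwich survives — the lower bound by conditioning on the neighbouring labels and the upper bound again via the memoryless density-argmax rule — giving $\inf_h\ClassifRisk(\theta,h)\asymp\Lambda$ with constants depending only on $(\pi_{\min},\pi_{\max},J,Q)$.

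For the clustering lower bound I would pass to posterior risks. Given $Y$, write $r_{\mathrm{class}}(P)=\EE[\frac1n\sum_i\1\{P_i\neq X_i\}\mid Y]$ and $r_{\mathrm{clust}}(P)=\EE[\min_\sigma\frac1n\sum_i\1\{\sigma(P_i)\neq X_i\}\mid Y]$ for a candidate labelled partition $P$, and set $\gamma(P)=r_{\mathrm{class}}(P)-r_{\mathrm{clust}}(P)\ge0$ together with $\Delta_n=\EE_Y[\max_P\gamma(P)]$. Minimising the posterior risk pointwise and using $\min_P(r_{\mathrm{class}}-\gamma)\ge\min_P r_{\mathrm{class}}-\max_P\gamma$ yields
\[
\inf_g\ClusterRisk(\theta,g)=\EE_Y\bigl[\min_P r_{\mathrm{clust}}(P)\bigr]\ \ge\ \EE_Y\bigl[\min_P r_{\mathrm{class}}(P)\bigr]-\Delta_n=\inf_h\ClassifRisk(\theta,h)-\Delta_n.
\]
Combined with the previous paragraph this reduces everything to showing $\Delta_n\lesssim e^{-cn}$: then $\inf_g\ClusterRisk(\theta,g)\ge c'\Lambda-e^{-cn}\gtrsim\Lambda$ exactly in the regime $\Lambda\gtrsim e^{-cn}$, and the two Bayes risks are comparable.

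Bounding $\Delta_n$ is the main obstacle. The quantity $\gamma(P)$ is the posterior-expected benefit of relabelling, $\frac1n(\max_\sigma\text{agreements}_\sigma-\text{agreements}_{\mathrm{id}})$. On the high-probability event $E$ that every label occurs with empirical frequency $\ge\delta n$ — with $\PP(E^c)\le e^{-cn}$ by a Chernoff bound in the i.i.d.\ case and by the mixing of $Q$ in the HMM case — any global relabelling that moves a label forfeits $\gtrsim\delta n$ agreements on a macroscopic block, so it cannot beat the identity for a near-optimal $P$; this disposes of $\Delta_n$ entirely when $J=2$, where the swap is the only nontrivial permutation, and is why no restriction on $\Lambda$ is needed there. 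The genuinely delicate point, and the source of the announced dichotomy, is that for $J\ge3$ the permutation group also contains relabellings supported on the $\Theta(n\Lambda)$ ambiguous coordinates (e.g.\ transpositions of two confusable labels), whose cumulative posterior benefit need not be negligible: it is controllable by $e^{-cn}$ precisely when $\Lambda\gtrsim e^{-cn}$, and can otherwise dominate $\Lambda$, making clustering strictly easier than classification. Pinning down this threshold — i.e.\ proving $\Delta_n\lesssim e^{-cn}$ under $\Lambda\gtrsim e^{-cn}$ and exhibiting its failure below — is where the real work and the $J=2$ versus $J>2$ split reside.
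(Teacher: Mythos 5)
Your skeleton matches the paper's architecture at a high level: the free inequality $\inf_g\ClusterRisk(\theta,g)\le\inf_h\ClassifRisk(\theta,h)$, the sandwich $\inf_h\ClassifRisk(\theta,h)\asymp\Lambda$ (your i.i.d.\ argument is essentially the paper's, and your conditioning-on-neighbours idea is a workable variant of its backward/forward-recursion proof for HMMs, although the phrase ``the conditional label priors stay bounded away from $0$ and $1$'' is wrong as stated --- the smoothing probabilities $\PP_\theta(X_i=x\mid Y_{1:n})$ are not bounded away from $0$ and $1$; what is bounded is the conditional weight $Q(a,x)f_x(Y_i)Q(x,b)$ relative to its sum over $x$). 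The genuine gap is in your central reduction. Defining $\Delta_n=\EE_Y[\max_P\gamma(P)]$ with the maximum over \emph{all} labelled partitions $P$ makes $\Delta_n$ of order one, not $e^{-cn}$: take $P$ to be the Bayes classifier with its two most populous labels swapped; then $r_{\mathrm{clust}}(P)$ is unchanged (clustering loss is label-invariant) while $r_{\mathrm{class}}(P)$ is nearly maximal, so $\gamma(P)\asymp 1$ whenever the populations are well separated, and the bound $\inf_g\ClusterRisk\ge\inf_h\ClassifRisk-\Delta_n$ is vacuous. You must control $\gamma$ only at a canonically labelled minimizer, which is what the paper does: it fixes a classifier $h$, compares $\EE_\theta[\min_\tau\EE_\theta[U_{n,\tau}(h)\mid Y_{1:n}]]$ with $\EE_\theta[\min_\tau U_{n,\tau}(h)]$, and works with the $Y_{1:n}$-measurable optimal permutation $\hat\tau_h$.

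Even after that repair, your stated target $\Delta_n\lesssim e^{-cn}$ is provably unattainable: the paper's two-sided gap theorem for $J=2$ i.i.d.\ shows the gap $\inf_h\ClassifRisk-\inf_g\ClusterRisk$ is bounded \emph{below} by a quantity of order $n^{-1/2}$ in the worst case, so the corrected $\gamma$ at the optimum is typically polynomially, not exponentially, small. The correct structure is multiplicative-plus-additive:
\begin{equation*}
  \inf_g\ClusterRisk(\theta,g)\ \ge\ (1-\xi_n)\inf_h\ClassifRisk(\theta,h)\ -\ C_J\,e^{-cn},
  \qquad \xi_n\asymp\sqrt{\log(J!)/n},
\end{equation*}
where the polynomial factor $\xi_n$ comes from concentration of $U_{n,\tau}$ around its conditional mean given $Y_{1:n}$ (Hoeffding in the i.i.d.\ case, Marton coupling/Paulin in the HMM case), union-bounded over the $J!$ permutations and split on the event $\{\hat p_{\hat\tau_h}(h)\ge\varepsilon\}$ so that the fluctuation is absorbed multiplicatively; and the additive $e^{-cn}$ term comes from $\PP_\theta(N_{(1)}+N_{(2)}<2n\eta)$, the event that the two smallest clusters are abnormally small. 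Your event-$E$ heuristic is exactly the paper's combinatorial lemma (if $U_{n,\tau'}(h)\le (N_{(1)}+N_{(2)})/(2n)$ then no permutation improves on $\tau'$), but it disposes only of the additive term; the polynomial fluctuation term remains and is why the result is an approximation $(1-\xi_n)$ rather than an exponentially tight identity. The condition $\Lambda\gtrsim e^{-cn}$ is needed solely to absorb the additive term, and your intuition for why $J=2$ needs no such condition is correct, since then $N_{(1)}+N_{(2)}=n$ deterministically.
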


Finally, we turn our attention to Question~\ref{q:iii}. As it will be shown in Theorem \ref{thm:bayes-risk:min:hmm:J=2} and Theorem \ref{thm:bayes-risk:min:hmm:J>2}, the Bayes clusterer can not always be derived from the Bayes classifier. Nevertheless, the plug-in Bayes classifier remains the most commonly used method for clustering under Hidden Markov Models (HMMs); see \cite{KG17, SSS03, GGM14}. Despite its popularity, this approach lacks theoretical justification. Such a substitution is not justified unless the Bayes clusterer can be derived from the Bayes classifier or if the clustering risk of the Bayes classifier is comparable to the Bayes risk of clustering.  This highlights the need for theoretical guarantees on the performance of this procedure. We show in Corollaries \ref{cor:Bayes_classif:iid} and \ref{cor:Bayes_classif:hmm} that the clustering risk incurred by the Bayes classifier closely approximates the Bayes risk of clustering, showing thus the near optimality of the Bayes classifier for clustering. Furthermore, in Theorem~\ref{thm:excess_risk:clust:rate}, we establish that the clustering excess risk of a plug-in Bayes classifier decreases at the nonparametric estimation rate when using an appropriate estimator of the model parameters. This result provides a theoretical foundation for the practical use of the plug-in Bayes classifier. The main takeaway for practitioners working with HMMs is that, although a conceptual distinction exists between the Bayes classifier and the Bayes clusterer, in practice, the Bayes classifier is a reliable choice for clustering, with its excess risk provably controlled. We support this claim with numerical experiments in the nonparametric setting, demonstrating that HMM-based clustering can successfully recover latent structures in scenarios where standard methods fail. These results can be summarized in the following informal theorem.
\begin{theorem}[Informal]
   Let $g_\theta$ be the clustering procedure built using the Bayes classifier. The Bayes classifier is nearly optimal:
   \begin{equation*}
      \ClusterRisk(\theta,g_{\theta}) \approx \inf_{g \in \mathcal{G}_n}\ClusterRisk(\theta,g).
   \end{equation*}
   Furthermore, under the hidden Markov Model, when the emission densities are $s$-Hölder regular, and under some additional regularity conditions, there exists an estimator $\hat{\theta}$ of $\theta$ such that:
   \begin{equation*}
     \ClusterRisk(\theta,g_{\hat{\theta}}) - \inf_{g}\ClusterRisk(\theta,g) = \mathcal{O}\left(\left(\frac{\log(n)}{n}\right)^{\frac{s}{2s+1}}\right).
   \end{equation*}
\end{theorem}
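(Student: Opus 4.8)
The plan is to prove the two assertions separately, reusing the risk-equivalence results already established for Question~\ref{q:ii}. For the near-optimality claim, the starting point is a deterministic sandwich valid for \emph{every} realization of the data: because the misclassification-error loss minimizes over all matchings of the two partitions, the clustering loss of the partition induced by any classifier $h$ is no larger than the classification loss of $h$. Passing to expectations and specializing to the Bayes classifier $h_\theta$ (whose induced clusterer is $g_\theta$) gives
\begin{equation*}
\inf_{g\in\mathcal{G}_n}\ClusterRisk(\theta,g)\ \leq\ \ClusterRisk(\theta,g_\theta)\ \leq\ \ClassifRisk(\theta,h_\theta)\ =\ \inf_{h\in\mathcal{H}_n}\ClassifRisk(\theta,h).
\end{equation*}
In the regimes covered by Question~\ref{q:ii} (either $J=2$, or $J>2$ with $\Lambda\gtrsim e^{-cn}$), Corollary~\ref{cor:equiv:class:clust} together with Theorems~\ref{thm:bayes-risk:lb:iid:J>2}, \ref{thm:bayes-risk:lb:hmm:J=2} and~\ref{thm:bayes-risk:lb:hmm:J>2} show that both extreme terms are of order $\Lambda$, so the middle term is squeezed between quantities of the same magnitude. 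This yields $\ClusterRisk(\theta,g_\theta)\approx\inf_g\ClusterRisk(\theta,g)$, which is precisely Corollaries~\ref{cor:Bayes_classif:iid} and~\ref{cor:Bayes_classif:hmm}.

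For the rate in the plug-in setting I would decompose the excess risk as
\begin{equation*}
\ClusterRisk(\theta,g_{\hat\theta})-\inf_g\ClusterRisk(\theta,g)=\underbrace{\bigl[\ClusterRisk(\theta,g_{\hat\theta})-\ClusterRisk(\theta,g_\theta)\bigr]}_{\text{estimation error}}+\underbrace{\bigl[\ClusterRisk(\theta,g_\theta)-\inf_g\ClusterRisk(\theta,g)\bigr]}_{\text{near-optimality gap}}.
\end{equation*}
The second bracket is handled by the first part of the theorem, so the work concentrates on the estimation error, i.e.\ on comparing the clusterer built from the true Bayes classifier with the one built from the plug-in classifier $h_{\hat\theta}$. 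The guiding principle is a stability (Lipschitz-type) estimate: the clustering risk should change by at most the discrepancy between the marginal smoothing probabilities $\PP_\theta(X_i=x\mid Y_1,\dots,Y_n)$ and their plug-in counterparts $\PP_{\hat\theta}(X_i=x\mid Y_1,\dots,Y_n)$, averaged over $i$.

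Bounding this discrepancy is where the HMM structure enters. One has to show that substituting $\hat\theta$ for $\theta$ in the forward--backward recursions produces an error that does \emph{not} accumulate along the chain; this is the role of the forgetting / exponential mixing property of the conditional chain (guaranteed by the additional regularity conditions, e.g.\ a full-rank transition matrix and densities bounded away from zero), which localizes the effect of the perturbation and reduces the smoothing-probability error to a quantity driven by $\|f_x-\hat f_x\|$ together with the parametric errors in $\nu$ and $Q$. The final step feeds in an estimator $\hat\theta$ with the right nonparametric rate: leveraging the identifiability of nonparametric HMMs, one constructs $\hat\theta$ (through a spectral / penalized-likelihood scheme) whose emission densities attain the minimax-up-to-logarithms rate $(\log n/n)^{s/(2s+1)}$ over $s$-Hölder classes, while $\nu$ and $Q$ converge at the faster parametric rate. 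Combining the stability estimate with this rate gives Theorem~\ref{thm:excess_risk:clust:rate}.

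I expect the main obstacle to be the stability estimate of the third paragraph, rather than the sandwich or the density-estimation rate, both of which are essentially available. The difficulty is that the Bayes classifier for an HMM is a highly nonlinear functional of $\theta$ through the normalized forward--backward messages, so a naive perturbation bound would grow with $n$; making the argument quantitative requires a careful, uniform control of the propagation of the parameter error through the smoothing recursions and an explicit geometric forgetting rate. A secondary subtlety is to ensure the resulting bound on the smoothing-probability error translates into the clustering excess risk \emph{without} a margin-type blow-up, so that the final rate matches the estimation rate of the emission densities exactly.
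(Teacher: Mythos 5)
Your route is the paper's route. For near-optimality you use exactly the paper's derivation of Corollaries~\ref{cor:Bayes_classif:iid} and~\ref{cor:Bayes_classif:hmm}: the sandwich
\begin{equation*}
\inf_{g\in\mathcal{G}_n}\ClusterRisk(\theta,g)\ \leq\ \ClusterRisk(\theta,\pi_n\circ h_\theta^\star)\ \leq\ \inf_{h\in\mathcal{H}_n}\ClassifRisk(\theta,h)
\end{equation*}
combined with the lower bounds of Theorems~\ref{thm:bayes-risk:lb:iid:J>2}, \ref{thm:bayes-risk:lb:hmm:J=2} and~\ref{thm:bayes-risk:lb:hmm:J>2} — though note your restriction to the regimes of Question~\ref{q:ii} is unnecessary: the corollaries hold for \emph{all} parameters, the exponentially small additive term being absorbed into the ``$\approx$''. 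For the plug-in rate, the paper does what you propose: it bounds the excess \emph{classification} risk by the averaged total-variation error of the plug-in smoothing marginals (Proposition~\ref{prop:ub:excess:hmm}; since the Bayes classifier maximizes the posterior marginal, the pointwise excess is at most $2\|\phi_{\theta,i\mid n}-\phi_{\hat\theta,i\mid n}\|_{\mathrm{TV}}$, with no margin condition — so your ``secondary subtlety'' is not an issue), controls that error via the forgetting-based stability inequality of Proposition~2.2 of \cite{YES17} (the estimate you flag as the main obstacle is imported, not reproved), feeds in a spectral/tensor estimator, and transfers back to clustering through the comparison theorems, the residual $\tilde\xi_n\inf_h\ClassifRisk(\theta,h)+(J^2+1)e^{-cn}=\mathcal{O}(n^{-1/2})$ being dominated by $(\log n/n)^{s/(2s+1)}$.

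The one genuine ingredient missing from your sketch is where the paper locates the substantial new work: \emph{label-switching coherence of the estimator}. Nonparametric HMM procedures recover $Q$ and $(f_x)_{x\in\mathbb{X}}$ only up to permutations of the hidden states, and the available guarantees (from \cite{YES17} and \cite{ACG21}) are stated up to permutations that need not coincide. The stability inequality you invoke requires a \emph{single} permutation $\tau$ with $\|Q^\tau-\hat Q\|_{\mathrm{F}}$ and $\max_x\|f_{\tau(x)}-\hat f_x\|_\infty$ simultaneously small: a global relabeling is harmless for the clustering risk, but a mismatch between the permutation aligning $\hat Q$ and the one aligning the $\hat f_x$ renders the plug-in forward--backward marginals meaningless, and your final combination step does not go through. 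The paper resolves this by modifying the spectral algorithm so that the matrix diagonalizing the emission statistics is built from the already-estimated transition matrix ($\hat R_2=\hat Q\hat O^{\top}\hat V$), and proving the common-permutation property on a high-probability event. Two further points your sketch glosses over but the proof needs: the stability bound involves the random quantity $\hat\delta=\min_{x,x'}\hat Q_{x,x'}$ in the denominator, so one must control $\EE_\theta[\hat\delta^{-2}]$ via a large-deviation argument (after flooring the entries of $\hat Q$); and the sup-norm density guarantee holds only on a high-probability event, so the $\hat f_x$ must be truncated for the bound to survive taking expectations — which is also why the theorem's rate carries the factor $D_n^{5/2}$.
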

Note that unlike specific cases such as parametric or translation models where many clustering algorithms can be proposed, the general nonparametric setting lacks a clear alternative to the plug-in procedure. This is why the excess risk reflects the nonparametric estimation rate.

The assumptions of the model and the definitions of the various risks we shall use in this work are described in Section \ref{sec:setting}. We state our main results in Section \ref{sec:main}, the proofs of which are detailed in the supplementary material \cite{GKN2025SM}. Section \ref{sec:simu} is devoted to simulation experiments, and Section \ref{sec:perspectives} to possible further work.

\section{Setting and definitions}
\label{sec:setting}

\subsection{Notations}
\label{sec:notations}

We note for $i\leq j$ $X_{i:j}:=(X_{i}, \ldots, X_{j})$ and  $Y_{i:j}:=(Y_{i}, \ldots, Y_{j})$. We denote $[n] \coloneqq \{1,\dots,n\}$ and $\mathcal{P}[n]$ is the set of partitions of $[n]$. The set of permutations of $[J]$ will be denoted by $\mathcal{S}_{J}$, and for any $\tau\in\mathcal{S}_{J}$, we consider  $\theta^{\tau}$, the model parameter when labels are permuted with $\tau$, that is 
$\theta^{\tau} = \left(\nu^{\tau}, Q^{\tau},
(f_{\tau(x)})_{x\in\mathbb{X}}\right)$ where $\nu^{\tau}=(\nu_{\tau (x)})_{x\in\mathbb{X}}$ and $Q^{\tau}=(Q_{\tau(x),\tau(x')})_{x,x'\in\mathbb{X}}$. For $h$ a measurable function, $\|h\|_{\infty}$ denotes the essential supremum of $h$, possibly infinite. Frobenius norm is denoted by $\|.\|_{\mathrm{F}}$ and $\|.\|$ stands for the operator norm. Given two sequences of positive numbers $(a_n)_n$ and $(b_n)_n$, $a_n \gtrsim b_n$ means that there exists an absolute constant $c > 0$ and $n_{0}\in\mathbb{N}$ such that $\left(\forall n\geq n_{0}\right)\quad a_{n} \geq c b_{n}$. In the HMM modeling, for any parameter $\theta$ and any $\tau\in{\cal S}_{J}$, the distribution of the observations under $\PP_{\theta}$ is the same as under $\PP_{\theta^{\tau}}$. In other words,  this distribution is invariant up to permutation of the labels given by the hidden states. This is known as the \textit{label-switching} issue.

\subsection{The model}
\label{sec:model}

We consider a Hidden Markov Model with $J$ hidden states taking value in a set of labels $\mathbb{X} = \left\{1, .., J\right\}$, and observations in a Polish space endowed with its Borel $\sigma$-field $(\mathbb{Y}, \mathcal{Y})$. We denote $\textbf{X} = (X_1, X_2, \dots)$ and $\textbf{Y} = (Y_1, Y_2,\dots)$  respectively the sequence of hidden states forming the Markov chain and the observations. 
We assume that the emission distributions have densities $f_{x}$, $x=1,\ldots J$, with respect to a dominating measure $\mathcal{L}$ on $(\mathbb{Y}, \mathcal{Y})$. The HMM assumption boils down to:
\begin{align*}
  \textbf{X}
  &\sim \mathrm{Markov}(\nu, Q)\\
  \PP_{\theta}(Y_{i}\in \cdot \mid \mathbf{X})
  &= f_{X_{i}}(\cdot)d\mathcal{L}
\end{align*}
where $\nu$ is the initial distribution of the chain and $Q$  the transition matrix of the hidden chain. Throughout the paper, it is assumed that only the beginning $Y_{1:n}$ of the sequence $\mathbf{Y}$ is observed, and nothing else. We set $\theta = \left(\nu, Q, \left(f_{x}\right)_{x \in \mathbb{X}} \right)$ the parameter of the model and $\Theta$ denotes the space of all valid parameters. The following assumption is made:

\begin{itemize}
    \item \textit{Independent case}. In this case we assume that all the lines of $Q$ are identical and equal to the vector $\nu$ of weights forming its stationary distribution. This corresponds to the usual mixture model with independent latent variables. The set of these parameters will be denoted 
    $\Theta^{\mathrm{ind}}$.
    \item \textit{Dependent case}. In this case we assume the lines of the transition matrix $Q$ not all equal, so that the $(X_i)_{i\geq 1}$ are not independent. The set of these parameters will be denoted $\Theta^{\mathrm{dep}}$. 
\end{itemize}
 Throughout this work, we shall consider the transition matrix being fixed, and we will be interested in how the separation of populations, understood as some quantity depending on the populations densities, drives the difficulty of the clustering task. We show that, under the HMM modelling, it is possible to cluster general populations without assuming they belong to some prescribed parametric family. While our primary interest is in the dependent case in which the emission densities can be identified without any further constraint, we obtain, however, results that have interest also in the widely used independent case, in particular the analysis of the Bayes risk and its minimizers. (see Section~\ref{sec:main}).

One main difference between the HMM and the i.i.d. situation in the analysis  of the Bayes risk of classification is that in the HMM modeling, the probability of a label $X_i$ given the observations $Y_{1},\ldots,Y_{n}$ depends on all the observations, whereas in the i.i.d. case it depends only on the associated observation $Y_i$. The vector of probabilities of the $X_i$'s given $Y_{1},\ldots,Y_{n}$ are called the smoothing distributions, they depend on $i$, $n$ and the observations. The vectors of probabilities of the $X_i$'s given the observations up to time $i$, $Y_{1},\ldots,Y_{i}$, are called the filtering distributions, they depend on $i$, $n$ and the involved observations. Recursive formulas verified by the filtering and smoothing distributions make the computations tractable under the HMM modelling. See Section 3 of \cite{CMT05} for details.

\subsection{The problem of clustering}\label{sec:-1}

For any $n\geq 1$, the finite sequence $X_{1:n} = (X_1,\dots,X_n)$ induces a random partition $\Pi_n = \{C_1,C_2,\dots\}$ of $[n]$ whose blocks -- the so-called \textit{clusters}  -- are the equivalence classes for the random equivalence relation $i\sim j \iff X_i = X_j$. The goal of clustering is to uncover this partition $\Pi_n$ on the sole basis of the observation $Y_{1:n} = (Y_1,\dots,Y_n)$. We define a \textit{clusterer}:
\begin{definition}[Clusterer]
  \label{def:clusterer}
  A $n$-clusterer is a measurable map $g : \mathbb{Y}^n \to \mathcal{P}[n]$. We denote by $\mathcal{G}_n$ the set of all $n$-clusterers.
\end{definition}

We measure the loss incurred by guessing $g(Y_{1:n})$ in place of $\Pi_n$ via the misclassification error distance \cite{meila2001experimental,meila2005comparing}. For two partitions $A$ and $B$ of $[n]$, this loss is defined by:
\begin{equation}\label{def:min_overlap}
    \ell\left(A, B\right) = 
    1 - \frac{1}{n}\sup_{\substack{M \subseteq \mathcal{E}(A, B)\\M\, \textrm{is\, a matching} } 
    } \sum_{ \{C,C' \} \in M }\card{(C \cap C' )}
\end{equation}
where the supremum is taken over the set of matchings. To define a matching, we build the complete bipartite graph $(A, B,\mathcal{E}(A, B))$ on vertices $A$ and $B$ with edge set $\mathcal{E}(A, B) \coloneqq \{ \{C,C' \}\;:\; C\in A,\, C' \in B \}$. Then we recall that a matching $M$ is a set $M\subseteq \mathcal{E}(A, B)$ of edges without common vertices (\textit{i.e}. each block of $A$ and $B$ appears in at most one edge of the matching). An example of a matching is depicted in Figure~\ref{fig:matching}. Then the risk of a clusterer $g$ can be defined as the expected loss of the partition $g(Y_{1:n})$ with respect to the true partition $\Pi_n$ which is summarized in the risk function $\ClusterRisk : \Theta \times \mathcal{G}_n \to [0,1]$
\begin{equation}
  \label{eq:clustering-risk:part}
  \ClusterRisk(\theta,g)%
  \coloneqq \EE_{\theta}\Bigg[\ell\left(g(Y_{1:n}), \Pi_n\right)\Bigg].
\end{equation}

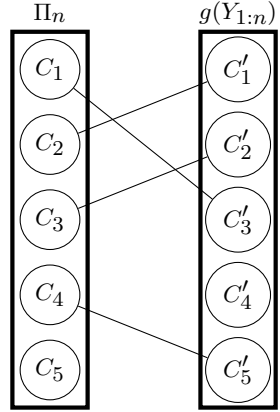
\begin{figure}[!htb]
    \centering
    \begin{tikzpicture}[scale=0.5]
        \node[shape=circle,draw=black] (A) at (0,0) {$C_1$};
        \node[shape=circle,draw=black] (B) at (0,-2) {$C_2$};
        \node[shape=circle,draw=black] (C) at (0,-4) {$C_3$};
        \node[shape=circle,draw=black] (D) at (0,-6) {$C_4$};
        \node[shape=circle,draw=black] (E) at (0,-8) {$C_5$};
        \node[shape=circle,draw=black] (F) at (5,0) {$C_1'$};
        \node[shape=circle,draw=black] (G) at (5,-2) {$C_2'$};
        \node[shape=circle,draw=black] (H) at (5,-4) {$C_3'$};
        \node[shape=circle,draw=black] (I) at (5,-6) {$C_4'$};
        \node[shape=circle,draw=black] (I) at (5,-8) {$C_5'$};
        \node at (0,1.5) {$\Pi_n$};
        \node at (5,1.5) {$g(Y_{1:n})$};
    
        \path [-] (A) edge node[right]{}(H);
        \path [-] (B) edge node[right]{}(F);
        \path [-] (C) edge node[right]{}(G);
        \path [-] (D) edge node[right]{}(I);
        \draw[ultra thick] (-1,1) rectangle (1,-9);
        \draw[ultra thick] (4,1) rectangle (6,-9);
    \end{tikzpicture}
    \caption{Example of a matching. Nodes on the left represent the clusters induced by the partition of $\Pi_n$; those on the right are the clusters of $g(Y_{1:n})$. Edges form a matching between the two partitions.}
    \label{fig:matching}
\end{figure}

A closely related notion is that of a \textit{classifier}:
\begin{definition}[Classifier]
  \label{def:classifier}
  A $n$-classifier is a measurable map $h : \mathbb{Y}^n \to \mathbb{X}^n$. We denote by $\mathcal{H}_n$ the set of all $n$-classifiers.
\end{definition}

One may find our Definition~\ref{def:classifier} different from standard textbook definitions \cite{DGL96}: we explain the reason of this choice later in Remark~\ref{rmk:classifier}. A classifier differs from a clusterer in that it not only seeks for the hidden partition, but also for the labels of the observations. Hence, the usage of a classifier only makes sense in a supervised framework where access to some labeled data is allowed in some way. In an unsupervised framework such as our model, $\mathbf{Y}$ does not contain any information about the labels and recovering them better than a lucky guess is impossible. It is however true that, to any $n$-classifier $h \in \mathcal{H}_n$ corresponds a unique $n$-clusterer $g\in\mathcal{G}_n$ which can be built via the map $\pi_n : \mathbb{X}^n \to \mathcal{P}[n]$ such that
\begin{equation*}
  g(Y_{1:n}) = \pi_n \circ h (Y_{1:n})%
  = \{ \{i\;:\; h_i(Y_{1:n}) = x \}\;:\; x \in \mathbb{X} \} \backslash \{ \varnothing \}
\end{equation*}
and any clusterer can be represented that way by choosing a specific labelling of the clusters. For this reason, the notions of clusterer and classifier are very much often amalgamated in the literature. We argue that it would be better to define them separately in order to avoid confusions between the risk of clustering $\ClusterRisk(\theta, \pi_n \circ h)$ and the risk of classification  $\ClassifRisk : \Theta \times \mathcal{H}_n \to [0,1]$ (relative to the loss counting number of misclassified observations)
\begin{equation}
  \label{eq:risk-classif}
  \ClassifRisk(\theta,h)%
  \coloneqq \EE_{\theta}\Bigg[\frac{1}{n}\sum_{i=1}^n\1_{h_i(Y_{1:n}) \ne X_i} \Bigg].
\end{equation}
Here again we insist that the definition of $\ClassifRisk$ in Equation~\eqref{eq:risk-classif} -- although mathematically correct -- has no statistical interest in an unsupervised model where the classifier $h$ is not allowed to see some of the labels. An easy exercise (see \cite[Lemma~S13.13]{GKN2025SM}) shows that the risk of clustering of $\pi_n \circ h$ can be rewritten as
\begin{equation}
  \label{eq:risk-clustering:classif}
  \ClusterRisk(\theta,\pi_n \circ h)%
  = \EE_{\theta}\Bigg[\min_{\tau \in \mathcal{S}_{J}}\frac{1}{n}\sum_{i=1}^n\1_{h_i(Y_{1:n}) \ne \tau(X_i)} \Bigg]
\end{equation}
and differs from \eqref{eq:risk-classif}. Note that it does not depend on the classifier $h$ chosen to represent the clusterer thanks to the infimum over the permutations of the labels. Note also that the infimum inside the expectation reflects the ease of the clustering problem compared to the classification problem, because unlike classification, clustering does not seek to identify the true labels themselves. 

It is customary to compare the performance of a given classifier $h$ to the best performance attainable by an oracle classifier, namely the \textit{Bayes risk of classification}:
\begin{equation*}
  \inf_{h\in \mathcal{H}_n} \ClassifRisk(\theta,h).
\end{equation*}
In particular, it is well-known that the previous optimization problem is solved by the (albeit non necessarily unique) so-called Bayes classifier $h^{\star}_{\theta} = (h_{\theta, 1}^{\star},\dots,h_{\theta, n}^{\star})$ such that
\begin{equation*}
  \PP_{\theta}(X_i = h_{\theta, i}^{\star}(Y_{1:n}) \mid Y_{1:n}) = \max_{x\in \mathbb{X}}\PP_{\theta}(X_i = x \mid Y_{1:n}),\qquad i=1,\dots,n.
\end{equation*}
In an unsupervised learning context, it makes sense to compare the risk of a clusterer to the \textit{Bayes risk of clustering}:
\begin{equation*}
  \inf_{g \in \mathcal{G}_n}\ClusterRisk(\theta,g).
\end{equation*}
As for the classification risk, the solution to the previous optimization problem exists and is obtained by the Bayes clusterer $g^{\star}_{\theta}$ such that $g^{\star}_{\theta}(Y_{1:n})$ is the partition that minimizes
\begin{equation}
  g \mapsto%
  \EE_{\theta}\Bigg[1 - \frac{1}{n}\sup_{\substack{M \subseteq \mathcal{E}(g(Y_{1:n}),\Pi_n)\\M\, \textrm{is\, a matching} } 
  } \sum_{ \{C,C' \} \in M }\card{(C \cap C' )} \bigg| Y_{1:n}\Bigg].
  \label{eq:clustering-risk:Bayes-clusterer}
\end{equation}
In contrast with classification, however, the Bayes clusterer has usually no simple expression. It is to be noted that there is no reason that $g^{\star}_{\theta} = \pi_n \circ h^{\star}_{\theta}$. An analysis will be conducted to determine when the equality $g^{\star}_{\theta} = \pi_n \circ h^{\star}_{\theta}$ holds. Although the inequality $\inf_{g \in \mathcal{G}_n}\ClusterRisk(\theta,g)  \leq \inf_{h\in \mathcal{H}_n}\ClassifRisk(\theta,h)$ is true and easily proved from \eqref{eq:risk-clustering:classif}, it is not guaranteed that $\inf_{g \in \mathcal{G}_n}\ClusterRisk(\theta,g)$ and $\inf_{h\in \mathcal{H}_n}\ClassifRisk(\theta,h)$ are equivalent (\textit{ie}. have comparable magnitude). In Sections~\ref{sec:comp-clust-class-iid} and~\ref{sec:comp-clust-class-hmm} we show that the equivalence holds both in the independent and in the dependent scenarios as long as the Bayes risk of classification is not exponentially small in $n$ or when there are only two clusters. Though we prove the two risks to be equivalent in some specific contexts, we provide a counter-example showing that this is not always true. 

\begin{remark}
  \label{rmk:classifier}
  It is common \cite{DGL96} to define a classifier as a function $h : \mathbb{Y} \to \mathbb{X}$ (as opposed to $\mathbb{Y}^n \to \mathbb{X}^n$ in our Definition~\ref{def:classifier}). This usual definition is motivated by the fact that in the independent scenario the law of $X_i \mid Y_{1:n}$ is that of $X_1 \mid Y_1$. Hence if one is willing to classify the vector $Y_{1:n}$, the Bayes classifier rewrites as $h^{\star}_{\theta} = (h_{\theta, 1}^{\star}(Y_1),\dots,h_{\theta, 1}^{\star}(Y_n))$ with $h_{\theta, 1}(y)$ maximizing $x\mapsto \PP_{\theta}(X_1= x \mid Y_1 = y)$ and the Bayes risk of classification equals $\PP_{\theta}(h_{\theta, 1}^{\star}(Y_1) \ne X_1)$. Thus classifying $Y_1$ or $Y_{1:n}$ is not very different. In the dependent case, however, the situation differs since $\textbf{X} \mid \textbf{Y}$ is a inhomogeneous Markov chain. This implies in particular that classifying $Y_1$ or $Y_{1:n}$ are different problems when $n\geq 2$, and the optimal solution for classifying $Y_{1:n}$ can not be obtained from the optimal solution of classifying $Y_1$.
\end{remark}

\begin{remark}
  \label{rmk:arianne}
  In \cite{MRR23}, the authors define the risk of clustering of a $n$-classifier $h$ in a different way:
$$
\ArianneRisk(\theta, h) \coloneqq
\EE_{\theta}\left[\inf_{\tau\in\mathcal{S}_{J}}\EE_{\theta}\left[\frac{1}{n}\sum_{i=1}^{n}\1_{\tau(X_{i})\neq h_{i}(Y_{1:n})}\bigg| Y_{1:n}\right]\right]
$$
Similarly, in \cite{LH16}, the risk of clustering is defined by:
$$
\tilde{\mathcal{R}}_n^{\mathrm{clust}}(\theta, h) \coloneqq
\inf_{\tau\in\mathcal{S}_{J}}\EE_{\theta}\left[\frac{1}{n}\sum_{i=1}^{n}\1_{\tau(X_{i})\neq
      h_{i}(Y_{1:n})}\right]
$$
Their definition is mathematically convenient as one can easily show that (see \cite[Lemma~S4.2]{GKN2025SM}):
$$
\inf_{h\in \mathcal{H}_n}\ArianneRisk(\theta, h) = \inf_{h\in \mathcal{H}_n}\tilde{\mathcal{R}}_n^{\mathrm{clust}}(\theta, h) = \inf_{h\in \mathcal{H}_n}\ClassifRisk(\theta,h)
$$
Hence, the Bayes clusterer relative to their risks is $g^{\star}_{\theta} = \pi_n \circ h^{\star}_{\theta}$ with $h^{\star}_{\theta}$ the Bayes classifier relative to the risk $\ClassifRisk(\theta, .)$. However, contrary to our definition, it seems that there is no suitable statistical interpretation of these risks of clustering. 

\end{remark}

\section{Main results}
\label{sec:main}

The Bayes risk of classification offers an advantage due to its closed formula, stemming from the explicit identification of the Bayes classifier. In contrast, the Bayes risk of clustering lacks this straightforward formulation. However, thanks to Equation \eqref{eq:risk-clustering:classif}, the risk of a clustering procedure can be seen as the risk of classification of the associated classifier up to the best random permutation (the one minimizing the sum inside the expectation in Equation \eqref{eq:risk-clustering:classif}). This is why a common idea is that risk of clustering and risk of classification are closely related. Following this intuition, authors of \cite{MR3546450, MR3845014} propose an inequality linking the minimax (over a specific class) risk of clustering to the minimax risk of classification, in the context of community detection under the stochastic block model. Their inequality is applied in \cite{Ndaoud18} in the context of the mixture of two Gaussian distributions and in \cite{LH16} for general subGaussian mixtures. Although their argument is neat, it relies heavily on two key ingredients: (i) in their model the latent partition is deterministic and viewed as a parameter, and  (ii) their point of view is minimax and thus, it is enough to consider some well-chosen parameters in order to lower bound the minimax risk. Thus their argument is not transposable to the situation where one is interested in bounding the Bayes risk of clustering at every possible value of the parameter with random labels.

\subsection{I.I.D. case}\label{sec:comp-clust-class-iid}
The following theorem shows that in the i.i.d. setting and when there are only two classes $(J=2)$, the clustering resulting from the Bayes classifier coincides with that of the Bayes clusterer, almost-everywhere. 
\begin{theorem}
    \label{thm:bayes-risk:min:iid:J=2}
    In the case of independent labels, if $J = 2$, then for all $\theta \in \Theta^{\mathrm{ind}}$ and all $n \geq 2$ 
    \begin{equation*}
        \quad g_{\theta}^{\star}(Y_{1:n}) = \pi_n\circ h^{\star}_{\theta}(Y_{1:n}) \quad\PP_{\theta}\textrm{-a.s}.
    \end{equation*}
\end{theorem}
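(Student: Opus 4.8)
The plan is to argue conditionally on $Y_{1:n}$ and to show that, $\PP_\theta$-almost surely, the partition $\pi_n\circ h^\star_\theta(Y_{1:n})$ is a minimizer of the conditional expected loss \eqref{eq:clustering-risk:Bayes-clusterer} defining the Bayes clusterer. Since $J=2$, the true partition $\Pi_n$ has at most two blocks, and for any candidate partition $g$ with three or more blocks the matching in \eqref{def:min_overlap} can saturate at most two of its blocks against those of $\Pi_n$; merging the unmatched blocks into one of the matched ones never decreases the overlap (if $T_{i^\star}$ is the block best matched to one block of $\Pi_n$, the two-block partition $\{T_{i^\star},[n]\setminus T_{i^\star}\}$ does at least as well). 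Hence it suffices to restrict the minimization to partitions $g_s=\{\{i:s_i=+1\},\{i:s_i=-1\}\}$ indexed by a sign vector $s\in\{-1,+1\}^n$, with $g_s=g_{-s}$ and the degenerate $s\equiv+1$ encoding the one-block partition.

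Next I would compute the loss in closed form. Writing $\eta_i:=2\cdot\mathbf 1_{X_i=1}-1\in\{-1,+1\}$ for the sign of the hidden label, a direct expansion of the two possible matchings between $g_s$ and $\Pi_n$, together with the fact that their overlaps sum to $n$, gives the identity
\begin{equation*}
  \ell(g_s,\Pi_n)=\frac12-\frac{1}{2n}\Big|\sum_{i=1}^n s_i\eta_i\Big|,
\end{equation*}
valid whether $\Pi_n$ has one or two blocks. Minimizing the conditional expected loss over $g_s$ is therefore equivalent to maximizing $s\mapsto\EE_\theta[\,|\sum_i s_i\eta_i|\mid Y_{1:n}]$ over $s\in\{-1,+1\}^n$. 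In the i.i.d. case the smoothing law factorizes, so conditionally on $Y_{1:n}$ the $\eta_i$ are independent with $\EE_\theta[\eta_i\mid Y_{1:n}]=2\PP_\theta(X_i=1\mid Y_i)-1$, whose sign is exactly the label assigned by the Bayes classifier at coordinate $i$; thus $\pi_n\circ h^\star_\theta$ corresponds to the sign vector $s^\star_i=\mathrm{sign}\,\EE_\theta[\eta_i\mid Y_{1:n}]$.

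The heart of the proof is the optimization lemma: among all sign vectors, $s^\star$ maximizes $\EE[\,|\sum_i s_i\eta_i|\,]$ for independent $\eta_i\in\{-1,+1\}$. I would reduce to the aligned case by setting $\tilde\eta_i:=s^\star_i\eta_i$, so that $\PP(\tilde\eta_i=1)\ge\tfrac12$ and the claim becomes that $\sigma\equiv+1$ maximizes $\sigma\mapsto\EE[\,|\sum_i\sigma_i\tilde\eta_i|\,]$. Grouping the coordinates according to $\sigma_i=\pm1$ into two independent partial sums $A$ and $B$, this is the inequality $\EE|A+B|\ge\EE|A-B|$. Using $|a+b|-|a-b|=2\,\mathrm{sign}(a)\,\mathrm{sign}(b)\min(|a|,|b|)$, the layer-cake identity $\min(|A|,|B|)=\int_0^\infty \mathbf 1_{|A|>t}\mathbf 1_{|B|>t}\,dt$, and independence, I would rewrite
\begin{equation*}
  \tfrac12\big(\EE|A+B|-\EE|A-B|\big)=\int_0^\infty \EE\big[\mathrm{sign}(A)\mathbf 1_{|A|>t}\big]\,\EE\big[\mathrm{sign}(B)\mathbf 1_{|B|>t}\big]\,dt.
\end{equation*}
Each factor equals $\PP(A>t)-\PP(-A>t)\ge0$, because $A$ (and likewise $B$), being an independent sum of signs each stochastically dominating its negative, satisfies $A\succeq_{\mathrm{st}}-A$ (stochastic order is preserved under convolution). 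Hence the integrand is nonnegative and the inequality follows, proving optimality of $s^\star$.

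I expect the main obstacle to be precisely this optimization lemma: a naive coordinate-by-coordinate greedy argument fails, since flipping one sign changes $\EE|W\pm1|$ by a quantity whose sign depends on the entire distribution of the remaining sum $W$, so the comparison must be made globally. The layer-cake representation combined with $A\succeq_{\mathrm{st}}-A$ is what renders it clean. The remaining work is routine: the reduction to two-block partitions, the loss computation, and the treatment of ties, i.e.\ coordinates with $\PP_\theta(X_i=1\mid Y_i)=\tfrac12$, for which both the Bayes classifier and the Bayes clusterer are indifferent; on the complementary event the inequality becomes strict, which yields the claimed $\PP_\theta$-almost sure equality $g^\star_\theta=\pi_n\circ h^\star_\theta$.
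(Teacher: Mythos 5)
Your proposal is correct, and its overall skeleton coincides with the paper's: the paper likewise represents any candidate clusterer through a classifier, rewrites the conditional clustering risk as $\tfrac12-\tfrac1n\EE_{\theta}\bigl[\bigl|\sum_{i=1}^n\1_{h_i(Y_{1:n})\neq X_i}-\tfrac n2\bigr|\,\big|\,Y_{1:n}\bigr]$ (your identity $\ell(g_s,\Pi_n)=\tfrac12-\tfrac1{2n}\bigl|\sum_i s_i\eta_i\bigr|$ is the same computation in sign notation), and then reduces everything to a maximization lemma over bias choices for conditionally independent Bernoulli variables (Lemma~\ref{lem:max_dev:Scully}, due to Ziv Scully). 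Where you genuinely depart is in the proof of that pivotal lemma. The paper's argument is local and exchange-based: it computes the effect of flipping a single bias, $\EE[|S_n|]=\EE[|S_{\neq k}|]+\PP(S_{\neq k}=0)+\EE[\mathrm{sign}(S_{\neq k})]\beta_k\EE[Y_k]$, shows any non-aligned assignment admits an improving flip, but needs the genericity assumption that the $\alpha_i$ are distinct and exceed $\tfrac12$ (to rule out $\EE[\mathrm{sign}(S_{\neq k})]=0$ for all $k$), followed by a continuity-in-parameters step to remove it. Your argument is global and one-shot: comparing an arbitrary sign vector to $s^\star$ reduces to $\EE|A+B|\geq\EE|A-B|$ for independent sums of aligned signs, which you get from $|a+b|-|a-b|=2\,\mathrm{sign}(a)\mathrm{sign}(b)\min(|a|,|b|)$, the layer-cake formula, independence, and the fact that $A\succeq_{\mathrm{st}}-A$ (dominance of each $\tilde\eta_i$ over $-\tilde\eta_i$ being preserved under independent convolution). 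This buys you a cleaner proof with no genericity or limiting argument, and it localizes the only nontrivial probabilistic input in the single inequality $\PP(A>t)\geq\PP(-A>t)$. What the paper's statement records in exchange — that both extreme assignments maximize — is in your formulation just the invariance $g_s=g_{-s}$, so nothing is lost. Two points deserve spelling out rather than assertion. First, your strictness claim for the no-tie event is true but not automatic from the weak inequality: with both sign-groups nonempty and all $\PP_{\theta}(X_i=1\mid Y_i)\neq\tfrac12$, the monotone coupling $\tilde\eta_i=\1_{U_i\leq p_i}-\1_{U_i>p_i}$, $\tilde\eta_i'=\1_{U_i\leq 1-p_i}-\1_{U_i>1-p_i}$ shows $\PP(A\geq1)-\PP(A\leq-1)\geq\prod_i(2p_i-1)>0$, so both factors are strictly positive for $t\in[0,1)$ and any $s\neq\pm s^\star$ is strictly suboptimal, which is what upgrades optimality to the claimed $\PP_{\theta}$-a.s.\ equality. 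Second, your explicit reduction from arbitrary partitions of $[n]$ (possibly with more than two blocks) to two-block partitions via the merging argument is a welcome addition: the paper passes over this silently by writing the Bayes clusterer as $\pi_n\circ\tilde h_{\theta}$ on the strength of Equation~\eqref{eq:risk-clustering:classif}, which a priori only covers partitions representable by $J$-valued classifiers. Your handling of ties by indifference matches the paper's (which resorts to a set-valued abuse of notation), so modulo writing out the strictness coupling, the proposal is a complete and somewhat more self-contained proof.
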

The proof of Theorem \ref{thm:bayes-risk:min:iid:J=2} can be found in \cite[Section~S1]{GKN2025SM}. Thanks to this result, the difference between the two risks of clustering and classification can be bounded almost tightly, as shown in the next theorem. Let $\varepsilon_{n, \theta} = \frac{1}{2} - \inf_{h \in \mathcal{H}_n}\ClassifRisk(\theta,h)$.

\begin{theorem}\label{thm:gap:classif:cluster}
When $J=2$ and $\theta\in\Theta^{\mathrm{ind}}$, $\inf_{h \in \mathcal{H}_n}\ClassifRisk(\theta,h) = 0$\quad if and only if \quad ~$\inf_{g \in \mathcal{G}_n}\ClusterRisk(\theta,g) = 0$. If $\inf_{h \in \mathcal{H}_n}\ClassifRisk(\theta,h) \neq 0$~ then the difference between the Bayes risks satisfies
    \begin{equation*}
        \inf_{h \in \mathcal{H}_n}\ClassifRisk(\theta,h) - \inf_{g \in \mathcal{G}_n}\ClusterRisk(\theta,g) \leq \min\Bigg(\frac{(1 - 4\varepsilon_{n,\theta}^2)^{\frac{n}{2}} }{\frac{n}{2}\log\Big(\frac{1 + 2 \varepsilon_{n, \theta}}{1 - 2\varepsilon_{n, \theta}}\Big)},\, \sqrt{\frac{\pi}{2 n}} \Bigg).
    \end{equation*}
Furthermore, there exists a universal constant $B > 0$ such that for all $n\geq 100$ and all $\theta \in \Theta^{\mathrm{ind}}$
    \begin{equation*}
      \inf_{h \in \mathcal{H}_n}\ClassifRisk(\theta,h) - \inf_{g \in \mathcal{G}_n}\ClusterRisk(\theta,g)%
      \geq B \min\Bigg(\frac{(1 - 4\varepsilon_{n,\theta}^2)^{\frac{n}{2}\big(1 + \frac{6.8}{1\vee \sqrt{n}\varepsilon_{n,\theta}}\big)} }{\frac{n}{2}\log\Big(\frac{1 + 2 \varepsilon_{n, \theta}}{1 - 2\varepsilon_{n, \theta}}\Big)}   ,\,\frac{1}{\sqrt{n}}\Bigg).
    \end{equation*}
\end{theorem}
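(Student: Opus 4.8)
The plan is to collapse the whole statement onto a single Binomial random variable. By Theorem~\ref{thm:bayes-risk:min:iid:J=2} we have $g^{\star}_{\theta} = \pi_n\circ h^{\star}_{\theta}$ almost surely, so $\inf_{g}\ClusterRisk(\theta,g) = \ClusterRisk(\theta,\pi_n\circ h^{\star}_{\theta})$; and since $\theta\in\Theta^{\mathrm{ind}}$, Remark~\ref{rmk:classifier} gives $h^{\star}_{\theta,i}(Y_{1:n}) = h^{\star}_{\theta,1}(Y_i)$. Consequently the misclassification indicators $Z_i \coloneqq \1_{h^{\star}_{\theta,1}(Y_i)\ne X_i}$ are i.i.d.\ $\mathrm{Bernoulli}(p)$ with $p \coloneqq \inf_{h}\ClassifRisk(\theta,h) = \tfrac12 - \varepsilon_{n,\theta}\le\tfrac12$. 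Writing $S_n \coloneqq \sum_{i=1}^n Z_i\sim\mathrm{Bin}(n,p)$ and specializing the representation~\eqref{eq:risk-clustering:classif} to $J=2$ (where $\mathcal{S}_{J} = \{\mathrm{id},\text{swap}\}$ and the swap turns $S_n$ into $n-S_n$), I obtain $\inf_{h}\ClassifRisk(\theta,h) = \tfrac1n\EE_{\theta}[S_n] = p$ and $\inf_{g}\ClusterRisk(\theta,g) = \tfrac1n\EE_{\theta}[\min(S_n,n-S_n)]$. The theorem then reduces entirely to estimating
$$
\inf_{h}\ClassifRisk(\theta,h) - \inf_{g}\ClusterRisk(\theta,g) = \tfrac1n\EE_{\theta}\big[(2S_n-n)_+\big].
$$
The first assertion is immediate from the min-representation: $\inf_{g}\ClusterRisk(\theta,g)=0$ forces $\min(S_n,n-S_n)=0$ a.s., i.e.\ $S_n\in\{0,n\}$ a.s., which for a $\mathrm{Bin}(n,p)$ with $n\ge2$ occurs only if $p\in\{0,1\}$, hence $p=0$ and $\inf_{h}\ClassifRisk(\theta,h)=0$; the converse is clear.

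For the quantitative bounds I estimate $\EE_{\theta}[(2S_n-n)_+]$ from above in two ways, treating $n=2m$ even (odd $n$ being analogous with $m=\lceil n/2\rceil$). The layer-cake identity gives $\EE_{\theta}[(2S_n-n)_+] = 2\sum_{j\ge1}\PP_{\theta}(S_n\ge m+j)$. Since $\PP_{\theta}(S_n=k)/\PP_{\theta}(S_n=k-1)\le\rho \coloneqq p/(1-p)$ for every $k\ge m+1$, this ratio bound propagates to the tails as $\PP_{\theta}(S_n\ge m+j)\le\rho^{\,j}\PP_{\theta}(S_n\ge m)$; summing by the comparison $\sum_{j\ge1}\rho^{\,j}\le\int_0^\infty\rho^{\,x}\intd x = 1/\log(1/\rho)$ and invoking the Chernoff bound $\PP_{\theta}(S_n\ge m)\le(1-4\varepsilon_{n,\theta}^2)^{m}$ --- whose exponent is $-nD(\tfrac12\,\|\,p) = \tfrac n2\log(1-4\varepsilon_{n,\theta}^2)$ --- produces, after noting $\log(1/\rho)=\log\tfrac{1+2\varepsilon_{n,\theta}}{1-2\varepsilon_{n,\theta}}$, exactly the first term inside the minimum. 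For the second term I use that $p\mapsto\EE_p[(2S_n-n)_+]$ is nondecreasing on $[0,\tfrac12]$ by stochastic monotonicity, so the gap is largest at $p=\tfrac12$; there $\EE[(2S_n-n)_+]=\EE|2S_n-n|$, and a mean-absolute-deviation (equivalently second-moment) estimate bounds this by a multiple of $\sqrt n$, yielding the $\sqrt{\pi/(2n)}$ term.

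The reverse inequality is the crux and the main obstacle, since it requires matching \emph{lower} bounds on the same tail sum $2\sum_{j\ge1}\PP_{\theta}(S_n\ge m+j)$. Two regimes must be separated according to the number $\sqrt n\,\varepsilon_{n,\theta}$ of standard deviations between $m$ and the mean $np$. When $\sqrt n\,\varepsilon_{n,\theta}\lesssim1$ the law of $S_n$ is nearly symmetric about $m$ and a Gaussian comparison (Berry--Esseen) lower bound on $\EE_{\theta}[(2S_n-n)_+]$ delivers the $1/\sqrt n$ term. When $\sqrt n\,\varepsilon_{n,\theta}\gtrsim1$ I need sharp two-sided (Bahadur--Rao type) control of the binomial tail, obtained from Stirling's formula with explicit remainder --- this is where the hypothesis $n\ge100$ enters --- together with a lower bound on the tail sum that retains the full $1/\log(1/\rho)$ factor instead of collapsing to its leading term. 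The delicate accounting is that the true tail carries a polynomial prefactor of order $(\sqrt n\,\varepsilon_{n,\theta})^{-1}$ in front of $(1-4\varepsilon_{n,\theta}^2)^{m}$; to express a clean bound of the advertised exponential shape this prefactor must be absorbed into the exponent, which is exactly the purpose of the correction $\tfrac n2\cdot\tfrac{6.8}{1\vee\sqrt n\,\varepsilon_{n,\theta}}$: the constant is calibrated so that $(1-4\varepsilon_{n,\theta}^2)^{(n/2)\cdot 6.8/(1\vee\sqrt n\varepsilon_{n,\theta})}\le(\sqrt n\,\varepsilon_{n,\theta})^{-1}$ throughout the regime (using $\lvert\log(1-4\varepsilon^2)\rvert\ge4\varepsilon^2$ and $13.6\,x\ge\log x$ for $x\ge1$). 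Patching the two regimes and choosing the universal constant $B$ small enough then gives the claimed lower bound.
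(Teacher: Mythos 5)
Your skeleton is the paper's: Theorem~\ref{thm:bayes-risk:min:iid:J=2} plus the product structure of the i.i.d.\ model collapse the gap onto $\frac1n\EE_{\theta}\big[(2S_n-n)_+\big]$ with $S_n\sim\mathrm{Bin}(n,p)$, $p=\frac12-\varepsilon_{n,\theta}$ (this is exactly the paper's $Z_n$ and its identity $J_n=\int_0^1\PP_{\theta}(Z_n>\frac n2(x+1))\,\intd x$), and your support argument for the ``iff'' part ($S_n\in\{0,n\}$ a.s.\ forces $p\in\{0,1\}$ when $n\ge2$) is correct and in fact more explicit than the paper, which only notes that the lower bound holds trivially when the classification risk vanishes. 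Where you genuinely diverge is in mechanics. For the upper bound the paper integrates the Chernoff bound $\PP_{\theta}(Z_n>\frac n2(x+1))\le e^{-ng(x)}$ over $x\in[0,1]$ via the convexity estimate $g(t)\ge g(0)+\max(g'(0)t,\,t^2/2)$; you instead sum the discrete tail using the likelihood-ratio bound $\PP_{\theta}(S_n=k)\le\rho\,\PP_{\theta}(S_n=k-1)$ above the median, $\sum_{j\ge1}\rho^j\le1/\log(1/\rho)$, and Chernoff at the median, with stochastic monotonicity in $p$ handling the $\sqrt{\pi/(2n)}$ term. This is more elementary and yields the identical constants; one harmless slip: at $p=\frac12$, symmetry gives $\EE[(2S_n-n)_+]=\frac12\EE|2S_n-n|$, not $\EE|2S_n-n|$, but either way $\frac1n\EE|S_n-\frac n2|\le\frac1{2\sqrt n}<\sqrt{\pi/(2n)}$.

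For the lower bound, your regime split at $\sqrt n\,\varepsilon_{n,\theta}\asymp1$, the Berry--Esseen treatment of the central regime (where the $1/\sqrt n$ term automatically dominates the min), and the calibration $(1-4\varepsilon_{n,\theta}^2)^{\frac n2\cdot 6.8/(1\vee\sqrt n\,\varepsilon_{n,\theta})}\le(\sqrt n\,\varepsilon_{n,\theta})^{-1}$ via $|\log(1-4\varepsilon^2)|\ge4\varepsilon^2$ and $13.6x\ge\log x$ are all correct, and they reproduce the paper's accounting, which performs the same absorption as $2\delta g'(0)/(ng(0))\le 6.8/(1\vee10\sqrt n\,\varepsilon_{n,\theta})$ using $g'(0)/g(0)\le1/\varepsilon_{n,\theta}$. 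The mechanism differs, though: the paper avoids pointwise Bahadur--Rao estimates, instead lower-bounding $\PP(S_n>\frac n2(x+1))$ by a Cram\'er tilt with Berry--Esseen applied to the tilted binomial on a window of width $\delta=-\frac{\sqrt n}2\Phi^{-1}(1/4)$, then integrating over $x\in(0,\frac1{10\sqrt n})$, so the prefactor is an absolute constant ($0.0595$) and the absorbed quantity is $e^{-2\delta g'(0)}$ rather than your polynomial $(\sqrt n\,\varepsilon_{n,\theta})^{-1}$. Be aware that the one step you assert rather than execute --- two-sided Stirling-with-remainder control of the tilted binomial retaining the full $1/\log(1/\rho)$ tail factor --- is precisely where all of the paper's numerical work and the hypothesis $n\ge100$ live; so as written your proposal is a correct, well-calibrated plan for the crux, with the remaining details routine but substantial.
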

The proof of Theorem \ref{thm:gap:classif:cluster} is given in \cite[Section~S2]{GKN2025SM}. 
As emphasized by the lower bound, the upper bound on the difference between the two Bayes risks in the previous theorem is essentially tight. The lower bound also shows that the Bayes risk of clustering is always strictly smaller than the Bayes risk of classification, unless they are both zero (which happens when the two emission densities have disjoint support). Also, the difference between the risks decays super-polynomially in $n$ as soon as $\varepsilon_{n,\theta} \gg \sqrt{\log(n)/n}$, and polynomially if $\varepsilon_{n,\theta} = O(\sqrt{\log(n)/n})$ with worst-case rate $\asymp n^{-1/2}$ when $\varepsilon_{n,\theta} = O(n^{-1/2})$.

A direct consequence of this result is that the Bayes risk of classification is equivalent to the Bayes risk of clustering for $n$ large enough as shown in the following corollary.
\begin{corollary}\label{cor:equiv:class:clust}
    In the case of independent labels with $J =2$, for all $\theta\in\Theta^{\mathrm{ind}}$ and all $n \geq 2$
    \begin{equation*}
        \left(1 - \alpha_n\right) \inf_{h \in \mathcal{H}_n}\ClassifRisk(\theta,h) \leq \inf_{g \in \mathcal{G}_n}\ClusterRisk(\theta,g) \leq \inf_{h \in \mathcal{H}_n}\ClassifRisk(\theta,h)
    \end{equation*}
    where $\alpha_n = 2\min\Big(2(1 + \varepsilon_{n, \theta})\frac{(1 - 4\varepsilon^{2}_{n, \theta})^{\frac{n-2}{2}}}{n \log\left(\frac{1+2\varepsilon_{n, \theta}}{1 - 2\varepsilon_{n, \theta}}\right)}, \frac{1}{1-2\varepsilon_{n, \theta}}\sqrt{\frac{\pi}{2n}}\Big)$.
\end{corollary}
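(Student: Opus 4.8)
The plan is to obtain this corollary as an immediate corollary of Theorem~\ref{thm:gap:classif:cluster}, by converting its bound on the absolute gap between the two Bayes risks into a bound on the relative gap. The right-hand inequality $\inf_{g\in\mathcal{G}_n}\ClusterRisk(\theta,g)\le\inf_{h\in\mathcal{H}_n}\ClassifRisk(\theta,h)$ is not special to $J=2$ or to the independent case: it is the general comparison recorded just after \eqref{eq:risk-clustering:classif}, obtained by taking the identity permutation in the infimum over $\mathcal{S}_J$. So only the left-hand inequality $(1-\alpha_n)\inf_{h}\ClassifRisk(\theta,h)\le\inf_{g}\ClusterRisk(\theta,g)$ requires an argument, and it is equivalent to $\inf_{h}\ClassifRisk(\theta,h)-\inf_{g}\ClusterRisk(\theta,g)\le\alpha_n\inf_{h}\ClassifRisk(\theta,h)$.

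First I would dispose of the degenerate endpoint. If $\inf_{h}\ClassifRisk(\theta,h)=0$, then by the first assertion of Theorem~\ref{thm:gap:classif:cluster} we also have $\inf_{g}\ClusterRisk(\theta,g)=0$, and the whole chain holds trivially with both ends equal to zero, irrespective of the value of $\alpha_n$. Hence I may assume $\inf_{h}\ClassifRisk(\theta,h)\neq 0$, which by definition of $\varepsilon_{n,\theta}=\tfrac12-\inf_{h}\ClassifRisk(\theta,h)$ means $\varepsilon_{n,\theta}<\tfrac12$ and $\inf_{h}\ClassifRisk(\theta,h)=\tfrac{1-2\varepsilon_{n,\theta}}{2}>0$, so that dividing by the classification risk is legitimate. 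I would also remark that whenever $\alpha_n\ge 1$ the target left inequality is vacuous, since $\inf_{g}\ClusterRisk(\theta,g)\ge 0\ge(1-\alpha_n)\inf_{h}\ClassifRisk(\theta,h)$; attention can thus be restricted to the regime $\alpha_n<1$.

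It then suffices to divide the upper bound on the gap supplied by Theorem~\ref{thm:gap:classif:cluster} by the positive quantity $\tfrac{1-2\varepsilon_{n,\theta}}{2}$ and check that the result is at most $\alpha_n$. Because $\tfrac{2}{1-2\varepsilon_{n,\theta}}>0$, this factor can be pushed inside the minimum term by term. For the $\sqrt{\pi/(2n)}$ branch this directly produces the entry $\tfrac{1}{1-2\varepsilon_{n,\theta}}\sqrt{\pi/(2n)}$ of $\alpha_n$ (after the outer factor $2$), while for the first branch I would use the factorisation $1-4\varepsilon_{n,\theta}^2=(1-2\varepsilon_{n,\theta})(1+2\varepsilon_{n,\theta})$ to cancel one power of $(1-2\varepsilon_{n,\theta})$ against the denominator, thereby rewriting $(1-4\varepsilon_{n,\theta}^2)^{n/2}/(1-2\varepsilon_{n,\theta})$ as a multiple of $(1-4\varepsilon_{n,\theta}^2)^{(n-2)/2}$ and recovering the first entry of $\alpha_n$. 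The argument is pure bookkeeping, so the only points demanding care are the treatment of the two branches of the minimum and the degenerate value $\varepsilon_{n,\theta}=\tfrac12$; there is no genuine analytic obstacle once Theorem~\ref{thm:gap:classif:cluster} is in hand.
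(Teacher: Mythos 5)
Your overall route is exactly the paper's: the corollary carries no separate proof in the supplement and is presented as a direct consequence of Theorem~\ref{thm:gap:classif:cluster}, obtained by dividing the gap bound by $\inf_{h}\ClassifRisk(\theta,h)=\tfrac{1-2\varepsilon_{n,\theta}}{2}$. Your handling of the right-hand inequality (identity permutation in \eqref{eq:risk-clustering:classif}), of the degenerate case $\inf_{h}\ClassifRisk(\theta,h)=0$ via the equivalence stated in Theorem~\ref{thm:gap:classif:cluster}, and of the second branch are all correct: in particular $\tfrac{2}{1-2\varepsilon_{n,\theta}}\sqrt{\pi/(2n)}$ matches the stated entry once the outer factor $2$ is accounted for.

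However, your claim that the factorisation ``recovers the first entry of $\alpha_n$'' does not survive the actual computation. Dividing the first branch of the theorem's bound by $\tfrac{1-2\varepsilon_{n,\theta}}{2}$ and cancelling one factor of $1-2\varepsilon_{n,\theta}$ via $1-4\varepsilon_{n,\theta}^2=(1-2\varepsilon_{n,\theta})(1+2\varepsilon_{n,\theta})$ yields
\begin{equation*}
\frac{4\,(1+2\varepsilon_{n,\theta})\,(1-4\varepsilon_{n,\theta}^{2})^{\frac{n-2}{2}}}{n\log\Big(\frac{1+2\varepsilon_{n,\theta}}{1-2\varepsilon_{n,\theta}}\Big)},
\end{equation*}
that is, a prefactor $1+2\varepsilon_{n,\theta}$, whereas the printed $\alpha_n$ carries $1+\varepsilon_{n,\theta}$; since $1+2\varepsilon_{n,\theta}>1+\varepsilon_{n,\theta}$ whenever $\varepsilon_{n,\theta}>0$, the comparison you need in order to land on the printed constant goes the wrong way. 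The discrepancy is bounded (a factor at most $4/3$, attained as $\varepsilon_{n,\theta}\to\tfrac12$) and is almost certainly a typo in the statement of the corollary: the direct-consequence derivation produces $2(1+2\varepsilon_{n,\theta})$ inside the minimum. So your method is the intended one and does prove the corollary with that corrected constant, but as written your ``pure bookkeeping'' step asserts a cancellation identity that fails for the constant actually printed, and a careful write-up should either correct $\alpha_n$ or flag the adjustment explicitly.
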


The following proposition shows that, contrary to Corollary \ref{cor:equiv:class:clust}, the two Bayes risks are not equivalent in general.

\begin{proposition}
  \label{pro:bayes-risk:non-equivalence}
  Whenever $J > 2$ and $n \geq 1$ :
  \begin{equation*}
    \inf_{\theta \in \Theta^{\mathrm{ind}}} \frac{\inf_{g\in \mathcal{G}_n}\ClusterRisk(\theta,g)}{\inf_{h\in \mathcal{H}_n}\ClassifRisk(\theta,h)}%
    = 0.
  \end{equation*}
\end{proposition}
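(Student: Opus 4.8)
The plan is to construct, for each \emph{fixed} $n$ and each small $\epsilon>0$, a parameter $\theta=\theta(\epsilon)\in\Theta^{\mathrm{ind}}$ with $J$ classes whose Bayes classification risk is exactly of order $\epsilon$ while its Bayes clustering risk is of order $n\epsilon^2$. Letting $\epsilon\to 0$ with $n$ held fixed then sends the ratio to $0$, so the infimum over $\theta$ vanishes. The construction isolates the single mechanism by which clustering can be strictly easier than classification: a pair of \emph{rare}, mutually \emph{indistinguishable} classes. Concretely, I would take emission densities $f_1,\dots,f_J$ with pairwise disjoint supports \emph{except} that $f_2=f_3=:g$ share a common support region, disjoint from all others, and set the weights $\nu_2=\nu_3=\epsilon$ with the remaining $\nu_x$ any fixed positive numbers completing the simplex. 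Thus every observation reveals its class with certainty unless it lands in the shared region, where class $2$ and class $3$ cannot be told apart.

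First I would pin down the denominator. In the i.i.d.\ setting the Bayes classification risk equals $\int_{\mathbb{Y}}\min_{x_0\in\mathbb{X}}\bigl(\sum_{x\neq x_0}\nu_x f_x\bigr)\,d\mathcal{L}$, since the Bayes classifier maximizes $x\mapsto \nu_x f_x(y)$. By disjointness the integrand vanishes off the shared region, and on it equals $2\epsilon g-\epsilon g=\epsilon g$; hence $\inf_{h\in\mathcal{H}_n}\ClassifRisk(\theta,h)=\epsilon$ exactly.

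Next I would bound the numerator by evaluating one explicit clusterer $g^{\dagger}$: the map that groups observations by the support region in which they fall. This recovers classes $1,4,\dots,J$ exactly and merges classes $2,3$ into a single block. Writing $A_2,A_3$ for the true class-$2$ and class-$3$ blocks, a one-line maximum-matching computation gives $\ell\bigl(g^{\dagger}(Y_{1:n}),\Pi_n\bigr)=\tfrac1n\min\bigl(\card(A_2),\card(A_3)\bigr)$, so that $\inf_{g\in\mathcal{G}_n}\ClusterRisk(\theta,g)\le \tfrac1n\,\EE_\theta\bigl[\min(\card(A_2),\card(A_3))\bigr]$. The key estimate is the elementary integer inequality $\min(a,b)\le ab$, which yields $\EE_\theta[\min(\card(A_2),\card(A_3))]\le \EE_\theta[\card(A_2)\card(A_3)]=n(n-1)\epsilon^2$ from the multinomial cross-moment. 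Hence $\inf_g\ClusterRisk(\theta,g)\le n\epsilon^2$, and combining gives $\inf_g\ClusterRisk/\inf_h\ClassifRisk\le n\epsilon^2/\epsilon=n\epsilon\to 0$ as $\epsilon\to0$ for fixed $n$. (For $J>3$ the classes $4,\dots,J$ play no role beyond being perfectly separated, so the same bounds hold verbatim.)

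The conceptual heart, and the only genuine obstacle, is choosing the right construction: naive candidates---one dominant class, or two confusable classes of comparable weight---give ratios bounded away from $0$, because there a classification error generically forces a partition error of the same order. The decisive point is that confusing two \emph{rare} classes costs classification a first-order $\epsilon$, paid once per misclassified observation, but costs clustering only the second-order $n\epsilon^2$, since a clustering error requires the coincidence that \emph{both} rare classes are simultaneously populated, an event of probability $\sim\epsilon^2$. This asymmetry is precisely what the bound $\EE[\min(\card(A_2),\card(A_3))]\le\EE[\card(A_2)\card(A_3)]$ encodes, and it is structurally unavailable when $J=2$, consistently with Corollary~\ref{cor:equiv:class:clust}.
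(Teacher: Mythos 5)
Your proof is correct, and it exploits the same mechanism the paper identifies (and sketches informally right after the proposition): a pair of rare classes, confusable only with each other, for which classification pays a first-order price $\epsilon$ per ambiguous observation while clustering errs only on the event that \emph{both} rare classes are simultaneously populated, of probability $O(n^2\epsilon^2)$. The technical route, however, genuinely differs. The paper keeps the three emissions distinct, taking $F_1=U(0,1/2)$, $F_2=U(3/4,1)$, $F_3=U(3/4-\varepsilon,1-\varepsilon)$ with weights $(1-2\eta,\eta,\eta)$, and upper-bounds the clustering risk of the Bayes \emph{classifier} $\pi_n\circ h^{\star}_{\theta}$ rather than of a bespoke clusterer: it conditions on no observation landing in the private slivers $(3/4-\varepsilon,3/4)\cup(1-\varepsilon,1)$, argues by cases on the number of non-class-$1$ observations that the permutation-minimized loss vanishes unless at least two observations fall in the overlap (a $\mathrm{Binomial}(n,2\eta)$ computation giving $\asymp n\eta^2$), pays $\asymp n\eta\varepsilon$ for the complementary event, and finally tunes $\varepsilon\asymp\eta$. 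You instead push the construction to its degenerate limit $f_2=f_3$ with support disjoint from all other classes, which makes the denominator exact, $\inf_{h}\ClassifRisk(\theta,h)=\epsilon$, and lets you evaluate a support-based clusterer whose loss is exactly $\min\left(\card(A_2),\card(A_3)\right)/n$; the inequality $\min(a,b)\leq ab$ for nonnegative integers and the cross-moment $\EE_{\theta}\left[\card(A_2)\card(A_3)\right]=n(n-1)\epsilon^2$ then finish in two lines. Both steps are sound: your matching computation is right (the merged block can be matched to only one of $A_2,A_3$, all other blocks match perfectly, and disjointness of supports rules out any better matching), and $f_2=f_3$ is admissible since $\Theta^{\mathrm{ind}}$ nowhere requires distinct emissions --- distinctness is invoked in the paper only for identifiability in Section 3.4, and in any case an arbitrarily small perturbation of $f_3$ (which is essentially what the paper's $\varepsilon$-shift accomplishes) would restore distinctness without affecting your bounds. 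What each route buys: yours is more elementary, yields exact constants valid verbatim for every $n\geq 1$ and $J>2$, and needs no parameter tuning; the paper's heavier argument additionally shows that the small clustering risk is achieved by the canonical object $\pi_n\circ h^{\star}_{\theta}$ itself, which ties the counterexample to the plug-in theme running through the rest of the paper.
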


The proof of Proposition~\ref{pro:bayes-risk:non-equivalence} is given in \cite[Section~S9]{GKN2025SM}. It sheds light on why the previously established equivalence when there were only two classes no longer holds when the number of classes exceeds two. The proof is based on the following intuition. The observations which are misclustered or misclassified appear only in regions of overlap between the emission densities (cf. Theorem \ref{thm:bayes-risk:key_quantity} and Corollary \ref{cor:key_quantity}). Consider now the situation where $n$ observations are derived from a mixture of $J=3$ densities $F_1$, $F_2$ and $F_3$ with weights $\nu_1 = 1-2\eta$, $\nu_2 = \eta$ and $\nu_3 = \eta$. Assume that the support of $F_1$ is disjoint from that of $F_2$ and $F_3$ and that $F_2$ and $F_3$ have overlapping supports over a small region of the space. When the weight $\eta$ is small, two situations are possible: eventually only one observation belongs to the support of $F_2$ or $F_3$, in which case the clustering error is null, or more observations appear in this region of the space which happens with very small probability because $\eta$ is small. Combining these two insights, the magnitude of the risk of clustering is shown to be negligible with respect to the risk of classification. The use of compactly supported distributions is not essential to the previous argument. For instance, the argument still holds modulo supplementary technicalities if one takes $F_1 = \mathcal{N}\left(-\alpha x, 1\right)$, $F_2 = \mathcal{N}\left(-\alpha , 1\right)$ and $F_3 = \mathcal{N}\left(\alpha, 1\right)$ with $\alpha > 0$ and $x > 0$ large enough.


This dichotomy between $J=2$ and $J > 2$ concerns also the minimizers of the risks. Contrary to Theorem \ref{thm:bayes-risk:min:iid:J=2} (when $J=2$), in the i.i.d. setting with $J>2$, one can always find some model parameters for which the result of clustering using the Bayes classifier differs from that of the Bayes clusterer with positive probability, as shown in the next theorem.

\begin{theorem}
    \label{thm:bayes-risk:min:iid:J>2}
    If $J > 2$, then for all $\theta \in \Theta^{\mathrm{ind}}$, if
        \begin{equation*}
            \PP_{\theta}\left(\bigcup_{j=1}^{J}\left\{0 < \max_{l \neq j} \nu_l f_l(Y) < \nu_j f_j (Y) \leq \sum_{l\neq j}\nu_l f_l (Y) \right\} \right) > 0
        \end{equation*}
        then,
        \begin{equation*}
            \forall n \geq 2,\quad\PP_{\theta}\left(g_{\theta}^{\star}(Y_{1:n}) \neq \pi_n\circ h_{\theta}^{\star}(Y_{1:n})\right) > 0.
        \end{equation*}
\end{theorem}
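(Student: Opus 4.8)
The plan is to reduce the claim to a pointwise (conditional) suboptimality statement and then exhibit an explicit improving competitor on a positive-probability set. In the i.i.d.\ case the Bayes classifier is coordinatewise, $h^{\star}_{\theta,i}(Y_{1:n})=\argmax_{x\in\mathbb{X}}\nu_x f_x(Y_i)$, and $g^{\star}_{\theta}(Y_{1:n})$ is by definition a minimizer of the conditional risk in \eqref{eq:clustering-risk:Bayes-clusterer}. Hence it suffices to produce an event $E$ with $\PP_{\theta}(E)>0$ on which $\pi_n\circ h^{\star}_{\theta}(Y_{1:n})$ is \emph{not} a minimizer of that conditional risk: if on $E$ some partition $P$ has strictly smaller conditional expected loss, then every minimizer differs from $\pi_n\circ h^{\star}_{\theta}(Y_{1:n})$ there, so $g^{\star}_{\theta}(Y_{1:n})\neq \pi_n\circ h^{\star}_{\theta}(Y_{1:n})$ on $E$ and the conclusion follows.

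To build $E$, the hypothesis gives a label $j_0$ with $\PP_{\theta}(Y\in A_{j_0})>0$, where $A_{j_0}=\{y:0<\max_{l\neq j_0}\nu_l f_l(y)<\nu_{j_0}f_{j_0}(y)\le \sum_{l\neq j_0}\nu_l f_l(y)\}$. On $A_{j_0}$ the class $j_0$ is the strict argmax, so $h^{\star}$ assigns $j_0$, while its posterior weight obeys $\PP_{\theta}(X_i=j_0\mid Y_i)\le \tfrac12$, and $\max_{l\neq j_0}\nu_l f_l>0$ forces a competing class to have positive posterior. Decomposing $A_{j_0}$ along the finitely many labels, I fix a single $k_0\neq j_0$ and a subset $A'\subseteq A_{j_0}$ with $\PP_{\theta}(Y\in A')>0$ on which $\PP_{\theta}(X_i=k_0\mid Y_i)>0$ and hence (strict argmax) $\PP_{\theta}(X_i=k_0\mid Y_i)<\PP_{\theta}(X_i=j_0\mid Y_i)\le\tfrac12$. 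I then set $E=\{Y_1\in A',\dots,Y_n\in A'\}$, which has probability $\PP_{\theta}(Y\in A')^n>0$ by independence; on $E$ every coordinate is classified to $j_0$, so $\pi_n\circ h^{\star}_{\theta}(Y_{1:n})=\{[n]\}$ is the single block.

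The competitor is $P'=\{\{1\},\{2,\dots,n\}\}$, splitting off observation $1$. Writing $M_c=\card\{i\ge 2:X_i=c\}$, letting $T$ be the set of classes attaining $\max_c M_c$, and letting $\mathrm{ov}(\cdot,\Pi_n)$ denote the best-matching supremum in \eqref{def:min_overlap}, a direct bookkeeping of the optimal matching shows that the overlap gain $\mathrm{ov}(P',\Pi_n)-\mathrm{ov}(\{[n]\},\Pi_n)$ equals $+1$ if $X_1\notin T$, equals $-1$ if $X_1\in T$ and $|T|=1$, and equals $0$ if $X_1\in T$ and $|T|\ge 2$. Since $X_1\perp X_{2:n}$ given $Y_{1:n}$ and $T$ is $\sigma(X_{2:n})$-measurable, conditioning on $X_{2:n}$ (and writing $c_1$ for the unique element of $T$ when $|T|=1$) gives
\begin{equation*}
  \EE_{\theta}\!\left[\mathrm{ov}(P',\Pi_n)-\mathrm{ov}(\{[n]\},\Pi_n)\,\middle|\,Y_{1:n},X_{2:n}\right]
  = \PP_{\theta}(X_1\notin T\mid Y_1)-\1(|T|=1)\,\PP_{\theta}(X_1=c_1\mid Y_1),
\end{equation*}
which is $\ge 0$ because $\PP_{\theta}(X_1=c\mid Y_1)\le \PP_{\theta}(X_1=j_0\mid Y_1)\le\tfrac12$ for every $c$.

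Strictness comes from the positive conditional-probability event $\{X_2=\dots=X_n=k_0\}$ (of probability $\prod_{i=2}^{n}\PP_{\theta}(X_i=k_0\mid Y_i)>0$ on $E$), where $T=\{k_0\}$ and the displayed expression equals $1-2\,\PP_{\theta}(X_1=k_0\mid Y_1)>0$; everywhere else it is nonnegative. Integrating over $X_{2:n}$, the conditional expected overlap strictly increases, so $\EE_{\theta}[\ell(P',\Pi_n)\mid Y_{1:n}]<\EE_{\theta}[\ell(\{[n]\},\Pi_n)\mid Y_{1:n}]$ on $E$; thus $\pi_n\circ h^{\star}_{\theta}=\{[n]\}$ is strictly suboptimal and differs from $g^{\star}_{\theta}$ on $E$, giving $\PP_{\theta}(g^{\star}_{\theta}\neq\pi_n\circ h^{\star}_{\theta})\ge\PP_{\theta}(E)>0$. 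Note this unifies the $n=2$ case, where the remainder is a single observation, $T=\{X_2\}$, and the gain reduces to $1-2\,\PP_{\theta}(X_1=X_2\mid Y_{1:2})>0$. I expect the main obstacle to be the matching bookkeeping establishing the $\{-1,0,+1\}$ trichotomy, in particular the tie case $|T|\ge 2$; the analytic content of the argument—that an ambiguous observation is likelier to lie in a different class than the empirical majority of the others precisely because its argmax posterior never exceeds $\tfrac12$—is otherwise elementary once this combinatorial step is in place.
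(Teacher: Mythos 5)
Your proposal is correct and takes essentially the same route as the paper's proof: both force the Bayes classifier to output the single-block partition on a positive-probability event where every posterior weight is at most $\tfrac12$, then compare it conditionally against the partition splitting off one observation, using the same $\{-1,0,+1\}$ overlap trichotomy (the paper's four-case formula for $\ell(\pi_n(x_{1:n}),\pi_n((1,\dots,1,2)))$ is exactly your bookkeeping with the split coordinate at the end instead of the beginning). The only organizational difference is where strictness is extracted — you condition on $X_{2:n}$ and use the monochromatic configuration $X_2=\dots=X_n=k_0$, giving gain $1-2\,\PP_{\theta}(X_1=k_0\mid Y_1)>0$, whereas the paper isolates a strictly positive near-tie probability term $\sum_j \alpha^{(Y_n)}_j\PP_{\theta}\left(N_{n-1,j}=\max_{k\neq j}N_{n-1,k}-1\mid Y_{1:n-1}\right)$ — but this is the same mechanism, and your fixed-pair reduction to the subset $A'$ is a mildly cleaner packaging of the paper's intersection of events.
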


The condition above can be ensured easily for many distributions $\left(f_j\right)_{j\in\mathbb{X}}$ such as multinomials, mixtures of Gaussians, etc. For example, it is valid when $J=3$, $(\nu_1, \nu_2, \nu_3) = (0.4, 0.4, 0.2)$, and the emission densities are Gaussians with variance $\sigma^{2} = 1$ and means $(\mu_1, \mu_2, \mu_3) = (1, 2, 3)$. The proof of Theorem~\ref{thm:bayes-risk:min:iid:J>2} is given in \cite[Section~S3]{GKN2025SM}.

Even though the Bayes risks of clustering and classification are not equivalent uniformly over $\Theta^{\mathrm{ind}}$ by Proposition~\ref{pro:bayes-risk:non-equivalence}, we show in the next theorem that when $J>2$, they become equivalent when $\inf_{g \in \mathcal{G}_n}\ClusterRisk(\theta,g) \gtrsim J^2 e^{-n\beta/8}$  where $\beta = \min_{j\ne k}(\nu_j + \nu_{k})$.

\begin{theorem}\label{thm:bayes-risk:lb:iid:J>2}
    For all $\theta \in \Theta^{\mathrm{ind}}$ and all $n\geq 1$ the following bounds hold
    \begin{align*}
    \inf_{g \in \mathcal{G}_n}\ClusterRisk(\theta,g)%
    &\geq \inf_{h \in \mathcal{H}_n}\ClassifRisk(\theta,h)%
      - \sqrt{\frac{\log(J!)}{2n}},\\
    \inf_{g \in \mathcal{G}_n}\ClusterRisk(\theta,g)%
    &\geq (1-\xi_n) \inf_{h\in \mathcal{H}_n}\ClassifRisk(\theta,h) - J^2e^{-n\beta/8},
  \end{align*}
  where $\beta = \min_{j\ne k}(\nu_j + \nu_{k})$ and $\xi_n = \frac{4e}{\beta}[\sqrt{\log(J!)/(2n)}]^{1- 4/(n\beta)}$.
\end{theorem}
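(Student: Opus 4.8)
The two inequalities share a common backbone, so the plan is to set it up once and then split. Since every clusterer is of the form $\pi_n\circ h$ for some $h\in\mathcal{H}_n$, I would write $\inf_{g\in\mathcal{G}_n}\ClusterRisk(\theta,g)=\inf_{h\in\mathcal{H}_n}\EE_\theta[\min_{\tau\in\mathcal{S}_{J}}L_h(\tau)]$ with $L_h(\tau)\coloneqq\frac1n\sum_{i=1}^n\1_{h_i(Y_{1:n})\neq\tau(X_i)}$, using \eqref{eq:risk-clustering:classif}, and fix an arbitrary $h$. The crucial feature of $\Theta^{\mathrm{ind}}$ is that, conditionally on $Y_{1:n}$, the variables $X_1,\dots,X_n$ are independent, $X_i$ following $p_i(\cdot)\coloneqq\PP_\theta(X_i=\cdot\mid Y_i)$. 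Writing $\mu_\tau\coloneqq\EE_\theta[L_h(\tau)\mid Y_{1:n}]$, I would use the elementary pointwise centering bound
\[
  \min_{\tau}L_h(\tau)\ \geq\ \min_{\tau}\mu_\tau-\max_{\tau}\big(\mu_\tau-L_h(\tau)\big),
\]
together with the structural identity $\min_\tau\mu_\tau=1-\frac1n\max_\tau\sum_i p_i(\tau^{-1}(h_i))$. Because $p_i(\tau^{-1}(h_i))\le\max_x p_i(x)$ termwise, this gives $\min_\tau\mu_\tau\ge 1-\frac1n\sum_i\max_x p_i(x)$, whose expectation is exactly $\inf_{h}\ClassifRisk(\theta,h)$, the Bayes classifier maximizing the posterior coordinatewise. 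Everything thus reduces to controlling the fluctuation term $\EE_\theta[\max_\tau(\mu_\tau-L_h(\tau))]$ uniformly in $h$.

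For the first (additive) bound, I would exploit that, conditionally on $Y_{1:n}$, the $h_i$ are frozen and $L_h(\tau)$ is a function of the independent $X_i$ with bounded differences $1/n$ in each coordinate. By McDiarmid, $L_h(\tau)-\mu_\tau$ is conditionally sub-Gaussian with variance proxy $1/(4n)$, so the sub-Gaussian maximal inequality over the $|\mathcal{S}_{J}|=J!$ permutations gives $\EE_\theta[\max_\tau(\mu_\tau-L_h(\tau))\mid Y_{1:n}]\le\sqrt{\log(J!)/(2n)}$. Taking expectations and combining with the backbone yields the first inequality; nothing here depends on $h$ beyond the conditional independence, so it passes to the infimum over $g$.

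For the second (multiplicative) bound, the point is that the Hoeffding variance proxy $1/(4n)$ is wasteful: the conditional variance of $\1_{h_i\neq\tau(X_i)}$ is bounded by its mean, so Bernstein's inequality makes the deviation of $L_h(\tau)$ scale like $\sqrt{\mu_\tau/n}$. Moreover the permutations that can actually lower the loss act by relabelling \emph{pairs} of classes, so via the transposition structure the relevant fluctuations localize to observations with $X_i\in\{j,k\}$, of which there are $\mathrm{Binomial}(n,\nu_j+\nu_k)$ many, i.e. at least of order $n\beta$. I would therefore first pass to the event that every pair $\{j,k\}$ is represented by at least half its expected count: a Chernoff lower tail for $\mathrm{Binomial}(n,\nu_j+\nu_k)$ gives $e^{-n(\nu_j+\nu_k)/8}\le e^{-n\beta/8}$, and a union bound over the at most $J^2$ pairs produces the additive error $J^2e^{-n\beta/8}$. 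On this event the effective sample size driving each relevant deviation is of order $n\beta$, and optimizing the Bernstein threshold across the $J!$ permutations turns the additive slack into the multiplicative factor $\xi_n=\frac{4e}{\beta}[\sqrt{\log(J!)/(2n)}]^{1-4/(n\beta)}$, the correction $4/(n\beta)$ in the exponent and the constant $4e/\beta$ both originating from the Bernstein denominator with effective sample size $n\beta$.

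The main obstacle is the second bound. The naive union bound over all $J!$ permutations reintroduces the large variance of permutations far from optimal and recovers only the additive rate; the whole difficulty is to show that such permutations are irrelevant to $\min_\tau L_h(\tau)$ except on the exponentially rare event isolated by the Chernoff step, and then to tune the deviation level so that the surviving contribution is genuinely multiplicative in $\inf_{h}\ClassifRisk(\theta,h)$. Matching the exact form of $\xi_n$ is the delicate part; by contrast, the additive bound and the structural reduction are routine once the conditional-independence viewpoint is adopted.
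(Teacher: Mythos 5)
Your backbone (reduction to $\inf_h\EE_\theta[\min_\tau L_h(\tau)]$, conditional independence of $X_{1:n}$ given $Y_{1:n}$, the centering bound, and the identification of $\EE_\theta[\min_\tau\mu_\tau]$ with the Bayes classification risk) and your proof of the first inequality (conditional Hoeffding plus a sub-Gaussian maximal inequality over the $J!$ permutations) are correct and are essentially the paper's argument. The additive error event you isolate for the second inequality is also the right one: requiring every pair count $\sum_i\1_{X_i\in\{j,k\}}$ to exceed $n\beta/2$, with Chernoff and a union bound over $J^2$ pairs giving exactly $J^2e^{-n\beta/8}$.

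However, for the second inequality there is a genuine gap at precisely the point you flag as "the delicate part." The paper does not resolve it by any Bernstein-type refinement of per-permutation deviations; it resolves it by a \emph{deterministic} combinatorial lemma: if for some permutation $\tau'$ one has $L_h(\tau')\leq (N_{(1)}+N_{(2)})/(2n)$, where $N_{(1)},N_{(2)}$ are the two smallest class counts, then $\min_\tau L_h(\tau)=L_h(\tau')$. (The reason: any $\tau''\neq\tau'$ disagrees with $\tau'$ on at least two labels, so $\sum_i\1_{\tau'(X_i)\neq\tau''(X_i)}\geq N_{(1)}+N_{(2)}$, and a counting argument gives $L_h(\tau'')\geq (N_{(1)}+N_{(2)})/n-L_h(\tau')\geq L_h(\tau')$.) This lemma makes all other permutations literally irrelevant on the good event, so no maximal inequality over $\mathcal{S}_J$ is needed there at all; one only needs a one-sided deviation bound for the \emph{single} variable $L_h(\hat\tau_h)$ above the level $\eta=\beta/4$, and a Poisson--Chernoff bound makes that probability multiplicatively small in $\hat p_{\hat\tau_h}=\EE_\theta[L_h(\hat\tau_h)\mid Y_{1:n}]$. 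The remaining ingredient you are missing is the threshold decomposition on the conditional Bayes error: on $\{\hat p_{\hat\tau_h}\geq\varepsilon\}$ the additive $\sqrt{\log(J!)/(2n)}$ bound is converted into a multiplicative one by paying a factor $1/\varepsilon$ (Markov), on $\{\hat p_{\hat\tau_h}<\varepsilon\}$ the Poisson--Chernoff bound applies, and optimizing $\varepsilon=\frac{\beta}{4e}[\log(J!)/(2n)]^{2/(n\beta)}$ produces exactly $\xi_n$. Your proposed route — Bernstein per permutation plus a union bound, then "optimizing the threshold" — cannot substitute for this: as you yourself observe, permutations with $\mu_\tau\asymp 1$ contribute fluctuations of order $\sqrt{\log(J!)/n}$ regardless of the variance refinement, which is an additive error and swamps $\min_\tau\mu_\tau$ in the regime where the Bayes risk is small; no tuning of a Bernstein level removes them. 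What removes them is the combinatorial lemma above, which is absent from your proposal.
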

 The proof of Theorem~\ref{thm:bayes-risk:lb:iid:J>2} is given in \cite[Section~S5]{GKN2025SM}. Even if the second lower bound obtained does not establish an equivalence between the Bayes risks of clustering and classification, it remains useful for analyzing phase transitions. Specifically, under a mild assumption on $\beta$, this bound results in similar phase transitions for both risks.

Even if clustering using the Bayes classifier differs sometimes from that of the Bayes clusterer (as shown in Theorem \ref{thm:bayes-risk:min:iid:J>2}), Theorem \ref{thm:bayes-risk:lb:iid:J>2} provides guarantees for the risk of clustering using the Bayes classifier as shown by the following corollary.

\begin{corollary}\label{cor:Bayes_classif:iid}
    For all $\theta \in \Theta^{\mathrm{ind}}$ and all $n\geq 1$ the following bounds hold
    \begin{equation*}
    \inf_{g \in \mathcal{G}_n} \ClusterRisk(\theta, g) \leq \ClusterRisk\left(\theta, \pi_n\circ h^{\star}_{\theta}\right) \leq \frac{1}{1 - \xi_{n}} \left(\inf_{g \in \mathcal{G}_n}\ClusterRisk(\theta,g) + J^{2}e^{-n\beta/8}\right)
\end{equation*}
with $\xi_n$ and $\beta$ as defined in Theorem~\ref{thm:bayes-risk:lb:iid:J>2}. 
When there are only two classes
\begin{equation*}
    \inf_{g \in \mathcal{G}_n} \ClusterRisk(\theta, g) \leq \ClusterRisk\left(\theta, \pi_n\circ h^{\star}_{\theta}\right) \leq \frac{1}{1 - \alpha_{n}} \inf_{g \in \mathcal{G}_n}\ClusterRisk(\theta,g) 
\end{equation*}
where $\alpha_n$ is defined in Corollary \ref{cor:equiv:class:clust}.
\end{corollary}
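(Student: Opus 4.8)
The plan is to treat the displayed lower bound as essentially free and to reduce the upper bound to a rearrangement of the estimates already established in Theorem~\ref{thm:bayes-risk:lb:iid:J>2} and Corollary~\ref{cor:equiv:class:clust}. The left-hand inequality $\inf_{g \in \mathcal{G}_n} \ClusterRisk(\theta, g) \leq \ClusterRisk(\theta, \pi_n\circ h^{\star}_{\theta})$ is immediate, since $\pi_n\circ h^{\star}_{\theta}$ is one particular member of $\mathcal{G}_n$, so the infimum of the clustering risk over $\mathcal{G}_n$ can only be smaller.

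For the upper bound the single new ingredient I would isolate is the monotonicity relation
\[
  \ClusterRisk(\theta, \pi_n \circ h) \leq \ClassifRisk(\theta, h), \qquad \forall h \in \mathcal{H}_n .
\]
This follows directly from the representation \eqref{eq:risk-clustering:classif}: the minimum over $\tau \in \mathcal{S}_{J}$ inside the expectation is bounded above by its value at the identity permutation $\tau = \mathrm{id}$, which is exactly the integrand defining $\ClassifRisk(\theta, h)$ in \eqref{eq:risk-classif}. Applying this to the Bayes classifier $h = h^{\star}_{\theta}$, and using that $h^{\star}_{\theta}$ attains $\inf_{h \in \mathcal{H}_n}\ClassifRisk(\theta, h)$, gives $\ClusterRisk(\theta, \pi_n \circ h^{\star}_{\theta}) \leq \inf_{h \in \mathcal{H}_n}\ClassifRisk(\theta, h)$.

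It then remains to bound the Bayes risk of classification from above by the Bayes risk of clustering, which is precisely what the second inequality of Theorem~\ref{thm:bayes-risk:lb:iid:J>2} provides after rearrangement: starting from $\inf_{g}\ClusterRisk(\theta,g) \geq (1-\xi_n)\inf_{h}\ClassifRisk(\theta,h) - J^2 e^{-n\beta/8}$ and dividing by $1-\xi_n$ yields $\inf_{h}\ClassifRisk(\theta,h) \leq \tfrac{1}{1-\xi_n}\bigl(\inf_{g}\ClusterRisk(\theta,g) + J^2 e^{-n\beta/8}\bigr)$. Chaining this with the monotonicity bound from the previous paragraph produces the stated general upper bound. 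The $J=2$ statement is obtained identically, except that I would substitute the sharp left inequality of Corollary~\ref{cor:equiv:class:clust}, namely $(1-\alpha_n)\inf_{h}\ClassifRisk(\theta,h) \leq \inf_g \ClusterRisk(\theta,g)$, which removes the additive $J^2 e^{-n\beta/8}$ term and replaces $\xi_n$ by $\alpha_n$.

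There is no deep obstacle here; the substantive work was already carried out in Theorem~\ref{thm:bayes-risk:lb:iid:J>2} and Corollary~\ref{cor:equiv:class:clust}, and the corollary is a short two-line consequence. The only point requiring a word of care is the division by $1 - \xi_n$ (respectively $1-\alpha_n$): the bound is meaningful only in the regime $\xi_n < 1$ (respectively $\alpha_n < 1$), and otherwise the right-hand side is to be read as vacuous (the clustering risk being at most $1$ in any case), so I would flag this convention rather than dwell on it.
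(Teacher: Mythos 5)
Your proposal is correct and takes essentially the same route the paper intends for this corollary, which is stated without separate proof as a direct consequence of the earlier results: the elementary monotonicity bound $\ClusterRisk(\theta,\pi_n\circ h^{\star}_{\theta})\leq \ClassifRisk(\theta,h^{\star}_{\theta})=\inf_{h\in\mathcal{H}_n}\ClassifRisk(\theta,h)$, which follows from \eqref{eq:risk-clustering:classif} by evaluating the minimum over permutations at the identity, chained with the rearranged lower bounds of Theorem~\ref{thm:bayes-risk:lb:iid:J>2} (general $J$) and Corollary~\ref{cor:equiv:class:clust} ($J=2$). Your caveat about requiring $\xi_n<1$ (respectively $\alpha_n<1$) for the division step is the same implicit convention under which the paper's statement must be read, so nothing is missing.
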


\subsection{HMM case}\label{sec:comp-clust-class-hmm}
The behavior of the Bayes risks under the HMM modeling exhibits similarities and differences with the i.i.d. case. As we will see, the Bayes risks keep having the same behavior under the HMM setting, while their minimizers, on the other hand, can be different. First, contrary to the i.i.d. case where the result of clustering using the Bayes classifier matches that of the Bayes clusterer, it is always possible to find a set of parameters for which they differ under the HMM setting, as shown in the next theorem.

\begin{theorem}
    \label{thm:bayes-risk:min:hmm:J=2}
    In the case of dependent labels and when $J = 2$, there exists a subset $\Tilde{\Theta} \subset \Theta^{\mathrm{dep}}$ such that
        \begin{equation*}
            \left(\forall \theta\in\Tilde{\Theta}\right) \left(\forall n \geq 2\right),\quad\PP_{\theta}\left(g_{\theta}^{\star}(Y_{1:n}) \neq \pi_n\circ h_{\theta}^{\star}(Y_{1:n})\right) > 0.
        \end{equation*}
\end{theorem}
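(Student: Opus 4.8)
The plan is to work entirely at the level of the conditional (posterior) law of $X_{1:n}$ given $Y_{1:n}$ and to encode, for $J=2$, every partition by a sign vector $s\in\{\pm1\}^n$ taken up to a global flip (the partition depends only on $\{s,-s\}$). Writing $S_i := 2\,\1_{X_i=1}-1$, the misclassification loss of the partition of $s$ against $\Pi_n$ equals $\tfrac12-\tfrac1{2n}\lvert\langle s,S\rangle\rvert$ with $\langle s,S\rangle=\sum_{i=1}^n s_iS_i$, so that by \eqref{eq:clustering-risk:Bayes-clusterer} the Bayes clusterer $g^{\star}_{\theta}$ is the partition of any $s$ maximizing $s\mapsto \EE_{\theta}[\,\lvert\langle s,S\rangle\rvert\mid Y_{1:n}]$, whereas $\pi_n\circ h^{\star}_{\theta}$ is the partition of the coordinatewise sign vector $s^{\star}_i=\operatorname{sign}\EE_{\theta}[S_i\mid Y_{1:n}]$. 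The i.i.d. coincidence of Theorem~\ref{thm:bayes-risk:min:iid:J=2} reflects that when the $S_i$ are conditionally independent, aligning signs maximizes $\EE[\lvert\langle s,S\rangle\rvert]$. My plan is to exhibit dependent parameters for which the posterior correlations (present only under $\Theta^{\mathrm{dep}}$) make a sign vector inducing a \emph{different} partition strictly better, for every $n\ge 2$.

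To that end I would isolate two robust sufficient conditions on the posterior that force disagreement: (A) $\PP_{\theta}(X_i=1\mid Y_{1:n})>\tfrac12$ for all $i$, which forces $\pi_n\circ h^{\star}_{\theta}=\{[n]\}$ (the ``one cluster'' partition); and (B) the existence of some $s$ with $s\neq\pm\mathbf 1$ and $\EE_{\theta}[\lvert\langle s,S\rangle\rvert\mid Y_{1:n}]>\EE_{\theta}[\lvert\langle\mathbf 1,S\rangle\rvert\mid Y_{1:n}]$, which forces the clusterer's optimum to differ from $\{[n]\}$. The engine driving (B) is transparent on a three-configuration posterior $a\,\delta_{\sigma^A}+b\,\delta_{\sigma^B}+c\,\delta_{\sigma^C}$ with $\sigma^C=(+,\dots,+)$, $\sigma^A=(+^k\mid-^{n-k})$ and $\sigma^B=-\sigma^A$: a direct computation gives $\EE[\lvert\langle\sigma^A,S\rangle\rvert]-\EE[\lvert\langle\mathbf 1,S\rangle\rvert]=(a+b-c)\,2\min(k,n-k)$, which is strictly positive once $a+b>c$, while the marginals $\PP(X_i=1\mid Y_{1:n})$ equal $c+a$ (for $i\le k$) and $c+b$ (for $i>k$). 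Choosing, e.g., $(a,b,c)=(0.3,0.3,0.4)$ satisfies both (A) and (B) simultaneously. The case $n=2$ already realizes this exactly: with the anti-persistent transition matrix $Q=\bigl(\begin{smallmatrix}0.5&0.5\\0.9&0.1\end{smallmatrix}\bigr)$ and any observations $(y_1,y_2)$ for which the likelihood ratios satisfy $\nu_1 f_1(y_1)/(\nu_2 f_2(y_1))\approx 2.4$ and $f_1(y_2)/f_2(y_2)\approx 4/3$, the posterior on $(X_1,X_2)$ is $\approx(0.39,0.29,0.29,0.02)$, the Bayes classifier returns ``same cluster'' and the Bayes clusterer returns ``different clusters''.

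For general $n$ I would fix one dependent $\theta=(\nu,Q,f_1,f_2)$ with a full-support continuous emission family (say $f_x=\mathcal N(\mu_x,1)$), so that $y\mapsto f_1(y)/f_2(y)$ is continuous and surjective onto $(0,\infty)$ and every open set of observations has positive probability. For each $n$ I would then specify an explicit open observation region (a first block biased toward state $1$ and the remaining coordinates kept near-neutral) on which the true joint posterior is a small perturbation of the three-configuration target above; since (A) and (B) are \emph{strict} open conditions, they persist on that open region, which has positive $\PP_{\theta}$-probability. Taking $\tilde\Theta$ to be a neighborhood of this $\theta$ inside $\Theta^{\mathrm{dep}}$ and using the same stability under perturbation closes the statement for all $n\ge2$ at once.

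The hard part will be the realizability step for all $n$: one cannot simply pad the $n=2$ example with extra coordinates. Indeed, whenever the bulk of the posterior mass has its signed sum $\langle\mathbf 1,S\rangle$ bounded away from $0$, the map $x\mapsto\lvert x\rvert$ linearizes and the clustering objective collapses to the classification one, so that $g^{\star}_{\theta}=\pi_n\circ h^{\star}_{\theta}$; concretely, conditioning the appended coordinates to be (nearly) deterministic provably restores agreement. Disagreement therefore genuinely requires sustaining, on a positive-probability event, both uniform marginal bias toward a single state (condition (A)) and real sign-uncertainty in the joint magnetization (condition (B)) — a tension that is exactly what forces the posterior to be at least three-modal. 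The delicate point is to verify that a \emph{single} fixed two-state chain can produce such a posterior for every window length $n$; I expect to handle this by a continuity argument around an explicitly solvable limiting configuration (e.g. a persistent $Q$ with observations placing the competing ``domain wall'' and the all-ones configuration at comparable posterior weight), showing the three target configurations carry dominant, correctly balanced mass while all competing configurations are negligible, so that (A) and (B) hold throughout an open observation set.
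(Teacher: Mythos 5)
Your reduction is sound and your building blocks check out. For $J=2$ every at-most-two-block partition is $\{s,-s\}$ for some $s\in\{\pm 1\}^n$ (and partitions with three or more blocks are never optimal here, since merging an unmatched block into a matched one cannot decrease the best overlap), the loss is indeed $\tfrac12-\tfrac1{2n}\lvert\langle s,S\rangle\rvert$, your three-configuration identity $(a+b-c)\,2\min(k,n-k)$ is correct, and so is the $n=2$ example: with $Q_{12}=0.5$, $Q_{21}=0.9$, stationary initial law $\nu=(9/14,5/14)$ and likelihood ratios $f_1/f_2=4/3$ at both observations, the posterior on $(X_1,X_2)$ is $(0.390,0.293,0.293,0.024)$, both marginals equal $0.68$, and $\EE[\lvert\langle(1,-1),S\rangle\rvert\mid Y_{1:2}]=1.16>0.82=\EE[\lvert\langle\mathbf 1,S\rangle\rvert\mid Y_{1:2}]$, so the classifier merges while the clusterer splits. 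Methodologically this differs from the paper, which handles general $n$ by taking emissions with different supports and working on the event $B_n=\{Y_i\notin\mathrm{supp}(f_2),\ i\geq 3\}$, pinning $X_{3:n}=(1,\dots,1)$ so that everything reduces to an explicit four-weight computation on $(X_1,X_2)$, followed by an openness argument much like your closing step.

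The genuine gap is the realizability step you flag yourself, and it is worse than merely hard: the target you propose to perturb around cannot exist for large $n$ under a fixed $\theta\in\Theta^{\mathrm{dep}}$. Conditionally on $Y_{1:n}$ the hidden chain is Markov with forward kernels $F_i$, and writing $p=Q_{12}$, $q=Q_{21}$, $g_x=f_x(Y_{i+1})\beta_{i+1\mid n}(x)$, one gets the observation-independent invariance $\frac{F_i(1,2)}{F_i(1,1)}\cdot\frac{F_i(2,1)}{F_i(2,2)}=\frac{p\,g_2}{(1-p)g_1}\cdot\frac{q\,g_1}{(1-q)g_2}=\frac{pq}{(1-p)(1-q)}$: tuning the data to suppress one flip direction at a site necessarily boosts the other, so at every interior site at least one of the two flip odds is at least $\sqrt{pq/((1-p)(1-q))}>0$. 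Consequently single-flip perturbations of your three configurations (other domain-wall positions, flip-backs) carry posterior mass growing linearly in $n$, and the premise that "all competing configurations are negligible" fails uniformly in $n$; tuning the ratios $r_i=f_1(Y_i)/f_2(Y_i)$ only trades one contaminating family against another. So conditions (A) and (B) would have to be verified in the presence of this $\Theta(n)$ contamination, which is a substantive missing argument, not a routine continuity check. Incidentally, your (correct) remark that nearly deterministic appended coordinates restore agreement applies verbatim on the paper's own event $B_n$ for $n\geq 5$: there the restricted objective is affine in $(y_1,y_2)$ with coefficients proportional to the marginal magnetizations $m_1,m_2$, so the clusterer reproduces the classifier on that event — your instinct that disagreement needs residual sign-uncertainty in the total magnetization is exactly right, and it suggests that the robust general-$n$ construction is to pin the tail to a \emph{nearly balanced} two-block configuration (so that $\sum_{i\geq 3}s_iS_i$ is an $O(1)$ offset and the absolute value stays nonlinear, reducing to your $n=2$ mechanism), rather than to engineer a three-modal posterior with all marginals biased toward a single state.
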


We illustrate this in the simple situation of $n=2$. Assume one observes two consecutive observations $Y_1$ and $Y_2$ of a HMM with transition matrix $$Q = \begin{pmatrix}
1-p & p \\
q & 1-q\\
\end{pmatrix}$$
and emission densities $f_1$ and $f_2$. Denoting for simplicity $\bar{p}=1-p$ and $\bar{q}= 1-q$, one can easily check that the Bayes clusterer puts the two observations in the same cluster when 
\begin{equation}
\label{eq:n2:bayesclust:cond1}
    q \bar{p} f_1 (Y_1) f_1(Y_2) + p \bar{q} f_2 (Y_1) f_2(Y_2) \geq p q \left(f_2 (Y_1) f_1(Y_2) + f_1 (Y_1) f_2(Y_2)\right)
\end{equation}
On the other hand, the Bayes classifier puts the two observations in the same class when
\begin{equation*}
    \left(\PP_{\theta}\left(X_1 = 2 \mid Y_{1:2}\right) - \PP_{\theta}\left(X_1 = 1 \mid Y_{1:2}\right)\right) \left(\PP_{\theta}\left(X_2 = 2 \mid Y_{1:2}\right) - \PP_{\theta}\left(X_2 = 1 \mid Y_{1:2}\right)\right) \geq 0,
\end{equation*}
or equivalently,
\begin{multline}
\label{eq:n2:bayesclust:cond2}
    \left(p q f_{2}(Y_1) f_{1} (Y_2) + p \bar{q} f_{2}(Y_1) f_{2} (Y_2) - q \bar{p} f_{1}(Y_1) f_{1} (Y_2) - q p f_{1}(Y_1) f_{2} (Y_2)\right)\\
    \times \left(q p f_{1}(Y_1) f_{2} (Y_2) + p \bar{q} f_{2}(Y_1) f_{2} (Y_2) - q \bar{p} f_{1}(Y_1) f_{1} (Y_2) - p q f_{2}(Y_1) f_{1} (Y_2)\right)\geq 0.
\end{multline}
The two conditions \eqref{eq:n2:bayesclust:cond1} and \eqref{eq:n2:bayesclust:cond2} are not equivalent. In the simple situation of Bernoulli emissions $\left(f_1, f_2\right) = \left(\mathcal{B}(\alpha_1), \mathcal{B}(\alpha_2)\right)$, and $Y_{1:2} = (1, 1)$, the Bayes clustering puts the two observations in the same cluster when 
\begin{equation*}
    q\bar{p} \alpha_1^{2} + p\bar{q}\alpha_2^{2} > 2 p q \alpha_1 \alpha_2
\end{equation*}
while the Bayes classifier puts them always in the same cluster since the corresponding  condition becomes
\begin{equation*}
    \left(p\bar{q}\alpha^{2}_2 - q\bar{p}\alpha_{1}^{2}\right)^{2} \geq 0.
\end{equation*}
Note that the first condition is not always ensured when $p$ and $q$ are chosen near $1$. Note also that in the situation where $p+q = 1$, the dependence structure is lost and the two conditions become equivalent to
    \begin{equation*}
        (qf_{1}(Y_1) - p f_{2}(Y_1)) (qf_{1}(Y_2) - p f_{2}(Y_2)) \geq 0
    \end{equation*}
which is coherent with Theorem \ref{thm:bayes-risk:min:iid:J=2} in the i.i.d. case. This highlights the strong difference between the dependent and independent setting. The proof of Theorem \ref{thm:bayes-risk:min:hmm:J=2} can be found in \cite[Section~S7]{GKN2025SM}.

We now establish the equivalence between the two risks under the Markovian dependence of the labels. We consider the following assumption.
\begin{assumption}\label{Assumption_mixing} 
$\delta = \min_{x,x'}Q_{x,x'} > 0$ and $\min_{x}\nu_{x}\geq \delta$.
\end{assumption}
The positive lower bound $\delta$ introduced in Assumption  \ref{Assumption_mixing} makes the hidden Markov chain irreducible. It will be used in proving deviation inequalities or when forgetting properties of the chain are needed. Even if the minimizers of the risk of classification and the risk of clustering might differ, we are still able to prove the equivalence between the two risks. The following theorem shows this is the case for $n$ large enough.

\begin{theorem}
  \label{thm:bayes-risk:lb:hmm:J=2}
  If $J = 2$, then for all $\theta \in \Theta^{\mathrm{dep}}$ such that  Assumption \ref{Assumption_mixing} holds  and all $n \geq 1$ 
  \begin{align*}
      (1 - \tilde{\alpha}_n) \inf_{h \in \mathcal{H}_n}\ClassifRisk(\theta,h) \leq \inf_{g \in \mathcal{G}_n}\ClusterRisk(\theta,g) \leq \inf_{h \in \mathcal{H}_n}\ClassifRisk(\theta,h),
  \end{align*}
  where $\tilde{\alpha}_n = 2e\big(\frac{1-\delta}{\delta} \big)^{4} [\frac{1-\delta}{\delta}\sqrt{\log(2)/2n}]^{1 - 2/n}$.
\end{theorem}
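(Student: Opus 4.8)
The plan is to treat the two inequalities separately; the right-hand one is immediate, while the left-hand one carries all the difficulty. For the upper bound, I apply the representation \eqref{eq:risk-clustering:classif} to the Bayes classifier $h^{\star}_{\theta}$ and bound the minimum over $\mathcal{S}_{2}$ by its value at the identity permutation, giving $\inf_{g}\ClusterRisk(\theta,g)\le\ClusterRisk(\theta,\pi_n\circ h^{\star}_{\theta})\le\ClassifRisk(\theta,h^{\star}_{\theta})=\inf_{h}\ClassifRisk(\theta,h)$. For the lower bound I first reduce to a purely probabilistic statement. Writing $N_h:=\sum_{i=1}^n\1_{h_i(Y_{1:n})\neq X_i}$ and using that for $J=2$ the swap permutation turns $\1_{h_i\neq\tau(X_i)}$ into $\1_{h_i=X_i}$, formula \eqref{eq:risk-clustering:classif} collapses to $\ClusterRisk(\theta,\pi_n\circ h)=\EE_\theta[\tfrac1n\min(N_h,n-N_h)]$. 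Since every clusterer is of the form $\pi_n\circ h$, this yields $\inf_{g}\ClusterRisk(\theta,g)=\inf_{h}\EE_\theta[\tfrac1n\min(N_h,n-N_h)]$, and I shall exploit the elementary identity $\min(N_h,n-N_h)=N_h-(2N_h-n)_+$, where $(\cdot)_+$ is the positive part.

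Because the clustering risk of $\pi_n\circ h$ is invariant under relabelling of the two states, I may replace each $h$ by the representative $h'$ obtained by choosing, as a measurable function of $Y_{1:n}$, the labelling of the two blocks for which the conditional expected number of errors $S(Y_{1:n}):=\EE_\theta[N_{h'}\mid Y_{1:n}]=\sum_i\PP_\theta(X_i\neq h'_i\mid Y_{1:n})$ is the smaller of the two; as the two labellings give values summing to $n$, this forces $S\le n/2$ pointwise. This substitution leaves $\EE_\theta[\min(N_{h'},n-N_{h'})]$ unchanged and preserves $\EE_\theta[N_{h'}/n]=\ClassifRisk(\theta,h')\ge\inf_{h}\ClassifRisk(\theta,h)$, the latter simply because $h'$ is a classifier. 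Taking conditional expectations of the identity above, the entire problem reduces to proving the conditional overshoot bound
\[
  \EE_\theta\big[(2N_{h'}-n)_+\,\big|\,Y_{1:n}\big]\le\tilde\alpha_n\,S(Y_{1:n})\qquad\PP_\theta\text{-a.s.}
\]
for every relabelled classifier with $S\le n/2$. Indeed, integrating this and using $\EE_\theta[S/n]=\EE_\theta[N_{h'}/n]$ gives $\ClusterRisk(\theta,\pi_n\circ h)\ge(1-\tilde\alpha_n)\EE_\theta[N_{h'}/n]\ge(1-\tilde\alpha_n)\inf_{h}\ClassifRisk(\theta,h)$, and taking the infimum over $h$ (the bound being vacuous whenever $\tilde\alpha_n\ge1$) delivers the claim.

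The heart of the proof, and the step I expect to be hardest, is this conditional overshoot estimate. Conditionally on $Y_{1:n}$ the hidden sequence $(X_i)_{i=1}^n$ is an inhomogeneous Markov chain, and $N_{h'}$ is an additive functional of it with increments in $\{0,1\}$; the event $\{2N_{h'}>n\}$ is a deviation of $N_{h'}$ above its conditional mean $S\le n/2$. Two ingredients are required. First, I must quantify the forgetting of this conditional chain purely through $\delta$ from Assumption~\ref{Assumption_mixing}: the lower bound $\delta$ on the entries of $Q$ and on $\nu$ forces the conditional transition kernels to have a Dobrushin contraction coefficient controlled by $\tfrac{1-\delta}{\delta}$, which is the origin of the $\big(\tfrac{1-\delta}{\delta}\big)$ factors in $\tilde\alpha_n$. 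Second, with this mixing in hand I would invoke a Bernstein/Bennett-type concentration inequality for additive functionals of Markov chains whose variance proxy is proportional to $S$ rather than to $n$; this variance-scaling is essential, since it is what makes the final bound multiplicative in the risk instead of a fixed additive $n^{-1/2}$ term. The regime $S\approx n/2$ is the worst case and sets the order of $\tilde\alpha_n$, while for small $S$ the overshoot is super-exponentially rare and dominated by $\tilde\alpha_n S$.

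Integrating the resulting tail $\PP_\theta(N_{h'}-S\ge t\mid Y_{1:n})$ from $t=n/2-S$ upward and optimising the free parameter in the Bennett bound should reproduce $\EE_\theta[(2N_{h'}-n)_+\mid Y_{1:n}]\le\tilde\alpha_n S$ with the stated $\tilde\alpha_n=2e\big(\tfrac{1-\delta}{\delta}\big)^4\big[\tfrac{1-\delta}{\delta}\sqrt{\log(2)/2n}\big]^{1-2/n}$; the slightly-below-one exponent $1-2/n$ is the typical signature of such an integrated Bennett bound, exactly paralleling the exponent $1-4/(n\beta)$ of the i.i.d. case in Theorem~\ref{thm:bayes-risk:lb:iid:J>2}. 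It is worth noting that this concentration route is unavoidable here precisely because, unlike the i.i.d. $J=2$ situation of Theorem~\ref{thm:gap:classif:cluster} where the Bayes clusterer coincides with $\pi_n\circ h^{\star}_{\theta}$ and $N_{h^{\star}_{\theta}}$ is a sum of independent Bernoullis that can be handled by an exact computation, in the dependent case the two minimisers may genuinely differ and the relevant sum is a dependent Markov functional.
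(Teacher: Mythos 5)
Your skeleton is sound and, in fact, mirrors the paper's own proof: the identity $\min(N_h,n-N_h)=N_h-(2N_h-n)_+$ together with the relabelling that forces $S:=\EE_\theta[N_{h'}\mid Y_{1:n}]\le n/2$ is exactly the paper's reduction through Lemma~\ref{lem:clust-vs-arianne}, Lemma~\ref{lem:bayes-risk:lb:largedev} and Proposition~\ref{pro:bayes-risk:lb:general} specialized to $J=2$ (where $\hat p_{\hat\tau_h}(h)=S/n$ and the term $\PP_\theta(N_{(1)}+N_{(2)}<2n\eta)$ vanishes since $N_{(1)}+N_{(2)}=n$), and your large-$S$ branch is the paper's Lemma~\ref{lem:bayes-risk:lb:HMM:1}: a McDiarmid-type bound for the conditional inhomogeneous chain $X_{1:n}\mid Y_{1:n}$ via a Marton coupling, with the Dobrushin coefficient of the forward kernels bounded by $\rho_0$ through exponential forgetting of the smoothing distributions, so that $1/(1-\rho_0)=\frac{1-\delta}{\delta}$ when $J=2$, converted into a multiplicative bound on $\{S\ge\varepsilon n\}$ by Markov's inequality.

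The genuine gap is the small-$S$ branch. You invoke a ``Bernstein/Bennett-type concentration inequality for additive functionals of Markov chains whose variance proxy is proportional to $S$'', but no such off-the-shelf inequality is available in the conditional, inhomogeneous setting you need: Paulin-type bounds for $X_{1:n}\mid Y_{1:n}$ carry a variance proxy of order $n/(1-\rho_0)^2$, independent of $S$, and therefore cannot deliver the mean-scaled (Poissonian) tail that your overshoot estimate $\EE_\theta[(2N_{h'}-n)_+\mid Y_{1:n}]\le\tilde\alpha_n S$ requires when $S\ll n$. The paper proves the needed bound by hand in Lemma~\ref{lem:bayes-risk:lb:HMM:2}, via a decoupling that has no analogue in your sketch: the conditional Laplace transform of $N_{h'}$ is written as a product of operators built from the backward kernels, each kernel entry is bounded using $B_{\theta,i}(x,y)\le\bigl(\frac{1-(J-1)\delta}{\delta}\bigr)^{2}\phi_{\theta,i\mid n}(y)$, yielding
\begin{equation*}
  \EE_{\theta}\bigl[e^{\lambda N_{h'}}\mid Y_{1:n}\bigr]
  \le \left(\frac{1-\delta}{\delta}\right)^{2n}\prod_{i=1}^{n}\EE_{\theta}\bigl[e^{\lambda\1_{\hat\tau_h(X_i)\neq h_i(Y_{1:n})}}\mid Y_{1:n}\bigr],
\end{equation*}
after which the independent-case Poissonian Chernoff bound applies. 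The price is the factor $\bigl(\frac{1-\delta}{\delta}\bigr)^{2n}$, exponentially large in $n$, which is neutralized only by the threshold choice $\varepsilon=\frac{1}{2e}\bigl(\frac{\delta}{1-\delta}\bigr)^{4}\bigl[\log(2)/(2n)\bigr]^{1/n}$, so that $(2e\varepsilon)^{n/2-1}$ cancels it; this cancellation is precisely the origin of the $\bigl(\frac{1-\delta}{\delta}\bigr)^{4}$ and of the exponent $1-2/n$ in $\tilde\alpha_n$, which you attributed instead to an ``integrated Bennett bound''. Without this decoupling idea (or an equivalent conditional tail bound scaling with $S$), your pointwise overshoot inequality, and hence the stated constant $\tilde\alpha_n$, is unsupported.
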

Thanks to Theorem \ref{thm:bayes-risk:lb:hmm:J=2}, it suffices to study the Bayes risk of classification, since any bound on this risk can be extrapolated to the risk of clustering, regardless of the magnitude of the Bayes risk of clustering. The proof of Theorem~\ref{thm:bayes-risk:lb:hmm:J=2} is given in \cite[Section~S6]{GKN2025main}.

Let us now consider the case $J > 2$. As in the i.i.d. setting where the minimizers of the risks do not always coincide, it is always possible to find a set of parameters for which the HMM observations are dependent, but the results of clustering using the Bayes clusterer and the partition induced by the Bayes classifier are not the same, as shown in the next theorem.
\begin{theorem}
    \label{thm:bayes-risk:min:hmm:J>2}
    In the case of dependent labels, for all $J > 2$ and all $n\geq 2$, there exists a subset $\Tilde{\Theta}_{n, J} \subset \Theta^{\mathrm{dep}}$ such that
        \begin{equation*}
            \forall \theta\in\Tilde{\Theta}_{n, J}, \quad\PP_{\theta}\left(g_{\theta}^{\star}(Y_{1:n}) \neq \pi_n\circ h_{\theta}^{\star}(Y_{1:n})\right) > 0.
        \end{equation*}
\end{theorem}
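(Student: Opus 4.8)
\emph{Strategy.} The plan is to reduce the statement to the already-established dependent two-state case, Theorem~\ref{thm:bayes-risk:min:hmm:J=2}, by embedding a $J=2$ HMM for which the Bayes clusterer and the clustering induced by the Bayes classifier disagree into a $J$-state HMM whose extra states are never visited. Because the extra labels carry no posterior mass, both Bayes objects will be shown to coincide with their two-state counterparts, so the positive-probability disagreement is inherited verbatim and holds for every $n \geq 2$.

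\emph{Construction.} Fix $J > 2$ and pick any $\theta_0 = (\nu_0, Q_0, (f_1, f_2)) \in \tilde{\Theta}$, the subset furnished by Theorem~\ref{thm:bayes-risk:min:hmm:J=2}, so that $\PP_{\theta_0}(g^\star_{\theta_0}(Y_{1:n}) \ne \pi_n \circ h^\star_{\theta_0}(Y_{1:n})) > 0$ for all $n \ge 2$. Write $Q_0 = \begin{pmatrix} 1-p & p \\ q & 1-q \end{pmatrix}$ with $p + q \ne 1$, which is what makes $\theta_0$ genuinely dependent. I would then define a $J$-state parameter $\theta = (\nu, Q, (f_1, \dots, f_J))$ by setting $\nu = (\nu_{0,1}, \nu_{0,2}, 0, \dots, 0)$; by taking $Q$ with top-left $2\times 2$ block equal to $Q_0$ and with $Q_{x, x'} = 0$ whenever $x \in \{1,2\}$ and $x' \ge 3$ (so that $\{1,2\}$ is a closed communicating class), the remaining rows $x \ge 3$ being completed into any stochastic matrix; and by choosing $f_3, \dots, f_J$ to be arbitrary densities. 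Since rows $1$ and $2$ of $Q$ differ (as $\theta_0 \in \Theta^{\mathrm{dep}}$ forces the lines of $Q_0$ not all equal), the matrix $Q$ does not have all lines equal, whence $\theta \in \Theta^{\mathrm{dep}}$.

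\emph{Invariance of the Bayes objects.} Under $\PP_\theta$ the chain starts in $\{1,2\}$ and can never leave it, so $\PP_\theta(X_i \in \{3, \dots, J\}) = 0$ for every $i$, and the joint law of $(X_{1:n}, Y_{1:n})$ under $\PP_\theta$ is identical to its law under $\PP_{\theta_0}$. Two consequences follow. First, for each $i$ the smoothing posterior satisfies $\PP_\theta(X_i = x \mid Y_{1:n}) = 0$ for $x \ge 3$, so the MAP rule $\argmax_{x \in \mathbb{X}} \PP_\theta(X_i = x \mid Y_{1:n})$ takes values in $\{1,2\}$ and coincides with the two-state Bayes classifier; thus one may take $h^\star_\theta = h^\star_{\theta_0}$ and $\pi_n \circ h^\star_\theta = \pi_n \circ h^\star_{\theta_0}$. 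Second, the clusterer space $\mathcal{G}_n$ of maps $\mathbb{Y}^n \to \mathcal{P}[n]$ does not depend on $J$, and the Bayes-clusterer objective in \eqref{eq:clustering-risk:Bayes-clusterer} depends on $\theta$ only through the conditional law of $\Pi_n$ given $Y_{1:n}$ and through the fixed loss $\ell$; since that conditional law is unchanged, the minimizer is unchanged and $g^\star_\theta = g^\star_{\theta_0}$. The two events therefore coincide and
\[
  \PP_\theta\bigl(g^\star_\theta(Y_{1:n}) \ne \pi_n \circ h^\star_\theta(Y_{1:n})\bigr)
  = \PP_{\theta_0}\bigl(g^\star_{\theta_0}(Y_{1:n}) \ne \pi_n \circ h^\star_{\theta_0}(Y_{1:n})\bigr) > 0,
\]
so setting $\tilde{\Theta}_{n,J}$ to be the collection of all such embedded parameters proves the claim.

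\emph{Main obstacle.} The step requiring the most care is the second consequence above: the Bayes clusterer optimizes over all of $\mathcal{P}[n]$, including partitions with three or more blocks, so one must argue that enlarging the label alphabet from $2$ to $J$ does not open up a strictly better partition. This is handled not by a case analysis over block counts but by the observation that the conditional objective is a functional of the law of $\Pi_n$ given $Y_{1:n}$ alone, which the embedding leaves invariant; the remaining bookkeeping (non-uniqueness of the Bayes classifier, and the fact that $\min_{\tau \in \mathcal{S}_J}$ reduces to $\min_{\tau \in \mathcal{S}_2}$ in \eqref{eq:risk-clustering:classif} because both $h^\star_\theta$ and the $X_i$ live in $\{1,2\}$) all reduces to quantities determined by the two-state subproblem. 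If an irreducible rather than block-reducible witness is desired, I would instead start from an irreducible $\theta_0 \in \tilde{\Theta}$ and perturb the forbidden transitions $Q_{x,x'}$, $x \in \{1,2\}$, $x' \ge 3$, to small positive values, invoking continuity of $\theta \mapsto \PP_\theta(g^\star_\theta \ne \pi_n \circ h^\star_\theta)$ together with the positivity just established to preserve the strict inequality for all sufficiently small perturbations; this is the variant that naturally yields a subset $\tilde{\Theta}_{n,J}$ depending on $n$.
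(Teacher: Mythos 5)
Your proof is correct, and it takes a genuinely different route from the paper's. You reduce to the dependent two-state case (Theorem~\ref{thm:bayes-risk:min:hmm:J=2}) via a lossless embedding: states $3,\dots,J$ get zero initial mass and are unreachable from the closed class $\{1,2\}$, so the joint law of $(X_{1:n},Y_{1:n})$ --- hence the conditional law of $\Pi_n$ given $Y_{1:n}$, hence both Bayes objects viewed as sets of minimizers (which also disposes of the non-uniqueness bookkeeping) --- is inherited verbatim, and the positive disagreement probability carries over. The paper instead reduces to the i.i.d.\ case with $J>2$ (Theorem~\ref{thm:bayes-risk:min:iid:J>2}) and perturbs from the independence boundary into $\Theta^{\mathrm{dep}}$: it fixes uniformly continuous emission densities, uses finiteness of $\mathcal{H}_n$ and $\mathcal{G}_n$ to exhibit a neighborhood $\mathcal{V}(\theta^{\star})$ and an open set $A_n$ of observations with positive probability on which the minimizers of the conditional classification and clustering risks are locally constant, and then concludes by continuity of $\theta\mapsto\PP_{\theta}(\,\cdot\,\cap A_n)$ for a fixed event. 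What each approach buys: yours is more elementary (no topology, no argument about stability of argmins) and your witness set works simultaneously for all $n\geq 2$, whereas the paper's $\tilde{\Theta}_{n,J}$ depends on $n$. The paper's construction, in exchange, produces non-degenerate witnesses --- parameters close to an i.i.d.\ model with all entries of $\nu$ and $Q$ positive, so all $J$ states actually occur --- and shows the phenomenon holds on an open slice of genuinely $J$-state dependent parameters; your witnesses are reducible chains in which states $3,\dots,J$ never appear, so they are two-cluster HMMs in $J$-state clothing (formally inside $\Theta^{\mathrm{dep}}$ as the paper defines it, since rows $1$ and $2$ of $Q$ differ because $p+q\neq 1$, but they demonstrate nothing beyond the $J=2$ case). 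One caution: your closing variant, which perturbs the zero transitions to small positive values and invokes ``continuity of $\theta\mapsto\PP_{\theta}(g^{\star}_{\theta}\neq\pi_n\circ h^{\star}_{\theta})$,'' glosses over precisely the delicate point --- the Bayes classifier and clusterer are argmins and can jump under perturbation, so that map is not obviously continuous; making this rigorous requires the paper's machinery (local constancy of the minimizers on a positive-probability event, then continuity for a fixed event). As written that variant is a sketch, but your main embedding construction stands on its own.
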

The proof of Theorem \ref{thm:bayes-risk:min:hmm:J>2} can be found in \cite[Section~S8]{GKN2025SM}.

Finally, we establish a result similar to Theorem \ref{thm:bayes-risk:lb:iid:J>2} under the HMM setting.
\begin{theorem}
  \label{thm:bayes-risk:lb:hmm:J>2}
  For all $\theta \in \Theta^{\mathrm{dep}}$ such that  Assumption \ref{Assumption_mixing} holds  and all $n \geq 1$, the following bounds are true
  \begin{align*}
    \inf_{g \in \mathcal{G}_n}\ClusterRisk(\theta,g)%
    &\geq\inf_{h \in \mathcal{H}_n}\ClassifRisk(\theta,h) - \frac{1}{1-\rho_{0}} \sqrt{\frac{\log(J!)}{2n}},\\
    \inf_{g \in \mathcal{G}_n}\ClusterRisk(\theta,g)
    &\geq (1 - \tilde{\xi}_n)\inf_{h \in \mathcal{H}_n}\ClassifRisk(\theta,h) - (J^{2}+1)e^{-2n(1-\rho_0)^{2}\beta^{2}/25},
  \end{align*}
  where $\beta = \min_{i, j\neq k}\PP_{\theta}\left(X_{i}\in\left\{j, k\right\}\right)$, $\rho_0 = \frac{1 - J\delta}{1 - (J-1)\delta}$, and $\tilde{\xi}_n = \frac{5}{\beta(1 - \rho_0)}\sqrt{\log(J!)/(2n)}$.
\end{theorem}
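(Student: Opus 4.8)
The plan is to reduce both inequalities to a single quantity: the expected gain obtained by optimizing over the relabelling permutation inside \eqref{eq:risk-clustering:classif}. Writing any clusterer as $\pi_n \circ h$ and using $\1_{h_i\ne\tau(X_i)} = \1_{\tau^{-1}(h_i)\ne X_i}$, one has for every classifier $h$
$$
\ClusterRisk(\theta,\pi_n\circ h) = \EE_{\theta}\Big[\min_{\sigma\in\mathcal{S}_J} A_\sigma\Big], \qquad A_\sigma := \frac1n\sum_{i=1}^n \1_{\sigma(h_i)\ne X_i}.
$$
Conditionally on $Y_{1:n}$, under which each $h_i$ is deterministic, set $\bar A_\sigma := \EE_{\theta}[A_\sigma\mid Y_{1:n}] = \frac1n\sum_i \PP_{\theta}(\sigma(h_i)\ne X_i\mid Y_{1:n})$. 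Since $\{\sigma\circ h:\sigma\in\mathcal{S}_J\}$ is a subset of all classifiers, $\min_\sigma \bar A_\sigma$ is at least the conditional Bayes classification risk $\frac1n\sum_i(1-\max_x\PP_{\theta}(X_i=x\mid Y_{1:n}))$, whose expectation equals $\inf_{h}\ClassifRisk(\theta,h)$. Combining this with the elementary inequality $\min_\sigma A_\sigma \ge \min_\sigma\bar A_\sigma - \max_\sigma(\bar A_\sigma-A_\sigma)$ and taking expectations yields, for every classifier $h$,
$$
\ClusterRisk(\theta,\pi_n\circ h) \ge \inf_{h'\in\mathcal{H}_n}\ClassifRisk(\theta,h') - \EE_{\theta}\Big[\max_{\sigma\in\mathcal{S}_J}(\bar A_\sigma - A_\sigma)\Big].
$$
Since the remainder will be controlled by a bound independent of $h$, taking the infimum over $h$ reduces both displayed inequalities of the theorem to estimating $\EE_{\theta}[\max_\sigma(\bar A_\sigma - A_\sigma)]$.

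The second step is a concentration argument for the centred sums $\bar A_\sigma - A_\sigma = \frac1n\sum_i W_i^{(\sigma)}$, with $W_i^{(\sigma)} = \PP_{\theta}(\sigma(h_i)\ne X_i\mid Y_{1:n}) - \1_{\sigma(h_i)\ne X_i}$, so that $\EE_{\theta}[W_i^{(\sigma)}\mid Y_{1:n}]=0$ and $|W_i^{(\sigma)}|\le 1$. As a function of $X_{1:n}$ with $Y_{1:n}$ frozen, each $A_\sigma$ has bounded differences $1/n$ regardless of $h$, and crucially the law of $X_{1:n}$ given $Y_{1:n}$ is an inhomogeneous Markov chain (cf. Remark~\ref{rmk:classifier}). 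I would first record, as a forgetting lemma relying on Assumption~\ref{Assumption_mixing}, that this conditional chain has Dobrushin contraction coefficient at most $\rho_0 = (1-J\delta)/(1-(J-1)\delta)$. Feeding this into a McDiarmid/Azuma-type inequality for Markov chains gives, for each fixed $\sigma$, a sub-Gaussian tail with variance proxy $\tfrac{1}{4n(1-\rho_0)^2}$, the extra factor $(1-\rho_0)^{-2}$ being exactly the price of dependence. A maximal inequality over the $J!$ permutations then produces
$$
\EE_{\theta}\Big[\max_{\sigma\in\mathcal{S}_J}(\bar A_\sigma - A_\sigma)\Big] \le \frac{1}{1-\rho_0}\sqrt{\frac{\log(J!)}{2n}},
$$
which is the first inequality of the theorem; in the i.i.d. limit $\rho_0=0$ this recovers Theorem~\ref{thm:bayes-risk:lb:iid:J>2}.

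For the multiplicative bound I would replace the crude sub-Gaussian estimate by a Bernstein-type inequality for the conditional chain, whose variance term is $\tfrac1{n^2}\sum_i\mathrm{Var}(\1_{\sigma(h_i)\ne X_i}\mid Y_{1:n}) \le \bar A_\sigma/n$. This ties the deviation of each $\sigma$ to the corresponding risk $\bar A_\sigma$ rather than to an absolute scale, so that after the union bound over $\mathcal{S}_J$ the remainder is controlled by a term proportional to $\inf_h\ClassifRisk(\theta,h)$. The role of $\beta=\min_{i,j\ne k}\PP_{\theta}(X_i\in\{j,k\})$ is to quantify, for every non-trivial permutation, the minimal mass of the two classes it can confuse: on the event where the empirical occupation of each such pair stays within its typical $\sim n\beta$ range, the Bernstein bound yields the factor $(1-\tilde{\xi}_n)$ with $\tilde{\xi}_n = \tfrac{5}{\beta(1-\rho_0)}\sqrt{\log(J!)/(2n)}$, while the complementary event is ruled out by an Azuma deviation inequality for the occupation counts of the chain, producing the $(J^2+1)e^{-2n(1-\rho_0)^2\beta^2/25}$ term.

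The main obstacle is the concentration input of the second step, and in particular obtaining the sharp contraction coefficient $\rho_0$ for the conditional chain $X_{1:n}\mid Y_{1:n}$ together with the correct constants in the Markov McDiarmid and Bernstein inequalities. Unlike the i.i.d. case, where conditionally on $Y_{1:n}$ the labels are independent and Hoeffding applies directly, here one must control the forgetting of the smoothing recursion uniformly in the observations; this is where Assumption~\ref{Assumption_mixing} is essential, and where the factors $(1-\rho_0)^{-1}$ and the exponent $(1-\rho_0)^2$ originate. Matching the precise numerical constants, namely the $5$ in $\tilde{\xi}_n$ and the $25$ in the exponential, to the Bernstein and Azuma estimates is the most delicate bookkeeping.
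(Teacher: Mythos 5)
Your reduction and your proof of the first (additive) inequality follow the paper's own route essentially verbatim: the inequality $\min_\sigma A_\sigma \geq \min_\sigma \bar A_\sigma - \max_\sigma(\bar A_\sigma - A_\sigma)$ is the paper's generic decomposition (Proposition~S4.1 in the supplement, with thresholds $\varepsilon=\eta=0$), the identification of $\inf_h \EE_\theta[\min_\sigma \bar A_\sigma]$ with the Bayes classification risk is its Lemma~S4.1, and your concentration input --- the Dobrushin coefficient $\rho_0=(1-J\delta)/(1-(J-1)\delta)$ for the conditional chain $X_{1:n}\mid Y_{1:n}$ obtained from exponential forgetting of the smoothing distributions, a Marton-coupling McDiarmid inequality in the style of Paulin, and a maximal inequality over the $J!$ permutations --- is exactly how the paper obtains $\frac{1}{1-\rho_0}\sqrt{\log(J!)/(2n)}$. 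Up to that point the proposal is sound.

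For the multiplicative inequality there is a genuine gap, and the Bernstein route you propose would fail quantitatively. Bernstein with variance proxy $\bar A_\sigma/n$ gives, after the split $\sqrt{\bar A_\sigma t/n}\leq \lambda\bar A_\sigma + t/(4\lambda n)$, a multiplicative loss of order $\lambda$ and an additive term of order $t/(\lambda n)$ with $t=\log(J!/\delta')$; to make the failure probability exponentially small in $n\beta^2$ you must take $t\asymp n(1-\rho_0)^2\beta^2$, and with $\lambda$ calibrated to the stated $\tilde\xi_n\asymp \sqrt{\log(J!)/n}/(\beta(1-\rho_0))$ the additive term becomes of order $(1-\rho_0)\beta^3\sqrt{n/\log(J!)}$, which \emph{grows} with $n$ --- so you cannot extract both the $\beta$-dependent factor $\tilde\xi_n$ and the remainder $(J^2+1)e^{-2n(1-\rho_0)^2\beta^2/25}$ from this scheme. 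The paper's mechanism rests on two ingredients absent from your sketch. First, a combinatorial ``locking'' lemma (Lemma~S4.2): if $U_{n,\tau'}(h)\leq (N_{(1)}+N_{(2)})/(2n)$, then $\tau'$ attains $\min_\tau U_{n,\tau}(h)$; this is the \emph{only} place where $\beta$ and the occupation counts enter, and it is what allows replacing the random empirical minimizer by the conditional minimizer $\hat\tau_h$ on the event $\{\hat p_{\hat\tau_h}(h)<\varepsilon\}$, after which a fixed-scale Hoeffding/Marton bound at deviation $\eta-\varepsilon$ together with the occupation bound $\PP_\theta(N_{(1)}+N_{(2)}<2n\eta)\leq J^2e^{-2n(1-\rho_1)^2(\beta-2\eta)^2}$ (with $\rho_1=1-J\delta<\rho_0$) yields, for $\eta=2\beta/5$ and $\varepsilon=\beta/5$, precisely the constants $5$ and $25$. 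Second, on the complementary event $\{\hat p_{\hat\tau_h}(h)\geq\varepsilon\}$ the multiplicative factor is produced not by Bernstein but by Markov's inequality, which converts the uniform deviation bound $\frac{1}{1-\rho_0}\sqrt{\log(J!)/(2n)}$ into $\frac{1}{\varepsilon(1-\rho_0)}\sqrt{\log(J!)/(2n)}\,\EE_\theta\bigl[\hat p_{\hat\tau_h}(h)\1_{\{\hat p_{\hat\tau_h}(h)\geq\varepsilon\}}\bigr]$. Your remark tying $\beta$ to the minimal mass of the two classes a permutation can confuse gestures at the right phenomenon, but without the locking lemma the event that the occupation counts stay in their typical range gives you no handle on $\min_\sigma A_\sigma$, and the second inequality as stated does not follow from your argument.
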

The proof of Theorem~\ref{thm:bayes-risk:lb:hmm:J>2} is given in \cite[Section~S6]{GKN2025SM}. Note that when the HMM is stationary, $\beta = \min_{j\ne k}(\nu_j + \nu_{k})$ as in the i.i.d. case. Notice also that when $\delta= 1/J$, the observations are i.i.d. with uniform distribution over the set $\mathbb{X}$, and we recover the first inequality for i.i.d. observations. However, we do not recover the inequality for i.i.d. observations in general from that of HMM  observations.


As for the i.i.d setting, even if clustering using the Bayes classifier differs sometimes from that of the Bayes clusterer (as shown in Theorem \ref{thm:bayes-risk:min:hmm:J=2} and Theorem \ref{thm:bayes-risk:min:hmm:J>2}), Theorem \ref{thm:bayes-risk:lb:hmm:J>2} provides guarantees for the risk of clustering using the Bayes classifier as shown by the following corollary.

\begin{corollary}\label{cor:Bayes_classif:hmm}
    For all $\theta \in \Theta^{\mathrm{dep}}$ and all $n\geq 1$ the following bounds hold
    \begin{equation*}
    \inf_{g \in \mathcal{G}_n} \ClusterRisk(\theta, g) \leq \ClusterRisk\left(\theta, \pi_n\circ h^{\star}_{\theta}\right) \leq \frac{1}{1 - \tilde\xi_{n}} \left(\inf_{g \in \mathcal{G}_n}\ClusterRisk(\theta,g) + (J^{2}+1)e^{-2n(1-\rho_0)^{2}\beta^{2}/25}\right)
\end{equation*}
where $\tilde{\xi}_n$, $\beta$ and $\rho_0$ are as in Theorem~\ref{thm:bayes-risk:lb:hmm:J>2}. When there are only two classes
\begin{equation*}
    \inf_{g \in \mathcal{G}_n} \ClusterRisk(\theta, g) \leq \ClusterRisk\left(\theta, \pi_n\circ h^{\star}_{\theta}\right) \leq \frac{1}{1 - \tilde\alpha_{n}} \inf_{g \in \mathcal{G}_n}\ClusterRisk(\theta,g) 
\end{equation*}
where $\tilde\alpha_{n}$ is as in Theorem~\ref{thm:bayes-risk:lb:hmm:J=2}.
\end{corollary}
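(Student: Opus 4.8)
The plan is to sandwich $\ClusterRisk(\theta, \pi_n\circ h^{\star}_{\theta})$ between the two Bayes risks and then invoke the comparison theorems already established, so the corollary becomes a bookkeeping consequence of Theorems~\ref{thm:bayes-risk:lb:hmm:J=2} and~\ref{thm:bayes-risk:lb:hmm:J>2}. The left inequality is immediate: since $\pi_n\circ h^{\star}_{\theta}$ is itself a member of $\mathcal{G}_n$, the Bayes risk of clustering, being an infimum over all clusterers, satisfies $\inf_{g \in \mathcal{G}_n}\ClusterRisk(\theta,g) \leq \ClusterRisk(\theta, \pi_n\circ h^{\star}_{\theta})$.

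For the right inequality, the first step is to control the clustering risk of the plug-in Bayes classifier by the Bayes risk of classification. Starting from the identity \eqref{eq:risk-clustering:classif} and bounding the minimum over permutations by its value at the identity permutation $\tau = \mathrm{id} \in \mathcal{S}_{J}$, I get for every classifier $h$
\begin{equation*}
  \ClusterRisk(\theta,\pi_n \circ h)
  = \EE_{\theta}\Big[\min_{\tau \in \mathcal{S}_{J}}\tfrac{1}{n}\sum_{i=1}^n\1_{h_i(Y_{1:n}) \ne \tau(X_i)} \Big]
  \leq \EE_{\theta}\Big[\tfrac{1}{n}\sum_{i=1}^n\1_{h_i(Y_{1:n}) \ne X_i} \Big]
  = \ClassifRisk(\theta,h).
\end{equation*}
Applying this to the Bayes classifier $h = h^{\star}_{\theta}$ and recalling that $\ClassifRisk(\theta, h^{\star}_{\theta}) = \inf_{h \in \mathcal{H}_n}\ClassifRisk(\theta,h)$ yields $\ClusterRisk(\theta, \pi_n\circ h^{\star}_{\theta}) \leq \inf_{h \in \mathcal{H}_n}\ClassifRisk(\theta,h)$.

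The second step converts this classification bound into the stated expression involving only the Bayes risk of clustering. For general $J$, I rearrange the second inequality of Theorem~\ref{thm:bayes-risk:lb:hmm:J>2}, namely $\inf_{g}\ClusterRisk(\theta,g) \geq (1 - \tilde{\xi}_n)\inf_{h}\ClassifRisk(\theta,h) - (J^{2}+1)e^{-2n(1-\rho_0)^{2}\beta^{2}/25}$, into $\inf_{h}\ClassifRisk(\theta,h) \leq \frac{1}{1-\tilde{\xi}_n}\big(\inf_{g}\ClusterRisk(\theta,g) + (J^{2}+1)e^{-2n(1-\rho_0)^{2}\beta^{2}/25}\big)$, valid provided $\tilde{\xi}_n < 1$. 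Chaining this with the previous step gives precisely the first displayed upper bound. For the case $J=2$, I instead use the left inequality of Theorem~\ref{thm:bayes-risk:lb:hmm:J=2}, rewritten as $\inf_{h}\ClassifRisk(\theta,h) \leq \frac{1}{1-\tilde{\alpha}_n}\inf_{g}\ClusterRisk(\theta,g)$, and combine it with $\ClusterRisk(\theta, \pi_n\circ h^{\star}_{\theta}) \leq \inf_{h}\ClassifRisk(\theta,h)$ to reach the second displayed bound.

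There is no genuine obstacle here, as the analytic content lives entirely in the two comparison theorems; the only points requiring care are (i) confirming that the permutation-at-identity bound in \eqref{eq:risk-clustering:classif} is legitimate, which it is because $\mathrm{id} \in \mathcal{S}_{J}$, and (ii) ensuring the normalising factors $1-\tilde{\xi}_n$ and $1-\tilde{\alpha}_n$ are positive so that the rearrangements preserve the inequalities, which holds in the regime where the bounds carry information.
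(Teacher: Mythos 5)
Your proposal is correct and is essentially the argument the paper intends: the corollary is presented as an immediate consequence of Theorems~\ref{thm:bayes-risk:lb:hmm:J=2} and~\ref{thm:bayes-risk:lb:hmm:J>2}, obtained by chaining $\inf_{g}\ClusterRisk(\theta,g) \leq \ClusterRisk(\theta,\pi_n\circ h^{\star}_{\theta}) \leq \ClassifRisk(\theta,h^{\star}_{\theta}) = \inf_{h}\ClassifRisk(\theta,h)$ (the middle step being the identity-permutation bound in \eqref{eq:risk-clustering:classif}) with the rearranged lower bounds from those theorems, exactly as you do. Your remark that the rearrangement requires $\tilde{\xi}_n < 1$ (resp.\ $\tilde{\alpha}_n < 1$) is a fair observation that applies equally to the paper's statement, where the bound is simply vacuous outside that regime.
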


\subsection{A key quantity for the Bayes risk of clustering for both I.I.D. and HMM}
\label{sec:estimates-bayes-risk}
We now state our main result which proves upper and lower bounds on the  Bayes risks in function of a quantity measuring the separation between the emission densities up to constants depending on the transition matrix. These bounds translate into bounds on the risk of clustering thanks to the result above.
Let $\Lambda \coloneqq \int_{\mathbb{Y}}\min_{x_{0}\in\mathbb{X}}\left[\sum_{x\neq x_0}f_{x}(y)\right]d\mathcal{L}(y)$.
\begin{theorem}\label{thm:bayes-risk:key_quantity}
Under Assumption \ref{Assumption_mixing}, for all $n\geq 1$, the  Bayes risk of classification satisfies:
\begin{equation*}
    \begin{split}
        &\forall \theta \in \Theta^{\mathrm{ind}},\quad \delta\Lambda \leq \inf_{h\in \mathcal{H}_n}\ClassifRisk(\theta,h) \leq (1 - (J-1)\delta)\Lambda\\
        &\forall \theta \in \Theta^{\mathrm{dep}},\quad\frac{\delta^{2}}{1 - (J-1)\delta}\Lambda \leq \inf_{h\in \mathcal{H}_n}\ClassifRisk(\theta,h)  \leq (1 - (J-1)\delta)\Lambda
    \end{split}
\end{equation*}
\end{theorem}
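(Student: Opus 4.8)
The plan is to work directly with the closed form of the Bayes risk of classification. Writing $r_i := \EE_{\theta}[1 - \max_{x\in\mathbb{X}}\PP_{\theta}(X_i = x\mid Y_{1:n})]$ for the error probability of the Bayes classifier at site $i$, one has $\inf_{h\in\mathcal{H}_n}\ClassifRisk(\theta,h) = \frac1n\sum_{i=1}^n r_i$, so it suffices to sandwich each $r_i$ between $\delta\Lambda$ (resp.\ $\frac{\delta^2}{1-(J-1)\delta}\Lambda$ in the dependent case) and $(1-(J-1)\delta)\Lambda$. The engine of the whole argument is the elementary identity $1 - \max_x p_x = \min_{x_0}\sum_{x\neq x_0}p_x$, valid for any probability vector $(p_x)_{x\in\mathbb{X}}$, combined with a leave-one-out decomposition of the smoothing distribution.

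The key step is the following reduction. Write $Y_{-i} := (Y_{1:i-1}, Y_{i+1:n})$ and $w_x := \PP_{\theta}(X_i = x\mid Y_{-i})$. Using the conditional independence $Y_i\perp Y_{-i}\mid X_i$ (immediate from the HMM structure), Bayes' formula gives $\PP_{\theta}(X_i = x\mid Y_{1:n}) = w_x f_x(Y_i)/\sum_{x'}w_{x'}f_{x'}(Y_i)$, and the same independence shows that the conditional density of $Y_i$ given $Y_{-i}$ is exactly $y\mapsto\sum_{x'}w_{x'}f_{x'}(y)$. Hence the normalizing denominator cancels upon integration, yielding the clean identity $\EE_{\theta}[1-\max_x\PP_{\theta}(X_i=x\mid Y_{1:n})\mid Y_{-i}] = \int_{\mathbb{Y}}\min_{x_0}\sum_{x\neq x_0}w_x f_x(y)\,d\mathcal{L}(y)$. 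This reduces everything to pointwise comparisons between $\min_{x_0}\sum_{x\neq x_0}w_x f_x$ and the integrand $\min_{x_0}\sum_{x\neq x_0}f_x$ defining $\Lambda$, controlled solely through bounds on the random weights $w_x$. In the i.i.d.\ case, independence gives $w_x = \nu_x$, and Assumption~\ref{Assumption_mixing} forces $\delta\leq\nu_x\leq 1-(J-1)\delta$; evaluating $\min_{x_0}$ at the minimizer of either side yields both inequalities after integration.

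For the dependent lower bound I would establish the uniform estimate $w_x\geq\delta^2/(1-(J-1)\delta)$ for every $i$. For interior $i$ the Markov property gives $X_i\perp Y_{-i}\mid(X_{i-1},X_{i+1})$, so $w_x$ is a convex combination of the three-point conditionals $\PP_{\theta}(X_i=x\mid X_{i-1}=a,X_{i+1}=b) = Q_{a,x}Q_{x,b}/\sum_{x'}Q_{a,x'}Q_{x',b}$; bounding the numerator below by $\delta^2$ and the denominator above by $\max_{x'}Q_{x',b}\leq 1-(J-1)\delta$ gives the claim, the boundary sites $i\in\{1,n\}$ being handled analogously using $\nu_x\geq\delta$. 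Feeding this into the pointwise comparison gives $r_i\geq\frac{\delta^2}{1-(J-1)\delta}\Lambda$.

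For the dependent upper bound the naive pointwise estimate only yields $w_x\leq 1-(J-1)\frac{\delta^2}{1-(J-1)\delta}$, which is too weak, so instead I would exploit concavity: the map $w\mapsto\min_{x_0}\sum_{x\neq x_0}w_x f_x(y)$ is a minimum of linear forms, hence concave, and Jensen's inequality over the randomness in $Y_{-i}$ gives $\EE_{\theta}[\min_{x_0}\sum_{x\neq x_0}w_x f_x(y)]\leq\min_{x_0}\sum_{x\neq x_0}\mu^{(i)}_x f_x(y)$ with $\mu^{(i)}_x:=\EE_{\theta}[w_x]=\PP_{\theta}(X_i=x)$. A one-line induction using $Q_{a,x}\geq\delta$ (and $\nu_x\geq\delta$ at $i=1$) shows $\mu^{(i)}_x\in[\delta,\,1-(J-1)\delta]$ for all $i$, and the pointwise comparison then closes the bound. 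I expect the main obstacle to be the dependent lower bound: obtaining a clean, dimension-correct constant on the leave-one-out weights $w_x$ hinges on the conditional-independence reduction to $(X_{i-1},X_{i+1})$ and on careful treatment of the two boundary sites; the remaining steps are bookkeeping.
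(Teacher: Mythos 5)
Your proof is correct, and in the dependent case it takes a genuinely different route from the paper's (in the i.i.d.\ case the two arguments essentially coincide: both compute $\inf_h\ClassifRisk(\theta,h)=\int\min_{x_0}\sum_{x\neq x_0}\nu_xf_x\,d\mathcal{L}$ and sandwich $\nu_x\in[\delta,1-(J-1)\delta]$). For $\theta\in\Theta^{\mathrm{dep}}$ the paper works with the filtering distributions $\phi_{\theta,i}(\cdot)=\PP_{\theta}(X_i\in\cdot\mid Y_{1:i})$: it first passes from smoothing to filtering — the upper direction by concavity of $p\mapsto\min_{x_0}\sum_{x\neq x_0}p_x$ plus the tower property, the lower direction via the backward recursion and the kernel bound $B_{\theta,i}[Y_{1:i}](x',x)\geq\frac{\delta}{1-(J-1)\delta}\phi_{\theta,i}(x)$, which costs the factor $\frac{\delta}{1-(J-1)\delta}$ — and then evaluates the filtering risk exactly through the one-step predictive density $\sum_x\bigl(\sum_{x'}Q_{x',x}\phi_{\theta,i-1}(x')\bigr)f_x$, whose mixture weights lie in $[\delta,1-(J-1)\delta]$, yielding $\delta\Lambda$ below and $(1-(J-1)\delta)\Lambda$ above. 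You instead condition symmetrically on $Y_{-i}$: your leave-one-out identity is exact (the conditional independence $Y_i\perp Y_{-i}\mid X_i$ and the cancellation of the normalizer both check out), your two-sided estimate $\PP_{\theta}(X_i=x\mid X_{i-1}=a,X_{i+1}=b)=Q_{a,x}Q_{x,b}/(Q^2)_{a,b}\geq\delta^2/(1-(J-1)\delta)$ is valid since $(Q^2)_{a,b}\leq\max_{x'}Q_{x',b}\leq 1-(J-1)\delta$ (and the boundary sites work as you say), and your diagnosis that the crude pointwise bound on $w_x$ is too weak for the upper bound — repaired by Jensen over the law of $Y_{-i}$, reducing the weights to the marginals $\PP_{\theta}(X_i=x)\in[\delta,1-(J-1)\delta]$ — is precisely the right fix; it is the same concavity trick the paper uses implicitly in its smoothing-to-filtering step. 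What each approach buys: the paper's route reuses forward--backward machinery (backward kernels, predictive densities, Dobrushin-type bounds) needed elsewhere in the supplement and isolates the filtering risk as an intermediate quantity of independent interest, whereas your route is more self-contained and time-symmetric, treats every site $i$ identically instead of splitting the loss between a backward contraction and a forward prediction, makes transparent where the dependent lower-bound constant comes from (one factor $\delta$ from each transition adjacent to $X_i$, normalized by the two-step transition $(Q^2)_{a,b}$), and recovers the i.i.d.\ statement as the degenerate subcase $w_x=\nu_x$. The constants agree exactly in both regimes.
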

The proof of Theorem~\ref{thm:bayes-risk:key_quantity} is given in \cite[Section~S10]{GKN2025SM}. Note that the bounds vanish when all the emission densities have disjoint supports. Note also that upper and lower bounds match when $\delta = \frac{1}{J}$ and the risk corresponds to $\Lambda/J$ in this case. This situation corresponds to i.i.d. observations derived from a mixture with $J$ components of equal weights. This proves in particular the tightless of the bounds which can not be improved by any absolute multiplicative constant without restricting the parameter space. Thanks to the results comparing the Bayes risks, $\Lambda$ is the appropriate measure of the difficulty of clustering in many regimes as shown in the following corollary. Recall the definition of $\alpha_n$ from Corollary~\ref{cor:equiv:class:clust}, of $\xi_n$ from Theorem~\ref{thm:bayes-risk:lb:iid:J>2} of $\tilde{\alpha}_n$ from Theorem~\ref{thm:bayes-risk:lb:hmm:J=2}, of $\tilde{\xi}_n$ from Theorem~\ref{thm:bayes-risk:lb:hmm:J>2}, and of $\beta = \min_{i\in[n], j\neq k\in\mathbb{X}}\PP_{\theta}\left(X_{i}\in\left\{j, k\right\}\right)$.

\begin{corollary}\label{cor:key_quantity}
    Under Assumption \ref{Assumption_mixing}, the following holds.

    \begin{itemize}
    \item When $J=2$:
    \begin{align*}
        &\forall \theta \in \Theta^{\mathrm{ind}}, \quad  (1-\alpha_n) \delta \Lambda \leq \inf_{g \in \mathcal{G}_n} \ClusterRisk(\theta, g) \leq (1 - \delta) \Lambda, \\
        &\forall \theta \in \Theta^{\mathrm{dep}}, \quad \frac{\delta^2(1-\tilde{\alpha}_n)}{1 - \delta} \Lambda \leq \inf_{g \in \mathcal{G}_n} \ClusterRisk(\theta, g) \leq (1 - \delta) \Lambda.
    \end{align*}
    
    \item When $J>2$ and $\theta = (\nu, Q, \left(f_x\right)_{x\in\mathbb{X}}) \in \Theta^{\mathrm{ind}}$ is such that $\delta \Lambda \geq 4J^2 e^{-n\beta/8}$ and $n$ is sufficiently large to have $\xi_n \leq \frac{1}{2}$:
    \begin{equation*}
        \frac{\delta}{4} \Lambda \leq \inf_{g \in \mathcal{G}_n} \ClusterRisk(\theta, g) \leq (1 - (J-1)\delta) \Lambda.
    \end{equation*}
        
    \item When $J > 2$ and $\theta = (\nu, Q, \left(f_x\right)_{x\in\mathbb{X}}) \in \Theta^{\mathrm{dep}}$ is such that $\delta^2\Lambda \geq 4(1-(J-1)\delta)(J^2+1)e^{-2n(1-\rho_0)^2\beta^2/15}$ and $n$ is sufficiently large to have $\tilde{\xi}_n \leq \frac{1}{2}$:
    \begin{equation*}
        \frac{\delta^2}{4(1-\delta)}\Lambda \leq \inf_{g \in \mathcal{G}_n} \ClusterRisk(\theta, g) \leq (1 - (J-1)\delta) \Lambda.
    \end{equation*}
    \end{itemize}
\end{corollary}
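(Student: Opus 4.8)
The plan is to obtain each displayed inequality by chaining the two-sided control of the Bayes classification risk by $\Lambda$ furnished by Theorem~\ref{thm:bayes-risk:key_quantity} with the appropriate comparison between the Bayes clustering and classification risks established earlier in the section. In every case the upper bound is the easiest part: since $\inf_{g\in\mathcal{G}_n}\ClusterRisk(\theta,g)\le\inf_{h\in\mathcal{H}_n}\ClassifRisk(\theta,h)$ always holds (this follows directly from Equation~\eqref{eq:risk-clustering:classif}), the upper bound of Theorem~\ref{thm:bayes-risk:key_quantity}, namely $\inf_{h}\ClassifRisk(\theta,h)\le(1-(J-1)\delta)\Lambda$, immediately yields the right-hand inequalities $(1-\delta)\Lambda$ when $J=2$ (where $1-(J-1)\delta=1-\delta$) and $(1-(J-1)\delta)\Lambda$ when $J>2$. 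So the substance lies entirely in the lower bounds.

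For $J=2$ I would treat the independent and dependent cases in parallel. In the independent case Corollary~\ref{cor:equiv:class:clust} gives $\inf_{g}\ClusterRisk(\theta,g)\ge(1-\alpha_n)\inf_{h}\ClassifRisk(\theta,h)$, and substituting the lower bound $\inf_{h}\ClassifRisk(\theta,h)\ge\delta\Lambda$ from Theorem~\ref{thm:bayes-risk:key_quantity} produces exactly $(1-\alpha_n)\delta\Lambda$. In the dependent case the same scheme applies with Theorem~\ref{thm:bayes-risk:lb:hmm:J=2} in place of Corollary~\ref{cor:equiv:class:clust}, giving the factor $(1-\tilde\alpha_n)$, together with the dependent lower bound $\inf_{h}\ClassifRisk(\theta,h)\ge\frac{\delta^2}{1-(J-1)\delta}\Lambda=\frac{\delta^2}{1-\delta}\Lambda$ of Theorem~\ref{thm:bayes-risk:key_quantity}, which yields $(1-\tilde\alpha_n)\frac{\delta^2}{1-\delta}\Lambda$.

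The cases $J>2$ require a bit more care because the relevant comparison bounds are additive rather than purely multiplicative. For the independent case I would start from the second inequality of Theorem~\ref{thm:bayes-risk:lb:iid:J>2}, namely $\inf_{g}\ClusterRisk(\theta,g)\ge(1-\xi_n)\inf_{h}\ClassifRisk(\theta,h)-J^2e^{-n\beta/8}$. Using $\xi_n\le\tfrac12$ to replace $1-\xi_n$ by $\tfrac12$ and the lower bound $\inf_{h}\ClassifRisk(\theta,h)\ge\delta\Lambda$ gives $\inf_{g}\ClusterRisk(\theta,g)\ge\tfrac12\delta\Lambda-J^2e^{-n\beta/8}$; the hypothesis $\delta\Lambda\ge4J^2e^{-n\beta/8}$ then bounds the subtracted term by $\tfrac14\delta\Lambda$, leaving the claimed $\tfrac{\delta}{4}\Lambda$. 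The dependent case is identical in spirit: I would invoke the second inequality of Theorem~\ref{thm:bayes-risk:lb:hmm:J>2}, the bound $\tilde\xi_n\le\tfrac12$, the dependent lower bound $\frac{\delta^2}{1-(J-1)\delta}\Lambda$ combined with $1-(J-1)\delta\le1-\delta$ to pass to $\frac{\delta^2}{1-\delta}\Lambda$, and finally the threshold hypothesis on $\delta^2\Lambda$ to absorb the additive correction $(J^2+1)e^{-2n(1-\rho_0)^2\beta^2/25}$, arriving at $\frac{\delta^2}{4(1-\delta)}\Lambda$.

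The main obstacle, such as it is, is purely bookkeeping in the $J>2$ regimes: one must verify that the threshold hypotheses on $\delta\Lambda$ (respectively $\delta^2\Lambda$), together with $\xi_n\le\tfrac12$ (respectively $\tilde\xi_n\le\tfrac12$), are calibrated so that the exponentially small correction is at most half of the leading $\Lambda$-proportional term, and that the passage from the classification lower bound $\frac{\delta^2}{1-(J-1)\delta}\Lambda$ to the stated $\frac{\delta^2}{4(1-\delta)}\Lambda$ respects $1-(J-1)\delta\le1-\delta$. Once these constants are reconciled, each inequality follows by direct substitution, with no further probabilistic input beyond the cited theorems.
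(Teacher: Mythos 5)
Your proposal is correct and is exactly the intended argument: the paper gives no separate proof of this corollary, which indeed follows by chaining the two-sided bounds of Theorem~\ref{thm:bayes-risk:key_quantity} with Corollary~\ref{cor:equiv:class:clust}, Theorem~\ref{thm:bayes-risk:lb:hmm:J=2}, and the second inequalities of Theorems~\ref{thm:bayes-risk:lb:iid:J>2} and~\ref{thm:bayes-risk:lb:hmm:J>2}, together with $\inf_g\ClusterRisk(\theta,g)\le\inf_h\ClassifRisk(\theta,h)$ and $1-(J-1)\delta\le 1-\delta$ for the upper bounds and the final constant in the dependent case. One caveat you gloss over in the last regime: the corollary's hypothesis carries the exponent $e^{-2n(1-\rho_0)^2\beta^2/15}$ while the correction term in Theorem~\ref{thm:bayes-risk:lb:hmm:J>2} has exponent $e^{-2n(1-\rho_0)^2\beta^2/25}$, and since $e^{-y/15}\le e^{-y/25}$ the literal hypothesis does not imply the absorption condition $\delta^2\Lambda\ge 4(1-(J-1)\delta)(J^2+1)e^{-2n(1-\rho_0)^2\beta^2/25}$ that your step requires; this is a constant mismatch in the paper's statement itself (the $/15$ should evidently read $/25$, as the choice $\eta=2\beta/5$, $\varepsilon=\beta/5$ in the proof of that theorem cannot yield $/15$), so your derivation is sound once the threshold is cited with the matching constant. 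In the $J=2$ cases no positivity check on $1-\alpha_n$ or $1-\tilde\alpha_n$ is needed, since the claimed lower bounds are trivially true when these factors are negative.
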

In other words, when there are only two classes, we obtain a tight characterization of the Bayes risk of clustering in terms of the separation between the two emission densities (\rom{1}) covering all the regimes (\rom{2}) valid in the parametric and non-parametric setting (\rom{3}) without imposing any separation between the emission densities. In both i.i.d. and HMM settings, when there are only two classes, there exists a positive constant $\alpha(\delta)$ depending only on $\delta$ such that
\begin{equation*}
    \alpha(\delta)\int_{\mathbb{Y}}[f_{1}\wedge f_{2}](y)d\mathcal{L}(y) \leq \inf_{g \in \mathcal{G}_n}\ClusterRisk(\theta,g)\leq (1 - \delta)\int_{\mathbb{Y}}[f_{1}\wedge f_{2}](y)d\mathcal{L}(y).
\end{equation*}
For example, in the case of Gaussian emission distributions with two hidden states, this translates in a clear identification of the signal-to-noise ratio which drives the Bayes risk of clustering. For two Gaussian emission densities with means $\mu_0$ and $\mu_1$ and the same covariance matrix $\Sigma$, the Bayes risk of clustering ensures:
\begin{equation*}
    \frac{\alpha(\delta)}{2}\exp\left(-\frac{\text{SNR}}{4}\right) \leq \inf_{g \in \mathcal{G}_n}\ClusterRisk(\theta,g)\leq (1-\delta)\exp\left(-\frac{\text{SNR}}{8}\right)
\end{equation*}
where $\text{SNR} = (\mu_{0} - \mu_{1})^{\intercal}\Sigma^{-1}(\mu_{0} - \mu_{1})$. 

\subsection{Reaching the Bayes risk}
\label{sec:reaching}

\subsubsection{I.I.D. setting}
While there is no formal proof of this fact, it seems that there is no way of reaching the Bayes risk of clustering or classification without strong assumptions on the mixture components under the non-parametric i.i.d. mixture model. Without structural assumptions, the associated Bayes classifier can never be learnt nor approximated from the data in the i.i.d. case because the model is not identifiable and thus, the mixture components can not be estimated.  The algorithms proposed in the literature perform well in clustering i.i.d. observations only when the clusters are assumed to be separated. This is the case for the $k$-means and its variants \cite{GV19, MVW17} and the spectral algorithms \cite{SBY09, KSH05} to cite a few. We refer to \cite{MR3889693} and Chapter 12 of \cite{G21} for a review of model-based clustering techniques.
\subsubsection{HMM setting}\label{sec:slow-rates-plugin}
Unlike the i.i.d. hypothesis, the HMM hypothesis allows us to identify the model without any assumptions about the emission distributions, apart from the fact that they are distinct \cite{GHA16}. This allows the construction of simple clustering procedures with risk comparable to the Bayes risk of clustering. Given that the transition matrix of the hidden Markov chain is non-singular and ergodic, all parameters -- including the number \( J \) of hidden states and the emission distributions -- can be identified from the distribution of \( K \) consecutive observations \( (Y_1, \ldots, Y_K) \), provided that the emission distributions are distinct and \( K \) is sufficiently large compared to \( J \) \cite{MR3509896}. This allows complete flexibility on the emission distributions, provided a Markovian dependence of labels. Within the framework of this model, non-parametric estimation of the mixture components becomes possible without any restrictions on the population densities, apart from the fact that they are distinct, and various estimation procedures have been proposed \cite{YEC16, YES17, MR3862446, MR4347381, ACG21, AGN23}. Although we have shown in Theorems \ref{thm:bayes-risk:min:hmm:J=2} and \ref{thm:bayes-risk:min:hmm:J>2} that the Bayes clusterer $g^{\star}_{\theta}$ does not necessarily coincide with $\pi_n\circ h^{\star}_{\theta}$, this distinction matters only to establish strong decision-theoretic foundations for clustering. In practice, the Bayes classifier still exhibits a competitive performance in clustering. Corollary $\ref{cor:Bayes_classif:hmm}$ shows that the risk of clustering of the Bayes classifier approximates closely the Bayes risk of clustering, proving thus its near-optimality. The widely used method of clustering when the observations are drawn from a HMM consists in approaching the behavior of the Bayes classifier $h_{\theta}^\star$ by plugging-in estimated parameters $\hat\theta$ and using the induced clusterer $\pi_n \circ h_{\hat\theta}^\star$. It is not worth the effort of computing $g_\theta^\star$ and use $g_{\hat\theta}^\star$ because the price to pay in the excess risk for trading the true $\theta$ by its estimate $\hat\theta$ is most likely of several order of magnitude larger than the price to pay in using $\pi_n \circ h_\theta^\star$ in place of $g_\theta^\star$.
Thus in this section, we focus on the clustering rule
\begin{equation*}
    g_{\hat{\theta}}^{\star}(Y_{1:n}) \coloneqq \pi_{n} \circ h_{\hat{\theta}}^{\star}(Y_{1:n}) = \pi_{n}\left( \left(\argmax_{x\in\mathbb{X}}\PP_{\hat{\theta}}\left(X_{i} = x\mid Y_{1:n}\right)\right)_{1\leq i \leq n}\right)
\end{equation*}
where $\hat{\theta} = \hat{\theta}(Y_{1:n})$ is an estimator constructed using the celebrated tensor method \cite{anandkumar2014tensor,ACG21}. The main advantage in using $\pi_n\circ h_{\hat\theta}^\star$ is that in contrast with $g_{\hat\theta}^\star$ the classifier $h_{\hat\theta}^\star$ is easily computed thanks to the recurrence formulas ensured by the Forward-Backward algorithm \cite{CMT05}.
Theorem~\ref{thm:excess_risk:clust:rate} below controls the excess risk of this clustering procedure. We will make the following assumption:
\begin{assumption}
    \label{Assumption_stat}
    The initial distribution $\nu$ is the stationary distribution of $\mathbf{X}$.
\end{assumption}

Notice however that under Assumption \ref{Assumption_stat}, the second part of Assumption \ref{Assumption_mixing} follows directly from the first part.

\begin{assumption}\label{Assumption_4}
    $Q$ is full-rank and aperiodic.
\end{assumption}

Under Assumptions \ref{Assumption_mixing}, \ref{Assumption_stat} and  \ref{Assumption_4}, the hidden Markov chain is stationary ergodic. The following assumption is sufficient to build estimators using the empirical distribution of the distribution of three consecutive observations.

\begin{assumption}\label{Assumption_3}
    The emission densities $(f_{x})_{x\in\mathbb{X}}$ are compactly supported and $C^{\star} = \int\frac{dy}{\sum_{x\in\mathbb{X}}f_x (y)} < \infty$, the integral is over the union of the supports of the emission densities.
\end{assumption}

\begin{assumption}\label{Assumption_5}
$\left(f_{x}\right)_{x\in\mathbb{X}}$ are linearly independent and belong to $C^{s}(\mathbb{R})$ the space of locally H\"{o}lder continuous functions.
\end{assumption}

We now state the theorem. Its proof is given in \cite[Section~S11]{GKN2025SM}. 

\begin{theorem}\label{thm:excess_risk:clust:rate}
    Let $D_{n} \xrightarrow[]{}+\infty$ arbitrarily slowly. There exists a sequence of randomized estimators $(\hat{\theta}_n)_{n\geq 1}$ such that for all $\theta\in\Theta^{\mathrm{dep}}$ satisfying Assumption~\ref{Assumption_mixing} to~\ref{Assumption_5}
\begin{equation*}
        \EE[\ClusterRisk(\theta,\pi_n\circ h_{\hat{\theta}}^{\star})] - \inf_{g \in \mathcal{G}_n}\ClusterRisk(\theta,g)= \mathcal{O}\left(D_{n}^{5/2}\left(\frac{\log(n)}{n}\right)^{\frac{s}{2s + 1}}\right)
\end{equation*}
where the expectation $\EE[\cdot]$ is understood with respect to the randomness of the algorithm. 
\end{theorem}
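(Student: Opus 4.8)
The plan is to control the excess clustering risk by splitting it into two pieces via the triangle inequality:
\begin{equation*}
  \EE[\ClusterRisk(\theta,\pi_n\circ h_{\hat\theta}^{\star})] - \inf_{g}\ClusterRisk(\theta,g)
  = \underbrace{\Big(\EE[\ClusterRisk(\theta,\pi_n\circ h_{\hat\theta}^{\star})] - \ClusterRisk(\theta,\pi_n\circ h_{\theta}^{\star})\Big)}_{\text{(I) estimation error}}
  + \underbrace{\Big(\ClusterRisk(\theta,\pi_n\circ h_{\theta}^{\star}) - \inf_{g}\ClusterRisk(\theta,g)\Big)}_{\text{(II) oracle gap}}.
\end{equation*}
Term (II) is already controlled: by Corollary~\ref{cor:Bayes_classif:hmm}, the clustering risk of the oracle Bayes classifier overshoots the Bayes risk of clustering by at most a factor $1/(1-\tilde\xi_n)$ plus the exponentially small term $(J^2+1)e^{-2n(1-\rho_0)^2\beta^2/25}$, so (II) is negligible against the target polynomial rate. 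The heart of the proof is therefore bounding the estimation error (I), i.e.\ showing that plugging in $\hat\theta$ instead of $\theta$ into the Bayes classifier costs only $\mathcal{O}(D_n^{5/2}(\log n/n)^{s/(2s+1)})$ in clustering risk.

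First I would bound the difference $|\ClusterRisk(\theta,\pi_n\circ h_{\hat\theta}^{\star}) - \ClusterRisk(\theta,\pi_n\circ h_{\theta}^{\star})|$ pointwise (before taking expectation over the algorithm's randomness) in terms of how far the estimated smoothing distributions $\PP_{\hat\theta}(X_i=\cdot\mid Y_{1:n})$ are from the true ones $\PP_{\theta}(X_i=\cdot\mid Y_{1:n})$. Using the representation \eqref{eq:risk-clustering:classif}, the clustering risk is an expectation of an average of indicator functions $\1_{h_i\ne\tau(X_i)}$ minimized over permutations; a misclassification under $\hat\theta$ relative to $\theta$ can only occur near the decision boundary where two smoothing probabilities are close, so the risk difference is controlled by $\frac{1}{n}\sum_{i=1}^n\EE_\theta[\|\PP_{\hat\theta}(X_i=\cdot\mid Y_{1:n})-\PP_{\theta}(X_i=\cdot\mid Y_{1:n})\|_1]$ up to handling the label-switching permutation (aligning $\hat\theta$ with $\theta^\tau$ for the optimal $\tau$). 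The next step is to invoke the \emph{forgetting}/exponential stability of the smoothing distributions, which holds under Assumption~\ref{Assumption_mixing} (the lower bound $\delta$ on $Q$ makes the filter geometrically stable, cf.\ \cite{CMT05}): this lets me bound each smoothing-distribution error by the parameter estimation error $\|\hat\theta-\theta\|$ — measured through the sup-norm distance between emission densities $\max_x\|f_x^{\hat\theta}-f_x\|_\infty$ (or an $L^1$ analogue) together with $\|\hat Q-Q\|$ and $\|\hat\nu-\nu\|$ — uniformly in $i$ and $n$, precisely because geometric forgetting kills the accumulation of errors over the chain length.

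The final ingredient is the statistical rate for the estimator $\hat\theta$ itself. Under Assumptions~\ref{Assumption_stat}--\ref{Assumption_5} the tensor/spectral method applied to the empirical law of three consecutive observations \cite{anandkumar2014tensor,ACG21} identifies and estimates $(\nu,Q,(f_x)_x)$; combining the spectral estimation guarantees with a nonparametric estimator of the $s$-Hölder emission densities yields the rate $(\log n/n)^{s/(2s+1)}$ for $\max_x\|f_x^{\hat\theta}-f_x\|$, with the $\log n$ factor and the slowly diverging $D_n^{5/2}$ arising from the randomization and the uniform control needed across the $n$ coordinates. Assembling the pieces, (I) is $\mathcal{O}(D_n^{5/2}(\log n/n)^{s/(2s+1)})$, which dominates (II), giving the claimed bound. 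I expect the main obstacle to be step two: making the reduction from clustering-risk gap to parameter error fully rigorous, since the permutation alignment inside \eqref{eq:risk-clustering:classif} interacts with label-switching, and one must show the forgetting constants are uniform in $n$ so that errors do not accumulate along the chain — this is where Assumption~\ref{Assumption_mixing} does the essential work and where the boundary/margin behavior of the Bayes classifier must be handled carefully to avoid a loss-of-rate.
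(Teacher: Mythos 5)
Your proposal has the right architecture, and it is essentially the paper's: dispose of the oracle gap via Corollary~\ref{cor:Bayes_classif:hmm} (indeed $O(n^{-1/2})$ plus an exponentially small term, negligible against $(\log n/n)^{s/(2s+1)}$), reduce the remaining error to the total-variation error of the smoothing distributions, convert that to parameter error via the stability bound \eqref{eq:smoothing-control}, and invoke spectral estimation. But the two steps where the real difficulty lives are missing, and one of them would fail as you sketch it. For term (I), the generic way to compare the clustering risks of two classifiers is $\left|\min_\tau U_{n,\tau}(h^\star_{\hat\theta})-\min_\tau U_{n,\tau}(h^\star_{\theta})\right|\le \frac{1}{n}\sum_{i=1}^n \1_{h^\star_{\hat\theta,i}\ne h^\star_{\theta,i}}$, where $U_{n,\tau}(h)=\frac{1}{n}\sum_i\1_{h_i(Y_{1:n})\ne\tau(X_i)}$, i.e.\ a \emph{disagreement} probability; and disagreement between two argmaxes is \emph{not} controlled by the total-variation distance between the smoothing distributions unless a margin condition holds — none is assumed, and posterior near-ties can occur with constant probability. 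The one-sided argmax trick that makes ``errors only matter near the boundary'' rigorous for classification is unavailable here, because $\pi_n\circ h^\star_\theta$ is \emph{not} the minimizer of the clustering risk (that is $g^\star_\theta$). The paper avoids the clustering loss altogether: it proves (Proposition~\ref{prop:ub:excess:hmm}) the excess \emph{classification} risk bound $\ClassifRisk(\theta,h^\star_{\hat\theta})-\inf_h\ClassifRisk(\theta,h)\le\frac{2}{n}\sum_i\EE_\theta\left[\|\phi_{\hat\theta,i\mid n}-\phi_{\theta,i\mid n}\|_{\mathrm{TV}}\right]$, and then transfers to clustering using $\ClusterRisk(\theta,\pi_n\circ h)\le\ClassifRisk(\theta,h)$ together with the lower bound of Theorem~\ref{thm:bayes-risk:lb:hmm:J>2}. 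Your decomposition can be repaired along exactly those lines, but not by the boundary heuristic.

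The larger gap is the estimation ingredient, which you cite as off-the-shelf. The stability bound \eqref{eq:smoothing-control} is useful only if $\|\hat\nu-\nu\|_2$, $\|\hat Q-Q\|_{\mathrm{F}}$ and $\max_x\|\hat f_x-f_x\|_\infty$ are simultaneously small \emph{under one common labelling of the hidden states}. The existing guarantees (transition matrix in \cite{YES17}, sup-norm density rates in \cite{ACG21}) are each stated up to their own unknown permutation, and nothing in those works forces these permutations to coincide; with mismatched labels the right-hand side of \eqref{eq:smoothing-control} is of constant order and the whole chain collapses. Securing a single permutation is the bulk of the paper's proof: a modified spectral algorithm (Algorithm~\ref{Spectral_algorithm}) whose density step diagonalizes with the matrix $\hat R_2=\hat Q\hat O^{\top}\hat V$, so that the alignment is inherited from the $\hat Q$ estimate; a perturbation lemma for simultaneous diagonalization (Lemma~\ref{Lemma_4}); and truncation plus large-deviation arguments to control $\EE_\theta[1/\hat\delta^{2}]$ and to turn high-probability bounds into bounds in expectation (note \eqref{eq:smoothing-control} involves the random $\hat\rho,\hat\delta$). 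Incidentally, the factor $D_n^{5/2}$ arises from condition numbers and the basis dimension in that spectral analysis, not from ``the randomization and the uniform control across the $n$ coordinates''. In short: right skeleton, but both load-bearing steps are unproved, and the common-permutation guarantee in particular cannot be outsourced to the literature.
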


 The cornerstone of the proof is the analysis of how errors in estimation of the parameters propagate to errors in the filtering and smoothing distributions proved in \cite{YES17}. To achieve this rate for the excess risk, we must specify an estimator. Here we make the choice of using a modified version of the spectral algorithm of \cite{ACG21} which relies on the tensor method developped in \cite{anandkumar2014tensor}. The full algorithm is given in details in \cite[Section~S11]{GKN2025main}. The main difference with the original algorithm of \cite{ACG21} is a modification guaranteeing that the algorithm outputs the same permutation for the estimation of the transition matrix $\hat Q$ and the emission densities $\hat f_1,\dots,\hat f_J$. Indeed, in the previous works, the aim was more to get upper bounds on the risks up to label-switching, and the analysis was done for $Q$ and the emission distributions separately which does not guarantee that the permutation used in the control of the error of estimation of $Q$ and the emission densities is the same. Albeit the modification is rather natural, proof that it works require a substantial effort.
 
 Notice that the rate on the excess risk is exactly the estimation rate of H\"older regular functions in sup-norm. However, since we do not observe realizations of each emission density separately, this rate is not a straightforward consequence of density estimation theory. See \cite{ACG21} where the usual non-parametric rate for the estimation of densities in sup-norm is obtained in the HMM context.
 
 Though we analyse the question for the spectral estimator, we believe that results similar to Theorem \ref{thm:excess_risk:clust:rate} hold for most estimation procedures previously proposed in the literature, putting in the upper bounds the bounds on the estimation risk up to label-switching obtained in those works.
This is why we shall use in Section \ref{sec:simu} the least-squares estimation method for which a public and efficient code exists \cite{YohannGH}.

\section{Numerical simulations}
\label{sec:simu}
We present here the results of numerical simulations which leverage the added value of non-parametric clustering under hidden Markov modelling. We will consider two examples in which the hidden states will be generated through the same transition matrix $$Q = \begin{pmatrix}
  0.8 & 0.2\\ 
  0.3 & 0.7
\end{pmatrix}.$$
\begin{example}\label{example_1}
     A sample of $n = 5.10^{4}$ observations of a HMM with transition matrix $Q$ and emissions: $F_{1} = \frac{1}{2}\left(\mathcal{N}(1.7, 0.2) + \mathcal{N}(7, 0.15)\right)$ and $F_{2} = \frac{1}{2}\left(\mathcal{N}(3.5, 0.2) + \mathcal{N}(5, 0.4)\right)$.
\end{example}
\begin{example}\label{example_2}
     A sample of $n = 10^{5}$ observations of a HMM with transition matrix $Q$ and emissions: $F_{1} = \frac{1}{2}\left(\mathcal{N}(3, 0.6) + \mathcal{N}(7, 0.4)\right)$ and $F_{2} = \frac{1}{2}\left(\mathcal{N}(5, 0.3) + \mathcal{N}(9, 0.4)\right)$.
\end{example}
On these examples, we use the plug-in classifier whose clustering risk has been controlled in Theorem \ref{thm:excess_risk:clust:rate}.
Recall our procedure works as follows:
\begin{itemize}
    \item First, the emission densities are estimated using the observations. We use the penalized least squares estimator proposed in \cite{YEC16} whose code has been made public in \cite{YohannGH}. We use the histogram basis for the estimation.
    \item Second, the Forward-Backward algorithm is used to compute the a posteriori distributions of the hidden states under the estimated model parameters and given the observations.
    \item Third, the hidden states are estimated by maximizing the a posteriori distributions. 
\end{itemize}
The results of clustering using the Forward-Backward algorithm will be compared to those using the $k$-means algorithm. Since we have access to the hidden states, the error of clustering can be estimated by choosing the best permutation. Figures~\ref{plots:estimation}, \ref{plots:clustering:Ex:1}, and~\ref{plots:clustering:Ex:2} display the results of estimation and clustering. Performance of Bayes classifier, plug-in classifier and $k$-means are reported in Table~\ref{tab:clust_results}.
\begin{table}[!htb]
    \centering
    \begin{tabular}{|c|c|c|c|c|}
        \cline{2-5}
        \multicolumn{1}{c|}{} & Bayes classifier & Plug-in classifier & $k$-means algorithm & $\Lambda$ \\
        \hline
        Example \ref{example_1} & 1.56\% & 1.61\% & 46.7\% & 0.046 \\ 
        Example \ref{example_2} & 6.42\% & 6.51\% & 47.3\% & 0.165\\ 
        \hline
    \end{tabular} 
    \caption{Errors of clustering using three clustering rules: the Bayes classifier (using the true model parameters), the plug-in classifier (using the estimated parameters) and the $k$-means algorithm.}
    \label{tab:clust_results}
\end{table}
\begin{figure}[!htb]
  \begin{subfigure}[b]{1\textwidth}
    \centering
    \includegraphics[width=1\linewidth]{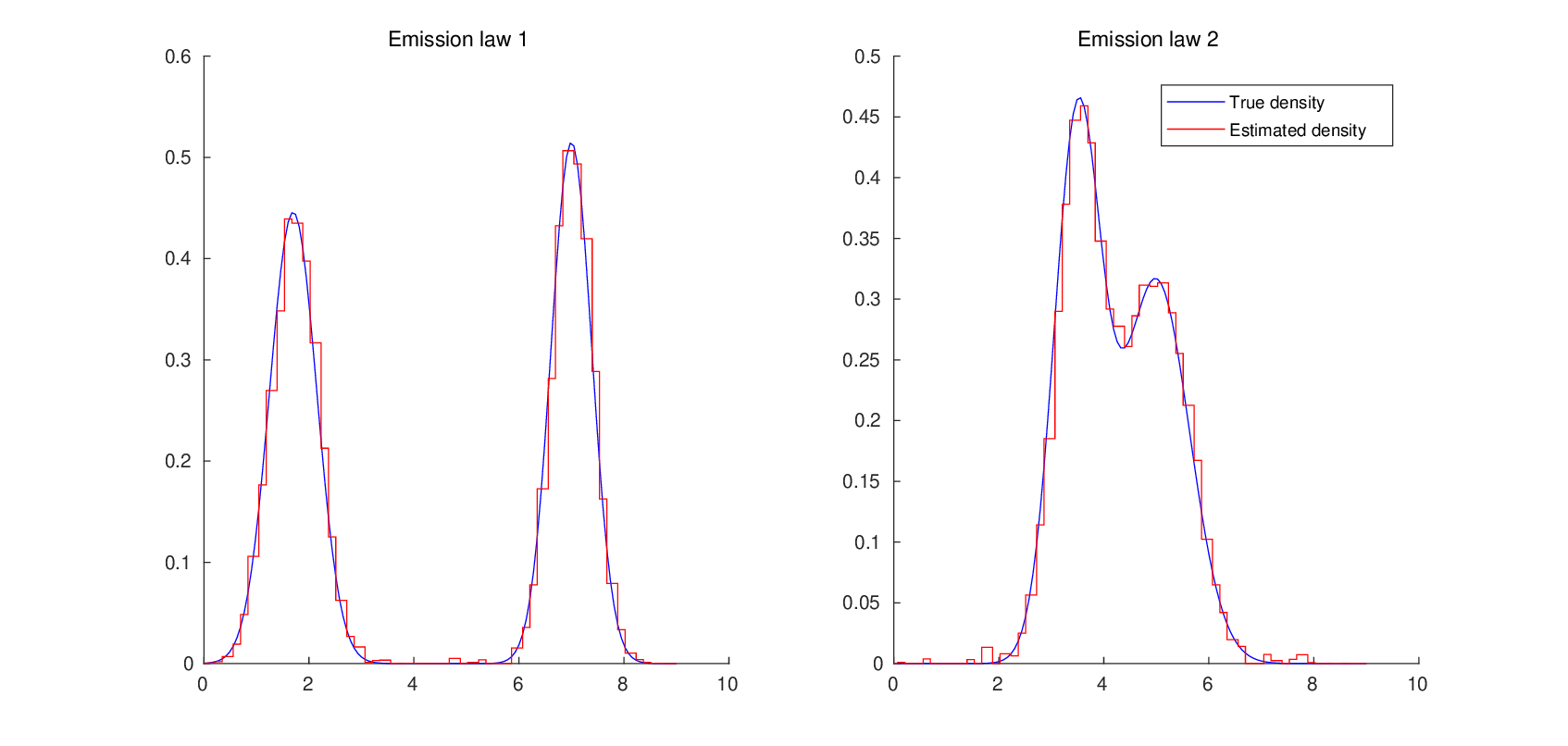} 
    \caption{Estimation results for Example \ref{example_1}}
    \label{plots:estimation:1}
  \end{subfigure}
  \begin{subfigure}[b]{1\textwidth}
    \centering
    \includegraphics[width=1\linewidth]{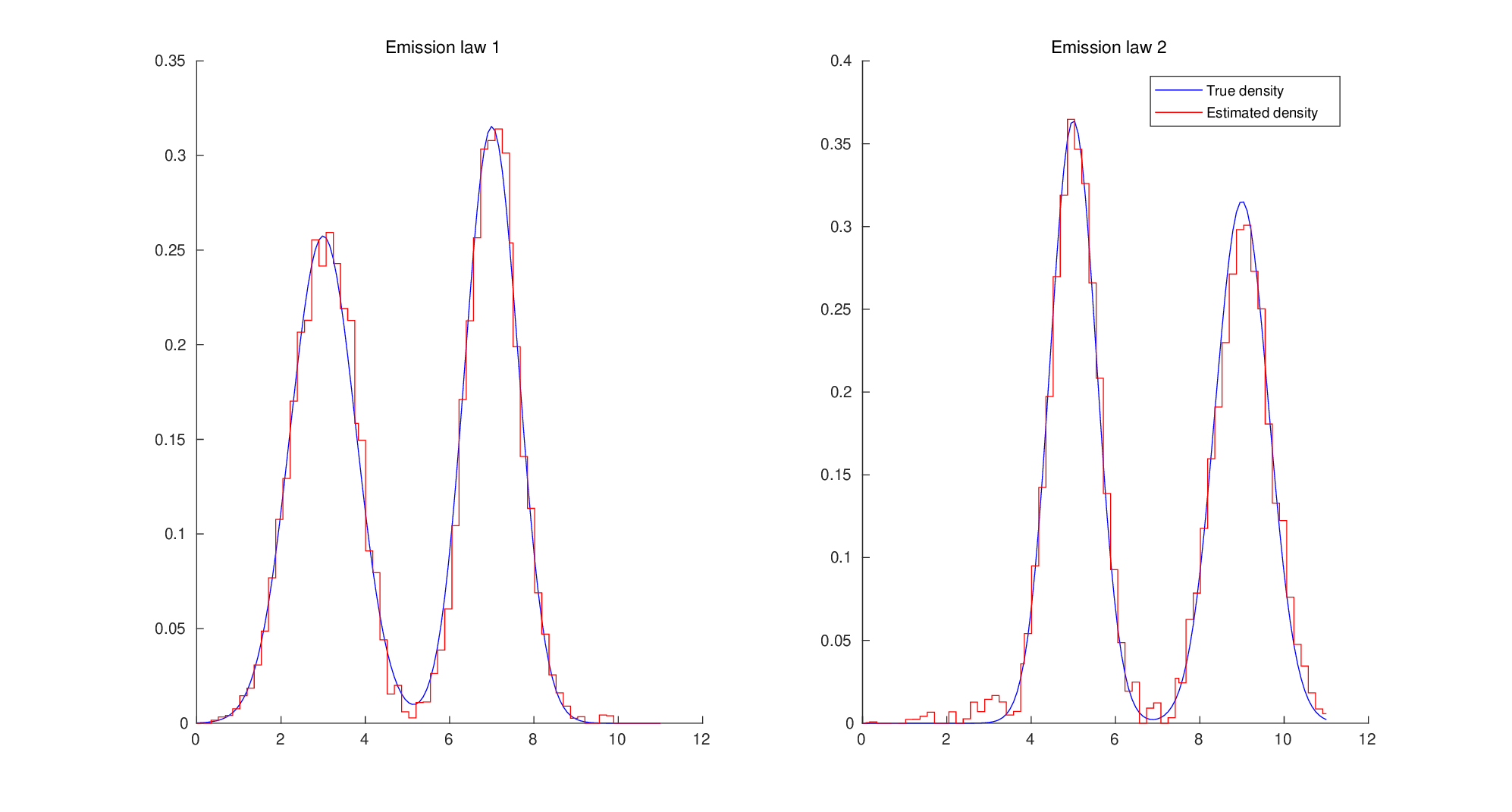} 
    \caption{Estimation results for Example \ref{example_2}} 
    \label{plots:estimation:2}
  \end{subfigure} 
  \caption{Non-parametric penalized least squares density estimation using the histogram basis for Example \ref{example_1} and Example \ref{example_2}} 
  \label{plots:estimation}
\end{figure}
\begin{figure}[!htb]
  \begin{subfigure}{0.5\linewidth}
    \centering
    \includegraphics[scale=0.53]{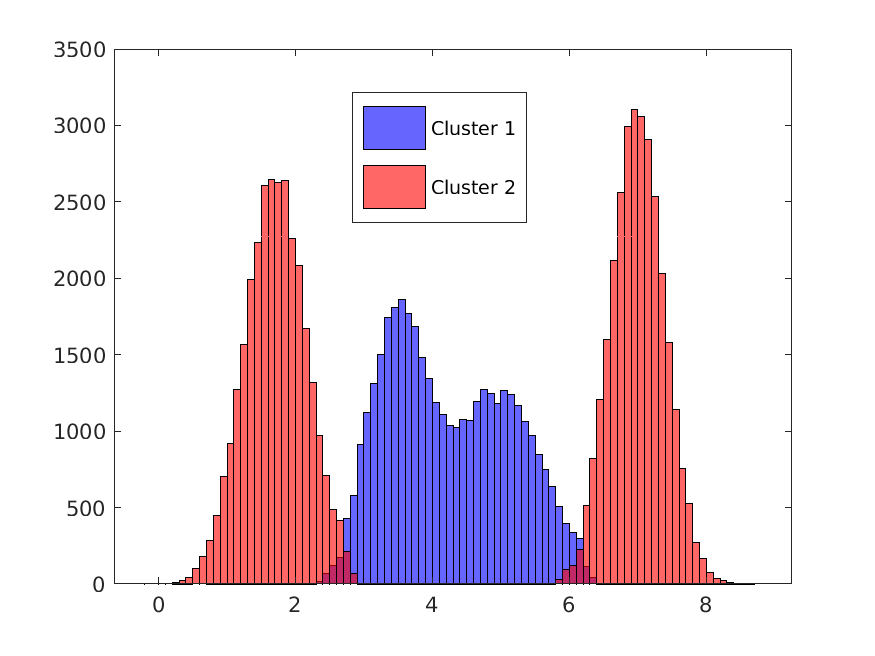} 
    \caption{Clustering using plug-in clusterer} 
    \label{plots:clustering:Ex:1:1}
    \vspace{4ex}
  \end{subfigure}
  \begin{subfigure}{0.5\linewidth}
    \centering
    \includegraphics[scale=0.53]{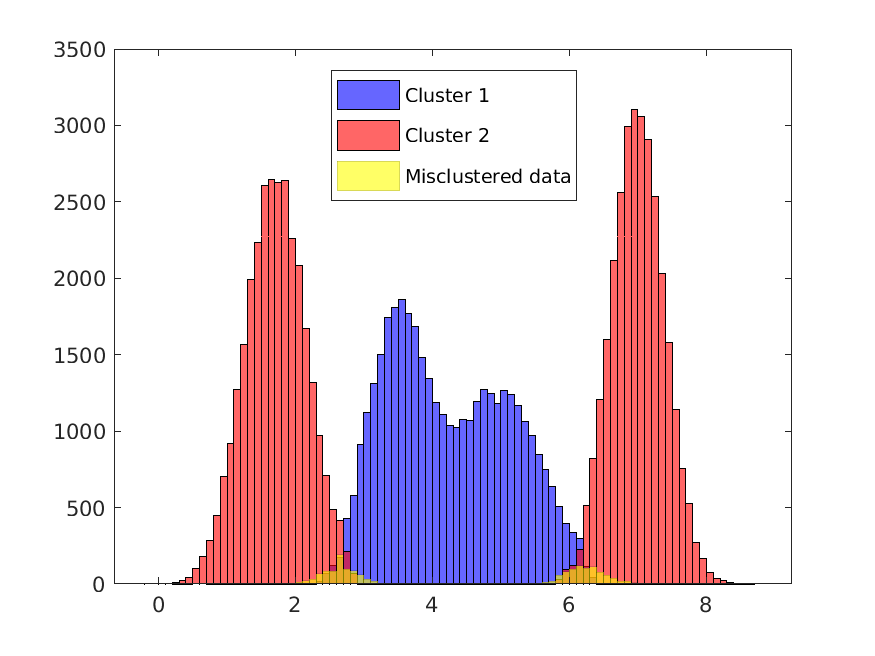} 
    \caption{Misclustered observations for plug-in clusterer} 
    \label{plots:clustering:Ex:1:2}
    \vspace{4ex}
  \end{subfigure} 
  \begin{subfigure}{0.5\linewidth}
    \centering
    \includegraphics[scale=0.53]{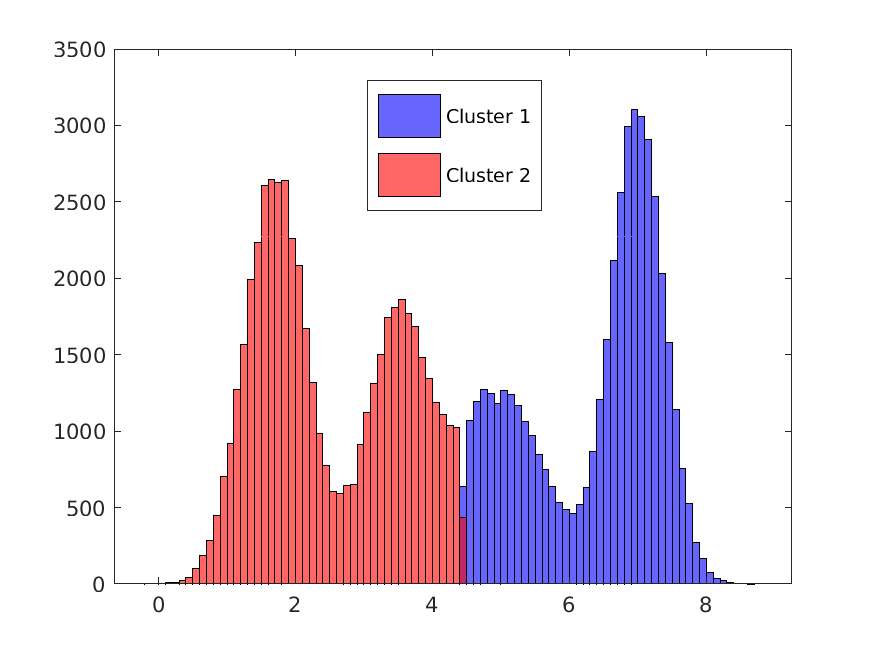} 
    \caption{Clustering using $k$-means} 
    \label{plots:clustering:Ex:1:3}
  \end{subfigure}
  \begin{subfigure}{0.5\linewidth}
    \centering
    \includegraphics[scale=0.53]{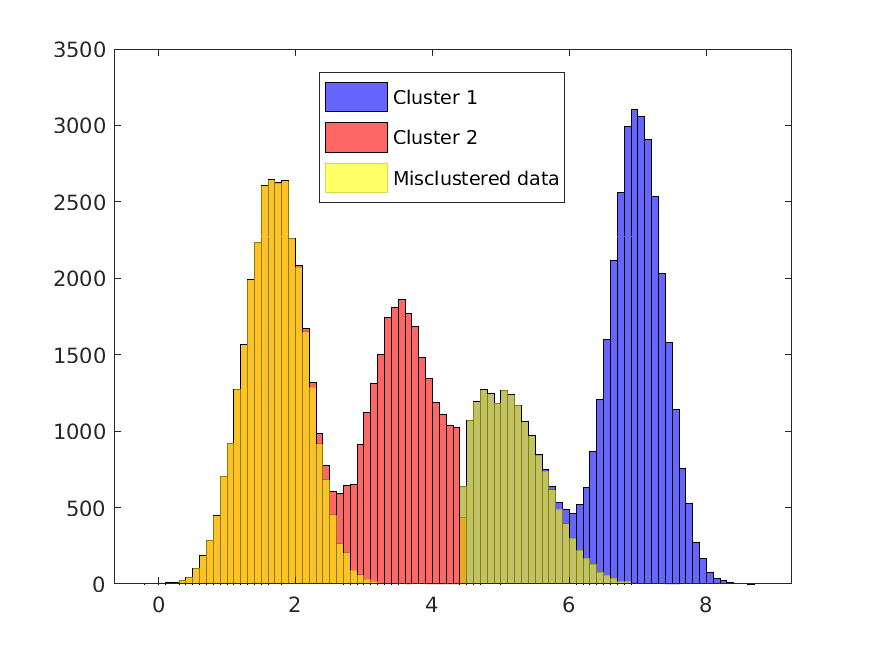} 
    \caption{Misclustered observations for $k$-means} 
    \label{plots:clustering:Ex:1:4}
  \end{subfigure} 
  \caption{Histograms of clusters and clustering errors for Example \ref{example_1}}
  \label{plots:clustering:Ex:1}
\end{figure}
\begin{figure}[!htb] 
  \begin{subfigure}{0.5\linewidth}
    \centering
    \includegraphics[scale=0.53]{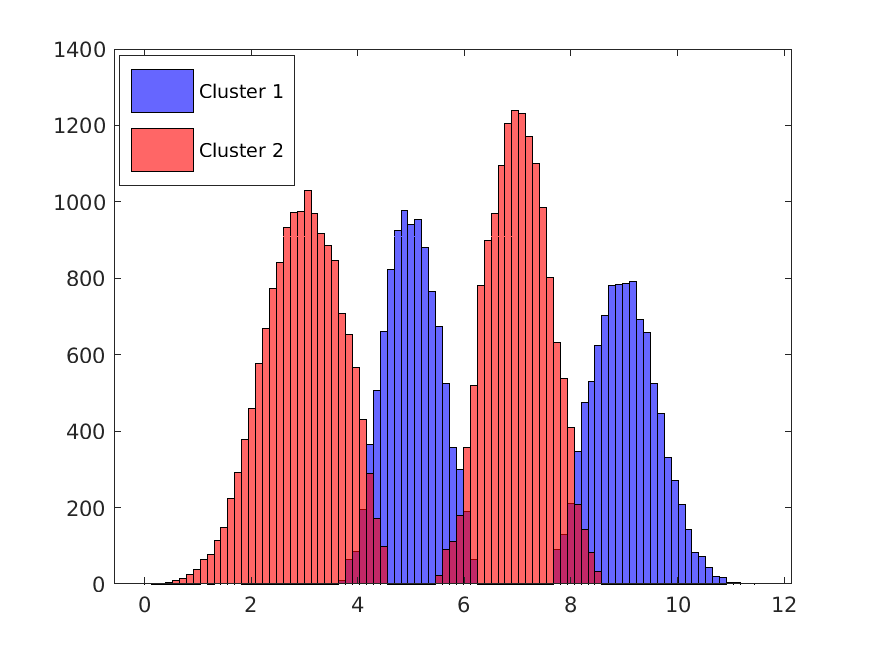} 
    \caption{Clustering using plug-in clusterer} 
    \label{plots:clustering:Ex:2:1}
    \vspace{4ex}
  \end{subfigure}
  \begin{subfigure}{0.5\linewidth}
    \centering
    \includegraphics[scale=0.53]{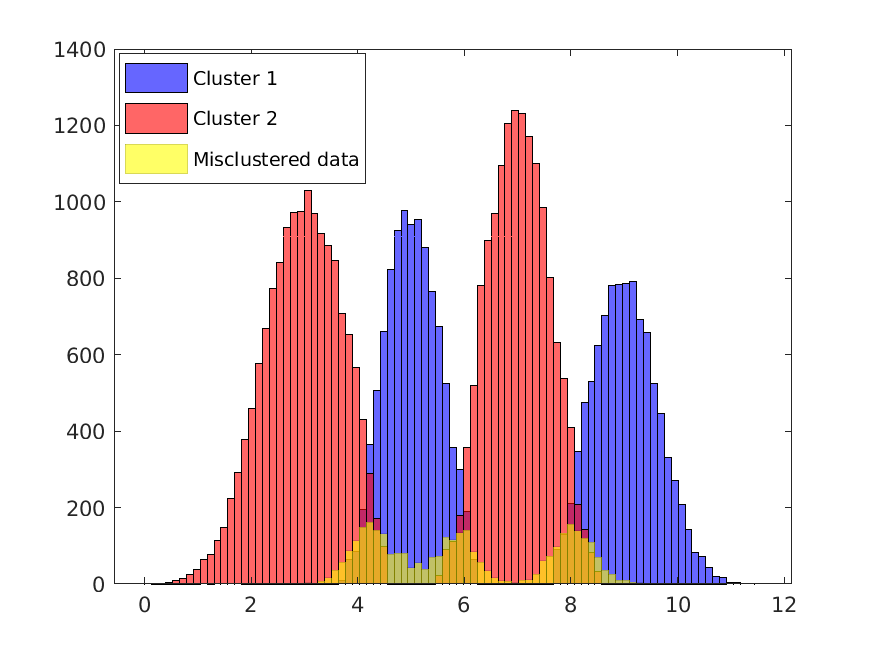} 
    \caption{Misclustered observations for plug-in clusterer} 
    \label{plots:clustering:Ex:2:2}
    \vspace{4ex}
  \end{subfigure} 
  \begin{subfigure}{0.5\linewidth}
    \centering
    \includegraphics[scale=0.53]{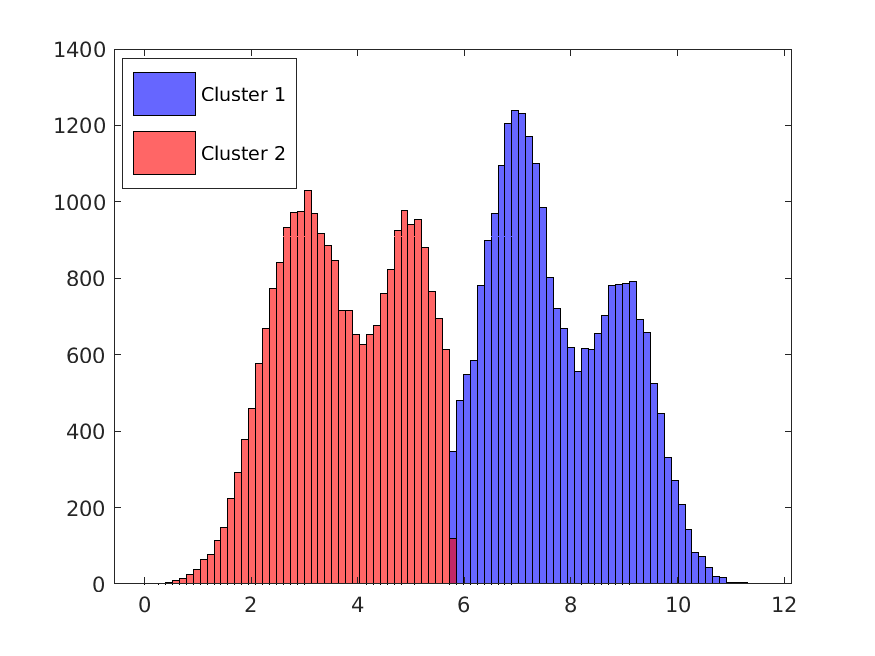} 
    \caption{Clustering using $k$-means} 
    \label{plots:clustering:Ex:2:3}
  \end{subfigure}
  \begin{subfigure}{0.5\linewidth}
    \centering
    \includegraphics[scale=0.53]{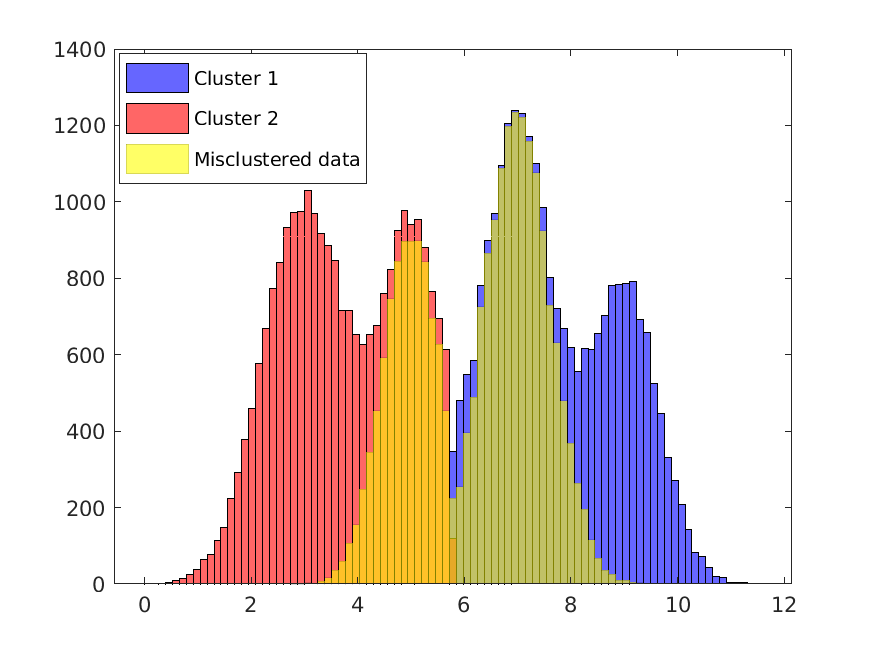} 
    \caption{Misclustered observations for $k$-means} 
    \label{plots:clustering:Ex:2:4}
  \end{subfigure} 
  \caption{Histograms of clusters and clustering errors for Example \ref{example_2}}
  \label{plots:clustering:Ex:2}
\end{figure}
If the observations were independent and the emissions modelled non-parametrically, the unique quantity that could have been estimated consistently would be the stationary distribution. However, under the HMM assumption, the estimation of each emission density with the minimax rate is possible. This is due to the identifiability of the model which holds even if no assumption is made on the emission densities. This is not possible in the independent case. Figure \ref{plots:estimation} shows the estimation results of the emission densities and confirms the theoretical properties of the estimator. On the other hand, Corollary \ref{cor:key_quantity} proves that in the case of a HMM with two classes, clustering errors could appear only in zones where the two emission densities overlap. In Figures \ref{plots:clustering:Ex:1:2}, and \ref{plots:clustering:Ex:2:2}, misclustered observations appear only in the overlaps between the emission densities. 
 Compared to the $k$-means algorithm which is purely geometric and does not exploit the distribution of the observations, the plug-in procedure allows combining together observations even if they are geometrically distant from each other, which is not possible with the $k$-means algorithm. In this context of Gaussian mixtures, the performance of the $k$-means algorithm is mediocre as depicted in Table \ref{tab:clust_results} and does not improve significantly when the overlap between the emission densities is small. However, for the plug-in procedure, the more separated are the emission densities, the better are the results of clustering. 
\section{Discussions and Perspectives}\label{sec:perspectives}
This work focuses on an in-depth study of Bayes risks of clustering and classification. This analysis has led us to prove a form of equivalence between both risks. After identifying the key quantity which measures the difficulty of the classification task, it was extrapolated to the Bayes risk of clustering in several regimes. Finally, the excess risk of the plug-in procedure was studied. Although the analysis is sufficiently detailed to ensure a thorough understanding of both problems, there are still some interesting questions which were not covered by this work. We give a small overview below.
\paragraph*{Lower-bound on entries of $Q$}
Throughout our analysis of the Bayes risk of clustering, we have used Assumption \ref{Assumption_mixing} which was crucial in obtaining the lower-bounds. In the absence of such an assumption, the lower-bound of Theorem \ref{thm:bayes-risk:key_quantity} no more matches the upper-bound and the magnitude of the Bayes risk of classification can not be precisely understood. The same thing applies to the Bayes risk of clustering. In addition, the control of the excess risk of the plug-in procedure is no more guaranteed. 
\paragraph*{Approaching the frontier to independence}
This situation happens when the emission distributions are nearly similar or when the transition matrix has almost equal lines. In this case, one can hope to improve the coefficient $\frac{\delta^{2}}{1 - (J-1)\delta}$ which appears in the lower-bound on the risk of classification (Theorem \ref{thm:bayes-risk:key_quantity}) to $\delta$ as in the independent case. In fact, in this situation, the dependence between the observations is so weak and the effect of future and past observations on the current classification rule is so negligible that the magnitude of the Bayes risk of classification is the same as in the i.i.d. setting. On the other hand, as shown in \cite{AGN21, AGN23}, estimation of the model parameters no more becomes possible when approaching the frontier. The plug-in procedure should not work as well.

\paragraph*{Lower bounds on the Bayes risk of clustering when it is very small}

Theorems~\ref{thm:gap:classif:cluster},  \ref{thm:bayes-risk:lb:iid:J>2}, \ref{thm:bayes-risk:lb:hmm:J=2} and ~\ref{thm:bayes-risk:lb:hmm:J>2} establish lower bounds on the Bayes risk of clustering in terms of the Bayes risk of classification. When $J=2$ these bounds are meaningful regardless of how small is the Bayes risk of clustering. When $J>2$, however, these bounds can be vacuous if the Bayes risk of classification gets too small. This is not an artifact of our bounds since we have shown in Proposition~\ref{pro:bayes-risk:non-equivalence} that the two Bayes risks are not uniformly comparable when $J > 2$. Whence from the current work we only know that the Bayes risk of clustering is driven by $\Lambda$ in the region of parameters for which it is not exponentially small in $n$ and that it can not be driven by $\Lambda$ otherwise. If it was the case, then it would be equivalent to the Bayes risk of classification in contradiction with the Proposition~\ref{pro:bayes-risk:non-equivalence}. Understanding the Bayes risk of clustering in the region of extreme parameters is still an open question.


\paragraph*{Fast rates}

Theorem~\ref{thm:excess_risk:clust:rate} has interest mainly in the situation where $\inf_{g \in \mathcal{G}_n}\ClusterRisk(\theta,g)  \gtrsim (\log(n)/n)^{\frac{s}{2s+1}}$. In this regime, Theorem~\ref{thm:excess_risk:clust:rate} tells us that the plug-in procedure has a risk of the same magnitude. However, in the situation where the magnitude of the Bayes risk of clustering is much smaller, that is when the emission distributions are very separated, one can hope to obtain faster rates. For example, when $\inf_{g \in \mathcal{G}_n}\ClusterRisk(\theta,g) = \mathcal{O}\left(e^{-c n}\right)$ for $c$ a positive absolute constant, one can hope to show that the risk of the plug-in procedure is exponentially small in $n$. The following lemma represents a first step for the proof of such a result. 
    
\begin{lemma}
\label{lem:fastrates}
    For all $0<\gamma<1 / 2$ and all $\theta \in \Theta$
    \begin{equation}
        \label{eq:lem:fastrates}
        \ClassifRisk(\theta, h_{\hat{\theta}}) \leq \frac{\inf _{h \in \mathcal{H}_n} \ClassifRisk(\theta, h)}{1 / 2-\gamma}+\frac{1}{n} \sum_{i=1}^n \mathbb{P}_\theta\left(\left\|\phi_{\theta, i \mid n}-\phi_{\hat{\theta}, i \mid n}\right\|_{\mathrm{TV}}>\gamma\right) .
    \end{equation}
    where $\phi_{\theta, i \mid n} = \PP_{\theta}\left(X_{i} \in . \mid Y_{1:n}\right)$ and $h_{\hat{\theta}}$ is the plug-in classifier defined in Section~\ref{sec:slow-rates-plugin}.
\end{lemma}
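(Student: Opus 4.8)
The plan is to prove a deterministic inequality that holds pathwise in $Y_{1:n}$ (and in $\hat\theta$, so that any data-dependence of the estimator causes no trouble), and then integrate. The starting point is that for any classifier $h\in\mathcal{H}_n$ and any index $i$, the conditional error factorizes through the smoothing distribution,
\begin{equation*}
  \PP_\theta\big(h_i(Y_{1:n})\neq X_i \mid Y_{1:n}\big) = 1 - \phi_{\theta,i\mid n}\big(h_i(Y_{1:n})\big).
\end{equation*}
Writing $x_i^\star = \argmax_{x\in\mathbb{X}}\phi_{\theta,i\mid n}(x)$ for the Bayes label, $\hat x_i = \argmax_{x\in\mathbb{X}}\phi_{\hat\theta,i\mid n}(x)$ for the plug-in label, and $M_i = \phi_{\theta,i\mid n}(x_i^\star)$, the Bayes conditional error is $1-M_i$ and the plug-in conditional error is $1-\phi_{\theta,i\mid n}(\hat x_i)$. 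Thus everything reduces to controlling $1-\phi_{\theta,i\mid n}(\hat x_i)$ by $(1-M_i)/(1/2-\gamma)$ plus the indicator of the ``bad'' event.

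The heart of the argument is a case analysis on the event $\{\|\phi_{\theta,i\mid n}-\phi_{\hat\theta,i\mid n}\|_{\mathrm{TV}}\le\gamma\}$. On this ``good'' event, since total variation dominates the gap at every singleton, $|\phi_{\theta,i\mid n}(x)-\phi_{\hat\theta,i\mid n}(x)|\le\gamma$ for all $x\in\mathbb{X}$; combined with the optimality $\phi_{\hat\theta,i\mid n}(\hat x_i)\ge\phi_{\hat\theta,i\mid n}(x_i^\star)$ this yields $\phi_{\theta,i\mid n}(\hat x_i)\ge M_i-2\gamma$. If $\hat x_i=x_i^\star$ the plug-in error equals the Bayes error and the bound is immediate. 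If instead $\hat x_i\ne x_i^\star$, then $x_i^\star$ and $\hat x_i$ are distinct labels, so $\phi_{\theta,i\mid n}(x_i^\star)+\phi_{\theta,i\mid n}(\hat x_i)\le 1$; together with $\phi_{\theta,i\mid n}(\hat x_i)\ge M_i-2\gamma$ this forces $2M_i-2\gamma\le 1$, i.e. $1-M_i\ge 1/2-\gamma$. The trivial bound $1-\phi_{\theta,i\mid n}(\hat x_i)\le 1$ then gives $1-\phi_{\theta,i\mid n}(\hat x_i)\le(1-M_i)/(1/2-\gamma)$. This two-points argument is precisely what produces the multiplicative constant $1/(1/2-\gamma)$ and is the main (and only delicate) step. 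On the complementary ``bad'' event I would simply use $1-\phi_{\theta,i\mid n}(\hat x_i)\le 1=\1_{\|\phi_{\theta,i\mid n}-\phi_{\hat\theta,i\mid n}\|_{\mathrm{TV}}>\gamma}$.

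Combining the two cases yields the pathwise pointwise inequality
\begin{equation*}
  \PP_\theta\big(\hat x_i\neq X_i \mid Y_{1:n}\big) \le \frac{\PP_\theta\big(x_i^\star\neq X_i \mid Y_{1:n}\big)}{1/2-\gamma} + \1_{\|\phi_{\theta,i\mid n}-\phi_{\hat\theta,i\mid n}\|_{\mathrm{TV}}>\gamma}.
\end{equation*}
To conclude I would take $\EE_\theta[\cdot]$ on both sides, average over $i=1,\dots,n$, and invoke that the Bayes classifier $h_\theta^\star$ (whose coordinates are the $x_i^\star$) attains $\inf_{h\in\mathcal{H}_n}\ClassifRisk(\theta,h)$, so that $\tfrac1n\sum_i\EE_\theta[\PP_\theta(x_i^\star\ne X_i\mid Y_{1:n})]=\inf_{h\in\mathcal{H}_n}\ClassifRisk(\theta,h)$. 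This gives exactly \eqref{eq:lem:fastrates}. Apart from the two-points argument, every step is routine bookkeeping; the subtlety worth highlighting is that the multiplicative factor arises not from a continuity estimate but from the geometric observation that a disagreement between Bayes and plug-in on the good event can only occur where the Bayes error is already large.
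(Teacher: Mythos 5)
Your proof is correct and takes essentially the same approach as the paper: the paper introduces the margin event $E_{n,i}=\{\max_k \PP_\theta(X_i=k\mid Y_{1:n})\ge 1/2+\gamma\}$, shows the plug-in label coincides with the Bayes label on $E_{n,i}$ intersected with the small-TV event, and handles $E_{n,i}^c$ by Markov's inequality, which produces the factor $1/(1/2-\gamma)$ in exactly the same way as your trivial bound $1\le (1-M_i)/(1/2-\gamma)$. Your two-point sum-to-one argument (disagreement on the good event forces $M_i\le 1/2+\gamma$) is precisely the contrapositive of the paper's margin-stability step, so the difference is purely organizational: you state the inequality pathwise and integrate, while the paper decomposes the risk over the three events first.
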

The proof of Lemma~\ref{lem:fastrates} is given in \cite[Section~S13]{GKN2025SM}.

Observe that the second term of the rhs of \eqref{eq:lem:fastrates} is a large deviation term which may eventually decrease exponentially fast in $n$. The only price to pay to obtain large deviation type of decay is a constant factor of at least 2 in front of $\inf _{h \in \mathcal{H}_n} \ClassifRisk(\theta, h)$. In situations where $\inf _{h \in \mathcal{H}_n} \ClassifRisk(\theta, h)$ is small, this might be advantageous compared to the bound in Theorem~\ref{thm:excess_risk:clust:rate}. 
In Proposition 2.2 of \cite{YES17}, the authors prove the inequality 
\begin{multline}
\label{eq:smoothing-control}
    \lVert \phi_{\theta, i|n}(.,Y_{1:n}) - \phi_{\hat{\theta}, i|n}(.,Y_{1:n})\rVert_{\mathrm{TV}} \leq
    \frac{4(1 - \delta)}{\delta^{2}}\Biggl(\rho^{i-1}\lVert \nu - \hat{\nu}\rVert_{2}\\ + \left(1/(1 - \rho) + 1/(1 - \hat{\rho}) \right)\lVert Q - \hat{Q}\rVert_{\mathrm{F}} + \sum_{l=1}^{n}\delta\frac{(\hat{\rho}\vee \rho)^{|l-i|}}{c^{\star}(Y_l)}\max_{x\in\mathbb{X}}\left|f_{x}(Y_{l}) - \hat{f}_{x}(Y_{l})\right|\Biggr)
\end{multline}
where $c^{\star}(y) = \min_{x\in\mathbb{X}}\sum_{x^{\prime}\in\mathbb{X}}Q(x, x^{\prime})f_{x^{\prime}}(y)$, $\rho=1-\delta/(1-\delta)$ and $\hat{\rho}=1-\hat{\delta}/(1-\hat{\delta})$ where $\hat{\delta}$ is an estimator of $\delta$. \eqref{eq:smoothing-control} can be used to control the second term in the rhs of \eqref{eq:lem:fastrates}. This would require to derive large deviation inequalities for all the terms involved in \eqref{eq:smoothing-control}, which turns out to be a rather challenging problem.

\paragraph*{Optimal excess risk}

Although we obtain upper bounds on the excess risk of clustering of the plug-in procedure (Theorem~\ref{thm:excess_risk:clust:rate}), we do not know the optimal rate of decay of the excess risk. In particular, it is unknown if the plug-in procedure achieves optimal excess risk. The Lemma~\ref{lem:fastrates} suggests that when the Bayes risk is smaller than $\mathcal{O}(n^{-s/(2s+1)})$ then our upper bounds on the excess risk of the plug-in could be improved. Yet without optimality guarantees. Determining the optimal excess risk is an open and interesting question to investigate.

\paragraph*{Alternatives to plug-in}

Under the hidden Markov modeling, the most straightforward way to take advantage of the identifiability of the model is to estimate the model parameters and use them for clustering through the plug-in procedure. Unlike the i.i.d. case where algorithms such as $k$-means can be used as an alternative, we do not know of any alternative to the plug-in in the HMM case. It would be very interesting to find clustering procedures that leverage the nonparametric identifiability of HMM without relying on estimating the parameters first.

\begin{acks}[Acknowledgments]
The authors would like to thank Ziv Scully for providing a proof for \cite[Lemma~S1.1]{GKN2025SM} and the associate editor for pointing out the question of the relationship between the Bayes classifier and the Bayes clusterer.
\end{acks}

\begin{funding}
{\'E}lisabeth Gassiat is supported by \textit{Institut Universitaire de France}. All the authors are supported by the \textit{Agence Nationale de la Recherche} under projects ANR-21-CE23-0035-02 and ANR-23-CE40-0018-02. 
\end{funding}

\begin{supplement}
\stitle{Clustering risk in Non-parametric Hidden Markov and I.I.D. Models: supplementary material}
\sdescription{This supplementary material contains all the missing proofs of the article.}
\end{supplement}

  \putbib%
\end{bibunit}

\newpage

\setcounter{page}{1}
\setcounter{section}{0}
\setcounter{table}{0}
\setcounter{figure}{0}
\setcounter{theorem}{0}
\setcounter{proposition}{0}
\setcounter{lemma}{0}
\setcounter{equation}{0}

\renewcommand{\thepage}{S\arabic{page}}
\renewcommand{\thesection}{S\arabic{section}}
\renewcommand{\thetable}{S\arabic{table}}
\renewcommand{\thefigure}{S\arabic{figure}}
\renewcommand{\thetheorem}{S\arabic{section}.\arabic{theorem}}
\renewcommand{\theproposition}{S\arabic{section}.\arabic{proposition}}
\renewcommand{\thelemma}{S\arabic{section}.\arabic{lemma}}
\renewcommand{\theequation}{S\arabic{section}.\arabic{equation}}

\begin{bibunit}[imsart-number]

\begin{frontmatter}
\title{Clustering risk in Non-parametric Hidden Markov and I.I.D. Models: supplementary material}
\runtitle{Model-based Clustering using Non-parametric Hidden Markov Models}

\begin{aug}
\author[A]{\fnms{{\'E}lisabeth}~\snm{Gassiat}\ead[label=e1]{elisabeth.gassiat@universite-paris-saclay.fr}},
\author[A]{\fnms{Ibrahim}~\snm{Kaddouri}\ead[label=e2]{ibrahim.kaddouri@universite-paris-saclay.fr}}
\and
\author[B]{\fnms{Zacharie}~\snm{Naulet}\ead[label=e3]{zacharie.naulet@inrae.fr}}
\address[A]{Université Paris-Saclay, CNRS, Laboratoire de mathématiques d’Orsay, 91405, Orsay, France\printead[presep={,\ }]{e1,e2}}
\address[B]{Université Paris-Saclay, INRAE, MaIAGE, 78350, Jouy-en-Josas, France\printead[presep={,\ }]{e3}}

\end{aug}

\begin{abstract}
This document is supplementary material for the article \cite{GKN2025main}. It contains the missing proofs. We refer to the main document \cite{GKN2025main} for all the definitions.
\end{abstract}

\begin{keyword}[class=MSC]
\kwd[Primary ]{62M05}
\kwd[; secondary ]{62G99}
\end{keyword}

\begin{keyword}
\kwd{Clustering}
\kwd{Nonparametric populations}
\kwd{Hidden Markov models}
\kwd{Mixture models}
\end{keyword}

\end{frontmatter}

   \section{Proof of \texorpdfstring{\cite[Theorem~\ref*{main-thm:bayes-risk:min:iid:J=2}]{GKN2025main}}{[4, Theorem 4]} }
\label{proof:bayes-risk:min:iid:J=2}

Let $\theta\in\Theta^{\mathrm{ind}}$. Thanks to \cite[Equation~(\ref*{main-eq:risk-clustering:classif})]{GKN2025main}, the Bayes clusterer $g^{\star}_{\theta}$ can be defined as the partition $g^{\star}_{\theta} = \pi_n\circ \Tilde{h}_{\theta}$ where $\Tilde{h}_{\theta} = \left(\Tilde{h}_{\theta, i}\right)_{i\in[n]}$ is the classifier minimizing:
\begin{equation}
    h = \left(h_{i}\right)_{1\leq i \leq n} \mapsto%
\EE_{\theta}\Bigg[\min_{\tau \in \mathcal{S}_{2}}\frac{1}{n}\sum_{i=1}^n\1_{h_i(Y_{1:n}) \ne \tau(X_i)} \Bigg].
\end{equation}
Let $Y_{1:n}$ be $n$ i.i.d. observations of the mixture with parameters $\theta$. We have $g^{\star}_{\theta}(Y_{1:n}) = \pi_n \circ \Tilde{h}_{\theta}(Y_{1:n})$ a.e, where
\begin{equation*}
    \Tilde{h}_{\theta}(Y_{1:n})\in\argmin_{h = (h_i)_{i\in[n]}}\EE_{\theta}\Bigg[\min_{\tau \in \mathcal{S}_{2}}\frac{1}{n}\sum_{i=1}^n\1_{h_i(Y_{1:n}) \ne \tau(X_i)} \bigg| Y_{1:n} \Bigg].
\end{equation*}
Given that:
\begin{equation*}
    \begin{split}
        \EE_{\theta}\Bigg[\min_{\tau \in \mathcal{S}_{2}}\frac{1}{n}\sum_{i=1}^n\1_{h_i(Y_{1:n}) \ne \tau(X_i)} \bigg| Y_{1:n} \Bigg] &= \EE_{\theta}\Bigg[\min\left(\frac{1}{n}\sum_{i=1}^n\1_{h_i(Y_{1:n}) \ne X_i}, 1 - \frac{1}{n}\sum_{i=1}^n\1_{h_i(Y_{1:n}) \ne X_i}\right) \bigg| Y_{1:n} \Bigg]\\
        &= \frac{1}{2} - \frac{1}{n}\EE_{\theta}\Bigg[\left|\sum_{i=1}^n\1_{h_i(Y_{1:n}) \ne X_i} - \frac{n}{2}\right| \bigg| Y_{1:n} \Bigg]
    \end{split}
\end{equation*}
one gets $\Tilde{h}_{\theta}(Y_{1:n})\in\argmax_{h = (h_i)_{i\in[n]}}\EE_{\theta}\Bigg[\left|\sum_{i=1}^n\1_{h_i(Y_{1:n}) \ne X_i} - \frac{n}{2}\right| \bigg| Y_{1:n} \Bigg]$. We now consider the following lemma. Its proof is due to Ziv Scully and can be found in \cite{Ziv}. We will detail its proof in Section~\ref{sec:proof:max_dev} for the sake of completeness.
\begin{lemma}\label{lem:max_dev:Scully}
    Let $(Z_i)_{i\in[n]}$ be a sequence of independent Bernoulli random variables such that $Z_i\sim\mathcal{B}(p_i)$ where $p_i\in\left\{\alpha_i, 1-\alpha_i\right\}$. Then the maximum $\max_{\left(p_i\right)_{i\in[n]}}\EE\left[\left|\sum_{i\in[n]}Z_i - \frac{n}{2}\right|\right]$ is reached at $\left(p_i\right)_{i\in[n]} = \left(\alpha_i\wedge (1-\alpha_i)\right)_{i\in[n]}$ and $\left(p_i\right)_{i\in[n]} = \left(\alpha_i\vee (1-\alpha_i)\right)_{i\in[n]}$.
\end{lemma}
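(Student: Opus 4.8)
The plan is to exploit the fact that $\phi(p) \coloneqq \EE\bigl[\,\bigl|\sum_{i\in[n]} Z_i - \tfrac n2\bigr|\,\bigr]$ is affine in each coordinate $p_j$ when the others are frozen, and to run an exchange argument that pushes any maximizer towards one of the two coherent configurations. Since the feasible set $\{\alpha_i, 1-\alpha_i\}$ is invariant under $\alpha_i \mapsto 1-\alpha_i$, I may assume without loss of generality that $\alpha_i \le 1/2$ for every $i$, so that $\alpha_i\wedge(1-\alpha_i) = \alpha_i$ (the ``low'' choice) and $\alpha_i\vee(1-\alpha_i) = 1-\alpha_i$ (the ``high'' choice). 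Writing $Z_i' = 1 - Z_i$ shows that $\sum_i Z_i'$ under the all-high configuration has the same law as $n - \sum_i Z_i$ under the all-low configuration, whence $\bigl|\sum_i Z_i' - \tfrac n2\bigr| \stackrel{d}{=} \bigl|\sum_i Z_i - \tfrac n2\bigr|$; therefore the two coherent configurations yield the same value of $\phi$, and it suffices to prove that one of them is a maximizer.

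First I would record the effect of a single flip. Fixing all coordinates but $j$ and setting $S_{-j} = \sum_{i\ne j} Z_i$ (independent of $Z_j$), conditioning on $S_{-j}$ gives $\phi(p) = p_j\,\EE\bigl|S_{-j}+1-\tfrac n2\bigr| + (1-p_j)\,\EE\bigl|S_{-j}-\tfrac n2\bigr|$, so switching coordinate $j$ from its low value $\alpha_j$ to its high value $1-\alpha_j$ changes $\phi$ by $(1-2\alpha_j)\,D_j$, where $D_j = \EE[f(S_{-j})]$ and $f(s) \coloneqq |s+1-\tfrac n2| - |s-\tfrac n2|$. A short piecewise computation shows $f(s) = -1$ for $s \le \tfrac n2 - 1$, $f(s) = +1$ for $s \ge \tfrac n2$, and $f(s) = 2s+1-n$ in between; in particular $f$ is nondecreasing, so for the integer-valued $S_{-j}$ the sign of $D_j$ measures only whether $S_{-j}$ sits predominantly above or below the threshold $n/2$.

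Next I would take any maximizer $p^\star$ (the feasible set is finite) and suppose, contrary to coherence, that both the low set $L = \{i : p_i^\star = \alpha_i\}$ and the high set $H = \{i : p_i^\star = 1-\alpha_i\}$ are nonempty (coordinates with $\alpha_i = 1/2$ may be placed in either set since they do not affect $\phi$). Optimality of $p^\star$ forbids any improving single flip: for $j \in L$ it gives $(1-2\alpha_j)D_j \le 0$, hence $D_j \le 0$, and for $j'\in H$ it gives $-(1-2\alpha_{j'})D_{j'} \le 0$, hence $D_{j'} \ge 0$. The crucial comparison is that $S_{-j}$ stochastically dominates $S_{-j'}$: both share the common sum $W = \sum_{i\ne j,j'} Z_i$, while $S_{-j} = W + Z_{j'}$ carries the high variable $Z_{j'}\sim\mathcal{B}(1-\alpha_{j'})$ and $S_{-j'} = W + Z_j$ carries the low variable $Z_j \sim\mathcal{B}(\alpha_j)$; since $\alpha_j \le 1-\alpha_{j'}$, a monotone coupling forces $S_{-j'} \le S_{-j}$ almost surely. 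As $f$ is nondecreasing, $D_j = \EE f(S_{-j}) \ge \EE f(S_{-j'}) = D_{j'}$, and combined with the optimality signs this yields $0 \ge D_j \ge D_{j'} \ge 0$, i.e. $D_j = 0$. Then flipping $j$ from low to high leaves $\phi$ unchanged, producing a new maximizer with strictly smaller $|L|$. Iterating empties $L$, so the all-high configuration is a maximizer; by the symmetry noted above so is the all-low one, which is exactly the claim.

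The step I expect to be the main obstacle is the coupling comparison in the last paragraph: one must take care that $S_{-j}$ and $S_{-j'}$ are compared at the \emph{same} maximizer $p^\star$, so that $W$ has a single well-defined law that can be coupled identically in both, and that the sign bookkeeping from optimality together with the monotonicity of $f$ genuinely pins $D_j$ to $0$ rather than merely bounding it. The remaining verifications—the monotonicity of $f$ and the degenerate case $\alpha_j = 1/2$—are routine.
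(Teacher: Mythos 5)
Your proof is correct, and while it rests on the same two pillars as the paper's argument, it closes in a genuinely different and arguably cleaner way. The shared ingredients are: (i) the objective is affine in each coordinate separately (the paper encodes this in $\pm1$ variables via the identity $\EE[|S_n|] = \EE[|S_{\neq k}|] + \PP(S_{\neq k}=0) + \beta_k\,\EE[Y_k]\,\EE[\mathrm{sign}(S_{\neq k})]$, you via the marginal effect $(1-2\alpha_j)D_j$ with $D_j = \EE[f(S_{-j})]$ for the nondecreasing $f(s) = |s+1-\tfrac n2| - |s-\tfrac n2|$), and (ii) a monotone coupling showing the leave-one-out sum at a low coordinate stochastically dominates the one at a high coordinate (the paper's Lemma~\ref{lem:fact:2}, your inequality $D_j \ge D_{j'}$). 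Where the proofs diverge is the endgame. The paper insists on finding a \emph{strictly} improving flip for every mixed assignment; to guarantee one exists it must assume the $\alpha_i$ are distinct and all on the same side of $1/2$ (its Lemma~\ref{lem:fact:1} then yields some $\EE[\mathrm{sign}(S_{\neq k})]\neq 0$), and it removes this genericity only at the very end by a continuity-in-$(\alpha_i)$ argument. You instead start from an exact maximizer of the finite feasible set and show that mixedness, the optimality inequalities $D_j \le 0 \le D_{j'}$, and the dominance $D_j \ge D_{j'}$ together force $D_j = 0$, so the flip is \emph{value-preserving}; iterating empties the low set. This dispenses with the genericity assumption, the distinctness lemma, and the limiting step, at no cost: the statement concerns attainment of the maximum, not strict suboptimality of mixed configurations, so value-preserving flips suffice. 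Your handling of the degenerate coordinates $\alpha_i = 1/2$ (their value is forced, hence consistent with both coherent configurations) and of the symmetry identifying the all-low and all-high values coincides with the paper's.
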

We apply this lemma to the random variables $\left(\1_{h_i(Y_{1:n}) \ne X_i}\right)_{i\in [n]}$ which are independent conditionally to $Y_{1:n}$ and ensure :
\begin{equation*}
    \1_{h_i(Y_{1:n}) \ne X_i} \mid Y_{1:n}\sim \mathcal{B}(p_i(Y_{i}))
\end{equation*}
such that $p_i(Y_{i})\in\left\{\PP_{\theta}\left(X_i = 1 \mid Y_i\right), \PP_{\theta}\left(X_i = 2 \mid Y_i\right)\right\}$. Two cases occur:
\begin{itemize}
    \item $\left(\forall i\in[n]\right)$ $\PP_{\theta}\left(X_i = 1 \mid Y_i\right)\neq 1/2$, then the Bayes classifier is unique and Lemma \ref{lem:max_dev:Scully} allows us to conclude that $\Tilde{h}_{\theta}(Y_{1:n}) = \left(\argmax_{x=1,2}\PP_{\theta}\left(X_i = x \mid Y_i\right)\right)_{i\in[n]} = h^{\star}_{\theta}\left(Y_{1:n}\right)$.
Consequently:
\begin{equation*}
    g^{\star}_{\theta}(Y_{1:n}) = \pi_n \circ h^{\star}_{\theta}(Y_{1:n}).
\end{equation*}
    \item $\left(\exists i\in[n]\right)$ $\PP_{\theta}\left(X_i = 1 \mid Y_i\right) = 1/2$, then the same argument yields
\begin{equation*}
    g^{\star}_{\theta}(Y_{1:n}) = \pi_n \circ h^{\star}_{\theta}(Y_{1:n}).
\end{equation*}
where we abuse the notation of $h^{\star}_{\theta}(Y_{1:n})$ to refer not only to a specific Bayes classifier but to the set of all the Bayes classifiers since it is not unique. The same is done for the Bayes clusterer $g^{\star}_{\theta}(Y_{1:n})$. The permutation $\pi_n$ is then applied to the set of all Bayes classifiers.
\end{itemize}

\section{Proof of \texorpdfstring{\cite[Theorem~\ref*{main-thm:gap:classif:cluster}]{GKN2025main}}{[4, Theorem 5]}}
\label{proof:thm:gap:classif:cluster}
 
We first prove the upper bound. Let define $h^{\star}_{\theta, i} (Y_i) = \argmax_{a\in\left\{1, 2\right\}}\PP_{\theta}\left(X_i = a\mid Y_i\right)$. Let $Z_n = \sum_{i=1}^{n}\1_{X_i \neq h^{\star}_{\theta, i} (Y_i)}$. One gets:
\begin{equation*}
    \begin{split}
        \inf_{h \in \mathcal{H}_n}\ClassifRisk(\theta,h) - \inf_{g \in \mathcal{G}_n}\ClusterRisk(\theta,g)&= \EE_{\theta}\left[\left(\frac{2}{n}Z_n -1\right)\1_{\frac{2}{n}Z_n > 1}\right]\\
        &= \int_{0}^{1}\PP_{\theta}\left(\left(\frac{2}{n}Z_n -1\right)\1_{\frac{2}{n}Z_n > 1}> x\right) dx.
    \end{split}
  \end{equation*}
  Namely,
  \begin{equation}
    \label{eq:gap:excact}
    \inf_{h \in \mathcal{H}_n}\ClassifRisk(\theta,h) - \inf_{g \in \mathcal{G}_n}\ClusterRisk(\theta,g)
    = \int_{0}^{1}\PP_{\theta}\left(Z_n >  \frac{n}{2}\left(x + 1\right)\right) dx%
    \eqqcolon J_n.
  \end{equation}
Chernoff bound yields:
\begin{equation*}
    \begin{split}
        \PP_{\theta}\left(Z_n > \frac{n}{2}\left(x + 1\right)\right)&\leq \inf_{\lambda}\left\{\exp\left(-\frac{n\lambda}{2}(x+1)\right)\EE_{\theta}\left[\exp\left(\lambda Z_n\right)\right]\right\}dx\\
        &\leq\left(\left(\frac{1+x}{1 - 2\varepsilon_{n, \theta}}\right)^{\frac{1+x}{2}}\left(\frac{1-x}{1 + 2\varepsilon_{n, \theta}}\right)^{\frac{1-x}{2}}\right)^{-n}.
    \end{split}
\end{equation*}
Let $g(t) = \frac{1+t}{2}\log\left(\frac{1+t}{1 - 2\varepsilon_{n, \theta}}\right) + \frac{1-t}{2}\log\left(\frac{1-t}{1 + 2\varepsilon_{n, \theta}}\right)$. Then, $g^{\prime}(t) = \frac{1}{2}\log\left(\frac{1+t}{1-t}\right) + \frac{1}{2}\log\left(\frac{1 + 2\varepsilon_{n, \theta}}{1 - 2\varepsilon_{n, \theta}}\right)$ and $g^{\prime\prime}(t) = \frac{1}{1 - t^{2}}$. Deduce that $g(t) \geq g(0) + \max(g'(0)t, \frac{t^2}{2} )$ for all $t \in (0,1)$. Then,
\begin{align*}
  J_n%
  &\leq e^{-ng(0)}\int_0^1e^{-n \max(g'(0)t, \frac{t^2}{2})}\intd t\\
  &\leq e^{-ng(0)}\min\Bigg(\int_0^1e^{-ng'(0)t}\intd t,\, \int_0^1e^{-nt^2/2}\intd t \Bigg)\\
  &\leq \min\Big(\frac{e^{-ng(0)}}{ng'(0)},\, \sqrt{\frac{\pi}{2n}} \Big)
\end{align*}

We now derive the more challenging lower bound. We assume throughout that $n\geq 100$. We also assume that  $\inf_{h\in \mathcal{H}_n}\ClassifRisk(\theta,h) > 0$ otherwise the lower bound is zero and holds trivially. Suppose first that $0 \leq \varepsilon_{n, \theta} \leq \frac{1}{10\sqrt{n}}$. Then,

\begin{align*}
  J_n%
  &\geq \int_0^{\frac{1}{10\sqrt{n}}}P\Big(S_n >  \frac{n(x+1)}{2}\Big)\intd x\\%
  &\geq \frac{1}{10\sqrt{n}}P\Big(S_n >  \frac{n}{2} + \frac{\sqrt{n}}{20}\Big)\\
  &= \frac{1}{10\sqrt{n}} P\Big( \frac{S_n - n(\frac{1}{2} - \varepsilon_{n, \theta})}{\sqrt{n(1 - \varepsilon_{n, \theta}^2)/4})} >   \frac{\frac{\sqrt{n}}{20} +  n \varepsilon_{n, \theta}}{\sqrt{n(1 - \varepsilon_{n, \theta}^2)/4}}\Big)\\
  &\geq \frac{1}{10 \sqrt{n}} P\Big( \frac{S_n - n(\frac{1}{2} - \varepsilon_{n, \theta})}{\sqrt{n(1 - \varepsilon_{n, \theta}^2)/4}} > \sqrt{\frac{10}{111}} \Big)
\end{align*}
because $n \geq 100$ and $\varepsilon_{n, \theta} \leq \frac{1}{10\sqrt{n}} \leq \frac{1}{100}$. By the theorem of Berry and Esseen,
\begin{align*}
  J_n%
  &\geq \frac{1}{\sqrt{n}}\Bigg(1 - \Phi\big(\sqrt{\frac{10}{111}}\big)%
  - \frac{0.4748}{\sqrt{n(1-\varepsilon_{n, \theta}^2)/4}}\Bigg)%
  \geq \frac{0.2870}{\sqrt{n}}
\end{align*}
since $n\geq 100$. Finally, because $\varepsilon_{n, \theta} \leq \frac{1}{10\sqrt{n}}$ it must be that
\begin{align*}
  \frac{\exp\big(-ng(0)\big[1 + \frac{6.8}{1\vee 10\sqrt{n\varepsilon_{n, \theta}^2}}\big] \big) }{ng'(0)}%
  &\geq\frac{1}{\sqrt{n}} \frac{\exp\big(-\frac{1}{2}\log(1 - 0.04/n)[1 + 6.8] \big)}{\frac{1}{2\sqrt{n}}\log(\frac{1+0.2/\sqrt{n}}{1-0.2/\sqrt{n}} )}\\
  &\geq \frac{500}{\sqrt{n}}.
\end{align*}
Deduce that for a universal constant $B > 0$
\begin{equation}
  \label{eq:mainlb}
  J_n \geq B\min\Bigg(\frac{\exp\big(-ng(0)\big[1 + \frac{6.8}{1\vee 10\sqrt{n\varepsilon_{n, \theta}^2}}\big] \big) }{ng'(0)}  ,\frac{1}{\sqrt{n}} \Bigg).
\end{equation}
Now suppose $\frac{1}{10\sqrt{n}} < \varepsilon_{n, \theta} < \frac{1}{2}$. We first lower bound,
\begin{align*}
  J_n 
  &\geq \int_0^{\frac{1}{10\sqrt{n}}}P\Big( S_n > \frac{n(x+1)}{2}\Big)\intd x.
\end{align*}
We lower bound the probability $P(S_n > \frac{n}{2}(x+1))$ using Cramér's technique. In the next $0 \leq x \leq \frac{1}{10\sqrt{n}}$. Then for all $\lambda > 0$ and all $0 < \delta < \frac{n}{2} - \frac{\sqrt{n}}{20}$ (which guarantees that $\frac{n(1+x)}{2} + \delta < n$] we have
\begin{align*}
  P\Big(S_n > \frac{n}{2}(x+1)\Big)%
  &= \sum_{\frac{n(x+1)}{2}<y\leq n} \binom{n}{y}r^y(1-r)^{n-y}\\%
  &\geq \sum_{\frac{n(x+1)}{2} < y < \frac{n(x+1)}{2}+ \delta} \binom{n}{y}e^{-\lambda y + n \psi_r(\lambda)} \frac{(re^{\lambda})^y(1-r)^{n-y}}{\exp(n \psi_r(\lambda))}w
\end{align*}
where $\psi_r(\lambda) = \log(1 - r +re^{\lambda})$ is the cumulant generating function of the Bernoulli distribution with parameter $r$; where $r \equiv \inf_{h\in \mathcal{H}_n}\ClassifRisk(\theta,h)$ for simplicity. Here one notice that $y \mapsto \binom{n}{y}\frac{(pe^{\lambda})^y(1-p)^{n-y}}{\exp(n \psi_r(\lambda))}$ is the density of the Binomial distribution with parameters $(n,q_{\lambda}) $ where $q_{\lambda} = \frac{r e^{\lambda}/(1-r)}{1 + re^{\lambda}/(1-r)}$. Letting $\tilde{S}_n \sim \mathrm{Binomial}(n,q_{\lambda})$, it is seen that
\begin{equation*}
  P\Big(S_n > \frac{n}{2}(x+1)\Big)%
  \geq e^{-\lambda( \frac{n(x+1)}{2} + \delta)  + n\psi_r(\lambda)}P\Bigg(\frac{n}{2}(x+1) < \tilde{S}_n < \frac{n}{2}(x+1)+\delta\Bigg).
\end{equation*}
Now we make the choice that $q_{\lambda} = \frac{1+x}{2} + \delta/n$, which corresponds to  $\lambda = -\log\big(1 - \frac{1+x + 2\delta/n}{2}\big) + \log\big(\frac{1-r}{r} \big)$. Observe that $\lambda$ is well-defined and positive since by assumption $0 < r \leq \frac{1}{2}$ and $1+x + 2\delta/n < 2$; this also guarantees that $0 < q_{\lambda} < 1$. Then,
\begin{align*}
  P\Big(S_n > \frac{n}{2}(x+1)\Big)%
  &\geq e^{-n I_r(\frac{1+x}{2} + \frac{\delta}{n} )}P\Big(\frac{n(1+x)}{2} < \tilde{S}_n < \frac{n(1+x)}{2}+ \delta \Big)w\\
  &= e^{-n g(x + 2\delta/n)}P\Big( -\frac{\delta}{\sqrt{nq_{\lambda}(1-q_{\lambda})}} < \frac{\tilde{S}_n - nq_{\lambda} }{\sqrt{nq_{\lambda}(1-q_{\lambda})}} < 0 \Big)%
\end{align*}
By the theorem of Berry and Esseen, we can conclude that
\begin{align*}
  P\Big(S_n > \frac{n}{2}(x+1)\Big)
  &\geq e^{-n g(x + 2\delta/n)}\Bigg(\Phi(0) - \Phi\Big(-\frac{\delta}{\sqrt{nq_{\lambda}(1-q_{\lambda})}} \Big)%
    - 2\frac{0.4748}{\sqrt{nq_{\lambda}(1-q_{\lambda})}} \Bigg)\\
  &\geq e^{-n g(x + 2\delta/n)}\Bigg(\frac{1}{2} -  \Phi\Big(-\frac{2\delta}{\sqrt{n}} \Big)%
    -  \frac{1.8992}{\sqrt{n(1-\kappa_x^2)}} \Bigg)
\end{align*}
where $\kappa_x \coloneqq x + \frac{2\delta}{n}$. We choose $\delta = - \frac{\sqrt{n}}{2}\Phi^{-1}(1/4)$. This implies that $\kappa_x \leq \frac{0.1 - \Phi^{-1}(1/4)}{\sqrt{n}} \leq \frac{0.7745}{\sqrt{n}}$ and $\Phi(-2\delta/\sqrt{n}) = \frac{1}{4}$. Consequently for $n \geq 100$,
\begin{equation*}
  P\Big(S_n > \frac{n}{2}(x+1)\Big)
  \geq 0.0595\cdot e^{-n g(x + 2\delta/n)},\ \mathrm{and},\quad%
  J_n \geq 0.0595\int_0^{\frac{1}{10\sqrt{n}}}e^{-ng(x + 2\delta/n)}\intd x.
\end{equation*}
A Taylor expansion of $g$ near zero yields the existence of $u \in (0,x+2\delta/n)$ such that $g(x + 2\delta/n) = g(0) + g'(0)(x + 2\delta/n) + g''(u)(x + 2\delta/n)^2/2$. But when $x \in (0,\frac{1}{10\sqrt{n}})$ it must be that $0 \leq x+2\delta/n \leq \frac{0.1 - \Phi^{-1}(1/4)}{\sqrt{n}} \leq \frac{0.7745}{\sqrt{n}}$ and thus $g''(u) = \frac{1}{1-u^2} \leq 1.0061$ when $n\geq 100$. Hence,
\begin{align*}
  J_n%
  &\geq 0.0595\cdot e^{-ng(0) - 2\delta g'(0) - \frac{1.0061\cdot 0.7745^2}{2}} \int_0^{\frac{1}{10\sqrt{n}}}e^{-ng'(0)x}\intd x\\
  &= 0.0440\cdot\Big(1 - e^{-\frac{g'(0)\sqrt{n}}{10}}\Big)\frac{e^{-ng(0)[1 + \frac{2\delta g'(0)}{n g(0)}] }}{n g'(0)}.
\end{align*}
Here recall that $g(0) = -\frac{1}{2}\log(1 - 4\varepsilon_{n, \theta}^2)$ and $g'(0) = \frac{1}{2}\log\big(\frac{1 + 2\varepsilon_{n, \theta}}{1-2\varepsilon_{n, \theta}} \big)$. It follows that the function $\varepsilon_{n, \theta} \mapsto g'(0) - \frac{g(0)}{\varepsilon_{n, \theta}}$ admits the derivative $\varepsilon_{n, \theta} \mapsto \frac{-(1 -4\varepsilon_{n, \theta}^2)\log(1-4\varepsilon_{n, \theta}^2) - 4\varepsilon_{n, \theta}^2}{\varepsilon_{n, \theta}^2(1-4\varepsilon_{n, \theta}^2)}$ which is negative for $\varepsilon_{n, \theta} > 0$. Deduce that $g'(0) - \frac{g(0)}{\varepsilon_{n, \theta}} \leq 0$, or equivalently $\frac{g'(0)}{g(0)} \leq \frac{1}{\varepsilon_{n, \theta}}$. Therefore,
\begin{align*}
  \frac{2\delta g'(0)}{n g(0)}%
  &\leq \frac{-\Phi^{-1}(1/4)}{\sqrt{n}\varepsilon_{n, \theta}}%
    \leq \frac{6.8}{\max(1,10 \sqrt{n\varepsilon_{n, \theta}^2}) }.
\end{align*}
Similarly since $\varepsilon_{n, \theta} \mapsto g'(0)$ is monotonically increasing, $\frac{g'(0)\sqrt{n}}{10} \geq \frac{\frac{1}{2}\log(\frac{1+ 0.4/\sqrt{n}}{1-0.4/\sqrt{n}})\sqrt{n}  }{10} \geq 0.04$. Hence,
\begin{equation*}
  J_n \geq%
  0.0017\cdot \frac{\exp\big(-ng(0)\big[1 + \frac{6.8}{1\vee 10\sqrt{n\varepsilon_{n, \theta}^2}}\big] \big) }{ng'(0)}
\end{equation*}
Finally, since $\varepsilon_{n, \theta} > \frac{1}{10\sqrt{n}}$ the above computations show that $n g'(0) \geq 0.4 \sqrt{n}$. Therefore there is a universal constant $B > 0$ such that Equation~\eqref{eq:mainlb} is also satisfied when $\frac{1}{10\sqrt{n}} < \varepsilon_{n, \theta} < \frac{1}{2}$.

\section{Proof of \texorpdfstring{\cite[Theorem~\ref*{main-thm:bayes-risk:min:iid:J>2}]{GKN2025main}}{[4, Theorem 6]}}
\label{proof:bayes-risk:min:iid:J>2}

Given two partitions $A$ and $B$, we recall the clustering loss defined in \cite[Equation~(\ref*{main-def:min_overlap})]{GKN2025main}:
\begin{equation*}
    \ell\left(A, B\right) = 
    1 - \frac{1}{n}\sup_{\substack{M \subseteq \mathcal{E}(A, B)\\M\, \textrm{is\, a matching}} 
    } \sum_{ \{C,C' \} \in M }\card{(C \cap C' )}
\end{equation*}
We define 
\[
\left(\forall i \in [n]\right)\left(\forall k\in \mathbb{X}\right) \quad \alpha^{(Y_i)}_{k} = \frac{\nu_{k}f_{k}(Y_i)}{\sum_{j=1}^{J} \nu_{j}f_{j}(Y_i)}
\]
and $h^{\star}_{\theta}(Y_{1:n}) = \left(h^{\star}_{\theta, i}(Y_{i})\right)_{i \in [n]}$ where $ \left(\forall i\in[n]\right)~ h^{\star}_{\theta, i}(Y_i) = \argmax_{k\in \mathbb{X}} \alpha^{(Y_i)}_{k}$. \\
Consider the event :
\begin{equation*}
    A_n = \bigcup_{j=1}^{J}\bigcap_{i=1}^{n}\left\{ \max_{k \neq j} \nu_k f_k (Y_i) < \nu_jf_j (Y_i)\right\}.
\end{equation*}
Then,
$$
A_n\subset \left\{\pi_{n}\left(\left(h^{\star}_{\theta, i}(Y_i)\right)_{i \in [n]}\right) = \pi_{n}\left(\left(1, \dots , 1\right)\right)\right\}.
$$
Let $\theta \in \Theta^{\mathrm{ind}}$, such that
        \begin{equation*}
            \PP_{\theta}\left(\bigcup_{j=1}^{J}~ \left\{0 < \max_{l \neq j} \nu_l f_l(Y) < \nu_j f_j (Y) \leq \sum_{l\neq j}\nu_l f_l (Y)\right\} \right) > 0.
        \end{equation*}
Then $\PP_{\theta}\left(A_n\right) >0$.
Since:
\begin{equation*}
    \begin{split}
        &A_n\cap\left\{\EE_{\theta}\Bigg[\ell(\pi_n(X_{1:n}), \pi_n(\left(1, \dots, 1)\right))\bigg| Y_{1:n}\Bigg] > \EE_{\theta}\Bigg[\ell(\pi_n(X_{1:n}), \pi_n(\left(1, \dots, 1, 2)\right))\bigg| Y_{1:n}\Bigg]\right\}\\
        &\subset \left\{g^{\star}_{\theta}(Y_{1:n}) \neq \pi_n\circ h^{\star}_{\theta}(Y_{1:n})\right\},
    \end{split}
\end{equation*}
it suffices then to show that
    \begin{equation*}
        \begin{split}
            \PP_{\theta}\left(\EE_{\theta}\Bigg[\ell(\pi_n(X_{1:n}), \pi_n(\left(1, \dots, 1)\right))\bigg| Y_{1:n}\Bigg] > \EE_{\theta}\Bigg[\ell(\pi_n(X_{1:n}), \pi_n(\left(1, \dots, 1, 2\right)))\bigg| Y_{1:n}\Bigg]\bigg| A_n\right) > 0.
        \end{split}
    \end{equation*}
    Let:
    \begin{equation*}
        \begin{split}
            k^{(1)}_{n}(x_{1:n}) &= \argmax_{i\in\mathbb{X}}\sum_{k=1}^{n} \1_{x_k = i}\\
            N^{(1)}_{n}(x_{1:n}) &= \max_{i\in\mathbb{X}}\sum_{k=1}^{n}\1_{x_k = i}\\
            N^{(2)}_{n}(x_{1:n}) &= \max_{i\neq k^{(1)}_{n}(x_{1:n})}\sum_{k=1}^{n} \1_{x_k = i}\\
            N_{n,i}(x_{1:n}) &= \sum_{k=1}^{n}\1_{x_k = i}
        \end{split}
    \end{equation*}
    First, note that for $x_{1:n}\in \mathbb{X}^{n}$:
    \begin{equation*}
        \begin{split}
            \ell(\pi_{n}\left(x_{1:n}\right), \pi_{n}\left(1, \dots, 1, 2\right)) &= \left\{
    \begin{array}{ll}
        n - N^{(1)}_{n}(x_{1:n}) -1 & \mbox{if } x_n \neq k^{(1)}_{n} (x_{1:n}) \\
        n - N^{(1)}_{n}(x_{1:n}) +1 & \mbox{if } N^{(2)}_{n}(x_{1:n}) < N^{(1)}_{n}(x_{1:n}) -1,  x_n = k^{(1)}_{n} (x_{1:n}) \\
        n - N^{(1)}_{n}(x_{1:n}) & \mbox{if } N^{(2)}_{n}(x_{1:n}) = N^{(1)}_{n}(x_{1:n}) - 1, x_n = k^{(1)}_{n} (x_{1:n}) \\
        n - N^{(1)}_{n}(x_{1:n}) -1 & \mbox{if } N^{(2)}_{n}(x_{1:n}) = N^{(1)}_{n}(x_{1:n}), x_n = k^{(1)}_{n} (x_{1:n}) \\
    \end{array}
\right.
        \end{split}
    \end{equation*}
    The inequality
    \begin{equation}\label{ineq:1}
        \EE_{\theta}\big[\ell(\pi_n(X_{1:n}), \pi_n(\left(1, \dots, 1)\right))\big| Y_{1:n}\big] > \EE_{\theta}\big[\ell(\pi_n(X_{1:n}), \pi_n(\left(1, \dots, 1, 2\right)))\big| Y_{1:n}\big]
    \end{equation}
    is equivalent to: 
    \begin{equation*}
        \sum_{x_{1:n}\in\mathbb{X}^{n}}\ell(\pi_n(x_{1:n}), \pi_n(\left(1, ...,1\right)))\prod_{i=1}^{n}\alpha^{(Y_i)}_{x_i}> 
        \sum_{x_{1:n}\in\mathbb{X}^{n}}\ell(\pi_n(x_{1:n}), \pi_n(\left(1, .., 1, 2\right)))\prod_{i=1}^{n}\alpha^{(Y_i)}_{x_i}
    \end{equation*}
    Conditionally to $Y_{1:n}$, $X_1, ..., X_n$ are independent multinomial random variables such that $X_i\bigg| Y_i \sim \left(\alpha^{(Y_i)}_k\right)_{k\in\mathbb{X}}$. Using the expression of $\ell(\pi_n(x_{1:n}), \pi_n(\left(1, \dots, 1, 2\right)))$ and the fact that $\ell(\pi_n(x_{1:n}), \pi_n(\left(1, \dots, 1\right))) = n - N^{(1)}_{n}(x_{1:n})$, one obtains:
    \begin{equation*}
        \begin{split}
            \eqref{ineq:1}&\Leftrightarrow\sum_{\substack{x_{1:n}\in\mathbb{X}^{n}\\ x_n \neq k^{(1)}_{n}(x_{1:n}) \text{ or }\\  x_n = k^{(1)}_{n}(x_{1:n}) \text{ and } N^{(2)}_{n}(x_{1:n}) = N^{(1)}_{n}(x_{1:n})}} \prod_{i=1}^{n}\alpha^{(Y_i)}_{x_i} > \sum_{\substack{x_{1:n}\in\mathbb{X}^{n}\\ x_n = k^{(1)}_{n}(x_{1:n}) \text{ and } N^{(2)}_{n}(x_{1:n}) < N^{(1)}_{n}(x_{1:n})-1}} \prod_{i=1}^{n}\alpha^{(Y_i)}_{x_i}\\
            &\Leftrightarrow\PP_{\theta}\left(X_n \neq k^{(1)}_{n}(X_{1:n})\bigg| Y_{1:n}\right) + \PP_{\theta}\left(X_n = k^{(1)}_{n}(X_{1:n}), N^{(2)}_{n}(X_{1:n}) = N^{(1)}_{n}(X_{1:n})\bigg| Y_{1:n}\right)\\
            &> \PP_{\theta}\left(X_n = k^{(1)}_{n}(X_{1:n}), N^{(2)}_{n}(X_{1:n}) < N^{(1)}_{n}(X_{1:n})-1\bigg| Y_{1:n}\right)\\
            &\Leftrightarrow\PP_{\theta}\left(X_n \neq k^{(1)}_{n}(X_{1:n})\bigg| Y_{1:n}\right) - \PP_{\theta}\left(X_n = k^{(1)}_{n}(X_{1:n}), N^{(2)}_{n}(X_{1:n}) < N^{(1)}_{n}(X_{1:n})-1\bigg| Y_{1:n}\right)\\
            &+ \PP_{\theta}\left(X_n = k^{(1)}_{n}(X_{1:n}), N^{(2)}_{n}(X_{1:n}) = N^{(1)}_{n}(X_{1:n})\bigg| Y_{1:n}\right) > 0\\
        \end{split}
    \end{equation*}
    By marginalization over the possible values of $X_n$, one gets:
    \begin{equation*}
        \begin{split}
        \eqref{ineq:1}&\Leftrightarrow\sum_{j=1}^{J}\biggl[\PP_{\theta}\left(X_n \neq j, N_{n, j}(X_{1:n}) \geq \max_{k\neq j} N_{n, k}(X_{1:n})\bigg| Y_{1:n}\right)\\
        &- \PP_{\theta}\left(X_n = j, N_{n, j}(X_{1:n}) > \max_{k\neq j}N_{n, k}(X_{1:n}) +1\bigg| Y_{1:n}\right)\\
        &+ \PP_{\theta}\left(X_n = j, N_{n, j}(X_{1:n}) = \max_{k\neq j}N_{n, k}(X_{1:n})\bigg| Y_{1:n}\right)\biggr] > 0\\
        \Leftrightarrow&\sum_{j=1}^{J}\sum_{l\neq j}\bigg[ \alpha^{(Y_n)}_l \PP_{\theta}\left(N_{n-1, j}(X_{1:n-1}) \geq \max_{k\neq j, l} N_{n-1, k}(X_{1:n-1})\vee (N_{n-1, l}(X_{1:n-1}) + 1)\bigg| Y_{1:n-1}\right) \\
        &- \frac{\alpha^{(Y_n)}_l \alpha^{(Y_n)}_j}{1-\alpha^{(Y_n)}_j}\PP_{\theta}\left(N_{n-1, j}(X_{1:n-1}) > \max_{k\neq j}N_{n-1, k}(X_{1:n-1})\bigg| Y_{1:n-1}\right)\\
        & \frac{\alpha^{(Y_n)}_l \alpha^{(Y_n)}_j}{1-\alpha^{(Y_n)}_j}\PP_{\theta}\left(N_{n-1, j}(X_{1:n-1}) = \max_{k\neq j}N_{n-1, k}(X_{1:n-1})-1\bigg| Y_{1:n-1}\right)\bigg] > 0\\
        \Leftrightarrow&\sum_{j=1}^{J}\sum_{l\neq j}\bigg[ \alpha^{(Y_n)}_l \left\{\PP_{\theta}\left(N_{n-1, j}(X_{1:n-1}) \geq \max_{k\neq j, l} N_{n-1, k}(X_{1:n-1})\vee (N_{n-1, l}(X_{1:n-1})+1)\bigg| Y_{1:n-1}\right)\right.\\
        &- \left.\frac{ \alpha^{(Y_n)}_j}{1-\alpha^{(Y_n)}_j}\PP_{\theta}\left(N_{n-1, j}(X_{1:n-1}) > \max_{k\neq j}N_{n-1, k}(X_{1:n-1})\bigg| Y_{1:n-1}\right)\right\}\bigg]\\
        & + \sum_{j=1}^{J}\alpha^{(Y_n)}_{j}\PP_{\theta}\left(N_{n-1, j}(X_{1:n-1}) = \max_{k\neq j}N_{n-1, k}(X_{1:n-1})-1\bigg| Y_{1:n-1}\right) > 0.
        \end{split}
    \end{equation*}
    On the one hand, since:
    \begin{equation*}
        \begin{split}
            &\left\{N_{n-1, j}(X_{1:n-1}) > \max_{k\neq j}N_{n-1, k}(X_{1:n-1})\right\}\\
            &\subset\left\{N_{n-1, j}(X_{1:n-1}) \geq \max_{k\neq j, l} N_{n-1, k}(X_{1:n-1})\vee (N_{n-1, l}(X_{1:n-1}) + 1)\right\}
        \end{split}
    \end{equation*}
    and $\alpha^{(Y_n)}_{k} \leq \frac{1}{2}\iff \frac{\alpha^{(Y_n)}_{k}}{1 - \alpha^{(Y_n)}_{k}} \leq 1$, one gets:
    \begin{equation*}
        \begin{split}
            &\bigcap_{k\in\mathbb{X}}\left\{\alpha^{(Y_n)}_{k} \leq \frac{1}{2}\right\}\\
            &\subset  \left\{\sum_{j=1}^{J}\sum_{l\neq j}\alpha^{(Y_n)}_l\bigg[ \PP_{\theta}\left(N_{n-1, j}(X_{1:n-1}) \geq \max_{k\neq j, l} N_{n-1, k}(X_{1:n-1})\vee (N_{n-1, l}(X_{1:n-1})+1)\bigg| Y_{1:n-1}\right)\right.\\
            &\left.- \frac{ \alpha^{(Y_n)}_j}{1-\alpha^{(Y_n)}_j}\PP_{\theta}\left(N_{n-1, j}(X_{1:n-1}) > \max_{k\neq j} N_{n-1, k}(X_{1:n-1})\bigg| Y_{1:n-1}\right)\bigg] \geq 0\right\}.
        \end{split}
    \end{equation*}
    On the other hand,
    \begin{multline*}
        \bigcup_{l\neq j\in \mathbb{X}}\bigcap_{i=1}^{n}\left\{ \alpha_l^{(Y_i)}\wedge\alpha_j^{(Y_i)} > 0\right\}\subset\\ \left\{\sum_{j=1}^{J}\alpha^{(Y_n)}_{j}\PP_{\theta}\left(N_{n-1, j}(X_{1:n-1}) = \max_{k\neq j} N_{n-1, k}(X_{1:n-1})-1\bigg| Y_{1:n-1}\right) > 0\right\}
    \end{multline*}
    because for $k\in\mathbb{X}$, $N_{n-1, k}(X_{1:n-1})$ is a sum of Bernoulli random variables. Consequently,
    \begin{equation*}
        \begin{split}
            \bigcup_{l\neq j\in \mathbb{X}}\bigcap_{i=1}^{n}\left\{\alpha_l^{(Y_i)}\wedge\alpha_j^{(Y_i)} > 0\right\}\bigcap\bigcap_{k\in\mathbb{X}}\left\{\alpha^{(Y_n)}_{k} \leq \frac{1}{2}\right\} \subset \left\{\eqref{ineq:1}\right\}
        \end{split}
    \end{equation*}
    Consequently, 
    \begin{equation*}
        \begin{split}
            &\PP_{\theta}\left(\bigcup_{l\neq j\in \mathbb{X}}\bigcap_{i=1}^{n}\left\{\alpha_l^{(Y_i)}\wedge\alpha_j^{(Y_i)} > 0\right\}\bigcap\bigcap_{k\in\mathbb{X}}\left\{\alpha^{(Y_n)}_{k} \leq \frac{1}{2}\right\} \bigg| A_n\right)\\
            & \leq \PP_{\theta}\left(\EE_{\theta}\Bigg[\ell(\pi_n(X_{1:n}), \pi_n(\left(1, \dots, 1\right)))\bigg| Y_{1:n}\Bigg] > \EE_{\theta}\Bigg[\ell(\pi_n(X_{1:n}), \pi_n (\left(1, \dots, 1, 2\right)))\bigg| Y_{1:n}\Bigg]\bigg| A_n\right)
        \end{split}
    \end{equation*}
    To conclude, we only need to prove that the conditional probability in the lower-bound is positive. Finally, 
    \begin{equation*}
        \begin{split}
            &\bigcup_{1\leq j\neq k\leq J}\bigcap_{i=1}^{n}\left\{\max_{l \neq j} \nu_l f_l (Y_i) < \nu_j f_j (Y_i) \leq \sum_{l\neq j} \nu_l f_l(Y_i), \nu_k f_k (Y_i) > 0\right\}\\
            &\subset \bigcup_{j=1}^{J}\bigcap_{i=1}^{n}\left\{ \max_{l \neq j} \nu_l f_l (Y_i) < \nu_jf_j (Y_i) \leq \frac{1}{2}\sum_{l=1}^{J}  \nu_l f_l(Y_i)\right\}\bigcap\bigcup_{1\leq l\neq j\leq J}\bigcap_{i=1}^{n}\left\{ \nu_l f_l(Y_i) > 0, \nu_j f_j (Y_i) > 0\right\}\\
            &\subset \bigcup_{j=1}^{J}\bigcap_{i=1}^{n}\left\{\max_{l \neq j} \nu_l f_l (Y_i) < \nu_jf_j (Y_i) \right\}\bigcap\bigcup_{1\leq l\neq j \leq J}\bigcap_{i=1}^{n}\left\{ \nu_{l}f_{l}(Y_i) > 0, \nu_j f_j (Y_i) > 0\right\}\bigcap\bigcap_{k = 1}^{J}\left\{\alpha^{(Y_n)}_k < \frac{1}{2}\right\}\\
            &\subset A_n \bigcap \bigcup_{1\leq l\neq j\leq J}\bigcap_{i=1}^{n}\left\{ \alpha_l^{(Y_i)}\wedge\alpha_j^{(Y_i)} > 0\right\} \bigcap\bigcap_{k = 1}^{J}\left\{\alpha^{(Y_n)}_k \leq \frac{1}{2}\right\}
        \end{split}
    \end{equation*}

Given that the observations $Y_{1:n}$ are i.i.d. following the stationary distribution $\sum_{k=1}^{J}\nu_k f_k$ and that \\
    \begin{equation*}
        \begin{split}
            &\bigcup_{j=1}^{J}\left\{0 < \max_{l \neq j} \nu_l f_l(Y) < \nu_j f_j (Y) \leq \sum_{l\neq j}\nu_l f_l (Y) \right\}\\
            &\subset \bigcup_{1\leq j\neq k\leq J}\left\{\max_{l \neq j} \nu_l f_l(Y) < \nu_j f_j (Y) \leq \sum_{l\neq j} \nu_l f_l (Y), \nu_k f_k (Y) > 0 \right\}
        \end{split}
        \end{equation*}
    and that by assumption,
    \begin{equation*}
        \PP_{\theta}\left(\bigcup_{j=1}^{J}\left\{0 < \max_{l \neq j} \nu_l f_l(Y) < \nu_j f_j (Y) \leq \sum_{l\neq j}\nu_l f_l (Y) \right\} \right) > 0,
    \end{equation*}
    the result follows.

\section{Common elements to the proof of \texorpdfstring{\cite[Theorems~\ref*{main-thm:bayes-risk:lb:iid:J>2}, \ref*{main-thm:bayes-risk:lb:hmm:J=2}, and~\ref*{main-thm:bayes-risk:lb:hmm:J>2}]{GKN2025main}}{[4, Theorems 7, 9, and 11]}}

Recall the definition of $\ArianneRisk$ in \cite[Remark~\ref*{main-rmk:arianne}]{GKN2025main}: for all $(\theta,h) \in \Theta \times \mathcal{H}_n$,
\begin{equation}
  \label{eq:recall-arianne}
  \ArianneRisk(\theta,h)%
  = \EE_{\theta}\Big[ \min_{\tau \in \mathcal{S}_{J}}\EE_{\theta}\big[ U_{n,\tau}(h)  \mid Y_{1:n}\big] \Big]
\end{equation}
where $U_{n, \tau}(h) \coloneqq \frac{1}{n}\sum_{i=1}^{n}\1_{\tau(X_{i})\neq h_{i}(Y_{1:n})}$. We also make use of the notation $\hat{p}_{\tau}(h) \coloneqq \EE_{\theta}\left[U_{n,\tau}(h) \mid Y_{1:n}\right]$. Let $\hat{\tau}_h$ denote a $Y_{1:n}$-measurable permutation satisfying:
\begin{equation*}
  \hat{p}_{\hat{\tau}_h}(h) = \EE_{\theta}\left[U_{n,\hat{\tau}_h}(h) \mid Y_{1:n}\right]%
  = \min_{\tau}\EE_{\theta}\left[U_{n,\tau}(h) \mid Y_{1:n}\right] = \min_{\tau}\hat{p}_{\tau}(h).
\end{equation*}
Instead of comparing $\ClusterRisk$ and $\ClassifRisk$, we compare $\ClusterRisk$ and $\ArianneRisk$, which is enough to obtain the result thanks to the following easy lemma.

\begin{lemma}
  \label{lem:clust-vs-arianne}
  For all $\theta \in \Theta$ and all $n\geq 1$
  \begin{equation*}
    \inf_{h\in \mathcal{H}_n}\ClassifRisk(\theta,h)%
    = \inf_{h\in \mathcal{H}_n}\ArianneRisk(\theta,h).
  \end{equation*}
\end{lemma}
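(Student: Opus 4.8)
The plan is to prove the two inequalities separately. The inequality $\inf_h \ArianneRisk(\theta,h) \leq \inf_h \ClassifRisk(\theta,h)$ is immediate, while the reverse one requires constructing, from any given classifier, an auxiliary classifier that absorbs the optimal label permutation.

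First I would observe that the identity permutation $\mathrm{id}\in\mathcal{S}_{J}$ is always feasible in the inner minimization defining $\ArianneRisk$. Since $\hat{p}_{\mathrm{id}}(h) = \EE_{\theta}[U_{n,\mathrm{id}}(h)\mid Y_{1:n}]$ and $U_{n,\mathrm{id}}(h) = \frac1n\sum_{i=1}^n\1_{X_i \neq h_i(Y_{1:n})}$, the tower property gives, for every $h \in \mathcal{H}_n$,
\[
\ArianneRisk(\theta,h) = \EE_{\theta}\big[\min_{\tau}\hat{p}_{\tau}(h)\big] \leq \EE_{\theta}\big[\hat{p}_{\mathrm{id}}(h)\big] = \EE_{\theta}[U_{n,\mathrm{id}}(h)] = \ClassifRisk(\theta,h).
\]
Taking the infimum over $h$ yields $\inf_h \ArianneRisk(\theta,h) \leq \inf_h \ClassifRisk(\theta,h)$.

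For the reverse inequality, the key idea is that applying a permutation to the labels output by a classifier produces another classifier. Given $h \in \mathcal{H}_n$, let $\hat{\tau}_h$ be the $Y_{1:n}$-measurable minimizing permutation introduced above and define $\tilde{h} = (\tilde{h}_i)_{i\in[n]}$ by $\tilde{h}_i(Y_{1:n}) \coloneqq \hat{\tau}_h^{-1}\big(h_i(Y_{1:n})\big)$. Because $\hat{\tau}_h$ is a bijection of $\mathbb{X}$, the pointwise identity $\1_{\tilde{h}_i \neq X_i} = \1_{\hat{\tau}_h(X_i) \neq h_i}$ gives $\frac1n\sum_{i=1}^n\1_{\tilde{h}_i \neq X_i} = U_{n,\hat{\tau}_h}(h)$. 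Conditioning on $Y_{1:n}$, which fixes $\hat{\tau}_h$, and using the tower property,
\[
\ClassifRisk(\theta,\tilde{h}) = \EE_{\theta}\big[\EE_{\theta}[U_{n,\hat{\tau}_h}(h)\mid Y_{1:n}]\big] = \EE_{\theta}\big[\hat{p}_{\hat{\tau}_h}(h)\big] = \EE_{\theta}\big[\min_{\tau}\hat{p}_{\tau}(h)\big] = \ArianneRisk(\theta,h).
\]
Hence $\inf_h \ClassifRisk(\theta,h) \leq \ClassifRisk(\theta,\tilde{h}) = \ArianneRisk(\theta,h)$ for every $h$, and the infimum over $h$ closes the argument.

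The only point demanding care — and the main technical obstacle, albeit a mild one — is the existence and measurability of the selector $\hat{\tau}_h$, and consequently of $\tilde{h}$. Since $\mathcal{S}_{J}$ is finite, the family $\{\tau \mapsto \hat{p}_{\tau}(h)\}$ consists of finitely many $Y_{1:n}$-measurable random variables, so one may fix an arbitrary enumeration of $\mathcal{S}_{J}$ and take $\hat{\tau}_h$ to be the first permutation in that enumeration attaining $\min_{\tau}\hat{p}_{\tau}(h)$; this makes $\hat{\tau}_h$ a genuine $Y_{1:n}$-measurable random permutation. Equipping $\mathcal{S}_{J}$ with its discrete $\sigma$-field, the map $(y,\sigma) \mapsto \sigma^{-1}(h_i(y))$ is measurable, so $\tilde{h}_i \in \mathcal{H}_n$ as required and all the manipulations above are justified.
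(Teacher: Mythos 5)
Your proof is correct and follows essentially the same route as the paper's (much terser) argument: the paper also rests on the observation that the minimizing permutation $\hat{\tau}_h$ is $Y_{1:n}$-measurable, so that composing $\hat{\tau}_h^{-1}$ with $h$ yields a legitimate element of $\mathcal{H}_n$ whose classification risk equals $\ArianneRisk(\theta,h)$. You have merely made explicit what the paper declares ``immediate'' --- the two one-sided inequalities, the pointwise identity $\1_{\tilde{h}_i \neq X_i} = \1_{\hat{\tau}_h(X_i) \neq h_i(Y_{1:n})}$, and the measurable selection of $\hat{\tau}_h$ over the finite set $\mathcal{S}_J$ --- all of which is sound.
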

\begin{proof}
  The optimal permutation $\hat{\tau}_{h}$ such that $\hat{p}_{\hat{\tau}_{h}}(h) = \min_{\tau \in \mathcal{S}_{J}}\EE_{\theta}\left[U_{n,\tau}(h) \mid Y_{1:n}\right]$ is a $Y_{1:n}$-measurable permutation valued random variable. Since any $h \in \mathcal{H}_n$ is also $Y_{1:n}$-measurable, the result is immediate.
\end{proof}

In the next we then focus on comparing $\ClusterRisk$ and $\ArianneRisk$. We let $N_j \coloneqq \sum_{i=1}^n\1_{ \{X_i = j \} }$ and $N_{(1)} \leq N_{(2)} \leq \dots \leq N_{(J)}$ denote the order statistics of $(N_1,\dots,N_J)$.

\begin{proposition}
  \label{pro:bayes-risk:lb:general}
  A generic lower bound that works for any latent model (i.i.d. or HMM or whatever). For all classifiers $h$, all $\varepsilon$, all $\eta$ and all $\theta\in\Theta$
  \begin{align*}
    \EE_{\theta}\Big[\min_{\tau}U_{n,\tau}(h) \Big]%
    &\geq \EE_{\theta}\Big[\min_{\tau}\EE_{\theta}[U_{n,\tau}(h) \mid Y_{1:n}] \Big]\\
    &\quad%
      - \EE_{\theta}\Big[\EE_{\theta}\Big[\max_{\tau}(-U_{n,\tau}(h) + \hat{p}_{\tau}(h)) \mid Y_{1:n} \Big]\1_{ \{ \hat{p}_{\hat\tau_h}(h) \geq \varepsilon\} } \Big]\\
    &\quad%
      - \EE_{\theta}\Big[ \PP_{\theta}(U_{n,\hat\tau_h}(h) > \eta \mid Y_{1:n})\1_{\{ \hat{p}_{\hat\tau_h}(h) < \varepsilon \} } \Big]\\
    &\quad%
      - \PP_{\theta}\big( N_{(1)} + N_{(2)} < 2n\eta \big).
  \end{align*}
\end{proposition}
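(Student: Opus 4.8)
The plan is to prove a conditional version of the inequality on two complementary $Y_{1:n}$-measurable events and then integrate. Write $\Gamma \coloneqq \min_{\tau}U_{n,\tau}(h)$ for the empirical clustering loss, so that by \eqref{eq:risk-clustering:classif} the left-hand side is $\EE_{\theta}[\Gamma]$ and, by definition of $\hat\tau_h$, the leading term on the right is $\EE_{\theta}[\hat p_{\hat\tau_h}(h)] = \EE_{\theta}[\min_\tau \hat p_\tau(h)]$. Denote by $T_1, T_2, T_3$ the three subtracted error terms on the right-hand side of the statement, in their order of appearance. Since $\hat p_{\hat\tau_h}(h)$ and the indicator $\1_{\hat p_{\hat\tau_h}(h)\geq\varepsilon}$ are $Y_{1:n}$-measurable, it suffices to bound $\EE_{\theta}[\Gamma\,\1_{\hat p_{\hat\tau_h}(h)\geq\varepsilon}]$ and $\EE_{\theta}[\Gamma\,\1_{\hat p_{\hat\tau_h}(h)<\varepsilon}]$ separately from below by the corresponding truncations of $\EE_\theta[\hat p_{\hat\tau_h}(h)]$ minus the appropriate errors.

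On the high region $\{\hat p_{\hat\tau_h}(h)\geq\varepsilon\}$ I would use only a crude uniform deviation bound: for every $\tau$, $U_{n,\tau}(h)\geq \hat p_\tau(h)-\max_{\tau'}(\hat p_{\tau'}(h)-U_{n,\tau'}(h))$, so that $\Gamma \geq \hat p_{\hat\tau_h}(h)-\max_\tau(-U_{n,\tau}(h)+\hat p_\tau(h))$. Taking $\EE_\theta[\,\cdot\mid Y_{1:n}]$ (the first summand being $Y_{1:n}$-measurable) and multiplying by $\1_{\hat p_{\hat\tau_h}(h)\geq\varepsilon}$ gives exactly $\EE_\theta[\Gamma\,\1_{\hat p_{\hat\tau_h}(h)\geq\varepsilon}]\geq \EE_\theta[\hat p_{\hat\tau_h}(h)\1_{\hat p_{\hat\tau_h}(h)\geq\varepsilon}]-T_1$.

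The low region is the crux, and here I would prove a combinatorial lemma showing that the empirical minimiser coincides in value with $\hat\tau_h$ off a controllable event. Introduce the confusion matrix $M_{a,b}\coloneqq\#\{i : X_i=a,\ h_i(Y_{1:n})=b\}$ on $\mathbb{X}\times\mathbb{X}$; then $nU_{n,\tau}(h)=n-\sum_a M_{a,\tau(a)}$, so minimising $U_{n,\tau}(h)$ is an assignment problem. On the event $\{U_{n,\hat\tau_h}(h)\leq\eta\}\cap\{N_{(1)}+N_{(2)}\geq 2n\eta\}$ I claim $\Gamma=U_{n,\hat\tau_h}(h)$: any $\tau\neq\hat\tau_h$ disagrees with $\hat\tau_h$ on a label set $A$ with $|A|\geq 2$, whence its matched mass on $A$ satisfies $\sum_{a\in A}M_{a,\tau(a)}\leq \sum_a\sum_{b\neq\hat\tau_h(a)}M_{a,b}=n-\sum_aM_{a,\hat\tau_h(a)}\leq n\eta$, while $\sum_{a\in A}M_{a,\hat\tau_h(a)}\geq\sum_{a\in A}N_a-n\eta\geq N_{(1)}+N_{(2)}-n\eta\geq n\eta$, so $\tau$ cannot improve on $\hat\tau_h$. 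It then follows pointwise that $U_{n,\hat\tau_h}(h)-\Gamma\leq \1_{U_{n,\hat\tau_h}(h)>\eta}+\1_{N_{(1)}+N_{(2)}<2n\eta}$, since both sides lie in $[0,1]$ and the right-hand side is at least $1$ off the good event.

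To finish, take $\EE_\theta[\,\cdot\mid Y_{1:n}]$ of this pointwise inequality, multiply by $\1_{\hat p_{\hat\tau_h}(h)<\varepsilon}$, and use the tower property together with $\EE_\theta[U_{n,\hat\tau_h}(h)\mid Y_{1:n}]=\hat p_{\hat\tau_h}(h)$; the first indicator contributes exactly $T_2$, and bounding $\1_{\hat p_{\hat\tau_h}(h)<\varepsilon}\leq 1$ turns the second into the unconditional probability $T_3=\PP_\theta(N_{(1)}+N_{(2)}<2n\eta)$. This yields $\EE_\theta[\Gamma\,\1_{\hat p_{\hat\tau_h}(h)<\varepsilon}]\geq \EE_\theta[\hat p_{\hat\tau_h}(h)\1_{\hat p_{\hat\tau_h}(h)<\varepsilon}]-T_2-T_3$, and adding the two regions gives the proposition. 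The main obstacle is the combinatorial lemma; the subtle points are that $|A|\geq 2$ (two distinct permutations of $[J]$ must differ in at least two coordinates) underlies the bound $\sum_{a\in A}N_a\geq N_{(1)}+N_{(2)}$, and that $N_{(1)}+N_{(2)}$ is \emph{not} $Y_{1:n}$-measurable, which is precisely why its control enters as the unconditional term $T_3$ rather than a conditional one.
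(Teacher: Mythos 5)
Your proposal is correct and follows essentially the same route as the paper's proof: the identical two-region decomposition on $\{\hat p_{\hat\tau_h}(h)\geq\varepsilon\}$, the same crude uniform deviation bound giving $T_1$ on the high region, and on the low region the same key combinatorial fact (the paper's Lemma~\ref{lem:bayes-risk:lb:largedev}) that the empirical minimiser coincides in value with $\hat\tau_h$ once $U_{n,\hat\tau_h}(h)\leq (N_{(1)}+N_{(2)})/(2n)$, followed by the same split of the resulting conditional probability into $T_2$ and the unconditional term $T_3$. The only cosmetic difference is that you prove the lemma directly via the confusion-matrix assignment formulation, whereas the paper argues by contradiction from the counting inequality $\sum_{i}\1_{\{\tau'(X_i)\neq\tau''(X_i)\}}\geq N_{(1)}+N_{(2)}$; both rest on distinct permutations differing in at least two labels, and your pointwise bound $U_{n,\hat\tau_h}(h)-\min_{\tau}U_{n,\tau}(h)\leq \1_{\{U_{n,\hat\tau_h}(h)>\eta\}}+\1_{\{N_{(1)}+N_{(2)}<2n\eta\}}$ is a clean equivalent of the paper's conditional-expectation step.
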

\begin{proof}
  For $\varepsilon \in [0,1]$, we decompose
  \begin{equation}
    \label{eq:pro:bayes-risk:lb:general:1}
    \EE_{\theta}\Big[\min_{\tau}U_{n,\tau}(h) \Big]%
    = \EE_{\theta}\Big[\min_{\tau}U_{n,\tau}(h)\1_{\{ \hat{p}_{\hat\tau_h}(h) \geq \varepsilon \} } \Big]%
    + \EE_{\theta}\Big[\min_{\tau}U_{n,\tau}(h)\1_{\{ \hat{p}_{\hat\tau_h}(h) < \varepsilon \} } \Big].
  \end{equation}
  The first term in the rhs of \eqref{eq:pro:bayes-risk:lb:general:1} is handled via a small deviation principle, remarking that on the event $\{ \hat{p}_{\hat\tau(h)}(h) \geq \varepsilon \}$
  \begin{align*}
    \EE_{\theta}\Big[ \min_{\tau}U_{n,\tau}(h)  \mid Y_{1:n}\Big]%
    &=\EE_{\theta}\Big[ \min_{\tau}(\hat{p}_{\tau}(h) + U_{n,\tau}(h) - \hat{p}_{\tau}(h))  \mid Y_{1:n}\Big]\\
    &\geq \min_{\tau}\hat{p}_{\tau}(h)%
      - \EE_{\theta}\Big[\max_{\tau}(-U_{n,\tau}(h) + \hat{p}_{\tau}(h)) \mid Y_{1:n} \Big].
  \end{align*}
  The second term in \eqref{eq:pro:bayes-risk:lb:general:1} is handled via a large deviation principle. By Lemma~\ref{lem:bayes-risk:lb:largedev}, on the event $\{ \hat{p}_{\hat\tau_h}(h) < \varepsilon\}$
  \begin{align*}
    \EE_{\theta}\Big[ \min_{\tau}U_{n,\tau}(h)  \mid Y_{1:n}\Big]%
    &\geq \EE_{\theta}\Big[ \min_{\tau}U_{n,\tau}(h)\1_{\{U_{n,\hat\tau_h}(h) \leq (N_{(1)} + N_{(2)})/(2n)  \} }  \mid Y_{1:n}\Big]\\
    &= \EE_{\theta}\Big[U_{n,\hat\tau_h}(h)\1_{\{U_{n,\hat\tau_h}(h) \leq (N_{(1)} + N_{(2)})/(2n)  \} }  \mid Y_{1:n}\Big]\\
    &\geq \hat{p}_{\hat\tau_h}(h)%
      - \PP_{\theta}\Big(U_{n,\hat\tau_h}(h) > \frac{N_{(1)} + N_{(2)}}{2n}  \mid Y_{1:n}\Big).
  \end{align*}
  But,
  \begin{align*}
    \PP_{\theta}\Big(U_{n,\hat\tau_h}(h) > \frac{N_{(1)} + N_{(2)}}{2n}  \mid Y_{1:n}\Big)%
    &\leq \PP_{\theta}\big( U_{n,\hat\tau_h}(h) > \eta \mid Y_{1:n}\big)%
      + \PP_{\theta}\big( N_{(1)} + N_{(2)} < 2n\eta \mid Y_{1:n} \big).
  \end{align*}
  The generic lower bound follows.
\end{proof}

\begin{lemma}
  \label{lem:bayes-risk:lb:largedev}
  If $U_{n,\tau'}(h) \leq \frac{N_{(1)} + N_{(2)}}{2n}$ then $\min_{\tau} U_{n,\tau}(h) = U_{n,\tau'}(h)$.
\end{lemma}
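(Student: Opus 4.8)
The plan is to recast the statement as an assignment problem on the confusion matrix between the true labels $X_{1:n}$ and the labels produced by $h$. The lemma is a purely deterministic (pointwise) fact, so no probabilistic input is needed. Writing $\hat X_i \coloneqq h_i(Y_{1:n})$ and introducing the confusion counts
\[
  M_{j,k} \coloneqq \sum_{i=1}^n \1_{\{X_i = j,\, \hat X_i = k\}}, \qquad j,k\in\mathbb{X},
\]
one has $N_j = \sum_{k} M_{j,k}$ and $n\,U_{n,\tau}(h) = n - \sum_{j} M_{j,\tau(j)}$. Hence minimizing $\tau \mapsto U_{n,\tau}(h)$ is equivalent to maximizing the matching weight $\tau \mapsto \sum_j M_{j,\tau(j)}$ over $\mathcal{S}_J$. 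Setting $e_j \coloneqq N_j - M_{j,\tau'(j)} \ge 0$ (the number of errors committed in row $j$ under $\tau'$), one checks that $\sum_j e_j = n\,U_{n,\tau'}(h)$, so the hypothesis reads exactly $\sum_j e_j \le (N_{(1)}+N_{(2)})/2$, and the goal becomes showing that such a $\tau'$ already maximizes the matching weight.

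First I would fix an arbitrary competitor $\tau \neq \tau'$ and compare the two matchings only on the set $D \coloneqq \{ j : \tau(j) \neq \tau'(j) \}$ where they disagree, since the contributions of indices outside $D$ cancel:
\[
  n\big(U_{n,\tau'}(h) - U_{n,\tau}(h)\big) = \sum_{j\in D}\big(M_{j,\tau(j)} - M_{j,\tau'(j)}\big).
\]
Two elementary observations drive the argument. First, because $\tau$ and $\tau'$ agree off $D$ and are bijections, the map $j\mapsto (\tau')^{-1}(\tau(j))$ is a fixed-point-free permutation of $D$, so $|D|\ge 2$. Second, for $j\in D$ the columns $\tau(j)$ and $\tau'(j)$ are distinct, whence $M_{j,\tau(j)} + M_{j,\tau'(j)} \le N_j$, giving $M_{j,\tau(j)} \le N_j - M_{j,\tau'(j)} = e_j$.

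Combining the second observation with $M_{j,\tau'(j)} = N_j - e_j$ yields $M_{j,\tau(j)} - M_{j,\tau'(j)} \le 2e_j - N_j$, and therefore
\[
  \sum_{j\in D}\big(M_{j,\tau(j)} - M_{j,\tau'(j)}\big) \le 2\sum_{j\in D} e_j - \sum_{j\in D} N_j \le \big(N_{(1)}+N_{(2)}\big) - \big(N_{(1)}+N_{(2)}\big) = 0,
\]
where the final inequality uses the hypothesis $2\sum_{j} e_j \le N_{(1)}+N_{(2)}$ together with $\sum_{j\in D} N_j \ge N_{(1)}+N_{(2)}$, the latter holding because $|D|\ge 2$ and the sum of any two of the nonnegative $N_j$'s dominates the two smallest order statistics. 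This gives $U_{n,\tau}(h) \ge U_{n,\tau'}(h)$ for every $\tau$, i.e. $\min_\tau U_{n,\tau}(h) = U_{n,\tau'}(h)$.

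The argument is self-contained and I do not anticipate a genuine obstacle; the only delicate point is the combinatorial step guaranteeing $|D|\ge 2$, which is what makes the bound $\sum_{j\in D} N_j \ge N_{(1)}+N_{(2)}$ available. This is precisely where the two smallest order statistics in the threshold $(N_{(1)}+N_{(2)})/(2n)$ enter, and it explains why the statement cannot be strengthened by replacing this threshold with a single order statistic.
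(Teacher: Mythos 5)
Your proof is correct, and despite the confusion-matrix packaging it is essentially the paper's argument: your per-row bound $M_{j,\tau(j)} - M_{j,\tau'(j)} \le 2e_j - N_j$ summed over the disagreement set $D$ reproduces exactly the paper's key inequality $n\big(U_{n,\tau'}(h) - U_{n,\tau}(h)\big) \le 2n\,U_{n,\tau'}(h) - \big(N_{(1)}+N_{(2)}\big)$, resting on the same two ingredients (any two distinct permutations differ on at least two labels, so $\sum_{j\in D} N_j \ge N_{(1)}+N_{(2)}$, and the error counts bound the cross terms). The only cosmetic differences are that you argue directly that every competitor $\tau$ satisfies $U_{n,\tau}(h) \ge U_{n,\tau'}(h)$ where the paper argues by contradiction against a hypothetical strictly better $\tau''$, and that you aggregate by true-label row where the paper aggregates by observation index.
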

\begin{proof}
  Let $U_{n,\tau'}(h) \leq \frac{N_{(1)} + N_{(2)}}{2n}$ and suppose $\min_{\tau} U_{n,\tau}(h) < U_{n,\tau'}(h)$. Then, there exists a permutation $\tau'' \ne \tau'$ such that $U_{n,\tau''}(h) < U_{n,\tau'}(h)$. But then, letting $I = \{i \in \{1,\dots,n\}\;:\; \tau'(X_i) = h_i(Y_{1:n})\}$:
  \begin{align*}
    n(U_{n,\tau'}(h) - U_{n,\tau''}(h) )%
    &= \sum_{i=1}^n\Big(\1_{ \{ \tau'(X_i) \ne h_i(Y_{1:n})\} } - \1_{ \{\tau''(X_i) \ne h_i(Y_{1:n}) \} } \Big)\\
    &= \sum_{i=1}^n\Big(\1_{ \{ \tau''(X_i) = h_i(Y_{1:n})\} } - \1_{ \{\tau'(X_i) = h_i(Y_{1:n})\} } \Big)\\
    &= - \sum_{i\in I}\1_{\{ \tau'(X_i) \ne \tau''(X_i) \}}%
      + \sum_{i\in I^c}\1_{ \{ \tau'(X_i) = h_i(Y_{1:n}) \} }\\
    &=- \sum_{i=1}^n\1_{ \{ \tau'(X_i) \ne \tau''(X_i) \} }%
      + \sum_{i\in  I^c}\Big(\1_{ \{ \tau'(X_i) = h_i(Y_{1:n}) \} } + \1_{ \{ \tau'(X_i) \ne \tau''(X_i) \} } \Big)\\
    &\leq -(N_{(1)} + N_{(2)}) + 2|I^c|\\
    &= -(N_{(1)} + N_{(2)}) + 2n U_{n,\tau'}(h)
  \end{align*}
  where we have used that since $\tau' \ne \tau''$, it must be that $\sum_{i=1}^n\1_{\tau'(X_i) \ne \tau''(X_i)} \geq N_{(1)} + N_{(2)}$. Rearranging the previous:
  \begin{align*}
    U_{n,\tau''}(h)%
    \geq \frac{N_{(1)} + N_{(2)}}{n} - U_{n,\tau'}(h)%
    \geq U_{n,\tau'}(h)
  \end{align*}
  which contradicts that $U_{n,\tau''}(h) < U_{n,\tau'}(h)$. Hence $\min_{\tau}U_{n,\tau}(h) \geq U_{n,\tau'}(h)$.
\end{proof}

\section{Proof of \texorpdfstring{\cite[Theorem \ref*{main-thm:bayes-risk:lb:iid:J>2}]{GKN2025main}}{[4, Theorem 7]} (independent scenario)}
\label{proof:bayes-risk:lb:iid}

Here we apply the result of Proposition~\ref{pro:bayes-risk:lb:general} to the i.i.d. case. 

When $J > 2$, the first trivial bound is obtained by choosing $\varepsilon = \eta = 0$. With this choice, Proposition~\ref{pro:bayes-risk:lb:general} gives for $J \geq 2$:
\begin{align*}
  \EE_{\theta}\Big[\min_{\tau}U_{n,\tau}(h) \Big]%
  &\geq \EE_{\theta}\Big[\min_{\tau}\EE_{\theta}[U_{n,\tau}(h) \mid Y_{1:n}] \Big] - \EE_{\theta}\Big[\EE_{\theta}\Big[\max_{\tau}(-U_{n,\tau}(h) + \hat{p}_{\tau}(h)) \mid Y_{1:n} \Big] \Big]\\
  &\geq \EE_{\theta}\Big[\min_{\tau}\EE_{\theta}[U_{n,\tau}(h) \mid Y_{1:n}] \Big] - \sqrt{\frac{\log(J!)}{2n}}
\end{align*}
by Lemma~\ref{lem:bayes-risk:lb:iid:1} below.\\

When $J > 2$, Lemma~\ref{lem:bayes-risk:lb:iid:3} can be used to find that
\begin{equation*}
  \PP_{\theta}\big(N_{(1)} + N_{(2)} < 2n\eta\big)%
  \leq J^2e^{-\frac{n(\beta - 2\eta)^2}{2\beta}}
\end{equation*}
and the bound is obtained by choosing $\eta = \beta/4$ and $\varepsilon = \frac{\beta}{4e}[\log(J!)/(2n) ]^{2/(n\beta)}$.

\begin{lemma}
  \label{lem:bayes-risk:lb:iid:1}
  For all $\theta\in\Theta^{\mathrm{ind}}$, $\PP_{\theta}$-almost-surely
  \begin{equation*}
    \EE_{\theta}\Big[\max_{\tau}(-U_{n,\tau}(h) + \hat{p}_{\tau}(h)) \mid Y_{1:n} \Big]%
    \leq \sqrt{\frac{\log(J!)}{2n}}.
  \end{equation*}
\end{lemma}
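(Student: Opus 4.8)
The plan is to recognize $-U_{n,\tau}(h) + \hat{p}_{\tau}(h)$ as the centered deviation of a conditionally i.i.d.\ bounded average, and then to invoke the standard maximal inequality for the expected maximum of finitely many subgaussian random variables, applied conditionally on $Y_{1:n}$.

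First I would exploit the independent structure. Under $\theta\in\Theta^{\mathrm{ind}}$, conditionally on $Y_{1:n}$ the hidden labels $X_1,\dots,X_n$ are independent (the law of $X_i\mid Y_{1:n}$ depends only on $Y_i$), while the classifier outputs $h_i(Y_{1:n})$ are $Y_{1:n}$-measurable, hence deterministic given $Y_{1:n}$. Therefore, for each fixed $\tau\in\mathcal{S}_{J}$, the quantity $U_{n,\tau}(h)=\frac{1}{n}\sum_{i=1}^{n}\1_{\tau(X_i)\neq h_i(Y_{1:n})}$ is an average of $n$ conditionally independent random variables, each valued in $\{0,1\}$, with conditional mean $\hat{p}_{\tau}(h)$. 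Writing $Z_{\tau}\coloneqq \hat{p}_{\tau}(h)-U_{n,\tau}(h)$, we have $\EE_{\theta}[Z_{\tau}\mid Y_{1:n}]=0$.

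Next I would apply Hoeffding's lemma termwise: since each summand lies in an interval of length $1/n$, the conditional moment generating function satisfies $\EE_{\theta}[e^{sZ_{\tau}}\mid Y_{1:n}]\leq \exp\!\big(s^{2}/(8n)\big)$ for all $s\in\R$, uniformly in $\tau$. In other words $Z_{\tau}$ is conditionally subgaussian with variance proxy $\sigma^{2}=1/(4n)$. Since the collection $\{Z_{\tau}\}_{\tau\in\mathcal{S}_{J}}$ has cardinality $J!$, the usual union-bound/Jensen argument (from $e^{s\,\EE[\max_{\tau}Z_{\tau}\mid Y_{1:n}]}\leq \sum_{\tau}\EE_{\theta}[e^{sZ_{\tau}}\mid Y_{1:n}]\leq J!\,e^{s^{2}\sigma^{2}/2}$, optimized at $s=\sqrt{2\log(J!)}/\sigma$) gives $\EE_{\theta}[\max_{\tau}Z_{\tau}\mid Y_{1:n}]\leq \sigma\sqrt{2\log(J!)}=\sqrt{\log(J!)/(2n)}$, which is exactly the claimed bound.

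I expect no serious obstacle here: the only point requiring a little care is that both the subgaussian estimate and the maximal inequality are invoked conditionally on $Y_{1:n}$, so one must check that the variance proxy $1/(4n)$ is uniform in $\tau$ and does not depend on $Y_{1:n}$; this is immediate from the boundedness of the indicators, and yields the bound $\PP_{\theta}$-almost surely. The genuine content of the lemma is simply the identification of the conditional independence structure in the i.i.d.\ case, which turns $U_{n,\tau}(h)$ into a bounded empirical average and makes the concentration machinery applicable.
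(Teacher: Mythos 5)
Your proof is correct and is essentially identical to the paper's: both exploit that conditionally on $Y_{1:n}$ the labels are independent in the i.i.d.\ model, apply Hoeffding's lemma to get the conditional subgaussian bound $\EE_{\theta}[e^{\lambda Z_{\tau}}\mid Y_{1:n}]\leq e^{\lambda^{2}/(8n)}$ uniformly over the $J!$ permutations, and conclude via the standard Jensen/union-bound maximal inequality optimized in $\lambda$, yielding $\sqrt{\log(J!)/(2n)}$. If anything, you state the conditioning slightly more carefully than the paper does, but the argument is the same.
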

\begin{proof}
  For any $\lambda > 0$,
\begin{equation*}
    \begin{split}
\EE_{\theta}\left[\max_{\tau}\left\{-U_{n, \tau}(h) + \hat{p}_{\tau}(h)\right\}\right] &\leq \frac{1}{\lambda}\log\left(\EE_{\theta}\left[\exp\left(\sup_{\tau}\left\{-\lambda \left(U_{n, \tau}(h) - \hat{p}_{\tau}(h)\right)\right\}\right)\bigg| Y_{1:n}\right]\right)\\
        &=\frac{1}{\lambda}\log\left(\EE_{\theta}\left[\sup_{\tau}\exp\left(-\lambda \left(U_{n, \tau}(h) - \hat{p}_{\tau}(h)\right)\right)\bigg| Y_{1:n}\right]\right)\\
        &\leq \frac{1}{\lambda}\log\left(J! \sup_{\tau}\EE_{\theta}\left[\exp\left(-\lambda\left(U_{n, \tau}(h) - \hat{p}_{\tau}(h)\right)\right)\bigg| Y_{1:n}\right]\right)\\
        &\leq \frac{1}{\lambda}\log\left(J! \exp\left(\frac{\lambda^{2}}{8}\times n \times \left(\frac{1}{n}\right)^{2}\right)\right) \text{(Hoeffding's lemma)}\\
        &\leq \inf_{\lambda > 0} \left\{\frac{\log(J!)}{\lambda} + \frac{\lambda}{8n}\right\}\\
        &\leq \sqrt{\frac{\log(J!)}{2n}}.
    \end{split}
\end{equation*}
 Hoeffding's lemma applies because conditionally to the sequence of observations $Y_{1:n}$, the labels $X_{1:n}$ are still independent.
\end{proof}

\begin{lemma}
  \label{lem:bayes-risk:lb:iid:2}
  For all $\theta\in\Theta^{\mathrm{ind}}$, $\PP_{\theta}$-almost-surely
  \begin{equation*}
    \PP_{\theta}\big( U_{n,\hat\tau_h}(h) > \eta  \mid Y_{1:n}\big)%
    \leq  \hat{p}_{\hat\tau_h}(h) \cdot \frac{e}{\eta} \Big(\frac{e \hat{p}_{\hat\tau_h}(h)}{\eta} \Big)^{n\eta - 1}e^{-n \hat{p}_{\hat\tau_h}(h)}.
  \end{equation*}
\end{lemma}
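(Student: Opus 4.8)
The plan is to recognize the claimed right-hand side as exactly the multiplicative Chernoff (Poisson-type) upper-tail bound for a sum of conditionally independent Bernoulli variables, and to derive it by the standard exponential-moment method.

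First I would condition on $Y_{1:n}$ and exploit the i.i.d.\ structure: since $\theta\in\Theta^{\mathrm{ind}}$, the labels $X_1,\dots,X_n$ are independent given $Y_{1:n}$ (indeed $X_i$ depends only on $Y_i$). Because both the optimal permutation $\hat\tau_h$ and each coordinate $h_i(Y_{1:n})$ are $\sigma(Y_{1:n})$-measurable, the indicators $B_i \coloneqq \1_{\hat\tau_h(X_i)\neq h_i(Y_{1:n})}$ are, conditionally on $Y_{1:n}$, independent Bernoulli variables with conditional means $q_i \coloneqq \PP_\theta(\hat\tau_h(X_i)\neq h_i(Y_{1:n})\mid Y_{1:n})$. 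Writing $S_n \coloneqq nU_{n,\hat\tau_h}(h) = \sum_{i=1}^n B_i$, its conditional mean is $\mu \coloneqq \sum_{i=1}^n q_i = n\,\hat{p}_{\hat\tau_h}(h)$.

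Next I would apply Chernoff's method conditionally. For any $\lambda>0$, conditional independence and the elementary inequality $1+q_i(e^\lambda-1)\le \exp(q_i(e^\lambda-1))$ give
\[
  \PP_\theta\big(S_n > n\eta \mid Y_{1:n}\big)
  \le e^{-\lambda n\eta}\,\EE_\theta\big[e^{\lambda S_n}\mid Y_{1:n}\big]
  \le \exp\big(-\lambda n\eta + \mu(e^\lambda-1)\big).
\]
Optimizing over $\lambda$ in the relevant regime $\eta\ge \hat{p}_{\hat\tau_h}(h)$ (the only regime in which this term is invoked in Proposition~\ref{pro:bayes-risk:lb:general}, through the indicator $\1_{\{\hat p_{\hat\tau_h}(h)<\varepsilon\}}$ with $\varepsilon<\eta$), I would take $\lambda=\log(\eta/\hat{p}_{\hat\tau_h}(h))\ge0$. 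Substituting and abbreviating $p=\hat p_{\hat\tau_h}(h)$ yields $\exp\big(-n\eta\log(\eta/p)+np(\eta/p-1)\big) = (ep/\eta)^{n\eta}e^{-np}$.

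Finally, I would rewrite this bound in the stated form by factoring one copy of $ep/\eta$ out of the power:
\[
  \Big(\tfrac{ep}{\eta}\Big)^{n\eta}e^{-np}
  = p\cdot\frac{e}{\eta}\Big(\tfrac{ep}{\eta}\Big)^{n\eta-1}e^{-np},
\]
which is precisely the claim. I do not expect any genuine obstacle: this is the textbook multiplicative Chernoff bound. The only points requiring a little care are \emph{(i)} justifying the conditional independence and Bernoulli structure of the $B_i$ from the i.i.d.\ assumption together with the $Y_{1:n}$-measurability of $\hat\tau_h$ and $h$, and \emph{(ii)} the cosmetic rearrangement into the $n\eta-1$ form; the choice $\lambda=\log(\eta/p)$ is legitimate exactly when $\eta\ge p$, which holds wherever the bound is used.
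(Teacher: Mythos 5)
Your proof is correct and takes essentially the same route as the paper's: conditioning on $Y_{1:n}$ to get conditionally independent Bernoulli indicators, applying Chernoff's bound with the relaxation $\log\big(q_i e^{\lambda}+1-q_i\big)\leq q_i(e^{\lambda}-1)$, choosing $\lambda=\log\big(\eta/\hat{p}_{\hat\tau_h}(h)\big)$, and performing the same cosmetic factoring into the $n\eta-1$ form. Your explicit caveat that this choice of $\lambda$ is legitimate only when $\eta\geq\hat{p}_{\hat\tau_h}(h)$ is in fact slightly more careful than the paper, which writes the optimized infimum as an equality without restriction (the stated bound can fail when $\eta<\hat{p}_{\hat\tau_h}(h)$, but it is only ever invoked on the event $\hat{p}_{\hat\tau_h}(h)<\varepsilon$ with $\varepsilon<\eta$, exactly as you observed).
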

\begin{proof}
  By Chernoff's bound (with $q_i(h) = \PP_{\theta}(h_i(Y_{1:n}) \ne \hat\tau_h(X_i) \mid Y_{1:n})$):
  \begin{align*}
    \PP_{\theta}\big(U_{n,\hat\tau_h}(h) > \eta  \mid Y_{1:n}\big)%
    &= \PP_{\theta}\Big(\sum_{i=1}^n\1_{h_i(Y_{1:n}) \ne \hat\tau_h(X_i)} > n\eta \mid Y_{1:n}\Big)\\
    &\leq \inf_{\lambda > 0}\exp\Big(-\lambda n\eta + \sum_{i=1}^n\log\Big( q_i(h)e^{\lambda} + 1 - q_i(h) \Big) \Big)\\
    &\leq \inf_{\lambda > 0}\exp\Big(-\lambda n\eta + n\hat{p}_{\hat\tau_h}(h)(e^{\lambda} -  1) \Big)\\
    &= \Big( \frac{e\hat{p}_{\hat\tau_h}(h)}{\eta} \Big)^{n\eta}e^{-n \hat{p}_{\hat\tau_h}(h) }\\
    &\leq  \hat{p}_{\hat\tau_h}(h) \cdot \frac{e}{\eta} \Big(\frac{e \hat{p}_{\hat\tau_h}(h)}{\eta} \Big)^{n\eta - 1}e^{-n \hat{p}_{\hat\tau_h}(h)}.
  \end{align*}
\end{proof}

\begin{lemma}
  \label{lem:bayes-risk:lb:iid:3}
  For $\theta\in\Theta^{\mathrm{ind}}$, let $\beta = \min_{j\ne k}(\nu_j + \nu_k)$. If $J > 2$, then
  \begin{equation*}
    \PP_{\theta}\big(N_{(1)} + N_{(2)} < 2n\eta\big)%
    \leq J^2e^{-\frac{n(\beta - 2\eta)^2}{2\beta}}.
  \end{equation*}
\end{lemma}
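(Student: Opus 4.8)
The plan is to reduce the order-statistic event to a union of binomial lower-tail events and then apply a Chernoff bound pair by pair. First I would exploit that in the i.i.d.\ scenario the count vector $(N_1,\dots,N_J)$ is multinomial with parameters $n$ and $(\nu_1,\dots,\nu_J)$, so that for any pair $j\neq k$ the sum $N_j+N_k=\sum_{i=1}^n\1_{\{X_i\in\{j,k\}\}}$ is a binomial random variable with $n$ trials and success probability $\nu_j+\nu_k$. The key combinatorial remark is that the sum of the two smallest counts equals the smallest pairwise sum, i.e.\ $N_{(1)}+N_{(2)}=\min_{j\neq k}(N_j+N_k)$. Consequently the event $\{N_{(1)}+N_{(2)}<2n\eta\}$ is exactly $\bigcup_{j\neq k}\{N_j+N_k<2n\eta\}$, and a union bound over the $\binom{J}{2}\leq J^2$ unordered pairs reduces the problem to controlling a single binomial lower tail.

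Second, for a fixed pair I would write $p_{jk}=\nu_j+\nu_k\geq\beta$ and $\mu_{jk}=np_{jk}$, and work in the regime $2\eta<\beta$ (the only case of interest, and the one invoked later with $\eta=\beta/4$). Applying the multiplicative Chernoff lower-tail inequality $\PP(S\leq(1-\delta)\mu)\leq\exp(-\delta^2\mu/2)$ for $S\sim\mathrm{Binomial}(n,p)$ and $0<\delta<1$, with the choice $(1-\delta)\mu_{jk}=2n\eta$, that is $\delta=1-2\eta/p_{jk}$, yields the per-pair estimate $\PP(N_j+N_k<2n\eta)\leq\exp\!\big(-n(p_{jk}-2\eta)^2/(2p_{jk})\big)$.

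Third, I would remove the dependence on the particular pair by a monotonicity argument. Writing $\phi(p)=(p-2\eta)^2/(2p)=p/2-2\eta+2\eta^2/p$, one computes $\phi'(p)=\tfrac12\big(1-(2\eta/p)^2\big)>0$ whenever $p>2\eta$, so $\phi$ is increasing on $(2\eta,\infty)$ and hence $\phi(p_{jk})\geq\phi(\beta)$ because $p_{jk}\geq\beta>2\eta$. Substituting this into the per-pair bound and summing over the at most $J^2$ pairs gives the claimed inequality $\PP(N_{(1)}+N_{(2)}<2n\eta)\leq J^2\exp\!\big(-n(\beta-2\eta)^2/(2\beta)\big)$.

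The delicate points here are purely bookkeeping: verifying the identity $N_{(1)}+N_{(2)}=\min_{j\neq k}(N_j+N_k)$ and the monotonicity of $\phi$; the probabilistic content is a single off-the-shelf Chernoff bound applied conditionally on nothing (the $X_i$ are unconditionally i.i.d.\ here). I anticipate no genuine obstacle, the only thing to track carefully being that the statement is meaningful precisely in the regime $2\eta<\beta$, which is exactly where Proposition~\ref{pro:bayes-risk:lb:general} is later applied.
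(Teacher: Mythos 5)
Your proposal is correct and follows essentially the same route as the paper: a union bound over the at most $J^2$ pairs, the observation that $N_j+N_k\sim\mathrm{Binomial}(n,\nu_j+\nu_k)$, and a Chernoff (subGaussian left-tail) bound, which is exactly what the paper invokes. The only differences are bookkeeping details the paper leaves implicit and that you make explicit — the monotonicity of $p\mapsto(p-2\eta)^2/(2p)$ on $(2\eta,\infty)$ (the paper could equivalently use stochastic monotonicity in $p$ to reduce directly to $p=\beta$), and your correct remark that the bound is only meaningful in the regime $2\eta<\beta$, which is precisely how it is applied with $\eta=\beta/4$.
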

\begin{proof}
  If $J = 2$, remark that $N_{(1)} + N_{(2)} = n$. Now we assume that $J > 2$. It holds that
  \begin{align*}
    \PP_{\theta}\big(N_{(1)} + N_{(2)} < 2n\eta\big)%
    &=P_{\theta}\big(\exists\, j\ne k,\ N_j + N_k \leq 2n\eta\big)\\
    &\leq J^2\max_{j\ne k}\PP_{\theta}\big(N_j + N_k \leq 2n\eta\big).
  \end{align*}
  Then observe that $N_j + N_k = \sum_{i=1}^n(\1_{X_i = j} + \1_{X_i = k}) = \sum_{i=1}^n\1_{X_i \in \{j,k\} }$ whenever $j\ne k$. In other words, when $j \ne k$ the random variables $N_j + N_k$ has a Binomial distribution with parameters $(n, \nu_j + \nu_k)$ under $\PP_{\theta}$. The conclusion follows using Chernoff's bound on the Binomial distribution (recall the Binomial distribution is subGaussian on the left-tail).
\end{proof}

\section{Proof of \texorpdfstring{\cite[Theorems \ref*{main-thm:bayes-risk:lb:hmm:J=2} and \ref*{main-thm:bayes-risk:lb:hmm:J>2}]{GKN2025main}}{[4, Theorems 9 and 11]} (dependent scenario)}
\label{proof:bayes-risk:lb:hmm}

\subsection{Preliminary}

We first recall basic results for HMMs that can be found in \cite{CMT05} about the distribution of the hidden states given a set of observations. For any parameter $\theta$, any integers $k$, $i\leq j$, the distribution of $X_{k}$ given $Y_{i:j}$ under $\PP_{\theta}$ will be denoted $\phi_{\theta, k\vert i:j}(., Y_{i:j})$. For any integers $i\leq n$, we shall simplify the so-called filtering distribution $\phi_{\theta, n \vert i:n}(., Y_{i:n})$ to $\phi_{\theta, n}(., Y_{i:n})$.

Conditional on observations $Y_{i:n}$, the sequence of the hidden states is an inhomogeneous Markov chain, with transition matrices called {\it forward kernels}. 
For each $k\leq n-1$, the forward kernel is denoted $(F_{\theta, k \vert n}[Y_{k+1:n}])$ to emphasize that it only depends on $Y_{k+1:n}$. When $k\geq n$, the kernel does not depend on the observations and is equal to the transition matrix $Q$, so that $F_{\theta, k \vert n}[Y_{k+1:n}]:=Q$ for $k\geq n$.
In other words, for any $n\in\N$, for any index $i\leq n$ and $k\geq i$ and any real-valued function $f$ on $\mathbb{X}$ (understood as a vector in $\mathbb{R}^J$), 
$$
\EE_{\theta}[f(X_{k+1})\mid X_{i:k}, Y_{i:n}] = F_{\theta, k \vert n}[Y_{k+1:n}]f = \sum_{x\in \mathbb{X}} F_{\theta, k \vert n}[Y_{k+1:n}](X_{k}, x)f(x).
$$

Conditional on observations $Y_{i:n}$, the reverse time sequence of  hidden states 
is also an inhomogeneous Markov chain with transition matrices $(B_{\theta, k}[Y_{i:k}])_{k\leq n-1}$ called  {\it backward kernels}. In other words, for any $n\in\N$, $i\leq  k\leq n-1$ and any function $f$ on $\mathbb{X}$:
$$
\EE_{\theta}[f(X_{k})\mid X_{k+1:n}, Y_{i:n}] = B_{\theta, k}[Y_{i:k}]f = \sum_{x\in \mathbb{X}} B_{\theta, k}[Y_{i:k}](X_{k+1}, x)f(x).
$$
Here, the backward kernel $B_{\theta, k}[Y_{i:k}]$ depends only on the observations up to time $k$. It is given by:
\begin{equation}\label{Backward_kernel}
    B_{\theta, k}[Y_{i:k}](\Tilde{x}, x) = \frac{\phi_{\theta, k}(x,Y_{i:k})Q(x, \Tilde{x})}{\sum_{x^{\prime}\in\mathbb{X}}\phi_{\theta, k}(x^{\prime},Y_{i:k})Q(x^{\prime}, \Tilde{x})}.
\end{equation}
Note that the denominator is always positive thanks to \cite[Assumption \ref*{main-Assumption_mixing}]{GKN2025main}.

For any transition kernel $T$, we denote $\delta(T)$ is  the Dobrushin coefficient of $T$ defined by:
\begin{equation*}
    \begin{split}
        \delta(T) &= 
        \sup_{(x, x^{\prime})\in \mathbb{X}\times \mathbb{X}}\lVert T(x, .) - T(x^{\prime}, .)\rVert_{\mathrm{TV}}
    \end{split}
\end{equation*}
where $\|\cdot\|_{\mathrm{TV}}$ is the total variation norm. We recall the following two lemmas which can be found in \cite{CMT05}. To end with, notice that under \cite[Assumption \ref*{main-Assumption_mixing}]{GKN2025main}, for any subset $A$ of $\mathbb X$, 
$$
J\delta\gamma(A) \leq \sum_{x'\in A}Q(x, x') \leq J (1-(J-1)\delta)\gamma(A)
 $$
 with $\gamma$ the uniform distribution over $\mathbb X$. Using Lemma~4.3.13 in \cite{CMT05},  this leads to the following lemma.

\begin{lemma}\label{Lemma_3}
Under \cite[Assumption \ref*{main-Assumption_mixing}]{GKN2025main}, for any integers $k$ and $n$, the Dobrushin coefficient of the forward 
kernel $F_{\theta, k \vert n}$ satisfies:
\begin{equation*}
     \delta(F_{\theta, k \vert n}) \leq \begin{cases}
       \rho_{0} &\quad k < n \\ \rho_{1} &\quad k \geq n
     \end{cases}
\end{equation*}
with  $\rho_{0} = 1 - \frac{J\delta}{J(1 - (J-1)\delta)} = \frac{1 - J\delta}{1 -(J-1)\delta}$ and $\rho_{1} = 1 - J\delta$.
\end{lemma}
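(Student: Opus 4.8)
The plan is to reduce the claim to a one-step Doeblin minorization of the forward kernel and then apply the elementary fact --- which is the content of Lemma~4.3.13 in \cite{CMT05} --- that a state-independent minorization controls the Dobrushin coefficient. First I would rewrite the set-wise bounds displayed just before the lemma in pointwise form. Under \cite[Assumption~\ref*{main-Assumption_mixing}]{GKN2025main}, every entry of $Q$ is at least $\delta$, and since the remaining $J-1$ entries of any given row are also at least $\delta$, one has $\delta \leq Q(x,x') \leq 1-(J-1)\delta$ for all $x,x'$; equivalently, writing $\gamma$ for the uniform law on $\mathbb{X}$,
\[
  J\delta\,\gamma(x') \;\leq\; Q(x,x') \;\leq\; J\bigl(1-(J-1)\delta\bigr)\,\gamma(x'), \qquad \forall\, x,x'\in\mathbb{X}.
\]

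Next I would exploit the explicit structure of the forward kernel. For $k<n$, the HMM conditional-independence structure yields
\[
  F_{\theta,k\vert n}[Y_{k+1:n}](x,x') = \frac{Q(x,x')\,\beta_{k+1}(x')}{\sum_{x''}Q(x,x'')\,\beta_{k+1}(x'')}, \qquad \beta_{k+1}(x') \coloneqq \PP_\theta(Y_{k+1:n}\mid X_{k+1}=x') \geq 0,
\]
where crucially the backward weight $\beta_{k+1}$ does not depend on the starting state $x$ (the denominator being positive by \cite[Assumption~\ref*{main-Assumption_mixing}]{GKN2025main}). Bounding $Q$ from below in the numerator and from above in the denominator, the factors $\beta_{k+1}$ persist and give, for every $A\subseteq\mathbb{X}$,
\[
  F_{\theta,k\vert n}(x,A) \;\geq\; \frac{\delta}{1-(J-1)\delta}\,\mu_k(A), \qquad \mu_k(A) \coloneqq \frac{\sum_{x'\in A}\gamma(x')\beta_{k+1}(x')}{\sum_{x'}\gamma(x')\beta_{k+1}(x')},
\]
a minorization by the probability measure $\mu_k$, which is independent of $x$.

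The conclusion then follows from the standard observation that a state-independent minorization $F(x,\cdot)\geq \epsilon\,\mu(\cdot)$ forces $\delta(F)\leq 1-\epsilon$: decomposing $F(x,\cdot) = \epsilon\mu + (1-\epsilon)R_x$ with $R_x$ a probability measure, the difference $F(x,\cdot)-F(\tilde{x},\cdot) = (1-\epsilon)(R_x-R_{\tilde{x}})$ has total variation at most $1-\epsilon$. With $\epsilon = \delta/(1-(J-1)\delta)$ this gives $\delta(F_{\theta,k\vert n}) \leq 1 - \delta/(1-(J-1)\delta) = \rho_0$ for $k<n$. For $k\geq n$ the forward kernel reduces to $Q$ itself (the conditioning block $Y_{k+1:n}$ being empty, so $\beta_{k+1}\equiv 1$), and the same argument applied to the cruder minorization $Q(x,\cdot)\geq J\delta\,\gamma(\cdot)$ gives $\delta(Q)\leq 1-J\delta = \rho_1$.

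The computation is essentially routine once the structure is in place; I expect the only delicate point to be verifying that the minorizing measure $\mu_k$ is genuinely independent of the starting state $x$. This independence is precisely what lets the sharper constant $\rho_0$ (which uses \emph{both} the upper and lower bounds on $Q$) appear for $k<n$, as opposed to $\rho_1$ (which uses only the lower bound) for $k\geq n$. Keeping track of which of the two-sided bounds is applied to the numerator versus the denominator is the one place where the constant could be mis-tracked.
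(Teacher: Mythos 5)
Your proof is correct and takes essentially the same route as the paper: the paper's entire argument consists of noting the two-sided bound $J\delta\,\gamma(A) \leq \sum_{x'\in A}Q(x,x') \leq J\bigl(1-(J-1)\delta\bigr)\gamma(A)$ with $\gamma$ the uniform distribution and then citing Lemma~4.3.13 of Capp\'e--Moulines--Ryd\'en, whose content (a state-independent minorization of the forward kernel with constant $\sigma^-/\sigma^+$, hence Dobrushin coefficient at most $1-\sigma^-/\sigma^+$) is exactly what you prove inline via the backward-variable representation and the Doeblin decomposition. You simply make explicit the step the paper delegates to the citation, including the correct treatment of the case $k\geq n$ where the kernel reduces to $Q$ and the direct minorization $Q(x,\cdot)\geq J\delta\,\gamma(\cdot)$ yields $\rho_1$.
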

Using Equation \eqref{Backward_kernel} we get that, under \cite[Assumption~\ref*{main-Assumption_mixing}]{GKN2025main}, for any  (possibly non positive) integers $i\leq k$,
$$
\forall x\in\mathbb{X},\qquad B_{\theta, k}[Y_{i:k}](x, .) \geq \frac{\delta}{1 - (J-1)\delta}\phi_{\theta, k}(., Y_{i:k}),
$$
so that applying Lemma~4.3.13 in \cite{CMT05}, one gets
\begin{lemma}\label{Lemma_B}
Under \cite[Assumption~\ref*{main-Assumption_mixing}]{GKN2025main}, for any  (possibly non positive) integers $i\leq k \leq n-1$, the Dobrushin coefficient of the backward 
kernel $B_{\theta, k}[Y_{i:k}]$ satisfies:
$$
\delta(B_{\theta, k}[Y_{i:k}]) \leq 1 - \frac{\delta}{1 - (J-1)\delta} = \rho_{0}.
$$
\end{lemma}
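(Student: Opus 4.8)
The plan is to establish a uniform minorization condition for the backward kernel and then invoke the standard bound relating minorization to the Dobrushin coefficient (Lemma~4.3.13 in \cite{CMT05}), exactly as announced in the sentence preceding the statement.

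First I would start from the explicit expression for the backward kernel in Equation~\eqref{Backward_kernel}. Fix a target state $\Tilde{x} \in \mathbb{X}$. For the numerator, \cite[Assumption~\ref*{main-Assumption_mixing}]{GKN2025main} gives $Q(x, \Tilde{x}) \geq \delta$, hence $\phi_{\theta, k}(x, Y_{i:k})Q(x, \Tilde{x}) \geq \delta\,\phi_{\theta, k}(x, Y_{i:k})$ for every $x \in \mathbb{X}$. For the denominator, I would combine the row-stochasticity of $Q$ with the entrywise lower bound: since $Q(x', y) \geq \delta$ for the $J-1$ indices $y \neq \Tilde{x}$, we have $Q(x', \Tilde{x}) = 1 - \sum_{y \neq \Tilde{x}} Q(x', y) \leq 1 - (J-1)\delta$. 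Summing over $x'$ and using that the filtering distribution $\phi_{\theta, k}(\cdot, Y_{i:k})$ is a probability measure on $\mathbb{X}$, the denominator is bounded above by $(1 - (J-1)\delta)\sum_{x'} \phi_{\theta, k}(x', Y_{i:k}) = 1 - (J-1)\delta$.

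Dividing the lower bound on the numerator by the upper bound on the denominator yields, for all $\Tilde{x}, x \in \mathbb{X}$,
\[
  B_{\theta, k}[Y_{i:k}](\Tilde{x}, x) \geq \frac{\delta}{1 - (J-1)\delta}\,\phi_{\theta, k}(x, Y_{i:k}),
\]
which is exactly a minorization $B_{\theta, k}[Y_{i:k}](\Tilde{x}, \cdot) \geq \sigma\,\mu(\cdot)$ holding for every starting state $\Tilde{x}$, with the \emph{common} reference probability measure $\mu = \phi_{\theta, k}(\cdot, Y_{i:k})$ and constant $\sigma = \delta/(1 - (J-1)\delta)$. Since any kernel admitting such a uniform minorization by $\sigma\mu$ has Dobrushin coefficient at most $1 - \sigma$ (Lemma~4.3.13 in \cite{CMT05}), I would conclude that $\delta(B_{\theta, k}[Y_{i:k}]) \leq 1 - \delta/(1 - (J-1)\delta) = \rho_0$.

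There is no genuine obstacle in this argument; it is a one-shot minorization estimate. The only two points that deserve attention are the column-wise upper bound $Q(x', \Tilde{x}) \leq 1 - (J-1)\delta$, which rests on the fact that the remaining $J-1$ entries of the row $Q(x', \cdot)$ each exceed $\delta$, and the verification that $\phi_{\theta, k}(\cdot, Y_{i:k})$ sums to one so that $\mu$ is genuinely a probability measure. Both are immediate, and everything else reduces to the direct invocation of the cited result.
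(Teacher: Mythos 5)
Your proof is correct and is essentially the paper's own argument: the paper likewise derives the uniform minorization $B_{\theta, k}[Y_{i:k}](x, \cdot) \geq \frac{\delta}{1-(J-1)\delta}\,\phi_{\theta, k}(\cdot, Y_{i:k})$ directly from Equation~\eqref{Backward_kernel} (bounding the numerator below by $\delta$ and the denominator above by $1-(J-1)\delta$) and then invokes Lemma~4.3.13 of \cite{CMT05} to convert the minorization into the Dobrushin bound $1 - \delta/(1-(J-1)\delta) = \rho_0$. The only difference is that you spell out the elementary entrywise estimates that the paper leaves implicit, which is fine.
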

\subsection{Proofs}
We apply the result of Proposition~\ref{pro:bayes-risk:lb:general} to the HMM case. As in the i.i.d. case, when $J=2$ it must be that $\PP_{\theta}(N_{(1)} + N_{(2)} < n) = 0$. On the one hand, using Lemma~\ref{lem:bayes-risk:lb:HMM:1} and Markov's inequality as in the independent case, one gets for $\theta\in\Theta^{\mathrm{dep}}$
\begin{multline*}
\EE_{\theta}\Big[\EE_{\theta}\Big[\max_{\tau}(-U_{n,\tau}(h) + \hat{p}_{\tau}(h)) \mid Y_{1:n} \Big]\1_{ \{ \hat{p}_{\hat\tau_h}(h) \geq \varepsilon\} } \Big]\\
\begin{aligned}
  &\leq \frac{1}{1-\rho_0}\sqrt{\frac{\log(J!)}{2n}}\PP_{\theta}\big( \hat{p}_{\hat\tau_h}(h) \geq \varepsilon \big)\\
  &\leq \frac{1}{\varepsilon (1 - \rho_0)}\sqrt{\frac{\log(J!)}{2n}}\EE_{\theta}\Big[\min_{\tau}\EE_{\theta}[U_{n,\tau}(h) \mid Y_{1:n}]\1_{ \{ \hat{p}_{\hat\tau_h}(h) \geq \varepsilon \} } \Big]
\end{aligned}
\end{multline*}
and then using Lemma~\ref{lem:bayes-risk:lb:HMM:2}, we establish that for all $\varepsilon,\eta > 0$
\begin{multline*}
\EE_{\theta}\Big[ \PP_{\theta}(U_{n,\hat\tau_h}(h) > \eta \mid Y_{1:n})\1_{\{ \hat{p}_{\hat\tau_h}(h) < \varepsilon \} } \Big]\\%
\begin{aligned}
&\leq \EE_{\theta}\Big[\hat{p}_{\hat\tau_h}(h) \cdot \frac{e}{\eta} \left(\frac{1-(J-1)\delta}{\delta}\right)^{2n}\Big(\frac{e \hat{p}_{\hat\tau_h}(h)}{\eta} \Big)^{n\eta - 1}e^{-n \hat{p}_{\hat\tau_h}(h) }\1_{ \{ \hat{p}_{\hat\tau_h}(h) < \varepsilon \} } \Big]\\
  &\leq \frac{e}{\eta}\left(\frac{1-(J-1)\delta}{\delta}\right)^{2n}\Big(\frac{e \varepsilon}{\eta}\Big)^{n\eta - 1}\EE_{\theta}\Big[\min_{\tau}\EE_{\theta}[U_{n,\tau}(h) \mid Y_{1:n}]\1_{ \{ \hat{p}_{\hat\tau_h}(h) < \varepsilon \} } \Big].
\end{aligned}
\end{multline*}
Then, the bound in this case follows by taking $\eta \to \frac{1}{2}$ (by below) and 
$\varepsilon = \frac{1}{2e}\left(\frac{\delta}{1 -\delta}\right)^{4}[\log(J!)/(2n) ]^{1/n}$.

When $J > 2$, the first trivial bound is obtained by choosing $\varepsilon = \eta = 0$. Proposition~\ref{pro:bayes-risk:lb:general} yields:
\begin{align*}
  \EE_{\theta}\Big[\min_{\tau}U_{n,\tau}(h) \Big]%
  &\geq \EE_{\theta}\Big[\min_{\tau}\EE_{\theta}(U_{n,\tau}(h) \mid Y_{1:n}) \Big]
  - \EE_{\theta}\Big[\EE_{\theta}\Big[\max_{\tau}(\hat{p}_{\tau}(h) - U_{n,\tau}(h)) \mid Y_{1:n} \Big] \Big]\\
  &\geq \EE_{\theta}\Big[\min_{\tau}\EE_{\theta}(U_{n,\tau}(h) \mid Y_{1:n}) \Big] -  \frac{1}{1-\rho_0}\sqrt{\frac{\log(J!)}{2n}}
\end{align*}
by Lemma~\ref{lem:bayes-risk:lb:HMM:1} below.  The inequality follows by taking the infimum over h on both sides.

For the remaining inequality, when $J > 2$, Lemma~\ref{lem:bayes-risk:lb:HMM:3} ensures:
$$
\PP_{\theta}\big(N_{(1)} + N_{(2)} < 2n\eta\big)%
\leq J^2e^{-2n(1 - \rho_1)^{2}(\beta - 2\eta)^{2}}.
$$
On the other hand, we have by Lemma~\ref{lem:bayes-risk:lb:HMM:1}:
$$
\EE_{\theta}\left[\max_{\tau}(\hat{p}_{\tau}(h) - U_{n, \tau}(h))\mid Y_{1:n}\right] \leq \frac{1}{1-\rho_0}\sqrt{\frac{\log(J!)}{2n}}
$$
On the event $\left\{\hat{p}_{\hat{\tau}_h}(h) < \varepsilon\right\}$, for $\varepsilon < \eta$:
    \begin{equation*}
        \begin{split}
            \PP_{\theta}\left(U_{n, \hat{\tau}_{h}}(h) > \eta\mid Y_{1:n}\right) &= \PP_{\theta}\left(U_{n, \hat{\tau}_{h}}(h) - \hat{p}_{\hat{\tau}_h}(h) > \eta - \hat{p}_{\hat{\tau}_h}(h) \mid Y_{1:n}\right)\\
            &\leq \PP_{\theta}\left(U_{n, \hat{\tau}_{h}}(h) - \hat{p}_{\hat{\tau}_h}(h) > \eta - \varepsilon \mid Y_{1:n}\right)\\
            &\leq e^{-\lambda(\eta - \varepsilon)}\EE_{\theta}\left[e^{\lambda\left\{U_{n, \hat{\tau}_{h}}(h) - \hat{p}_{\hat{\tau}_h}(h)\right\}}\mid Y_{1:n}\right]\\
            &\leq e^{-\lambda(\eta - \varepsilon)}e^{\frac{1}{8n}\left(\frac{\lambda}{1 - \rho_0}\right)^{2}}
        \end{split}
    \end{equation*}
where the last inequality is due to the argument using Marton coupling as shown in the proof of Lemma~\ref{lem:bayes-risk:lb:HMM:1}. Taking the minimum over $\lambda$, one gets
$$
\EE_{\theta}\left[\PP_{\theta}\left(U_{n, \hat{\tau}_{h}}(h) > \eta\mid Y_{1:n}\right)\1_{\hat{p}_{\hat{\tau}_h}(h) < \varepsilon}\right] \leq e^{-2n(1-\rho_0)^{2}(\eta - \varepsilon)^{2}}.
$$
Using Lemma~\ref{lem:bayes-risk:lb:HMM:3}, the final bound reads:
\begin{equation*}
    \begin{split}
        \EE_{\theta}\left[\min_{\tau}U_{n, \tau}(h)\right] &\geq \EE_{\theta}\left[\min_{\tau}\EE_{\theta}\left[U_{n, \tau}(h)\mid Y_{1:n}\right]\right]\\
        & -\frac{1}{\varepsilon(1 - \rho_0)}\sqrt{\frac{\log(J!)}{2n}}\EE_{\theta}\left[\hat{p}_{\hat{\tau}_h}(h)\1_{\hat{p}_{\hat{\tau}_h}(h) \geq \varepsilon}\right]\\
        & -e^{-2n(1-\rho_0)^{2}(\eta - \varepsilon)^{2}}-J^{2}e^{-2n(1-\rho_1)^{2}(\beta - 2\eta)^{2}}
    \end{split}
\end{equation*}
Choosing $\varepsilon = \frac{\eta}{2}$ and $\eta = \frac{2}{5}\beta$ and noting that $\rho_1 < \rho_0$ one obtains:
\begin{align*}
\EE_{\theta}\left[\min_{\tau}U_{n, \tau}(h)\right]
&\geq \left[1 - \frac{5}{\beta(1 - \rho_0)}\sqrt{\frac{\log(J!)}{2n}}\right]\EE_{\theta}\left[\min_{\tau}\EE_{\theta}\left[U_{n, \tau}(h)\mid Y_{1:n}\right]\right]\\
&\quad- (J^{2}+1)e^{-cn(1-\rho_0)^{2}\beta^{2}}.
\end{align*}
The result follows by taking the infimum over $h$.

\begin{lemma}
    \label{lem:bayes-risk:lb:HMM:1}
    Under \cite[Assumptions $\ref*{main-Assumption_mixing}$ and \ref*{main-Assumption_stat}]{GKN2025main}, for all $\theta\in\Theta^{\mathrm{dep}}$, $\PP_\theta$-almost-surely
$$
\EE_{\theta}\left[\max_{\tau}(\hat{p}_{\tau}(h) - U_{n, \tau}(h))\mid Y_{1:n}\right] \leq \frac{1}{1-\rho_0}\sqrt{\frac{\log(J!)}{2n}}
$$
where $\rho_0 = \frac{1-J\delta}{1-(J-1)\delta}$.
\end{lemma}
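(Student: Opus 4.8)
The plan is to mirror the entropy argument of the i.i.d. Lemma~\ref{lem:bayes-risk:lb:iid:1}, replacing the conditional-independence step (Hoeffding's lemma) by a concentration inequality tailored to the conditional law of the hidden states, which under the HMM is that of an inhomogeneous Markov chain. First, by the standard variational bound, for every $\lambda>0$,
\[
\EE_{\theta}\left[\max_{\tau}(\hat{p}_{\tau}(h) - U_{n,\tau}(h))\mid Y_{1:n}\right]
\leq \frac{1}{\lambda}\log\Big(J!\,\max_{\tau}\EE_{\theta}\big[e^{\lambda(\hat{p}_{\tau}(h)-U_{n,\tau}(h))}\mid Y_{1:n}\big]\Big),
\]
after bounding the supremum over the $J!$ permutations by the sum. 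Everything then reduces to a sub-Gaussian control of the conditional moment generating function of each $U_{n,\tau}(h)$ around its conditional mean $\hat{p}_{\tau}(h)$.

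Second, I would exploit the structure recalled in the Preliminary: conditionally on $Y_{1:n}$, the chain $X_{1:n}$ is an inhomogeneous Markov chain whose forward kernels have Dobrushin coefficient at most $\rho_0$ for every index $k\leq n-1$, by Lemma~\ref{Lemma_3}. Fixing $\tau$, the variable $nU_{n,\tau}(h)=\sum_{i=1}^n \mathbf{1}_{\tau(X_i)\neq h_i(Y_{1:n})}$ is, conditionally on $Y_{1:n}$, a function $f$ of $X_{1:n}$ that changes by at most $1$ when a single coordinate is modified, i.e.\ it has bounded-difference constants $c_i=1$.

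Third, and this is the technical heart, I would establish the conditional sub-Gaussian bound
\[
\EE_{\theta}\big[e^{\lambda(\hat{p}_{\tau}(h)-U_{n,\tau}(h))}\mid Y_{1:n}\big]\leq \exp\Big(\tfrac{\lambda^2}{8n(1-\rho_0)^2}\Big)
\]
via a Marton coupling. Writing $f-\EE_{\theta}[f\mid Y_{1:n}]$ as the sum of the martingale differences $D_k=\EE_{\theta}[f\mid X_{1:k},Y_{1:n}]-\EE_{\theta}[f\mid X_{1:k-1},Y_{1:n}]$, the contraction of the composed forward kernels ($\delta(F_{\theta,k\mid n})\leq\rho_0$) forces the influence of $X_k$ on the later summands to decay geometrically, so the conditional range of $D_k$ is at most $(\Gamma c)_k$, where $\Gamma$ is the upper-triangular matrix with entries $\Gamma_{kj}=\rho_0^{\,j-k}$ for $j\geq k$. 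Azuma--Hoeffding then gives $\log\EE_{\theta}[e^{-(\lambda/n)(f-\EE_{\theta}f)}\mid Y_{1:n}]\leq \tfrac{\lambda^2}{8n^2}\|\Gamma c\|_2^2$, and since the row and column sums of $\Gamma$ are bounded by $1/(1-\rho_0)$, the Schur test yields $\|\Gamma\|\leq 1/(1-\rho_0)$, whence $\|\Gamma c\|_2^2\leq n/(1-\rho_0)^2$. This is exactly where the factor $1/(1-\rho_0)$ originates, and it is the step where the conditional dependence of the $X_i$ must be handled carefully; verifying the geometric decay of $D_k$ through the forward-kernel contraction is the main obstacle, while the rest is routine bookkeeping.

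Finally, I would substitute this moment generating function bound into the entropy inequality to obtain the upper bound $\tfrac{\log(J!)}{\lambda}+\tfrac{\lambda}{8n(1-\rho_0)^2}$, and optimize over $\lambda$, taking $\lambda=\sqrt{8n(1-\rho_0)^2\log(J!)}$, to get precisely $\tfrac{1}{1-\rho_0}\sqrt{\tfrac{\log(J!)}{2n}}$, as claimed.
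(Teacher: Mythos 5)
Your proposal is correct and follows essentially the same route as the paper: the same entropy bound over the $J!$ permutations, the same conditional sub-Gaussian control with exponent $\lambda^{2}/(8n(1-\rho_0)^{2})$ obtained from the Marton-coupling matrix $\Gamma_{ij}\leq\rho_0^{\,j-i}$ (via the forward-kernel Dobrushin contraction, i.e.\ exponential forgetting of the smoothing distributions), and the same optimization in $\lambda$. The only difference is cosmetic: the paper invokes inequality (2.5) of Theorem~2.9 in \cite{Paulin15} as a black box, whereas you sketch a self-contained Azuma--Hoeffding/Kontorovich--Ramanan derivation of that same bound, which is exactly how the cited result is proved.
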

\begin{proof}
    Given that for any $\lambda>0$, 
$$
\EE_{\theta}\left[\max_{\tau}\left(\hat{p}_{\tau}(h)-(U_{n, \tau}(h) \right)\right] \leq \frac{1}{\lambda}\log\left(J! \max_{\tau}\EE_{\theta}\left[\exp\left(-\lambda (U_{n, \tau}(h) - \hat{p}_{\tau}(h))\right) \bigg| Y_{1:n}\right]\right)
$$
we shall exhibit an upper bound of the rhs term by applying Theorem~2.9 of \cite{Paulin15}, conditional on $Y_{1:n}$. So that for now we consider $Y = Y_{1:n}$ as fixed.
Define $f_{Y}^{h, \tau}$ for any  $x = x_{1:n}$ by:
$$
f_{Y}^{h, \tau}(x) =- \frac{1}{n}\sum_{i=1}^{n}\1_{\tau(x_{i}) \neq h_{i}(Y_{1:n})}.
$$
Then, for any $x = x_{1:n}$ and $x^{\prime} = x^{\prime}_{1:n}$,
$$f_{Y}^{h, \tau}(x) - f_{Y}^{h, \tau}(x^{\prime}) \leq \frac{1}{n}\sum_{i=1}^{n}\1_{x_i \neq x_i^{\prime}}.
$$
We thus may apply (2.5) in Theorem~2.9 of \cite{Paulin15} to get
\begin{equation}
\label{eq:Paulin}
\EE_{\theta}\left[e^{\lambda (f_{Y}^{h, \tau}(X) - \EE_{\theta}\left[f_{Y}^{h, \tau}(X) \mid Y_{1:n}\right])} \bigg| Y_{1:n}\right] \leq e^{\frac{\lambda^{2}}{8n^{2}}\sum_{i=1}^{n}\left(\sum_{j=i}^{n}\Gamma_{i, j}\right)^{2}},
\end{equation}
where $\Gamma$ comes from a Marton coupling (see Definition 2.1 in \cite{Paulin15}) and is given by:
$$
\Gamma_{j, i}:=0,\qquad \Gamma_{i, j}:=\sup _{x_1, \ldots, x_i, x_i^{\prime}\in\mathbb{X}} \mathbb{P}_{\theta}\left(X_{1, j}^{\left(x_1, \ldots, x_i, x_i^{\prime}, Y_{1:n}\right)} \neq X_{2, j}^{\left(x_1, \ldots, x_i, x_i^{\prime}, Y_{1:n}\right)}\bigg| Y_{1:n}\right)
$$
for $1 \leq i < j \leq n$. Now,
\begin{multline*}
\Gamma_{i,j} \leq \sup_{x_{1:i-1}\in\mathbb{X}^{i-1}}
       \Bigg\|\PP_{\theta}\left(X_{1,j}^{(x_{1:i-1}, \Tilde{a}, \Tilde{b}, Y_{1:n})}\in \cdot\bigg| X_{1:i-1} = x_{1:i-1}, Y_{1:n}\right)\\ - \PP_{\theta}\left(X_{2,j}^{(x_{1:i-1}, \Tilde{a}, \Tilde{b}, Y_{1:n})}\in \cdot\bigg|X_{1:i-1} = x_{1:i-1}, Y_{1:n}\right)\Bigg\|_{\mathrm{TV}},
\end{multline*}
ie.,
\begin{multline*}
\Gamma_{i,j} \leq \sup_{x_{1:i-1}\in\mathbb{X}^{i-1}}  \Bigg\|\PP_{\theta}\left(X_{j}\in \cdot \bigg| X_{1:i-1} = x_{1:i-1}, X_i = \Tilde{a}, Y_{1:n}\right)\\-\PP_{\theta}\left(X_{j}\in \cdot\bigg| X_{1:i-1} = x_{1:i-1}, X_i = \Tilde{b}, Y_{1:n}\right)\Bigg\|_{\mathrm{TV}},
\end{multline*}
ie.,
\begin{equation*}
    \Gamma_{i, j} \leq \Bigg\|\PP_{\theta}\left(X_{j}\in \cdot \bigg| X_i = \Tilde{a}, Y_{1:n}\right)- \PP_{\theta}\left(X_{j}\in \cdot \bigg| X_i = \Tilde{b}, Y_{1:n}\right)\Bigg\|_{\mathrm{TV}}
\end{equation*}
since conditional on $Y_{1:n}$, the hidden states form a inhomogeneous Markov chain with transition kernels $(F_{k|n}[Y_{k+1:n}])$. 
Exponential forgetting of the smoothing distributions in HMMs (Proposition 4.3.26 in \cite{CMT05}) allows to conclude
that
$$
\Bigg\|\PP_{\theta}\left(X_{j}\in \cdot \bigg| X_i = \Tilde{a}, Y_{1:n}\right)- \PP_{\theta}\left(X_{j}\in \cdot \bigg| X_i = \Tilde{b}, Y_{1:n}\right)\Bigg\|_{\mathrm{TV}}\leq \rho_{0}^{j-i}
$$
where $\rho_0 =\frac{1 - J\delta}{1 - (J-1)\delta} $ (see also Lemma~\ref{Lemma_3}).
By inequality \eqref{eq:Paulin}:
$$
\max_{\tau}\EE_{\theta}\left[e^{\lambda (f_{Y}^{h, \tau}(X) - \EE_{\theta}\left[f_{Y}^{h, \tau}(X) \mid Y_{1:n}\right])} \bigg| Y_{1:n}\right] \leq e^{\frac{1}{8n}\left(\frac{\lambda}{1 - \rho_0}\right)^{2}}.
$$
Thus,
\begin{eqnarray*}
\EE_{\theta}\left[\max_{\tau}\left(\hat{p}_{\tau}(h)-U_{n, \tau}(h)\right) \bigg| Y_{1:n}\right]
&\leq& \inf_{\lambda>0}\left\{\frac{\log (J!)}{\lambda}+\frac{\lambda}{8n(1-\rho_0)^2} \right\}\\
&=& \frac{1}{1-\rho_{0}} \sqrt{\frac{\log(J!)}{2n}}
\end{eqnarray*}
\end{proof}
\begin{lemma}
  \label{lem:bayes-risk:lb:HMM:2}
  For all $\theta\in\Theta^{\mathrm{dep}}$, $\PP_{\theta}$-almost-surely
  \begin{equation*}
    \PP_{\theta}\big( U_{n,\hat\tau_h}(h) > \eta  \mid Y_{1:n}\big)%
    \leq  \hat{p}_{\hat\tau_h}(h) \cdot \frac{e}{\eta}\left(\frac{1-(J-1)\delta}{\delta}\right)^{2n}\Big(\frac{e \hat{p}_{\hat\tau_h}(h)}{\eta} \Big)^{n\eta - 1}e^{-n \hat{p}_{\hat\tau_h}(h)}.
  \end{equation*}
\end{lemma}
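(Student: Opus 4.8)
The plan is to reduce the computation to the independent case already settled in Lemma~\ref{lem:bayes-risk:lb:iid:2}. The only genuine obstacle is that, conditionally on $Y_{1:n}$, the hidden states $X_{1:n}$ no longer form an independent sequence but an inhomogeneous Markov chain, so the conditional moment generating function of $\sum_{i=1}^{n}\1_{h_i(Y_{1:n})\ne\hat\tau_h(X_i)}$ does not factorize. I would circumvent this by dominating the joint smoothing law of $X_{1:n}$ by the product of its one-dimensional marginal smoothing laws, at the price of the multiplicative factor $\big(\tfrac{1-(J-1)\delta}{\delta}\big)^{2n}$; everything downstream is then identical to the i.i.d. argument.

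Concretely, write $\phi_i(x)=\PP_\theta(X_i=x\mid Y_{1:n})$ for the marginal smoothing distribution and recall that, conditionally on $Y_{1:n}$, the chain $X_{1:n}$ has initial law $\phi_1$ and forward transition kernels $F_k:=F_{\theta,k\vert n}[Y_{k+1:n}]$, so that
$$
\PP_\theta(X_{1:n}=x_{1:n}\mid Y_{1:n})=\phi_1(x_1)\prod_{k=1}^{n-1}F_k(x_k,x_{k+1}).
$$
The central claim is the pointwise domination $F_k(x,x')\le\big(\tfrac{1-(J-1)\delta}{\delta}\big)^2\phi_{k+1}(x')$ for all $x,x'$. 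To see this, note that $F_k(x,x')=Q(x,x')\,b_{k+1}(x')/\sum_{x''}Q(x,x'')b_{k+1}(x'')$ where $b_{k+1}(x')=\PP_\theta(Y_{k+1:n}\mid X_{k+1}=x')$ is the backward function; hence for any two first arguments $a,b$ the ratio $F_k(a,x')/F_k(b,x')$ is at most $\big(\tfrac{1-(J-1)\delta}{\delta}\big)^2$, using the mixing assumption $\delta\le Q(x,x')\le 1-(J-1)\delta$ both on the leading factor $Q(a,x')/Q(b,x')$ and on the ratio of the two normalizers. Since $\phi_{k+1}(x')=\sum_{x''}\phi_k(x'')F_k(x'',x')\ge\min_{x''}F_k(x'',x')$, the claim follows. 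Multiplying these bounds over $k$ and using $\tfrac{1-(J-1)\delta}{\delta}\ge 1$ (because $J\delta\le 1$) yields
$$
\PP_\theta(X_{1:n}=x_{1:n}\mid Y_{1:n})\le\Big(\tfrac{1-(J-1)\delta}{\delta}\Big)^{2n}\prod_{i=1}^{n}\phi_i(x_i).
$$

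With this domination in hand, for any $\lambda>0$ the conditional moment generating function obeys
$$
\EE_\theta\big[e^{\lambda\sum_i\1_{h_i\ne\hat\tau_h(X_i)}}\mid Y_{1:n}\big]\le\Big(\tfrac{1-(J-1)\delta}{\delta}\Big)^{2n}\prod_{i=1}^{n}\big(q_i e^{\lambda}+1-q_i\big),
$$
where $q_i:=\PP_\theta(h_i(Y_{1:n})\ne\hat\tau_h(X_i)\mid Y_{1:n})$ satisfies $\tfrac1n\sum_i q_i=\hat p_{\hat\tau_h}(h)$. From here I would repeat verbatim the Chernoff computation of Lemma~\ref{lem:bayes-risk:lb:iid:2}: bound $\log(1+q_i(e^{\lambda}-1))\le q_i(e^{\lambda}-1)$, optimize $-\lambda n\eta+n\hat p_{\hat\tau_h}(h)(e^{\lambda}-1)$ at $e^{\lambda}=\eta/\hat p_{\hat\tau_h}(h)$, and rearrange to reach the stated bound. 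The difficulty is entirely concentrated in establishing the domination by the marginals with the correct exponent $2n$; once that is in place, the large-deviation estimate is exactly the one from the independent case, with the extra factor carried through untouched.
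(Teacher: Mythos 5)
Your proof is correct, and it reaches the paper's bound by a related but structurally different route. The paper works in the backward direction: it expresses the conditional moment generating function of $S=\sum_{i}\1_{\hat\tau_h(X_i)\neq h_i(Y_{1:n})}$ as an iterated product of transfer operators $M_{\theta,i}$ built from the backward kernels $B_{\theta,i}$, bounds each operator norm via the pointwise estimate $B_{\theta,i}(x,y)\leq\big(\tfrac{1-(J-1)\delta}{\delta}\big)^{2}\phi_{\theta,i\mid n}(y)$ (obtained by comparing the filtering marginal to the smoothing marginal through the backward recursion), and thereby factorizes the MGF up to the factor $\big(\tfrac{1-(J-1)\delta}{\delta}\big)^{2n}$. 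You instead work in the forward direction and prove the stronger statement that the entire joint conditional law is dominated: $\PP_{\theta}(X_{1:n}=x_{1:n}\mid Y_{1:n})\leq\big(\tfrac{1-(J-1)\delta}{\delta}\big)^{2(n-1)}\prod_{i}\phi_i(x_i)$, via the two-sided ratio bound on the forward kernel $F_k(x,x')\propto Q(x,x')b_{k+1}(x')$ together with $\phi_{k+1}(x')\geq\min_{x''}F_k(x'',x')$; your ratio argument is sound since $\delta\leq Q(\cdot,\cdot)\leq 1-(J-1)\delta$ controls both the leading factor and the two normalizers, and $\tfrac{1-(J-1)\delta}{\delta}\geq 1$ lets you pass from exponent $2(n-1)$ to $2n$. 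Your domination lemma buys more than the paper's: it bounds the conditional expectation of any nonnegative functional of $X_{1:n}$ (not just exponentials of additive statistics) by its i.i.d.-ified counterpart, it is written directly for general $J$ (the paper's operator display is phrased on $L^{\infty}(\{0,1\})$), and it makes completely transparent where the constant comes from — one factor $\big(\tfrac{1-(J-1)\delta}{\delta}\big)^{2}$ per transition. The ensuing Chernoff computation is then, as you say, verbatim the i.i.d. one. One shared caveat: like the paper's proof, your optimization at $e^{\lambda}=\eta/\hat p_{\hat\tau_h}(h)$ requires $\lambda>0$, i.e.\ $\eta>\hat p_{\hat\tau_h}(h)$; this is harmless because the lemma is only invoked on the event $\{\hat p_{\hat\tau_h}(h)<\varepsilon\}$ with $\varepsilon<\eta$, and both proofs rely on this regime in the same way. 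Note also that, as in the paper, the mixing assumption $\delta=\min_{x,x'}Q_{x,x'}>0$ is used implicitly even though the lemma statement quantifies only over $\theta\in\Theta^{\mathrm{dep}}$.
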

\begin{proof}
Let $S = \sum_{i=1}^{n}\1_{\hat{\tau}_{h}(X_i)\neq h_{i}(Y_{1:n})}$. We consider the following operators defined on $L^{\infty}(\left\{0, 1\right\})$, for $i\in[n]$:
$$
(M_{\theta, i}.f)(x) \coloneqq f(0)e^{\lambda \1_{\hat{\tau}_{h}(0)\neq h_{i}(Y_{1:n})} } B_{\theta, i}(x, 0) + f(1) e^{\lambda \1_{\hat{\tau}_{h}(1)\neq h_{i}(Y_{1:n})}} B_{\theta, i}(x, 1)
$$
where $B_{\theta, i}$ is the Backward kernel defined by:
\begin{equation*}
    B_{\theta, i}(x, y) = \frac{\phi_{\theta, i}(y) Q(y, x)}{\sum_{y^{\prime}}\phi_{\theta, i}(y^{\prime})Q(y^{\prime}, x)}.
\end{equation*}
Then observe that,
\begin{equation*}
    \begin{split}
        \EE_{\theta}[e^{\lambda S}\mid Y_{1:n}] &= \EE_{\theta}\left[e^{\lambda\sum_{i=2}^{n}\1_{\hat{\tau}_{h}(X_i)\neq h_{i}(Y_{1:n})} }
        \EE_{\theta}\left[e^{\lambda\1_{\hat{\tau}_{h}(X_1)\neq h_{1}(Y_{1:n})} } \mid X_{2:n}, Y_{1:n}\right]\mid Y_{1:n}\right]\\
        &= \EE_{\theta}\left[e^{\lambda\sum_{i=2}^{n}\1_{\hat{\tau}_{h}(X_i)\neq h_{i}(Y_{1:n})} }(M_{\theta, 1}.\1)(X_2)\mid Y_{1:n}\right]
    \end{split}
\end{equation*}
Repeating inductively the same trick leads to
\begin{equation*}
\EE_{\theta}[e^{\lambda S}\mid Y_{1:n}]%
= \EE_{\theta}\left[(M_{\theta, n}...M_{\theta, 1}.\1)(X_{n+1})\mid Y_{1:n}\right]
\end{equation*}
Hence
\begin{equation*}
\EE_{\theta}[e^{\lambda S}\mid Y_{1:n}]%
\leq \lVert (M_{\theta, n}...M_{\theta, 1}.\1) \rVert_{\infty}%
\leq \prod_{i=1}^{n}|||M_{\theta, i}|||_{\infty}
\end{equation*}
where
\begin{equation*}
    \begin{split}
        |||M_{\theta, i}|||_{\infty} &\coloneqq \sup_{f, ||f||_{\infty} = 1}||M_{\theta, i}.f||_{\infty}\\
        &=\max\left((M_{\theta, i}.f)(0), (M_{\theta, i}.f)(1)\right)\\
        &\leq \max\left(\sum_{z\in\left\{0, 1\right\}}e^{\lambda\1_{\hat{\tau}_{h}(z)\neq h_{i}(Y_{1:n})}}B_{\theta, i}(0, z), \sum_{z\in\left\{0, 1\right\}}e^{\lambda\1_{\hat{\tau}_{h}(z)\neq h_{i}(Y_{1:n})}}B_{\theta, i}(1, z)\right).
    \end{split}
\end{equation*}
But, given that
\begin{equation*}
    B_{\theta, i}(x, y) = \frac{\phi_{\theta, i}(y) Q(y, x)}{\sum_{y^{\prime}}\phi_{\theta, i}(y^{\prime})Q(y^{\prime}, x)}\leq \frac{1-(J-1)\delta}{\delta}\phi_{\theta, i}(y)
\end{equation*}
and that
\begin{equation*}
    \begin{split}
        \phi_{\theta, i|n}(y) &= \phi_{\theta, i+1|n}B_{\theta, i}(y) = \sum_{x}\frac{\phi_{\theta, i+1|n}(x)\phi_{\theta, i}(y) Q(y, x)}{\sum_{y^{\prime}}\phi_{\theta, i}(y^{\prime})Q(y^{\prime}, x)} \geq \frac{\delta \phi_{\theta, i}(y)}{1-(J-1)\delta}
    \end{split}
\end{equation*}
one obtains
\begin{equation*}
B_{\theta, i}(x, y) \leq \left(\frac{1-(J-1)\delta}{\delta}\right)^{2}\phi_{\theta, i|n}(y).
\end{equation*}
Thus,
\begin{equation*}
    \begin{split}
        |||M_{\theta, i}|||_{\infty}&\leq \left(\frac{1-(J-1)\delta}{\delta}\right)^{2}\left(e^{\lambda\1_{\hat{\tau}_{h}(0)\neq h_{i}(Y_{1:n})}}\phi_{i|n}(0) + e^{\lambda\1_{\hat{\tau}_{h}(1)\neq h_{i}(Y_{1:n})}}\phi_{i|n}(1)\right)\\
        &= \left(\frac{1-(J-1)\delta}{\delta}\right)^{2}\EE_{\theta}\left[e^{\lambda\1_{\hat{\tau}_{h}(X_i)\neq h_{i}(Y_{1:n})}}\mid Y_{1:n}\right].
    \end{split}
\end{equation*}
Finally,
\begin{equation*}
    \EE_{\theta}[\exp(\lambda S)\mid Y_{1:n}] \leq \left(\frac{1-(J-1)\delta}{\delta}\right)^{2n}\prod_{i=1}^{n}\EE_{\theta}\left[e^{\lambda\1_{\hat{\tau}_{h}(X_i)\neq h_{i}(Y_{1:n})}}\mid Y_{1:n}\right].
\end{equation*}
One can then use Chernoff's bound (with $q_i(h) = \PP_{\theta}(h_i(Y_{1:n}) \ne \hat\tau_h(X_i) \mid Y_{1:n})$):
\begin{equation*}
    \begin{split}
    \PP_{\theta}\big(U_{n,\hat\tau_h}(h) > \eta  \mid Y_{1:n}\big) &= \PP_{\theta}\Big(\sum_{i=1}^n 1_{h_{i}(Y_{1:n}) \ne \hat{\tau}_{h}(X_i)} > n\eta \mid Y_{1:n}\Big)\\
    &\leq \inf_{\lambda > 0}e^{-\lambda n\eta}\EE_{\theta}\left[e^{\lambda S}\mid Y_{1:n}\right]\\
    &\leq \inf_{\lambda > 0}e^{-\lambda n\eta}\left(\frac{1-(J-1)\delta}{\delta}\right)^{2n}\prod_{i=1}^{n}\EE_{\theta}\left[e^{\lambda\1_{\hat{\tau}_{h}(X_i)\neq h_{i}(Y_{1:n})}}\mid Y_{1:n}\right]\\
    &\leq \inf_{\lambda > 0}e^{-\lambda n\eta}\left(\frac{1-(J-1)\delta}{\delta}\right)^{2n}e^{\sum_{i=1}^{n}\log\left(q_{i}(h)e^{\lambda} + 1 - q_{i}(h)\right)}\\
    &\leq \inf_{\lambda > 0}e^{-\lambda n\eta}\left(\frac{1-(J-1)\delta}{\delta}\right)^{2n}e^{n\hat{p}_{\hat\tau_h}(h)(e^{\lambda}-1)}\\
    &\leq \hat{p}_{\hat\tau_h}(h) \cdot \frac{e}{\eta}\left(\frac{1-(J-1)\delta}{\delta}\right)^{2n} \Big(\frac{e \hat{p}_{\hat\tau_h}(h)}{\eta} \Big)^{n\eta - 1}e^{-n \hat{p}_{\hat\tau_h}(h)}
    \end{split}
\end{equation*}
\end{proof}
\begin{lemma}
  \label{lem:bayes-risk:lb:HMM:3}
  Let  $\beta = \min_{i, j\neq k}\PP_{\theta}\left(X_{i}\in\left\{j, k\right\}\right)$ and $\rho_1 = 1 - J\delta$. If $J \geq 3$, then for $\eta < \frac{\beta}{2}$ and $\theta\in\Theta^{\mathrm{dep}}$
  \begin{equation*}
    \PP_{\theta}\big(N_{(1)} + N_{(2)} < 2n\eta\big)%
    \leq J^2e^{-2n(1 - \rho_1)^{2}(\beta - 2\eta)^{2}}.
  \end{equation*}
\end{lemma}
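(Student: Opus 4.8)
The plan is to transport the i.i.d. argument of Lemma~\ref{lem:bayes-risk:lb:iid:3} to the Markovian setting, replacing the Binomial tail estimate by a concentration inequality for additive functionals of the hidden chain. First I would reduce to a single pair of labels by a union bound. Since the event $\{N_{(1)} + N_{(2)} < 2n\eta\}$ forces the existence of two distinct states $j \neq k$ with $N_j + N_k < 2n\eta$, we have
\begin{equation*}
  \PP_\theta\big(N_{(1)} + N_{(2)} < 2n\eta\big)
  \leq \sum_{j\neq k}\PP_\theta\big(N_j + N_k < 2n\eta\big)
  \leq J^2\max_{j\neq k}\PP_\theta\big(N_j + N_k < 2n\eta\big),
\end{equation*}
so it suffices to bound $\PP_\theta(N_j + N_k < 2n\eta)$ for a fixed pair $j\neq k$.

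The key observation is that $N_j + N_k = \sum_{i=1}^n \1_{X_i \in \{j,k\}}$ is an additive functional of the \emph{unconditional} chain $\mathbf{X}\sim\mathrm{Markov}(\nu,Q)$, and that $\EE_\theta[N_j + N_k] = \sum_{i=1}^n \PP_\theta(X_i \in \{j,k\}) \geq n\beta$ by definition of $\beta$. I would then apply the same Marton-coupling concentration inequality (Theorem~2.9 of \cite{Paulin15}) used in Lemma~\ref{lem:bayes-risk:lb:HMM:1}, now to the plain chain and to the $1/n$-Lipschitz map $x_{1:n}\mapsto \frac1n\sum_{i=1}^n \1_{x_i \in \{j,k\}}$. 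The crucial point -- and the part requiring the most care -- is that here the relevant mixing coefficient is governed by the Dobrushin coefficient of $Q$ itself, namely $\rho_1 = 1 - J\delta$, rather than by the smoothing coefficient $\rho_0$ appearing when one conditions on $Y_{1:n}$. Under \cite[Assumption~\ref*{main-Assumption_mixing}]{GKN2025main} every row of $Q$ is bounded below entrywise by $\delta$, so for any two rows $\|Q(x,\cdot) - Q(x',\cdot)\|_{\mathrm{TV}} = 1 - \sum_y \min(Q(x,y), Q(x',y)) \leq 1 - J\delta = \rho_1$, i.e. $\delta(Q) \leq \rho_1$. By submultiplicativity of the Dobrushin coefficient (Lemma~4.3.13 in \cite{CMT05}, cf. Lemma~\ref{Lemma_3} in the case $k\geq n$) the forward forgetting of the chain furnishes a Marton coupling matrix with $\Gamma_{i,\ell} \leq \rho_1^{\ell - i}$, whence $\sum_{\ell \geq i}\Gamma_{i,\ell} \leq (1-\rho_1)^{-1}$.

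Plugging this into Theorem~2.9 of \cite{Paulin15} yields the sub-Gaussian moment bound
\begin{equation*}
  \EE_\theta\Big[\exp\big(\lambda\big(\tfrac1n(N_j + N_k) - \tfrac1n\EE_\theta[N_j + N_k]\big)\big)\Big]
  \leq \exp\Big(\frac{\lambda^2}{8n(1-\rho_1)^2}\Big),
\end{equation*}
exactly as in the computation of Lemma~\ref{lem:bayes-risk:lb:HMM:1} with $\rho_0$ replaced by $\rho_1$; thus $\frac1n(N_j + N_k)$ is sub-Gaussian with variance proxy $[4n(1-\rho_1)^2]^{-1}$. A Chernoff bound on the lower tail then gives, for every $t>0$,
\begin{equation*}
  \PP_\theta\Big(\tfrac1n(N_j + N_k) \leq \tfrac1n\EE_\theta[N_j+N_k] - t\Big)
  \leq \exp\big(-2n(1-\rho_1)^2 t^2\big).
\end{equation*}
Finally I would take $t = \beta - 2\eta$, which is positive by the hypothesis $\eta < \beta/2$ and which satisfies $\frac1n\EE_\theta[N_j + N_k] - t \geq \beta - (\beta - 2\eta) = 2\eta$; this bounds $\PP_\theta(N_j + N_k < 2n\eta)$ by $e^{-2n(1-\rho_1)^2(\beta - 2\eta)^2}$, and combining with the union bound completes the proof. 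The main obstacle is pinning down the forgetting rate $\rho_1$ for the unconditional chain and verifying $\delta(Q)\le\rho_1$; once the coupling matrix is controlled by $\rho_1^{\ell-i}$, the remaining steps are the routine sub-Gaussian Chernoff estimates that parallel the i.i.d. case.
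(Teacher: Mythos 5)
Your proposal is correct and follows essentially the same route as the paper's proof: a union bound over pairs $j\neq k$, the Marton-coupling concentration inequality of Paulin applied to the additive functional $\frac{1}{n}\sum_{i=1}^n\1_{X_i\in\{j,k\}}$ of the unconditional chain with mixing coefficient $\rho_1 = 1-J\delta$, and a sub-Gaussian Chernoff bound on the lower tail combined with $\frac{1}{n}\EE_\theta[N_j+N_k]\geq\beta$ and the choice $t=\beta-2\eta$. The only difference is cosmetic: you spell out the verification $\delta(Q)\leq\rho_1$ and the coupling-matrix bound $\Gamma_{i,\ell}\leq\rho_1^{\ell-i}$, which the paper leaves implicit by pointing back to the analogous computation in its Lemma on the conditional chain and to its Dobrushin-coefficient lemma.
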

\begin{proof}
    Let $\lambda > 0$, $\eta < \frac{\beta}{2}$, $j\neq k$ , $\beta_{i}(j, k) = \PP_{\theta}\left(X_i\in\left\{j, k\right\}\right)$.
\begin{equation*}
    \begin{split}
        \PP_{\theta}\left(N_j + N_k < 2n\eta\right) & = \PP_{\theta}\left(\sum_{i=1}^{n}(\1_{X_i\in\left\{j, k\right\}} - \beta_{i}(j, k)) < 2n\eta - \sum_{i=1}^{n}\beta_{i}(j, k)\right)\\
        &= \PP_{\theta}\left(e^{\lambda \sum_{i=1}^{n}\left(\beta_{i}(j, k) - \1_{X_i\in\left\{j, k\right\}}\right)} > e^{\lambda(\sum_{i=1}^{n}\beta_{i}(j, k) - 2n\eta)}\right)\\
        &\leq e^{-\lambda(\sum_{i=1}^{n}\beta_{i}(j, k) - 2n\eta)}\EE_{\theta}\left[e^{\lambda \sum_{i=1}^{n}\left(\beta_{i}(j, k) - \1_{X_i\in\left\{j, k\right\}}\right)}\right]
    \end{split}
\end{equation*}
$\EE_{\theta}\left[e^{\lambda \sum_{i=1}^{n}\left(\beta_{i}(j, k) - \1_{X_i\in\left\{j, k\right\}}\right)}\right]$ can be controlled by the same technique using the Marton coupling that was used in the proof of Lemma~\ref{lem:bayes-risk:lb:HMM:1}:
$$
\EE_{\theta}\left[e^{\lambda \sum_{i=1}^{n}\left(\beta_{i}(j, k) - \1_{X_i\in\left\{j, k\right\}}\right)}\right] \leq e^{\frac{(n\lambda)^{2}}{8n(1-\rho_1)^{2}}} = e^{\frac{n\lambda^{2}}{8(1-\rho_1)^{2}}}
$$
It follows that
\begin{align*}
\PP_{\theta}\left(N_j + N_k < 2n\eta\right) &\leq \inf_{\lambda > 0}e^{-\lambda(\sum_{i=1}^{n}\beta_{i}(j, k) - 2n\eta) + \frac{n\lambda^{2}}{8(1-\rho_1)^{2}}}\\
&\leq e^{-2n(1 - \rho_1)^{2}\left(\frac{\sum_{i=1}^{n}\beta_{i}(j, k)}{n} - 2\eta\right)^{2}}\\
&\leq e^{-2n(1 - \rho_1)^{2}(\beta - 2\eta)^{2}}
\end{align*}
Thus,
\begin{align*}
\PP_{\theta}\left(N_{(1)} + N_{(2)} < 2n\eta\right)
&\leq J^{2}\max_{j\neq k}\PP_{\theta}\left(N_j + N_k < 2n\eta\right)\\
&\leq J^{2}e^{-2n(1 - \rho_1)^{2}(\beta - 2\eta)^{2}}.
\end{align*}
\end{proof}

\section{Proof of \texorpdfstring{\cite[Theorem~\ref*{main-thm:bayes-risk:min:hmm:J=2}]{GKN2025main}}{[4, Theorem 8]}}
\label{proof:bayes-risk:min:hmm:J=2}

Let $a\in\left\{1, 2\right\}$ and $n\in\mathbb{N}$. Let $Q = \begin{pmatrix}
1-p & p \\
q & 1-q
\end{pmatrix}$ be the transition matrix and assume $0 <q < \frac{1}{2} < p$ and $p+q < 1$. Assume also that the initial distribution is the stationary distribution, that is $\nu = \left(\frac{q}{p+q}, \frac{p}{p+q}\right)$.
\begin{equation*}
    \begin{split}
        \PP_{\theta}\left(X_1 = a \mid Y_{1:n}\right) &= \sum_{x_{2:n}\in\left\{1, 2\right\}}\PP_{\theta}\left(X_1 = a, X_{2:n} = x_{2:n} \mid Y_{1:n}\right)\\
        &\propto \sum_{x_{2:n}\in\left\{1, 2\right\}} \nu(a) Q_{a, x_2}...Q_{x_{n-1}, x_n}f_{a}(Y_1)..f_{x_n}(Y_n)\\
        \PP_{\theta}\left(X_2 = a \mid Y_{1:n}\right) &= \sum_{x_1, x_{3:n}\in\left\{1, 2\right\}}\PP_{\theta}\left(X_1 = x_1, X_2 = a, X_{3:n} = x_{3:n} \mid Y_{1:n}\right)\\
        &\propto \sum_{x_1, x_{3:n}\in\left\{1, 2\right\}} \nu(x_1) Q_{x_1, a}...Q_{x_{n-1}, x_n}f_{x_1}(Y_1)f_{a}(Y_2)...f_{x_n}(Y_n)
    \end{split}
\end{equation*}
The Bayes classifier puts the two first observations in the same cluster when:
\begin{equation*}
    \begin{split}
        &\biggl[\PP_{\theta}\left(X_1 = 2 \mid Y_{1:n}\right) - \PP_{\theta}\left(X_1 = 1 \mid Y_{1:n}\right)\biggr]\times\biggl[\PP_{\theta}\left(X_2 = 2 \mid Y_{1:n}\right) - \PP_{\theta}\left(X_2 = 1 \mid Y_{1:n}\right)\biggr] \geq 0\\
        &\iff \biggl[\sum_{x_{2:n}\in\left\{1, 2\right\}}\left(\nu(2)Q_{2, x_2}f_{2}(Y_1) - \nu(1)Q_{1, x_2}f_{1}(Y_1)\right)Q_{x_{2}, x_3}...Q_{x_{n-1}, x_n}f_{x_2}(Y_2)...f_{x_n}(Y_n)\biggr]\\
        &\times\biggl[\sum_{x_1, x_{3:n}\in\left\{1, 2\right\}}\left(Q_{x_1, 2}Q_{2, x_3}f_{2}(Y_2) - Q_{x_1, 1}Q_{1, x_3}f_{1}(Y_2)\right)\nu(x_1)Q_{x_{3}, x_4}...Q_{x_{n-1}, x_n}f_{x_1}(Y_1)f_{x_3}(Y_3)...f_{x_n}(Y_n)\biggr] \geq 0
    \end{split}
\end{equation*}
A sufficient condition for this to be ensured is:
\begin{equation*}
    \begin{split}
        & \left(\frac{f_{1}}{f_{2}}(Y_1) < \frac{p\min(q, 1-q)}{q\max(p, 1-p)} \text{ and } \frac{f_{1}}{f_{2}}(Y_2) < \frac{\min(p, 1-q)\min(q, 1-q)}{\max(q,1-p)\max(p,1-p)}\right) \\
        &\text{ or }\\
        & \left(\frac{f_{2}}{f_{1}}(Y_1) < \frac{q\min(p, 1-p)}{p\max(q, 1-q)} \text{ and } \frac{f_{2}}{f_{1}}(Y_2) < \frac{\min(q,1-p)\min(p, 1-p)}{\max(p,1-q)\max(q,1-q)}\right)
    \end{split}
\end{equation*}

Since $q < \frac{1}{2} < p$ and $p+q < 1$, the condition simplifies to:
\begin{equation*}
    \left(\frac{f_{1}}{f_{2}}(Y_1) < 1 \text{ and } \frac{f_{1}}{f_{2}}(Y_2) < \frac{q}{1-p}\right)\text{ or } \left( \frac{f_{2}}{f_{1}}(Y_1) < \frac{q (1-p)}{p (1-q)} \text{ and } \frac{f_{2}}{f_{1}}(Y_2) < \frac{q (1-p)}{(1-q)^{2}}\right).
\end{equation*}

We consider the event:
\begin{equation*}
    A = \left\{\frac{f_1}{f_2}(Y_1) < 1, \frac{f_1}{f_2}(Y_2) < \frac{q}{1-p}\right\}\bigcup\left\{\frac{f_2}{f_1}(Y_1) < \frac{q(1-p)}{p(1-q)}, \frac{f_2}{f_1}(Y_2) < \frac{q(1-p)}{(1-q)^{2}}\right\}
\end{equation*}
In what follows, we seek a sufficient condition under which the Bayes clusterer puts the two first observations in two different clusters. The Bayes clusterer is a partition $F_n^{\star}$ that minimizes $\EE_{\theta}\left[\ell\left(\pi_n\left(X_{1:n}\right), F_n\right)\mid Y_{1:n}\right]$.
Let $L(Y_{1:n})$ be the likelihood of the observations $Y_{1:n}$. Consider the event 
\begin{equation*}
    B_n = \left\{\left(\forall i\in\llbracket 3, n\rrbracket\right)\quad Y_i\notin \text{Supp}\left(f_2\right) \right\}
\end{equation*}
Assume $B_n$ has positive probability. Since the hidden Markov chain is mixing, this happens for example when $f_1$ and $f_2$ do not have the same support. On this event,
\begin{equation*}
    \begin{split}
        &\EE_{\theta}\left[\ell\left(\pi_{n}(X_{1:n}), F_n\right) \mid Y_{1:n}\right] = \sum_{x_{1:n}\in\left\{1, 2\right\}^{n}}\ell\left(\pi_{n}(x_{1:n}), F_n\right) \PP_{\theta}\left(X_{1:n} = x_{1:n} \mid Y_{1:n}\right)\\
        &= \sum_{x_{1:2}\in\left\{1, 2\right\}^{2}}\ell\left(\pi_{n}((x_{1}, x_{2}, 1,\dots, 1), F_n\right)\PP_{\theta}\left(X_{1:2} = x_{1:2}, X_{3:n} = 1\mid Y_{1:n}\right)\\
        &= \frac{1}{L(Y_{1:n})}\sum_{x_{1:2}\in\left\{1, 2\right\}^{2}}\ell\left(\pi_{n}((x_{1}, x_{2}, 1,\dots, 1)), F_n\right)\nu(x_1)Q_{x_1, x_2} Q_{x_2, 1} Q_{1, 1}^{n-3}f_{x_1}(Y_1)f_{x_2}(Y_2)\prod_{i=3}^{n}f_{1}(Y_i)\\
        &\propto\Big(\ell\left(\pi_{n}((1,\dots, 1)), F_n\right)\nu(1)Q_{1, 1}^{2}f_{1}(Y_1)f_{1}(Y_2) + \ell\left(\pi_{n}((2, 2, 1, \dots, 1), F_n\right)\nu(2)Q_{2, 2} Q_{2, 1}f_{2}(Y_1)f_{2}(Y_2)\\
        & + \ell\left(\pi_{n}((1, 2, 1,\dots,1)), F_n\right)\nu(1)Q_{1, 2}Q_{2, 1}f_{1}(Y_1)f_{2}(Y_2) +\ell\left(\pi_{n}((2, 1, \dots, 1)), F_n\right)\nu(2)Q_{2, 1}Q_{1, 1}f_{2}(Y_1)f_{1}(Y_2)\Big)\\
        &\propto\Big(\ell\left(\pi_{n}((1,\dots, 1)), F_n\right)q(1-p)^{2}f_{1}(Y_1)f_{1}(Y_2) + \ell\left(\pi_{n}((2, 2, 1, \dots, 1), F_n\right)pq(1-q)f_{2}(Y_1)f_{2}(Y_2)\\
        & + \ell\left(\pi_{n}((1, 2, 1,\dots,1)), F_n\right)pq^{2}f_{1}(Y_1)f_{2}(Y_2) +\ell\left(\pi_{n}((2, 1, \dots, 1)), F_n\right)pq(1-p)f_{2}(Y_1)f_{1}(Y_2)\Big)
    \end{split}
\end{equation*}
For $n \geq 5$, $F^{\star}_n$ is necessarily of the form $F^{\star}_n = \pi_n(\left(y^{\star}_1, y^{\star}_2, 1, .., 1\right))$ with $y^{\star}_1$ and $y^{\star}_2$ in $\left\{1, 2\right\}$. Thus, 
\begin{equation*}
    \begin{split}
        F^{\star}_n \in \argmin\EE_{\theta}\left[\ell\left(\pi_{n}(X_{1:n}), F_n\right) \mid Y_{1:n}\right] &\iff (y^{\star}_1, y^{\star}_2)\in \argmin H(y_1, y_2)
    \end{split}
\end{equation*}
where 
\begin{equation*}
    \begin{split}
        H(y_1, y_2) &=  (y_1 + y_2 - 2) (1-p)^{2}f_{1}(Y_1)f_{1}(Y_2) + (4 - y_1 - y_2)p(1-q) f_{2}(Y_1)f_{2}(Y_2) \\
        & + (1 + y_1 - y_2)pq f_{1}(Y_1) f_{2}(Y_2) + (1 - y_1 + y_2)p(1-p)f_{2}(Y_1) f_{1} (Y_2)\\
        &= y_1 \left[(1-p)^{2}f_{1}(Y_1)f_{1}(Y_2) + pq f_{1}(Y_1) f_{2}(Y_2) - p(1-q) f_{2}(Y_1)f_{2}(Y_2) - p(1-p)f_{2}(Y_1) f_{1} (Y_2)\right]\\
        &+ y_2 \left[(1-p)^{2}f_{1}(Y_1)f_{1}(Y_2) - pq f_{1}(Y_1) f_{2}(Y_2) - p(1-q) f_{2}(Y_1)f_{2}(Y_2) + p(1-p)f_{2}(Y_1) f_{1} (Y_2)\right]\\
        & -2 (1-p)^{2} f_{1}(Y_1)f_{1}(Y_2) + 4p(1-q)f_{2}(Y_1)f_{2}(Y_2) + pq f_{1}(Y_1) f_{2}(Y_2) + p(1-p)f_{2}(Y_1) f_{1} (Y_2)
    \end{split}
\end{equation*}
\begin{equation*}
    \begin{split}
        y^{\star}_1 \neq y^{\star}_2 &\iff\left[(1-p)^{2}f_{1}(Y_1)f_{1}(Y_2) + pq f_{1}(Y_1) f_{2}(Y_2) - p(1-q) f_{2}(Y_1)f_{2}(Y_2) - p(1-p)f_{2}(Y_1) f_{1}(Y_2)\right]\\
        &\times\left[(1-p)^{2}f_{1}(Y_1)f_{1}(Y_2) - pq f_{1}(Y_1) f_{2}(Y_2) - p(1-q) f_{2}(Y_1)f_{2}(Y_2) + p(1-p)f_{2}(Y_1) f_{1} (Y_2)\right] < 0\\
        &\iff \left|(1-p)^{2}f_{1}(Y_1)f_{1}(Y_2) - p(1-q) f_{2}(Y_1)f_{2}(Y_2) \right| < p\left|q f_{1}(Y_1) f_{2}(Y_2) - (1-p) f_{2}(Y_1)f_{1}(Y_2)\right|
    \end{split}
\end{equation*}
Finally, consider the event:
\begin{equation*}
    C_n = \left\{\left|(1-p)^{2}f_{1}(Y_1)f_{1}(Y_2) - p(1-q) f_{2}(Y_1)f_{2}(Y_2) \right| < p\left|q f_{1}(Y_1) f_{2}(Y_2) - (1-p) f_{2}(Y_1)f_{1}(Y_2)\right|\right\}
\end{equation*}
We finally have: 
\begin{equation*}
    A \cap B_n \cap C_n \subset \left\{g_{\theta}^{\star}(Y_{1:n}) \neq  \pi_n\circ h^{\star}_{\theta}(Y_{1:n})\right\}.
\end{equation*}
By choosing appropriately the parameters $p$ and $q$, one can ensure that $\PP_{\theta}\left(A \cap B_n \cap C_n\right)$ is positive for many emission densities not having the same support. For example, one can ensure that for $p=0.58$, $q=0.35$, $f_{1}(Y_1)=5$, $f_{2}(Y_1)= 2.5$, $f_{1}(Y_2)=1.8$, $f_{2}(Y_2)=1.2$, both inequalities involved in the definition of event $C_n$ and $A$ are ensured. Choosing smooth densities having not exactly the same support, the event $A \cap B_n \cap C_n$ can be ensured to have positive probability. The counterexamples for $n\in\left\{3, 4\right\}$ can be proved as for the case $n=2$ presented in \cite[Section~\ref*{main-sec:comp-clust-class-hmm}]{GKN2025main}.

\section{Proof of \texorpdfstring{\cite[Theorem~\ref*{main-thm:bayes-risk:min:hmm:J>2}]{GKN2025main}}{[4, Theorem 10]}}
\label{proof:bayes-risk:min:hmm:J>2}

Let $n\in\N$ and $\theta^{\star} = \left(\nu^{\star}, Q^{\star}, \left(f^{\star}_{x}\right)_{x\in\mathbb{X}}\right)\in\Theta^{\mathrm{ind}}$ such that the assumption of \cite[Theorem~\ref*{main-thm:bayes-risk:min:iid:J>2}]{GKN2025main} is ensured. It follows that $\PP_{\theta^{\star}}\left(g_{\theta^{\star}}^{\star}(Y_{1:n}) \neq \pi_n\circ h_{\theta^{\star}}^{\star}(Y_{1:n})\right) > 0$. We also assume the emission densities $\left(f^{\star}_{x}\right)_{x\in\mathbb{X}}$ to be uniformly continuous. We denote $\Tilde{\Theta} \subset \Theta$ the subset of parameters of the form $\theta = \left(\nu, Q, f^{\star}_{1}, \dots, f^{\star}_{J}\right)$. Consider the two functions
\begin{align*}
H \colon \mathcal{H}_{n}\times \Theta \times \mathcal{Y}^{n} & \longrightarrow\left[0, 1\right]\\
\left(h, \theta, y_{1:n}\right)&\longmapsto \EE_{\theta}\left[\frac{1}{n}\sum_{i=1}^{n}\1_{h_{i}(Y_{1:n})\neq X_i}\bigg| Y_{1:n} = y_{1:n}\right]\\
G \colon \mathcal{G}_{n}\times \Theta \times \mathcal{Y}^{n} & \longrightarrow\left[0, 1\right]\\
\left(g, \theta, y_{1:n}\right)&\longmapsto \EE_{\theta}\left[\ell\left(\pi_{n}\left(X_{1:n}\right), g(Y_{1:n})\right)\bigg| Y_{1:n} = y_{1:n}\right]
\end{align*}
By uniform continuity of $\left(f^{\star}_{x}\right)_{x\in\mathbb{X}}$, it follows that for all $g\in\mathcal{G}_{n}$ and $h\in\mathcal{H}_{n}$, $H(h, ., .)$ and $G(g, ., .)$ are uniformly continuous. Since $\mathcal{H}_{n}$ and $\mathcal{G}_{n}$ are finite, there exists $\mathcal{V}(\theta^{\star})\subset \Tilde{\Theta}$ a neighborhood of $\theta^{\star}$ and $A_n$ an open subset of $\mathcal{Y}^{n}$ such that $\PP_{\theta^{\star}}\left(A_n\right) > 0$ and $\left(\forall \theta\in\mathcal{V}(\theta^{\star})\right) \left(\forall y_{1:n}\in A_n\right)$
\begin{equation*}
    \begin{split}
        \argmin_{h} H(h, \theta, y_{1:n}) = \argmin_{h} H(h, \theta^{\star}, y_{1:n}) \text{ and } \argmin_{g} G(g, \theta, y_{1:n}) = \argmin_{g} G(g, \theta^{\star}, y_{1:n})
    \end{split}
\end{equation*}
Or equivalently $g^{\star}_{\theta^{\star}}(y_{1:n}) = g^{\star}_{\theta}(y_{1:n})$ and $h^{\star}_{\theta^{\star}}(y_{1:n}) = h^{\star}_{\theta}(y_{1:n})$. On the other hand, using exactly the same arguments as those of \cite[Theorem~\ref*{main-thm:bayes-risk:min:iid:J>2}]{GKN2025main}, one could also have chosen $\theta^{\star}\in\Theta^{\mathrm{ind}}$ such that not only $\PP_{\theta^{\star}}\left(g_{\theta^{\star}}^{\star}(Y_{1:n}) \neq \pi_n\circ h_{\theta^{\star}}^{\star}(Y_{1:n})\right) > 0$ but also $\PP_{\theta^{\star}}\left(\left\{g_{\theta^{\star}}^{\star}(Y_{1:n}) \neq \pi_n\circ h_{\theta^{\star}}^{\star}(Y_{1:n})\right\}\cap A_{n}\right) > 0$. It follows that 
\begin{equation*}
    \left(\forall \theta \in\mathcal{V}(\theta^{\star})\right) \PP_{\theta}\left(\left\{g^{\star}_{\theta}(Y_{1:n}) \neq \pi_{n}\circ h^{\star}_{\theta}(Y_{1:n})\right\}\cap A_{n}\right) = \PP_{\theta}\left(\left\{g^{\star}_{\theta^{\star}}(Y_{1:n}) \neq \pi_{n}\circ h^{\star}_{\theta^{\star}}(Y_{1:n})\right\}\cap A_{n}\right)
\end{equation*}
which is positive when $\theta$ approches $\theta^{\star}$ by continuity of the map 
\begin{equation*}
    \theta\mapsto \PP_{\theta}\left(\left\{g^{\star}_{\theta^{\star}}(Y_{1:n}) \neq \pi_{n}\circ h^{\star}_{\theta^{\star}}(Y_{1:n})\right\}\cap A_{n}\right).
\end{equation*}
The result follows. 

\section{Proof of \texorpdfstring{\cite[Proposition~\ref*{main-pro:bayes-risk:non-equivalence}]{GKN2025main}}{[4, Proposition 1]}}
\label{sec:proof-prop-refpr}

The result is straightforward when $n = 1$. We assume in what follows $n \geq 2$. We prove the proposition by showing that when the probability of having small clusters is high, the two risks are not necessarily equivalent; and $\inf_{g \in \mathcal{G}_n}\ClusterRisk(\theta,g)$ may be much smaller than $\inf_{h \in \mathcal{H}_n}\ClassifRisk(\theta,h)$.

 Consider $J=3$ (similar examples can be constructed for any $J \geq 3$) with $\nu_1 = 1 - 2\eta$, $\nu_2 = \nu_3 = \eta$. We take $F_1 = U(0,1/2)$, $F_2= U(3/4,1)$ and $F_3 = U(3/4-\varepsilon,1-\varepsilon)$ for some $0 < \varepsilon < 1/4$ where $U(a, b)$ is the uniform distribution on the interval $(a, b)$. In this case,
\begin{align*}
  \PP_{\theta}(X_i \in \cdot \mid Y_i \in (0,1/2))%
  &= \delta_1(\cdot)\\
  \PP_{\theta}(X_i \in \cdot \mid Y_i \in (3/4-\varepsilon,3/4))
  &= \delta_3(\cdot)\\
  \PP_{\theta}(X_i \in \cdot \mid Y_i \in (1-\varepsilon,1))
  &= \delta_2(\cdot)\\
  \PP_{\theta}(X_i \in \cdot \mid Y_i \in (3/4,1-\varepsilon))
  &= \frac{1}{2}\delta_2(\cdot) + \frac{1}{2}\delta_3(\cdot).
\end{align*}
So in this case
\begin{align*}
  \EE_{\theta}\left[U_{n,\tau}(h) \mid Y_{1:n}\right]%
  &= \EE_{\theta}\Big[\frac{1}{n}\sum_{i=1}^n\1_{h_i(Y_{1:n}) \ne \tau(X_i)} \mid Y_{1:n} \Big]\\
  &= \frac{1}{n}\sum_{i=1}^n \PP_{\theta}(h_i(Y_{1:n}) \ne \tau(X_i) \mid Y_{1:n}).
\end{align*}
A Bayes ``classifier'' minimizing $h \mapsto \mathcal{R}_n^{\mathrm{class}}(\theta,h)$ in this case is given by $h^{\star}_{\theta} = \left(h^{\star}_{\theta, i}\right)_{i\in[n]}$ where
\begin{equation*}
  h_{\theta, i}^{\star}%
  =%
  \begin{cases}
    1 &\mathrm{if}\ Y_i\in (0,1/2)\\
    2 &\mathrm{if}\ Y_i \in (1-\varepsilon, 1)\\
    3 &\mathrm{if}\ Y_i \in (3/4-\varepsilon,1-\varepsilon).
  \end{cases}
\end{equation*}
With this choice, the optimal permutation is identity and
\begin{align*}
  \min_{\tau}\EE_{\theta}\left[U_{n,\tau}(h^{\star}_{\theta}) \mid Y_{1:n}\right]%
  &= \frac{1}{n}\sum_{i=1}^n\1_{Y_i \in (0,1/2)} \PP_{\theta}(1 \ne X_i \mid Y_i)\\%
  &\quad
    + \frac{1}{n}\sum_{i=1}^n\1_{Y_i \in (3/4-\varepsilon,3/4)} \PP_{\theta}(3 \ne X_i \mid Y_i)\\
  &\quad%
    +\frac{1}{n}\sum_{i=1}^n\1_{Y_i \in (1-\varepsilon,1)} \PP_{\theta}(2 \ne X_i \mid Y_i)\\%
    &\quad
    +\frac{1}{n}\sum_{i=1}^n\1_{Y_i \in (3/4,1-\varepsilon)} \PP_{\theta}(3 \ne X_i \mid Y_i),
\end{align*}
ie.,
\begin{equation*}
\min_{\tau}\EE_{\theta}\left[U_{n,\tau}(h^{\star}_{\theta}) \mid Y_{1:n}\right]%
  = \frac{1}{2n}\sum_{i=1}^n\1_{Y_i \in (3/4,1-\varepsilon)}.
\end{equation*}
Thus,
\begin{align*}
  \EE_{\theta}\Big[ \min_{\tau}\EE_{\theta}(U_{n,\tau}(h^{\star}_{\theta}) \mid Y_{1:n}) \Big]%
  &= \frac{1}{2}\PP_{\theta}(Y_1 \in (3/4,1-\varepsilon) )\\
  &= \frac{1}{2}\Big( \eta\cdot (1-4\varepsilon) + \eta\cdot (1-4\varepsilon)\Big)\\%
    &= \eta(1-4\varepsilon).
\end{align*}

We now investigate $\EE_{\theta}\left[\min_{\tau}U_{n,\tau}(h^{\star}_{\theta})\right]$, for the previous Bayes classifier $h^{\star}_{\theta}$ (which does not necessarily minimize $h \mapsto \EE_{\theta}\left[\min_{\tau}U_{n,\tau}(h)\right]$). We rewrite,
\begin{align*}
  \EE_{\theta}\left[\min_{\tau}U_{n,\tau}(h^{\star}_{\theta})\right]%
  &= \EE_{\theta}\left[\min_{\tau}U_{n,\tau}(h^{\star}_{\theta})\1_{\sum_{i=1}^n\1_{Y_i \in (3/4-\varepsilon,3/4)\cup (1-\varepsilon,1) } = 0}\right]\\%
   &\quad + \sum_{m=1}^n \EE_{\theta}\left[\min_{\tau}U_{n,\tau}(h^{\star}_{\theta})\1_{\sum_{i=1}^n\1_{Y_i \in (3/4-\varepsilon,3/4)\cup (1-\varepsilon,1) } = m}\right]
\end{align*}

Let first consider $\EE_{\theta}[\min_{\tau}U_{n,\tau}(h^{\star}_{\theta}) \mid Y_{1:n}]$ on the event that $\{ \sum_{i=1}^n\1_{Y_i \in (3/4-\varepsilon,3/4)\cup (1-\varepsilon,1) } = 0 \}$. Let define $N = \sum_{i=1}^n\1_{X_i\ne 1}$:
\begin{itemize}
  \item if $N = 0$ this means that all the $Y_i$ are in $(0,1/2)$ and our classifier $h^{\star}_{\theta}$ combined with the identity permutation will make zero error, \textit{i.e.} $\min_{\tau}U_{n,\tau}(h^{\star}_{\theta}) = 0$ on this event;

  \item if $N = 1$ this means that there is only one $j \in \{1,\dots,n\}$ such that $Y_j \in (3/4,1-\varepsilon)$ [by assumption it can not be in $(3/4-\varepsilon, 3/4)$ or $(1-\varepsilon,1)$]. Our classifier will predict $h^{\star}_{\theta, i} = 1$ for all $i\ne j$ and $h^{\star}_{\theta, j}=3$. Now, necessarily $X_i = 1$ for $i\ne j$. If $X_j = 3$ then $h^{\star}_{\theta} \circ \mathrm{Id}$ will have loss zero, and if $X_j = 2$ then $h^{\star}_{\theta} \circ
  \begin{pmatrix}
    1 & 2 & 3\\
    1 & 3 & 2\\
  \end{pmatrix}
  $ will have loss zero. So in the event that $\{N=1\}$ we also have that $\min_{\tau}U_{n,\tau}(h^{\star}_{\theta}) = 0$.
  \item If $N \geq 2$ our classifier will still classify perfectly all the $Y_i \in (0,1/2)$ so the loss can not exceed $\min_{\tau}U_{n,\tau}(h^{\star}_{\theta}) \leq N/n$ in this case.
\end{itemize}
So on the event $\{ \sum_{i=1}^n\1_{Y_i \in (3/4-\varepsilon,3/4)\cup (1-\varepsilon,1) } = 0 \}$:
\begin{align*}
  \EE_{\theta}\left[\min_{\tau}U_{n,\tau}(h^{\star}_{\theta}) \mid Y_{1:n}\right]
  &\leq \EE_{\theta}\Big[ \frac{N}{n}\1_{N\geq 2}\mid Y_{1:n}\Big]  \\
  &\leq \sum_{k = 2}^n \frac{k}{n} \PP_{\theta}\Big(N = k\mid Y_{1:n} \Big) 
\end{align*}
But under the law of  $X_{1:n} \mid Y_{1:n}$ in the considered event, we have that $N$ is almost-surely equal to the number of $Y_i \in (3/4,1-\varepsilon)$, so
\begin{align*}
  \EE_{\theta}\left[\min_{\tau}U_{n,\tau}(h^{\star}_{\theta}) \mid Y_{1:n}\right]%
  &\leq \sum_{k=2}^n\frac{k}{n}\1_{\sum_{i=1}^n\1_{Y_i \in (3/4,1-\varepsilon)} = k}\\
  &= \Big(\frac{1}{n}\sum_{i=1}^n\1_{Y_i\in (3/4,1-\varepsilon)}\Big)\1_{\sum_{i=1}^n\1_{Y_i \in (3/4,1-\varepsilon)} \geq 2}.
\end{align*}
Deduce that,
\begin{multline*}
  \EE_{\theta}\left[\min_{\tau}U_{n,\tau}(h^{\star}_{\theta})\1_{\sum_{i=1}^n\1_{Y_i \in (3/4-\varepsilon,3/4)\cup (1-\varepsilon,1) } = 0}\right]\\
  \leq \EE_{\theta}\Bigg[\Big(\frac{1}{n}\sum_{i=1}^n\1_{Y_i\in (3/4,1-\varepsilon)}\Big)\1_{\sum_{i=1}^n\1_{Y_i \in (3/4,1-\varepsilon)} \geq 2} \mid \sum_{i=1}^n\1_{Y_i \in (3/4-\varepsilon,3/4)\cup (1-\varepsilon,1) } = 0 \Bigg]
\end{multline*}
Conditional on $\{ \sum_{i=1}^n\1_{Y_i \in (3/4-\varepsilon,3/4)\cup (1-\varepsilon,1) } = 0 \}$, the random variable $\sum_{i=1}^n\1_{Y_i\in (3/4,1-\varepsilon)}$ has a $\mathrm{Binomial}(n,2\eta)$ distribution. Then,
\begin{align*}
  \EE_{\theta}\left[\min_{\tau}U_{n,\tau}(h^{\star}_{\theta})\1_{\sum_{i=1}^n\1_{Y_i \in (3/4-\varepsilon,3/4)\cup (1-\varepsilon,1) } = 0}\right]%
  &\leq \frac{1}{n}\sum_{k=2}^n\binom{n}{k} k \cdot (2\eta)^{k}(1-2\eta)^{n-k}\\
  &= \frac{2\eta(1 - (1-2\eta)^n - 2\eta)}{1-2\eta}\\
  &\asymp 4(n-1)\eta^2
\end{align*}
when $\eta \ll 1/n$.

Next (remark that this can be largely improved, but this is indeed for our purpose),
\begin{multline*}
\sum_{m=1}^n \EE_{\theta}\left[\min_{\tau}U_{n,\tau}(h^{\star}_{\theta})\1_{\sum_{i=1}^n\1_{Y_i \in (3/4-\varepsilon,3/4)\cup (1-\varepsilon,1) } = m}\right]\\
\begin{aligned}
&\leq \PP_{\theta}\Bigg(\sum_{i=1}^n\1_{Y_i \in (3/4-\varepsilon,3/4)\cup (1-\varepsilon,1) } \geq 1 \Bigg)\\
  &= 1 - \PP_{\theta}\Bigg(\sum_{i=1}^n\1_{Y_i \in (3/4-\varepsilon,3/4)\cup (1-\varepsilon,1) } = 0 \Bigg)\\
  &= 1 - \PP_{\theta}\Big(\forall i,\ Y_i \notin (3/4-\varepsilon,3/4)\cup (1-\varepsilon,1) \Big)\\
  &= 1 - \Big( (1-2\eta) + 2\eta( 1-4\varepsilon ) \Big)^n\\
  &= 1 - (1 - 8\eta\varepsilon)^n\\
  &\asymp 8n\eta\varepsilon
\end{aligned}
\end{multline*}
when $\eta \ll n$. So by choosing $\varepsilon \asymp \eta$, we have shown that whenever $\eta \ll 1/n$
\begin{equation*}
  \inf_{h}\EE_{\theta}\left[\min_{\tau}U_{n,\tau}(h)\right]\leq \EE_{\theta}\left[\min_{\tau}U_{n,\tau}(h^{\star}_{\theta})\right] \lesssim n\eta^2
\end{equation*}
but
\begin{equation*}
  \inf_{h}\EE_{\theta}\left[\min_{\tau}\EE_{\theta}\left[U_{n,\tau}(h) \mid Y_{1:n}\right]\right] = \EE_{\theta}\left[\min_{\tau}\EE_{\theta}\left[U_{n,\tau}(h^{\star}_{\theta}) \mid Y_{1:n}\right]\right] \sim \eta
\end{equation*}
so that
\begin{equation*}
  \frac{\inf_{h}\EE_{\theta}\left[\min_{\tau}U_{n,\tau}(h)\right]}{\inf_{h}\EE_{\theta}\left[\min_{\tau}\EE_{\theta}\left[U_{n,\tau}(h) \mid Y_{1:n}\right]\right]}%
  \lesssim n \eta
\end{equation*}
which goes to zero as $\eta \to 0$.

\section{Proof of \texorpdfstring{\cite[Theorem~\ref*{main-thm:bayes-risk:key_quantity}]{GKN2025main}}{[4, Theorem 12]}}
\label{sec:theo3}
Simple computations lead to the expression of the Bayes risk of classification:
\begin{equation*}
    \inf_{h \in \mathcal{H}_n}\ClassifRisk(\theta,h) = \frac{1}{n}\sum_{i=1}^{n}\EE_{\theta}\left[\min_{x_0\in\mathbb{X}}\PP_{\theta}\left(X_{i} \neq x_0 \mid Y_{1:n}\right)\right] =  \frac{1}{n}\sum_{i=1}^{n}\EE_{\theta}\left[\min_{x_0\in\mathbb{X}}\left(\sum_{x\neq x_{0}}\phi_{\theta, i|n} (x)\right)\right].
\end{equation*}

    \subsection{Bounds for the independent scenario}
First, for $\theta\in\Theta^{\mathrm{ind}}$,
\begin{eqnarray*}
\inf_{h \in \mathcal{H}_n}\ClassifRisk(\theta,h) &=& \EE_{\theta}\left[\min_{x_0\in\mathbb{X}}\PP_{\theta}\left(X_{1} \neq x_0 \mid Y_{1}\right)\right]\\
&=&\EE_{\theta}\left[\min_{x_0\in\mathbb{X}}\frac{\sum_{x\neq x_{0}}\nu_{x}f_{x}(Y_{1})}{\sum_{x}\nu_{x}f_{x}(Y_{1})}\right]\\
&=&\int \min_{x_0 \in \mathbb{X}} \sum_{x\neq x_{0}}\nu_{x}f_{x}(y)d\mathcal{L}(y)
\end{eqnarray*}
so that using \cite[Assumption~\ref*{main-Assumption_mixing}]{GKN2025main},
\begin{multline}
\label{eq:theo3:1}
\delta \int_{\mathbb{Y}}\min_{x_{0}\in\mathbb{X}}\left[\sum_{x\neq x_0}f_{x}(y)\right]d\mathcal{L}(y)
\leq \inf_{h \in \mathcal{H}_n}\ClassifRisk(\theta,h)\\
\leq (1 - (J-1)\delta)\int_{\mathbb{Y}}\min_{x_{0}\in\mathbb{X}}\left[\sum_{x\neq x_0}f_{x}(y)\right]d\mathcal{L}(y).
\end{multline}
\subsection{Bounds for the dependent scenario}
Let $\theta\in\Theta^{\mathrm{dep}}$. We first have 
$$
     \inf_{h\in \mathcal{H}_n}\ClassifRisk(\theta,h) 
     = \frac{1}{n}\sum_{i=1}^{n}\EE_{\theta}\left[\min_{x_0\in\mathbb{X}}\sum_{x\neq x_{0}}\phi_{\theta, i|1:n}(x)\right] \leq \frac{1}{n}\sum_{i=1}^{n}\EE_{\theta}\left[\min_{x_0\in\mathbb{X}}\sum_{x\neq x_{0}}\phi_{\theta, i}(x)\right].
$$
Now, for any $i\leq n-1$, using  the Backward recursions (see Proposition 3.3.9 in \cite{CMT05}),
$$
\EE_{\theta}\left[\min_{x_0\in\mathbb{X}}\sum_{x\neq x_{0}}\phi_{\theta, i|1:n}(x)\right] = \EE_{\theta}\left[\min_{x_0\in\mathbb{X}}\sum_{x\neq x_0}\sum_{x^{\prime}\in\mathbb{X}}\phi_{\theta, i+1|1:n}(x^{\prime})B_{\theta, i}[Y_{1:i}](x^{\prime}, x)\right].
$$
Now, using \cite[Assumption~\ref*{main-Assumption_mixing}]{GKN2025main} and Equation \eqref{Backward_kernel}, we get
$$
B_{\theta, i}[Y_{1:i}](x^{\prime}, x) = \frac{\phi_{\theta, i}(x)Q(x, x^{\prime})}{\sum_{\tilde{x}\in\mathbb{X}}\phi_{\theta, i}(\Tilde{x})Q(\Tilde{x}, x^{\prime})} \geq \frac{\phi_{\theta, i}(x)Q(x, x^{\prime})}{1 - (J-1)\delta}
$$
so that
\begin{equation*}
    \begin{split}
\EE_{\theta}\left[\min_{x_0\in\mathbb{X}}\sum_{x\neq x_{0}}\phi_{\theta, i|1:n}(x)\right] 
        &\geq \frac{\delta}{1 - (J-1)\delta}\EE_{\theta}\left[\min_{x_{0}\in\mathbb{X}}\sum_{x^{\prime}}\phi_{\theta, i+1}(x^{\prime})\sum_{x\neq x_0}\phi_{\theta, i}(x)\right]\\
               &\geq \frac{\delta}{1 - (J-1)\delta}\EE_{\theta}\left[\min_{x_{0}\in\mathbb{X}}\sum_{x\neq x_0}\phi_{\theta,i}(x)\right]\\
        &= \frac{\delta}{1 - (J-1)\delta}\EE_{\theta}\left[\min_{x_0\in\mathbb{X}}\sum_{x\neq x_{0}}\phi_{\theta, i}(x)\right].
    \end{split}
\end{equation*}
Now, for $i=n$, the same inequality obviously holds, so that we get
$$
\frac{\delta}{1 - (J-1) \delta}
\frac{1}{n}\sum_{i=1}^{n}\EE_{\theta}\left[\min_{x_0\in\mathbb{X}}\sum_{x\neq x_{0}}\phi_{\theta, i}(x)\right] \leq \inf_{h\in \mathcal{H}_n}\ClassifRisk(\theta,h) \leq \frac{1}{n}\sum_{i=1}^{n}\EE_{\theta}\left[\min_{x_0\in\mathbb{X}}\sum_{x\neq x_{0}}\phi_{\theta, i}(x)\right].
$$
It suffices then to exhibit upper and lower bounds on $\sum_{i=1}^{n}\EE_{\theta}\left[\min_{x_0\in\mathbb{X}}\sum_{x\neq x_{0}}\phi_{\theta, i}(x)\right]$. 
Using the Forward recursions (see Equation (3.22) in Proposition 3.2.5 of \cite{CMT05}), for any $i\geq 2$,
$$
\EE_{\theta}\left[\min_{x_0\in\mathbb{X}}\left(\sum_{x\neq x_{0}}\phi_{\theta, i} (x)\right)\right]=
 \EE_{\theta}\left[\min_{x_0\in\mathbb{X}}\left(\frac{\sum_{x\neq x_0}\sum_{x^{\prime}\in\mathbb{X}}Q_{x^{\prime}, x}\phi_{\theta, i-1}(x^{\prime})f_{x}(Y_{i})}{\sum_{x\in\mathbb{X}}\sum_{x^{\prime}\in\mathbb{X}}Q_{x^{\prime}, x}\phi_{\theta, i-1}(x^{\prime})f_{x}(Y_{i})}\right)\right].
 $$
Let $A\in\mathcal{Y}$. One has:
\begin{equation*}
    \begin{split}
  \PP_{\theta}\left(Y_{i}\in A \mid Y_{1:i-1}\right) &= \sum_{x\in\mathbb{X}}\PP_{\theta}\left(Y_{i}\in A, X_{i} = x \mid Y_{1:i-1}\right)\\
        &= \sum_{x\in\mathbb{X}} \PP_{\theta}\left(Y_{i}\in A \mid X_{i} = x, Y_{1:i-1}\right)\PP_{\theta}\left(X_{i} = x \mid Y_{1:i-1}\right)\\
        &= \sum_{x\in\mathbb{X}} \PP_{\theta}\left(X_{i} = x \mid Y_{1:i-1}\right)\int_{A}f_{x}(y)d\mathcal{L}(y)\\
        &= \sum_{x\in\mathbb{X}}\sum_{x^{\prime}\in\mathbb{X}} \PP_{\theta}\left(X_{i-1} = x^{\prime}, X_{i} = x \mid Y_{1:i-1}\right)\int_{A}f_{x}(y)d\mathcal{L}(y)\\
        &=\sum_{x\in\mathbb{X}}\sum_{x^{\prime}\in\mathbb{X}} \PP_{\theta}\left(X_{i} = x \mid X_{i-1} = x^{\prime}, Y_{1:i-1}\right)\phi_{\theta, i-1}(x^{\prime})\int_{A}f_{x}(y)d\mathcal{L}(y)\\
        &= \int_{A}\left(\sum_{x\in\mathbb{X}}\sum_{x^{\prime}\in\mathbb{X}} Q_{x^{\prime}, x}\phi_{\theta, i-1}(x^{\prime})f_{x}(y)\right)d\mathcal{L}(y),\\
    \end{split}
\end{equation*}
so that, conditionally on $Y_{1:i-1}$, $Y_i$ has density $\sum_{x\in\mathbb{X}}\sum_{x^{\prime}\in\mathbb{X}} Q_{x^{\prime}, x}\phi_{\theta, i-1}(x^{\prime})f_{x}$  with respect to the dominating measure $\mathcal{L}$. We thus get :
\begin{eqnarray*}
\EE_{\theta}\left[\min_{x_0\in\mathbb{X}}\left(\sum_{x\neq x_{0}}\phi_{\theta, i} (x)\right)\right]&=& \EE_{\theta}\left[\EE_{\theta}\left[\min_{x_0\in\mathbb{X}}\left(\frac{\sum_{x\neq x_0}\sum_{x^{\prime}\in\mathbb{X}}Q_{x^{\prime}, x}\phi_{\theta, i-1}(x^{\prime})f_{x}(Y_{i})}{\sum_{x\in\mathbb{X}}\sum_{x^{\prime}\in\mathbb{X}}Q_{x^{\prime}, x}\phi_{\theta, i-1}(x^{\prime})f_{x}(Y_{i})}\right)\bigg| Y_{1:i-1}\right]\right]\\
        &=& \EE_{\theta}\left[\int_{\mathbb{Y}}\min_{x_0\in\mathbb{X}}\left(\sum_{x\neq x_0}\sum_{x^{\prime}\in\mathbb{X}}Q_{x^{\prime}, x}\phi_{\theta, i-1}(x^{\prime})f_{x}(y)\right)d\mathcal{L}(y)\right].
\end{eqnarray*}
Then, under \cite[Assumption~\ref*{main-Assumption_mixing}]{GKN2025main}, for any $i\geq 2$,
\begin{multline}
\label{eq:theo3:2}
\delta \int_{\mathbb{Y}}\min_{x_{0}\in\mathbb{X}}\left[\sum_{x\neq x_0}f_{x}(y)\right]d\mathcal{L}(y) \leq \EE_{\theta}\left[\min_{x_0\in\mathbb{X}}\left(\sum_{x\neq x_{0}}\phi_{\theta, i} (x)\right)\right]\\
\leq (1 - (J-1)\delta)\int_{\mathbb{Y}}\min_{x_{0}\in\mathbb{X}}\left[\sum_{x\neq x_0}f_{x}(y)\right]d\mathcal{L}(y),
\end{multline}
The result follows.

\section{Proof of \texorpdfstring{\cite[Theorem~\ref*{main-thm:excess_risk:clust:rate}]{GKN2025main}}{[4, Theorem 13]}}
\label{sec:thm:clust:rate}

We first control the excess risk of classification by the errors made in the estimation of the parameters.
\begin{proposition}\label{prop:ub:excess:hmm}
    For all $\theta\in\Theta^{\mathrm{dep}}$ satisfying \cite[Assumptions~\ref*{main-Assumption_mixing}, \ref*{main-Assumption_stat} and~\ref*{main-Assumption_3}]{GKN2025main} and for all $n\geq 1$,
\begin{multline*}
    \ClassifRisk(\theta, \hat{h}) - \inf_{h\in \mathcal{H}_n}\ClassifRisk(\theta,h) \leq C\EE_{\theta}\left[\frac{1}{n}\lVert\nu - \hat{\nu}\rVert_{2} + \left(1 + \frac{\delta}{\hat{\delta}} \right)\lVert Q - \hat{Q} \rVert_{\mathrm{F}}\right]\\ + \frac{\delta C\sqrt{C^{\star}}}{n}\sum_{i=1}^{n}\sum_{l=1}^{n}\EE_{\theta}\left[(\hat{\rho}\vee \rho)^{2|l-i|}\|f_{x} - \hat{f}_{x}\|_{\infty}^{2}\right]^{1/2},
\end{multline*}
where $C = \frac{8(1-\delta)}{\delta^3}$, $\rho = \frac{1 - 2\delta}{1 - \delta}$, $\hat{\rho} = \frac{1 - 2\hat{\delta}}{1 - \hat{\delta}}$ and $\hat{\delta} = \min_{x,x'}\hat{Q}_{x,x'}$.
\end{proposition}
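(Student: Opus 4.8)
The plan is to reduce the excess classification risk to an expected total-variation discrepancy between the true and plug-in smoothing distributions, and then feed that discrepancy into the stability estimate~\eqref{eq:smoothing-control} borrowed from \cite{YES17}. Writing $\phi_{\theta,i|n}$ and $\phi_{\hat{\theta},i|n}$ for the true and estimated laws of $X_i$ given $Y_{1:n}$, recall that $\ClassifRisk(\theta,h) = \frac1n\sum_{i=1}^n\EE_\theta[1 - \phi_{\theta,i|n}(h_i(Y_{1:n}))]$, that the Bayes classifier picks $h^{\star}_{\theta,i} = \argmax_x \phi_{\theta,i|n}(x)$, and that the plug-in rule picks $\hat{h}_i = \argmax_x \phi_{\hat{\theta},i|n}(x)$. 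First I would establish the pointwise plug-in inequality: writing $x^{\star} = h^{\star}_{\theta,i}$ and inserting $\pm\phi_{\hat{\theta},i|n}$, the difference $\phi_{\theta,i|n}(x^{\star}) - \phi_{\theta,i|n}(\hat{h}_i)$ telescopes into three terms, the middle of which is nonpositive by optimality of $\hat{h}_i$ for $\phi_{\hat{\theta},i|n}$; the remaining two are each a single coordinate of $\phi_{\theta,i|n} - \phi_{\hat{\theta},i|n}$, hence bounded by $\|\phi_{\theta,i|n}-\phi_{\hat{\theta},i|n}\|_{\mathrm{TV}}$. This yields
$$\ClassifRisk(\theta,\hat{h}) - \inf_{h\in\mathcal{H}_n}\ClassifRisk(\theta,h) \le \frac2n\sum_{i=1}^n\EE_\theta\bigl[\|\phi_{\theta,i|n} - \phi_{\hat{\theta},i|n}\|_{\mathrm{TV}}\bigr].$$

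Next I would substitute~\eqref{eq:smoothing-control} inside the expectation and split the bound into the $\nu$-, $Q$- and emission-contributions, carrying the pre-factor $\tfrac{8(1-\delta)}{\delta^2}=2\times\tfrac{4(1-\delta)}{\delta^2}$. For the initial-law term I would use that $\|\nu-\hat{\nu}\|_2$ does not depend on the index $i$ and sum the geometric series $\sum_{i\ge1}\rho^{i-1}\le (1-\rho)^{-1}=\tfrac{1-\delta}{\delta}$; together with $(1-\delta)^2\le 1-\delta$ this produces exactly the constant $C=\tfrac{8(1-\delta)}{\delta^3}$ in front of $\tfrac1n\|\nu-\hat{\nu}\|_2$. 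For the transition term I would bound, pointwise in the random $\hat{\delta}$, the factor $(1-\rho)^{-1}+(1-\hat{\rho})^{-1} = \tfrac{1-\delta}{\delta}+\tfrac{1-\hat{\delta}}{\hat{\delta}}\le \tfrac1\delta(1+\tfrac{\delta}{\hat{\delta}})$, recovering $C(1+\tfrac{\delta}{\hat{\delta}})\|Q-\hat{Q}\|_{\mathrm{F}}$ after taking expectations.

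The only genuinely nonroutine step is the emission term, where the difficulty is that the estimator $\hat{\theta}$ and the observation $Y_l$ driving the normalisation $c^{\star}(Y_l)$ are correlated. First I would absorb the explicit factor $\delta$ present in~\eqref{eq:smoothing-control} via Assumption~\ref{Assumption_mixing}: since $c^{\star}(y)=\min_x\sum_{x'}Q(x,x')f_{x'}(y)\ge \delta\sum_{x'}f_{x'}(y)$, one has $\delta/c^{\star}(Y_l)\le 1/\sum_x f_x(Y_l)$ and $\max_x|f_x(Y_l)-\hat{f}_x(Y_l)|\le \max_x\|f_x-\hat{f}_x\|_\infty$. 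Then, for fixed $i,l$, I would apply Cauchy--Schwarz in $\EE_\theta$ to the product of $A=(\hat{\rho}\vee\rho)^{|l-i|}\max_x\|f_x-\hat{f}_x\|_\infty$ and $B=1/\sum_x f_x(Y_l)$, which cleanly separates the estimation error (producing the required $\EE_\theta[(\hat{\rho}\vee\rho)^{2|l-i|}\|f_x-\hat{f}_x\|_\infty^2]^{1/2}$) from the normalisation. It remains to control $\EE_\theta[B^2]$: under Assumption~\ref{Assumption_stat}, $Y_l$ has marginal density $\sum_x\nu_x f_x$, whence
$$\EE_\theta\Bigl[\tfrac{1}{(\sum_x f_x(Y_l))^2}\Bigr] = \int\frac{\sum_x\nu_x f_x(y)}{(\sum_x f_x(y))^2}\,d\mathcal{L}(y)\le \int\frac{d\mathcal{L}(y)}{\sum_x f_x(y)}=C^{\star},$$
using $\nu_x\le1$ and the definition of $C^{\star}$ in Assumption~\ref{Assumption_3}. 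Collecting the factor $\sqrt{C^{\star}}$ with the pre-factor $\delta C=\tfrac{8(1-\delta)}{\delta^2}$ reproduces the emission term and completes the bound. I expect the correlated Cauchy--Schwarz decoupling of the emission contribution, together with the careful bookkeeping of the constant $C$, to be the delicate points, whereas the plug-in reduction and the geometric summations are standard.
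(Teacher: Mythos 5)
Your proof is correct and follows essentially the same route as the paper's: the same reduction of the excess risk to $\frac{2}{n}\sum_{i=1}^n\EE_{\theta}\bigl[\lVert\phi_{\theta,i\mid n}-\phi_{\hat{\theta},i\mid n}\rVert_{\mathrm{TV}}\bigr]$ via optimality of $\hat{h}_i$ under $\PP_{\hat{\theta}}$, followed by the smoothing-stability bound of Proposition~2.2 of \cite{YES17}, the geometric bound $\sum_{i}\rho^{i-1}\le(1-\rho)^{-1}$, the estimate $(1-\rho)^{-1}+(1-\hat{\rho})^{-1}\le\delta^{-1}(1+\delta/\hat{\delta})$, and Cauchy--Schwarz together with $c^{\star}(y)\ge\delta\sum_x f_x(y)$ and stationarity to control the normalisation by $\sqrt{C^{\star}}$. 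The only cosmetic difference is that you absorb the factor $\delta$ into $c^{\star}(Y_l)$ before applying Cauchy--Schwarz, whereas the paper bounds $\EE_{\theta}[c^{\star}(Y_l)^{-2}]^{1/2}\le\sqrt{C^{\star}}/\delta$ afterwards; the constants come out identically.
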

\begin{proof}
Recall that $\phi_{\theta,i\mid n}(\cdot) = \PP_{\theta}(X_i \in \cdot \mid Y_{1:n})$. Then,

\begin{multline*}
\ClassifRisk(\theta, \hat{h}) - \inf_{h\in \mathcal{H}_n}\ClassifRisk(\theta,h) \\
\begin{aligned}
&= \EE_{\theta}\Big[\frac{1}{n}\sum_{i=1}^{n} \1_{X_{i}\neq \hat{h}_{i}}\Big] - \EE_{\theta}\Big[\frac{1}{n}\sum_{i=1}^{n} \1_{X_{i}\neq h^{\star}_{\theta, i}}\Big]\\
&= \frac{1}{n}\sum_{i=1}^n\EE_{\theta}\Big[ \PP_{\theta}( X_i  = h^{\star}_{\theta, i} \mid Y_{1:n} ) - \PP_{\theta}(X_i = \hat{h}_{i} \mid Y_{1:n}) \Big]\\
&\leq \frac{1}{n}\sum_{i=1}^n\EE_{\theta}\Big[ \PP_{\theta}( X_i  = h^{\star}_{\theta, i} \mid Y_{1:n} ) - \PP_{\hat\theta}(X_i = \hat{h}_{i} \mid Y_{1:n}) + \|\phi_{\hat\theta,i\mid n} - \phi_{\theta,i\mid n}\|_{\mathrm{TV}} \Big]\\
&\leq \frac{1}{n}\sum_{i=1}^n\EE_{\theta}\Big[ \PP_{\theta}( X_i  = h^{\star}_{\theta, i} \mid Y_{1:n} ) - \PP_{\hat\theta}(X_i = h^{\star}_{\theta, i} \mid Y_{1:n}) + \|\phi_{\hat\theta,i\mid n} - \phi_{\theta,i\mid n}\|_{\mathrm{TV}} \Big]\\
&\leq \frac{2}{n}\sum_{i=1}^n\EE_{\theta}\Big[ \|\phi_{\hat\theta,i\mid n} - \phi_{\theta,i\mid n}\|_{\mathrm{TV}}\Big]
\end{aligned}
\end{multline*}
where the penultimate line follows because by definition $\hat{h}_i$ maximizes $x \mapsto \PP_{\hat\theta}(X_i = x \mid Y_{1:n})$. Then, under \cite[Assumption~\ref*{main-Assumption_3}]{GKN2025main} and by application of Proposition 2.2 of \cite{YES17} (see also \cite[Equation~(\ref*{main-eq:smoothing-control})]{GKN2025main}), one has

\begin{equation*}
    \begin{split}
    &\ClassifRisk(\theta, \hat{h}) - \inf_{h\in \mathcal{H}_n}\ClassifRisk(\theta,h) \\
    &\leq \frac{8(1 - \delta)}{\delta^{2}}\EE_{\theta}\Biggl[\frac{1}{n\delta}\lVert\nu - \hat{\nu}\rVert_{2} +\left(\frac{1}{\delta} + \frac{1}{\hat{\delta}} \right)\lVert Q - \hat{Q} \rVert_{\mathrm{F}} + \frac{1}{n}\sum_{i=1}^{n}\sum_{l=1}^{n}\delta\frac{(\hat{\rho}\vee \rho)^{|l-i|}}{c^{\star}(Y_l)}\max_{x\in\mathbb{X}}\|f_{x} - \hat{f}_{x}\|_{\infty}\Biggr]\\
    &\leq C\EE_{\theta}\left[\frac{1}{n}\lVert\nu - \hat{\nu}\rVert_{2} + \left(1 + \frac{\delta}{\hat{\delta}} \right)\lVert Q - \hat{Q} \rVert_{\mathrm{F}} + \frac{\delta^{2}}{n}\sum_{i=1}^{n}\sum_{l=1}^{n}\frac{(\hat{\rho}\vee \rho)^{|l-i|}}{c^{\star}(Y_l)}\max_{x\in\mathbb{X}}\|f_{x} - \hat{f}_{x}\|_{\infty}\right]\\
    &\leq C\EE_{\theta}\left[\frac{1}{n}\lVert\nu - \hat{\nu}\rVert_{2} + \left(1 + \frac{\delta}{\hat{\delta}} \right)\lVert Q - \hat{Q} \rVert_{\mathrm{F}}\right] + \frac{\delta^{2} C}{n}\sum_{i=1}^{n}\sum_{l=1}^{n}\EE_{\theta}\left[\frac{(\hat{\rho}\vee \rho)^{|l-i|}}{c^{\star}(Y_l)}\max_{x\in\mathbb{X}}\|f_{x} - \hat{f}_{x}\|_{\infty}\right]\\
    &\leq C\EE_{\theta}\left[\frac{1}{n}\lVert\nu - \hat{\nu}\rVert_{2} + \left(1 + \frac{\delta}{\hat{\delta}} \right)\lVert Q - \hat{Q} \rVert_{\mathrm{F}}\right] + \frac{\delta^{2} C}{n}\sum_{i=1}^{n}\sum_{l=1}^{n}\EE_{\theta}\left[\frac{1}{c^{\star}(Y_l)^{2}}\right]^{1/2}\EE_{\theta}\left[(\hat{\rho}\vee \rho)^{2|l-i|}\|f_{x} - \hat{f}_{x}\|_{\infty}^{2}\right]^{1/2}\\
    &\leq C\EE_{\theta}\left[\frac{1}{n}\lVert\nu - \hat{\nu}\rVert_{2} + \left(1 + \frac{\delta}{\hat{\delta}} \right)\lVert Q - \hat{Q} \rVert_{\mathrm{F}}\right] + \frac{\delta C\sqrt{C^{\star}}}{n}\sum_{i=1}^{n}\sum_{l=1}^{n}\EE_{\theta}\left[(\hat{\rho}\vee \rho)^{2|l-i|}\|f_{x} - \hat{f}_{x}\|_{\infty}^{2}\right]^{1/2}
    \end{split}
\end{equation*}
\end{proof}
In order to obtain a rate on the excess risk, we make use of Algorithm~\ref{Spectral_algorithm} which will yield the estimates used in the statement of the corollary. This algorithm merges the spectral algorithms of \cite{YES17, ACG21} with some slight modifications. Note that all the expectations and probabilities of this proof are with respect to the observations and the random unit matrices. Also note that the algorithm outputs estimates of the densities that are not necessarily \textit{bona-fide} densities. This is not problematic for the plug-in procedure as one typically uses the Forward-Backward algorithm \cite{CMT05} which works even if the emissions are not correctly normalized.


\begin{algorithm}
\caption{Non-parametric spectral estimation of the transition matrix and the emission laws}\label{Spectral_algorithm}
\begin{algorithmic}
  \Input
  \begin{itemize}
  \item Number of states $J$, integers $D$ and $r$.
  \item Data $(Y_i)_{i\leq n+2}$ drawn from a HMM with $J$ states.
  \item Functions $(\varphi_{d})_{d\in\mathbb{N}}$ \textbf{uniformly bounded} such that $O = (\EE_{\theta}[\varphi_{d}(Y_1)\mid X_1 = j])_{1\leq d\leq D, 1\leq j\leq J}$ is of rank $J$ with $\sigma_J(O)$ \textbf{bounded away from 0 uniformly} in $D$, at least for $D$ large enough.
  \item $K$ a Lipschitz-continuous kernel
  \end{itemize}
  \EndInput
  \Output
  \begin{itemize}
  \item Spectral estimators $\hat{Q}$ and $(\hat{f}_j)_{1\leq j\leq J}$
  \end{itemize}
  \EndOutput
  \Estimation
  \begin{itemize}
  \item[\textbf{[Step 1]}] For all $a, b, c \in\llbracket 1, D \rrbracket$, consider the following empirical estimators:
\begin{equation*}
\begin{split}
\hat{L} (a) &= \frac{1}{n}\sum_{s=1}^{n}\varphi_a (Y_s)\\
\hat{N}(a, b) &= \frac{1}{n}\sum_{s=1}^{n}\varphi_{a}(Y_s)\varphi_b(Y_{s+1})\\
\hat{P}(a, c) &= \frac{1}{n}\sum_{s=1}^{n}\varphi_a(Y_s)\varphi_c(Y_{s+2})\\
\hat{M}(a, b, c) &= \frac{1}{n}\sum_{s=1}^{n}\varphi_a (Y_s)\varphi_b (Y_{s+1})\varphi_c (Y_{s+2}) \\
\hat{M}^{x, L}(a, b) &= \frac{1}{n}\sum_{s=1}^{n}\varphi_a(Y_s)K_{L}(x, Y_{s+1})\varphi_b(Y_{s+2})
\end{split}
\end{equation*}  
 where $L$ is chosen such that: $2^{L} \asymp (n/\log(n))^{1/(2s+1)}$ and $s$ is the smoothness of the emissions.
   \item[\textbf{[Step 2]}] Let $\hat{V}$ be the $D\times J$ matrix of orthonormal right singular vectors of $\hat{P}$ corresponding to its top $J$ singular values.
   \item[\textbf{[Step 3]}] For all $d\in\llbracket 1, D \rrbracket$, set $\hat{B}(d) = (\hat{V}^{\top}\hat{P}\hat{V})^{-1}\hat{V}^{\top}\hat{M}(., d, .)\hat{V}$
   \item[\textbf{[Step 4]}] Generate $\Omega$ a $J\times J$ unit matrix uniformly drawn, for all $x\in\llbracket 1, J\rrbracket$, $\hat{C}(x) = \sum_{d=1}^{D}(\hat{V}\Omega)(d, x)\hat{B}(d)$
   \item[\textbf{[Step 5]}] Compute $\hat{R}_{1}$ a $J\times J$ unit Euclidean norm columns matrix that diagonalizes the matrix $\hat{C}(1)$:
   \begin{equation*}
   \hat{R}_{1}^{-1}\hat{C}(1)\hat{R}_{1} = \mathcal{D}iag[(\hat{\Lambda}(1,1), ..., \hat{\Lambda}(1, J))]
   \end{equation*}
   \item[\textbf{[Step 6]}] For all $x, x^{\prime}\in\llbracket 1, J \rrbracket$, $\hat{\Lambda}(x, x^{\prime}) =(\hat{R}_{1}^{-1}\hat{C}(x)\hat{R}_{1})_{x^{\prime}, x^{\prime}}$.
   \item[\textbf{[Step 7]}] Repeat steps 4 to 6 r times and take $\Omega_{r}$ maximizing $i\mapsto\min_{k\leq J}\min_{k_1 \neq k_2}|\hat{\Lambda}_{i}(k, k_1) - \hat{\Lambda}_{i}(k, k_2)|$
   \item[\textbf{[Step 8]}] Set $\hat{O} = \hat{V}\Omega_{r}\hat{\Lambda}$, $\tilde{\nu} = (\hat{V}^{\top}\hat{O})^{-1}\hat{V}^{\top}\hat{L}$ and  
$\hat{Q} = \Pi_{TM}\left((\hat{V}^{\top}\hat{O}\mathcal{D}iag[\tilde{\nu}])^{-1}\hat{V}^{\top}\hat{N}\hat{V}(\hat{O}^{\top}\hat{V})^{-1}\right)$ where $\Pi_{TM}$ denotes the projection (with respect to the scalar product given by the Frobenius norm) onto the convex set of transition matrices.
	\item[\textbf{[Step 9]}] For $x\in\mathbb{R}$, set $\hat{B}^{x} = \hat{B}^{x, D, L} = (\hat{V}^{\top}\hat{P}\hat{V})^{-1}\hat{V}^{\top}\hat{M}^{x, L}\hat{V}$
	\item[\textbf{[Step 10]}] Set $\hat{R}_{2} = \hat{Q}\hat{O}^{\top}\hat{V}$ and take $\tilde{f}_{j}(x) = (\hat{R}_{2}\hat{B}^{x}\hat{R}_{2}^{-1})_{j,j}$
	\item[\textbf{[Step 11]}] $\hat{f}_{j}(x) =  \left\{
    \begin{array}{ll}
        \tilde{f}_{j}(x) & \mbox{si } |\tilde{f}_{j}(x)|\leq n^{\beta} \\
        n^{\beta}sign(\tilde{f}_{j}(x)) & \mbox{otherwise}
    \end{array}
\right.$
for $\beta > 0$ fixed (but arbitrary). 
\end{itemize}
   \EndEstimation
\end{algorithmic}
\end{algorithm}


First, we start by controlling $\EE_{\theta}\left[\lVert Q - \hat{Q}\rVert_{\mathrm{F}}^{2}\right]$, $\EE_{\theta}\left[\frac{1}{\hat{\delta}^{2}}\right]$ and $\EE_{\theta}\left[\max_{x\in\mathbb{X}}\lVert f_{x} - \hat{f}_{x}\rVert_{\infty}^{2}\right]$ using the estimates yielded by the algorithm. Thanks to step 7 of the algorithm, one can obtain a slightly different version of Theorem~3.1 of \cite{YES17} (Note that this version is used in the proof of Corollary 3.2 in \cite{YES17}). It ensures the existence of positive constants $C, x_{0}, y_{0}, D_{0}$ and $n_{1}$ such that for all $D \geq D_{0}$ there exist a permutation $\tau_D\in\mathcal{S}_{J}$ such that for all $n \geq n_{1}\eta_{3}^{2}(\varphi_{D})x(y + \log(r))e^{y/r}$, $x \geq x_{0}$, $y \geq y_{0}$ and $r \geq 1$, with probability at least $1 - 4e^{-x} - 2e^{-y}$:
\begin{equation}\label{improved_control}
	\begin{split}
	\lVert Q^{\tau_D} - \hat{Q}\rVert_{\mathrm{F}}^{2} &\leq C\eta_{3}^{2}(\varphi_{D})x(y + \log(r))e^{y/r}/n\\
	\max_{x\in\mathbb{X}}\lVert f_{D,\tau_{D}(x)} - \hat{f}_{D, x}^{(r)}\rVert^{2}_{2} &\leq C\eta_{3}^{2}(\varphi_{D})x(y + \log(r))e^{y/r}/n
	\end{split}
\end{equation}
where:
\begin{itemize}
    \item $\left(\varphi_{k}\right)_{k\in\mathbb{N}}$ is the basis used in Algorithm \ref{Spectral_algorithm}
    \item $\eta_{3}^{2}(\varphi_D) = \sup_{y, y^{\prime}\in\mathcal{Y}^{3}}\sum_{a, b, c = 1}^{D}\left(\varphi_{a}(y_1)\varphi_{b}(y_2)\varphi_{c}(y_3) - \varphi_{a}(y_{1}^{\prime})\varphi_{b}(y_{2}^{\prime})\varphi_{c}(y_{3}^{\prime})\right)^{2}$
    \item $f_{D, x} = \sum_{d=1}^{D}\langle f_{x}, \varphi_{d}\rangle\varphi_{d}$ the projection of the density $f_x$ on the subspace spanned by the first $D$ components of the basis.
    \item $\hat{f}_{D, x}^{(r)} = \sum_{d=1}^{D}\hat{O}_{d, x}\varphi_{d}$ where $\hat{O}$ is the matrix constructed at step 8 of Algorithm \ref{Spectral_algorithm}.
\end{itemize}
As detailed in \cite{YES17}, when using a wavelet basis or trigonometric polynomials basis, $\eta_3(\varphi_{D})$ ensures for a constant $C_{\eta} > 0$: 
\begin{equation}\label{eta_bound}
    \eta_{3}(\varphi_{D})\leq C_{\eta}D^{3/2} \text{ and }  \max_{l\in\mathbb{N}}\lVert \varphi_l\rVert_{\infty} < \infty 
\end{equation}
We assume a similar basis is used.

It is important to note that the estimator $\hat{f}^{(r)}_{D, x}$ is not the one yielded by the Algorithm~\ref{Spectral_algorithm} but it is rather the one used in \cite{YES17}. We do not use it for the estimation because it does not allow obtaining the appropriate rate in infinite norm (see \cite{YES17} for more details). However, we will use it in our proof because $\lVert\hat{O}(., k) - O(., \tau_{D_n}(k))\rVert_{2} = \lVert \hat{f}_{D_{n}, k}^{(r)} - f_{D_{n},\tau_{D_n}(k)}\rVert_{2}$.

Assume that the parameters $x = x_n$, $y = y_n$, $D = D_n$ and $r = r_n$ are increasing with respect to $n$ and that $n \geq n_{1}C_{\eta}^{2}D_{n}^{3}x_n(y_n + \log(r_n))e^{y_n/r_n}$.

\paragraph*{Control of $\EE_{\theta}\left[\lVert Q^{\tau_{D_n}} - \hat{Q}\rVert^{2}_{\mathrm{F}}\right]$}

The control of $\lVert Q^{\tau_{D_n}} - \hat{Q}\rVert^{2}_{\mathrm{F}}$ in expectation is already proved in Corollary 3.2 of \cite{YES17} using Inequality \eqref{improved_control}. The proof chooses $r_{n} \propto \log(n)$ and $\eta_{3}(\varphi_{D_{n}}) = o(\sqrt{n}/\log(n))$ and yields:
$$
\EE_{\theta}\left[\lVert Q^{\tau_{D_n}} - \hat{Q}\rVert^{2}_{\mathrm{F}}\right] = \mathcal{O}(\eta_{3}^{2}(\varphi_{D_{n}})\log(n)/n) = \mathcal{O}(D_{n}^{3}\log(n)/n) \quad(\text{by } \eqref{eta_bound})
$$
for a sequence $(\tau_{D_n})_n$ of permutations.
We will keep the same values of $r_{n}$ and $D_{n}$ in what follows.

One of the advantages of this algorithm with respect to the previous versions is that it allows obtaining the appropriate rate on the errors in the estimation of all the model parameters. This is done thanks to the use of the kernel estimator of the emission densities for which the error of approximation is tuned (through the parameter $L$) independently of the error of estimation of the transition matrix. The shortcoming of the algorithm proposed in \cite{YES17} is that it does not allow controlling the rate on the emission densities without altering that of the transition matrix as it is clear in Corollary 3.3 of that paper.

\paragraph*{Control of $\EE_{\theta}\left[\frac{1}{\hat{\delta}^{2}}\right]$}

Let $\tilde{\delta} = \frac{\delta}{2}$. Then,
\begin{equation*}
\begin{split}
\EE_{\theta}\left[\frac{1}{\hat{\delta}^{2}}\right] &= \EE_{\theta}\left[\frac{1}{\hat{\delta}^{2}}\1_{\hat{\delta} < \tilde{\delta}}\right] + \EE_{\theta}\left[\frac{1}{\hat{\delta}^{2}}\1_{\hat{\delta} \geq \tilde{\delta}}\right]\\
&\leq \EE_{\theta}\left[\frac{1}{\hat{\delta}^{2}}\1_{\hat{\delta} < \tilde{\delta}}\right] + \frac{1}{\tilde{\delta}^{2}}
\end{split}
\end{equation*}
On the other hand:
\begin{equation*}
\tilde{\delta} - \hat{\delta} = \delta - \hat{\delta} - \frac{\delta}{2}\leq \left|\delta - \hat{\delta}\right|- \frac{\delta}{2}\leq \max_{i, j}\left|Q^{\tau_{D_n}}_{i, j} - \hat{Q}_{i, j}\right|- \frac{\delta}{2}\leq\lVert Q^{\tau_{D_n}} - \hat{Q}\rVert_{F} - \frac{\delta}{2}
\end{equation*}
where we have used the inequality $|\min_{i, j} Q_{i, j} - \min_{i, j} \hat{Q}_{i, j}| \leq \max_{i, j}| Q_{i, j} - \hat{Q}_{i, j}|$.

We assume that all the entries of $\hat{Q}$ are between $n^{-\alpha/2}$ and $1 - n^{-\alpha/2}$ for $\alpha \geq 2$. If it is not the case, modifying the entries of $\hat{Q}$ to obtain a similar property induces an error of order $n^{-\alpha/2}$ which is negligible with respect to the rates we seek and all the subsequent results remain unchanged. It follows that $\hat{\delta} \geq n^{-\alpha/2}$ and:
$$
\EE_{\theta}\left[\frac{1}{\hat{\delta}^{2}}\right] \leq n^{\alpha}\PP_{\theta}\left(\lVert Q^{\tau_{D_n}} - \hat{Q}\rVert_{F} > \frac{\delta}{2}\right) + \frac{1}{\tilde{\delta}^{2}}
$$
Choosing for example $x_{n} = y_{n} = r_{n} = \alpha\log(n)$, then one obtains for $n$ large enough:
\begin{equation*}
\begin{split}
\PP_{\theta}\left(\lVert Q^{\tau_{D_n}} - \hat{Q}\rVert_{F} > \frac{\delta}{2}\right) &\leq \PP_{\theta}\left(\lVert Q^{\tau_{D_n}} - \hat{Q}\rVert_{F} > C\eta_{3}^{2}(\varphi_{D_n})x_{n}(y_{n} + \log(r_n))e/n\right)\\
&\leq 4e^{-x_{n}} + 2e^{-y_n}
\end{split}
\end{equation*}
It follows that $\EE_{\theta}\left[\frac{1}{\hat{\delta}^{2}}\right]$ is upper-bounded by an absolute constant.

The values $x_n$, $y_n$ and $r_n$ will be kept the same in what follows.

\paragraph*{Control of $\EE_{\theta}\left[\max_{x\in\mathbb{X}}\lVert f_{\tau_{D_n}(x)} - \hat{f}_{x}\rVert_{\infty}^{2}\right]$}

The difficulty of the control of this quantity lies in ensuring that the same permutation $\tau_{D_n}$ used for the control of $Q$ still works for the control of the emission densities. In the spectral algorithm of \cite{ACG21}, the matrix $\hat{R}$ is chosen independently of $Q$ or $\hat{Q}$ (In fact, this algorithm does not even estimate $\hat{Q}$). Had we used this matrix, there would not be any reason for which the same permutation $\tau_{D_n}$ works for the control of the emission densities. To solve this problem, we choose a matrix $\hat{R}$ that depends explicitly on $\hat{Q}$ so that the same permutation that works for the control of $Q$ works also for that of the emission densities.
We follow here the steps of the proof of Theorem~5 in \cite{ACG21}.

Let $M^{x, L}, P, O$ be the quantities estimated by $\hat{M}^{x, L}, \hat{P}, \hat{O}$ and construct $\hat{f}_{j}$, $\tilde{f}_{j}$ using Algorithm~\ref{Spectral_algorithm}. Let $E_n$ be the event with probability greater than $1 - 4e^{-x_n} - 2e^{-y_n}$ on which the control of (\ref{improved_control}) holds when $x_n = y_n = r_n = \alpha\log(n)$ and $D = D_n$. For $\gamma > 0$ there exists $c = c(\gamma)$ such that the event:
$$
\mathcal{A}_{n} = \left\{\lVert\hat{P}-P \rVert \leq cD_{n}\left(\frac{\log(n)}{n}\right)^{\frac{s}{2s+1}},\quad \sup_{x\in\mathbb{R}}\lVert \hat{M}^{x, L} - M^{x, L}\rVert \leq cD_{n}^{2}\left(\frac{\log(n)}{n}\right)^{\frac{s}{2s+1}} \right\}
$$
is measurable and has probability $n^{-\gamma}$ (Cf. Lemma~25.a of \cite{ACG21}). Given that $E_n$ has probability greater than $1 - 4e^{-x_n} -2e^{-y_n} = 1 - 6n^{-\alpha}$, it follows that $\mathcal{A}_{n}\cap E_n$ has probability greater than $1 - n^{-\gamma}- 6n^{-\alpha}$. Note that the difference with the original proof is that we use the event $\mathcal{A}_n\cap\mathcal{E}_n$ instead of the event $\mathcal{A}_n$. This is compulsory to control the errors of $\hat{Q}$ and $\hat{f}$ simultaneously.

On the event $\mathcal{A}_{n}\cap E_n$, and at step 10 of Algorithm~\ref{Spectral_algorithm}, instead of using the matrix $\hat{R}$ appearing in the spectral algorithm in \cite{ACG21}, we rather use the matrix $\hat{R}_2 = \hat{Q}\hat{O}^{\top}\hat{V}$ where the components $\hat{V}$, $\hat{Q}$ and $\hat{O}$ are constructed in the Algorithm~\ref{Spectral_algorithm}. On the other hand, since the columns of $\hat{R}$ are not normalized, we choose $\tilde{R}_2 = QO^{\top}\hat{V}$ on the contrary to what is done in the proof of Theorem~5 in \cite{ACG21}.
By denoting $Q^{\tau_{D_n}} = P_{\tau_{D_n}} Q P_{\tau_{D_n}}^{-1}$, one obtains:
\begin{equation*}
\hat{R}_2 - P_{\tau_{D_n}}\tilde{R}_2 = \hat{Q}\hat{O}^{\top}\hat{V} - P_{\tau_{D_n}}QO^{\top}\hat{V} =  \left(\hat{Q}(\hat{O} - OP_{\tau_{D_n}}^{\top})^{\top} + (\hat{Q}-Q^{\tau_{D_n}})P_{\tau_{D_n}}O^{\top}\right)\hat{V}
\end{equation*}
It follows by using operator norm:
\begin{equation}\label{ineq2}
\begin{split}
\lVert \hat{R}_2 - P_{\tau_{D_n}}\tilde{R}_2\rVert &\leq \left(\lVert\hat{Q}\rVert\lVert \hat{O} - OP_{\tau_{D_n}}^{\top}\rVert + \lVert P_{\tau_{D_n}}O^{\top}\rVert\lVert \hat{Q} - Q^{\tau_{D_n}}\rVert\right)\lVert \hat{V}\rVert
\end{split}
\end{equation}
First, note that on the event $\mathcal{A}_{n}\cap E_n$:
\begin{equation*}
    \begin{split}
        \lVert\hat{Q}\rVert &\leq 1\\
        \lVert \hat{V}\rVert&\leq\lVert \hat{V}\rVert_{F} = J^{1/2} \quad\text{(columns are normalized)}\\
        \lVert \hat{Q} - Q^{\tau_{D_n}}\rVert &\leq \lVert \hat{Q} - Q^{\tau_{D_n}}\rVert_{\mathrm{F}}\\
        &\leq \left(CC_{\eta}^{2}D_{n}^{3}x_n(y_n + \log(r_n))e^{y_n/r_n}/n\right)^{1/2}\text{ (by \eqref{improved_control} and \eqref{eta_bound}})\\
        \lVert P_{\tau_{D_n}}O^{\top}\rVert &= \lVert O\rVert = \sup_{\lVert v \rVert = 1}\left(\sum_{j=1}^{J}\left(\sum_{l=1}^{D_n}v_l \langle f_{j}, \varphi_{l}\rangle\right)^{2}\right)^{1/2}\\
        &\leq (J D_n)^{1/2}\max_{1\leq l \leq D_n}\lVert \varphi_l\rVert_{\infty}\\
        \lVert \hat{O} - OP_{\tau_{D_n}}^{\top}\rVert &\leq  \lVert \hat{O} - OP_{\tau_{D_n}}^{\top}\rVert_{F} = \left(\sum_{k=1}^{K}\lVert\hat{O}(., k) - O(., \tau_{D_n}(k))\rVert_{2}^{2}\right)^{1/2}\\
        &= \left(\sum_{k=1}^{K}\lVert \hat{f}_{D_{n}, k}^{(r)} - f_{D_{n},\tau_{D_n}(k)}\rVert^{2}_{2}\right)^{1/2}\\
    &\leq\left(KCC_{\eta}^{2}D_{n}^{3}x_n(y_n + \log(r_n))e^{y_n/r_n}/n\right)^{1/2}\text{ (by \eqref{improved_control} and \eqref{eta_bound}}).
    \end{split}
\end{equation*}
By keeping the previous choices of $x_n, y_n$ and $r_n$ then by Inequality \eqref{ineq2}, there exists a constant $C^{\prime}$ such that:
$$
\1_{\mathcal{A}_{n}\cap E_n}\lVert \hat{R}_2 - P_{\tau_{D_n}}\tilde{R}_2\rVert \leq C^{\prime}D_n^{1/2}D_{n}^{3/2}\frac{\log(n)}{\sqrt{n}}.
$$
By Lemma~25.b of \cite{ACG21}, for $n$ large enough, $\hat{P}$ has rank $J$, $(\hat{V}^{T}\hat{P}\hat{V})$ and $(\hat{V}^{T}P\hat{V})$ are invertible and the matrices $(\hat{B}(d))_{1\leq d \leq D_n}$ appearing in Algorithm~\ref{Spectral_algorithm} are then well-defined. By Lemma~11 and 25.b of \cite{ACG21}, $\Tilde{B}^{x} = (\hat{V}^{T}P\hat{V})^{-1}\hat{V}^{T}M^{x}\hat{V}$ satisfies:
\begin{equation}
    \Tilde{B}^{x} = (QO^{T}\hat{V})^{-1}D^{x}(QO^{T}\hat{V}) = \Tilde{R}_{2}^{-1}D^{x}\Tilde{R}_{2}
\end{equation}
where $D^{x} = (K_{L}[f_j](x))_{j\leq J}$. Using the fact that $\sigma_{J}(\hat{V}) = 1$ (the columns of $\hat{V}$ are orthonormal) and $\sigma_{J}(QO^{T}) \geq \sigma_{J}(Q)\sigma_{J}(O) > 0$ (because $Q$ in full rank and $\sigma_{J}(O)$ is bounded from below by an absolute constant by assumption of the algorithm), it follows that $\lVert \Tilde{R}_{2}^{-1}\rVert^{-1}$ is bounded from below by an absolute constant because:
\begin{multline}
\label{R_bound}
    \lVert \Tilde{R}_{2}^{-1}\rVert^{-1} = \frac{1}{\sigma_{1}(\Tilde{R}_{2}^{-1})} = \sigma_{J}(\Tilde{R}_{2})\\
    = \sigma_{J}(QO^{T}\hat{V})\geq\sigma_{J}(QO^{T})\sigma_{J}(\hat{V}) = \sigma_{J}(QO^{T}) \geq \sigma_{J}(Q)\sigma_{J}(O) > 0.
\end{multline}
Thus, for $n$ large enough, the assumption of Lemma~\ref{Lemma_4} (stated below) is verified with $A_t = \tilde{B}^t$, $\hat{A}_t = \hat{B}^{t}$ , $R = \tilde{R}_2 = QO^{T}\hat{V}$. This ensures that:
\begin{multline*}
\1_{\mathcal{A}_{n}\cap E_n}\max_{x\in\mathbb{X}}\lVert\tilde{f}_{x} - K_{L}[f_{\tau_{D_n}(x)}]\rVert_{\infty}\\
\leq 4\kappa(\Tilde{R}_{2})\left[\1_{\mathcal{A}_{n}\cap E_n}\sup_{t}\lVert  \Tilde{B}^{t} - \hat{B}^{t}\rVert + \lambda_{\max}\kappa(\Tilde{R}_{2})\lVert \Tilde{R}^{-1}_{2}\rVert\1_{\mathcal{A}_{n}\cap E_n}\lVert \hat{R}_2 - P_{\tau_{D_n}}\tilde{R}_2\rVert\right]
\end{multline*}
where $\lambda_{\max} = \sup_{t}\max_{j}|\lambda_{t, j}| = \sup_{t}\max_{j}|K_{L}[f_{j}]| < \infty$ and $\lambda_{t, j}$ is the $j$-th eigenvalue of $\Tilde{B}^{t}$. By Lemma~35.c of \cite{ACG21}, one has for some constant $C$:
 $$
\1_{\mathcal{A}_{n}\cap E_n}\sup_{t}\lVert  \Tilde{B}^{t} - \hat{B}^{t}\rVert \leq CD_n^{2}\left(\frac{\log(n)}{n}\right)^{\frac{s}{2s+1}}.
 $$
By Lemma~35.b in \cite{ACG21}, one obtains: $\kappa(\Tilde{R}_{2})\leq \Tilde{C}D_{n}^{1/2}$ and $\lVert \Tilde{R}_{2}^{-1}\rVert$ upper-bounded by an absolute constant by \eqref{R_bound}. We finally obtain that:
\begin{align*}
\1_{\mathcal{A}_{n}\cap E_n}\max_{x\in\mathbb{X}}\lVert\tilde{f}_{x} - K_{L}[f_{\tau_{D_n}(x)}]\rVert_{\infty}
&\leq c^{\prime}D_{n}^{1/2}\left[D_n^{2}\left(\frac{\log(n)}{n}\right)^{\frac{s}{2s+1}} + D_n^{1/2} D_n^{1/2} D_n^{3/2}\frac{\log(n)}{\sqrt{n}}\right]\\
&\leq c^{\prime\prime}D_n^{5/2}\left(\frac{\log(n)}{n}\right)^{\frac{s}{2s+1}}
\end{align*}
for come constants $c^{\prime}, c^{\prime\prime}$. The choice of $L$ (cf. Algorithm~\ref{Spectral_algorithm}) allows obtaining then:
$$
\1_{\mathcal{A}_{n}\cap E_n}\max_{x\in\mathbb{X}}\lVert\tilde{f}_{x} - f_{\tau_{D_n}(x)}\rVert_{\infty}\leq c^{\prime\prime}D_n^{5/2}\left(\frac{\log(n)}{n}\right)^{\frac{s}{2s+1}}.
$$
Given that for $n$ large enough $\lVert f_{j}\rVert_{\infty}\leq n^{\beta}$,  it follows that:
$\lVert\hat{f}_j -  f_{\tau_{D_n}(j)}\rVert_{\infty} \leq \lVert\tilde{f}^{L}_{j}- f_{\tau_{D_n}(j)}\rVert_{\infty}$.

Finally, thanks to the truncation of the emission densities, it is possible to obtain the same rate in expectation:
\begin{equation*}
\begin{split}
\EE_{\theta}\left[\lVert \hat{f}_j - f_{\tau_{D_n}(j)} \rVert_{\infty}^{2}\right] &\leq c^{\prime\prime} D_n^{5/2} \left(\frac{\log(n)}{n}\right)^{\frac{2s}{2s+1}} + 2n^{\beta}\PP_{}((\mathcal{A}_n\cap E_n)^c)\\
&\leq c^{\prime\prime} D_n^{5/2}\left(\frac{\log(n)}{n}\right)^{\frac{2s}{2s+1}} + 2 n^{\beta}( n^{-\gamma} + 6n^{-\alpha}).
\end{split}
\end{equation*}
By choosing $\alpha$ and $\gamma$ sufficiently large, one obtains:
$$
\EE_{\theta}\left[\lVert \hat{f}_j - f_{\tau_{D_n}(j)} \rVert_{\infty}^{2}\right] = \mathcal{O}\left(D_n^{5/2}\left(\frac{\log(n)}{n}\right)^{\frac{2s}{2s+1}}\right).
$$

\paragraph*{Control of $\frac{1}{n}\sum_{i=1}^{n}\sum_{l=1}^{n}\EE_{\theta}\left[\left(\rho\vee\hat{\rho}\right)^{2|l-i|}\max_{x\in\mathbb{X}}\|f_{x} - \hat{f}_{x}\|_{\infty}^{2}\right]^{1/2}$}

Let $\tilde{\rho} = \frac{1 - 2\tilde{\delta}}{1 - \tilde{\delta}}$ where $\Tilde{\delta} = \frac{\delta}{2}$.
\begin{multline*}
    \EE_{\theta}\left[\left(\rho\vee\hat{\rho}\right)^{2|l-i|}\max_{x\in\mathbb{X}}\|f_{x} - \hat{f}_{x}\|_{\infty}^{2}\right]^{1/2}
    \leq \Tilde{\rho}^{|l-i|}\EE_{\theta}\left[\max_{x\in\mathbb{X}}\|f_{x} - \hat{f}_{x}\|_{\infty}^{2}\right]^{1/2}\\
    + \EE_{\theta}\left[\max_{x\in\mathbb{X}}\|f_{x} - \hat{f}_{x}\|_{\infty}^{2}\1_{\hat{\rho}\geq \Tilde{\rho}}\right]^{1/2}.
\end{multline*}
The term $\EE_{\theta}\left[\max_{x\in\mathbb{X}}\|f_{x} - \hat{f}_{x}\|_{\infty}^{2}\1_{\hat{\rho}\geq \Tilde{\rho}}\right]^{1/2}$ can be made of order $n^{-\alpha}$ for arbitrary $\alpha > 0$ by the same large deviation argument used in the control of $\EE_{\theta}\left[\frac{1}{\hat{\delta}^{2}}\right]$. Then, summing up over $i$ and $n$ yields:
$$
\frac{1}{n}\sum_{i=1}^{n}\sum_{l=1}^{n}\EE_{\theta}\left[\left(\rho\vee\hat{\rho}\right)^{2|l-i|}\max_{x\in\mathbb{X}}\|f_{x} - \hat{f}_{x}\|_{\infty}^{2}\right]^{1/2} = \mathcal{O}\left(D_{n}^{5/2}\left(\frac{\log(n)}{n}\right)^{\frac{s}{2s+1}}\right)
$$
Finally, using Proposition~\ref{prop:ub:excess:hmm} and the previous controls of the errors of estimation of the model parameters, one gets:
\begin{equation*}
    \EE[\ClassifRisk(\theta, \hat{h}^{\tau_n})] - \inf_{h\in \mathcal{H}_n}\ClassifRisk(\theta,h) = \mathcal{O}\left(D_{n}^{5/2}\left(\frac{\log(n)}{n}\right)^{\frac{s}{2s + 1}}\right).
\end{equation*}
A similar rate holds for the excess risk of clustering thanks to the relationship between the Bayes risk of classification and the Bayes risk of clustering established in \cite[Theorems~\ref*{main-thm:bayes-risk:lb:hmm:J=2} and~\ref*{main-thm:bayes-risk:lb:hmm:J>2}]{GKN2025main}.

\begin{lemma}\label{Lemma_4}
    Suppose $(A_t, t\in\mathbb{R})$ are $J\times J$ matrices simultaneously diagonalized by a matrix R:
    $$
R A_t R^{-1} = diag(\lambda_{t, 1}, ..., \lambda_{t, J}), t\in\mathbb{R}.
    $$
    Let $\hat{R}$ be a matrix such that for some permutation $\tau$ of $\left\{1, ..., J\right\}$, we have:
    $$
\lVert \hat{R} - P_{\tau}R\rVert = \varepsilon_{R} \leq \frac{\lVert R^{-1}\rVert^{-1}}{2}
    $$
    Assume $\lambda_{\max} = \sup_{t}\max_{j}|\lambda_{t, j}| < \infty$. For matrices $(\hat{A}_{t})_{t\in\mathbb{R}}$, write 
    $\varepsilon_{A} = \sup_{t}\lVert A_t -\hat{A}_t \rVert$ and define
    $$
\hat{\lambda}_{t, j} = e_{j}^{T}\hat{R}\hat{A}_{t}\hat{R}^{-1}e_j\quad \mathrm{and}\quad\lambda_{t, \tau(j)} = e_{j}^{T}(P_{\tau}R) A_{t}(P_{\tau}R)^{-1}e_j.
    $$
    Then
    $$
\sup_{t}\max_{j}|\hat{\lambda}_{t, j} - \lambda_{t, \tau(j)}| \leq 4\kappa(R)\left[\varepsilon_A + \lambda_{\max}\kappa(R)\lVert R^{-1}\rVert\varepsilon_{R}\right].
    $$
\end{lemma}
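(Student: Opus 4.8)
The plan is to reduce everything to a single operator-norm bound on the matrix $M_t := \hat{R}\hat{A}_t\hat{R}^{-1} - (P_\tau R)A_t(P_\tau R)^{-1}$. Indeed, for every index $j$ one has $|\hat{\lambda}_{t,j} - \lambda_{t,\tau(j)}| = |e_j^\top M_t e_j| \le \|M_t\|$, so a bound on $\sup_t\|M_t\|$ controls $\sup_t\max_j|\hat{\lambda}_{t,j} - \lambda_{t,\tau(j)}|$ in one stroke. Writing $S := P_\tau R$ and using that the permutation matrix $P_\tau$ is an isometry in operator norm, I would first record the identities $\|S\| = \|R\|$, $\|S^{-1}\| = \|R^{-1}\|$, hence $\kappa(S) = \kappa(R)$; moreover $S A_t S^{-1} = P_\tau (R A_t R^{-1}) P_\tau^{-1} = P_\tau\,\mathrm{diag}(\lambda_{t,1},\dots,\lambda_{t,J})\,P_\tau^{-1} =: \tilde{D}_t$ is diagonal with $(\tilde{D}_t)_{jj} = \lambda_{t,\tau(j)}$ and $\|\tilde{D}_t\| = \max_j|\lambda_{t,\tau(j)}| \le \lambda_{\max}$.

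Next I would split $M_t = \hat{R}(\hat{A}_t - A_t)\hat{R}^{-1} + \bigl(\hat{R}A_t\hat{R}^{-1} - S A_t S^{-1}\bigr)$, the first term carrying the estimation error $\varepsilon_A$ and the second the perturbation $\varepsilon_R$ of $R$. For the first term the smallness hypothesis $\varepsilon_R \le \|R^{-1}\|^{-1}/2$ is used to control $\kappa(\hat{R})$: writing $\hat{R} = S\bigl(I + S^{-1}(\hat{R}-S)\bigr)$ with $\|S^{-1}(\hat{R}-S)\| \le \|R^{-1}\|\varepsilon_R \le 1/2$, a Neumann series gives $\|\hat{R}^{-1}\| \le 2\|R^{-1}\|$, while $\|\hat{R}\| \le \|R\| + \varepsilon_R \le \tfrac32\|R\|$, so $\kappa(\hat{R}) \le 3\kappa(R)$ and the first term is at most $\kappa(\hat{R})\varepsilon_A \le 3\kappa(R)\varepsilon_A$.

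For the second term, set $G := \hat{R}S^{-1} = I + F$ with $F := (\hat{R}-S)S^{-1}$, so $\|F\| \le \|R^{-1}\|\varepsilon_R \le 1/2$, and since $A_t = S^{-1}\tilde{D}_t S$ one gets $\hat{R}A_t\hat{R}^{-1} = G\tilde{D}_t G^{-1}$. Expanding $G\tilde{D}_t G^{-1} - \tilde{D}_t = F\tilde{D}_t + \tilde{D}_t(G^{-1}-I) + F\tilde{D}_t(G^{-1}-I)$ and bounding $\|G^{-1}-I\| = \|(I+F)^{-1}F\| \le \|F\|/(1-\|F\|) \le 2\|F\|$ yields a bound $\le 4\lambda_{\max}\|F\| \le 4\lambda_{\max}\|R^{-1}\|\varepsilon_R$. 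Adding the two pieces and using $\kappa(R) \ge 1$ to write $3\kappa(R)\varepsilon_A \le 4\kappa(R)\varepsilon_A$ and $4\lambda_{\max}\|R^{-1}\|\varepsilon_R \le 4\kappa(R)^2\lambda_{\max}\|R^{-1}\|\varepsilon_R$ gives exactly the claimed $4\kappa(R)\bigl[\varepsilon_A + \lambda_{\max}\kappa(R)\|R^{-1}\|\varepsilon_R\bigr]$.

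The only genuinely delicate point is the bookkeeping around the permutation $P_\tau$: verifying the operator-norm identities $\|P_\tau R\| = \|R\|$ and $\|(P_\tau R)^{-1}\| = \|R^{-1}\|$, and checking that the diagonal entries of $S A_t S^{-1}$ are precisely $(\lambda_{t,\tau(j)})_j$ with the indexing convention fixed in the statement. Everything else is elementary once the condition $\varepsilon_R \le \|R^{-1}\|^{-1}/2$ is invoked to make the two Neumann expansions converge; there is no real analytic obstacle, only careful tracking of the multiplicative constants, and in fact the argument above produces a slightly sharper bound than stated, which a fortiori yields the claim.
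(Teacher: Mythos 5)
Your proposal is correct, and it takes a genuinely different route from the paper's proof. The paper works entry-wise: it writes $\hat{\lambda}_{t,j}-\lambda_{t,\tau(j)}=\hat{\zeta}_j^{T}\hat{A}_t\hat{\xi}_j-\zeta_j^{T}A_t\xi_j$ with $\hat{\zeta}_j^{T}=e_j^{T}\hat{R}$, $\hat{\xi}_j=\hat{R}^{-1}e_j$ (and the analogous unhatted vectors), splits this into three terms tracking the perturbations of the right vector, the left vector, and the middle matrix separately, bounds each by vector norms (using $\lVert A_t\rVert\le\lambda_{\max}\kappa(R)$ and $\lVert A_t\xi_j\rVert\le\lambda_{\max}\lVert R^{-1}\rVert$ from the eigenstructure), and imports the inverse-perturbation bound $\lVert\hat{R}^{-1}-(P_{\tau}R)^{-1}\rVert\le\lVert R^{-1}\rVert^{2}\varepsilon_R/(1-\varepsilon_R\lVert R^{-1}\rVert)$ from Lemma~37 of the cited spectral-methods paper, before a somewhat lengthy collection of terms. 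You instead work at the operator level with the two-term splitting $M_t=\hat{R}(\hat{A}_t-A_t)\hat{R}^{-1}+\bigl(G\tilde{D}_tG^{-1}-\tilde{D}_t\bigr)$, $G=I+F$, which exploits that conjugating the \emph{diagonal} matrix $\tilde{D}_t$ by a near-identity matrix costs only $O(\lambda_{\max}\lVert F\rVert)$; the same smallness hypothesis $\varepsilon_R\le\lVert R^{-1}\rVert^{-1}/2$ drives both your Neumann expansions and the paper's cited lemma, so the underlying analytic tool is the same. What each buys: the paper's entry-wise bookkeeping matches how the quantities $\hat{\lambda}_{t,j}$ are actually used downstream (they are the estimated emission values) and reuses an off-the-shelf lemma, while your argument is shorter, structurally cleaner, and strictly sharper — your intermediate bound $3\kappa(R)\varepsilon_A+4\lambda_{\max}\lVert R^{-1}\rVert\varepsilon_R$ avoids the $\kappa(R)^{2}$ factor on the $\varepsilon_R$ term entirely, whereas the paper's final constant carries it. All the steps you flag as delicate check out: $\lVert P_{\tau}R\rVert=\lVert R\rVert$ and $\lVert(P_{\tau}R)^{-1}\rVert=\lVert R^{-1}\rVert$ since permutation matrices are orthogonal, $\varepsilon_R\le\lVert R^{-1}\rVert^{-1}/2\le\lVert R\rVert/2$ gives $\kappa(\hat{R})\le3\kappa(R)$ and invertibility of $\hat{R}$, and the identity $\hat{R}A_t\hat{R}^{-1}=G\tilde{D}_tG^{-1}$ is exact.
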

\begin{proof}
    Let $\hat{\zeta}_{j}^{T} = e_{j}^{T}\hat{R}$, let $\hat{\xi}_{j} = \hat{R}^{-1}e_{j}$ and define $\zeta_{j}^{T} = e_{j}^{T}P_{\tau}R$ and $\xi_{j} = (P_{\tau}R)^{-1}e_{j}$. \\
    Then, $\lambda_{t, \tau(j)} = \zeta^{T}_{j}A_t\xi_{j}$, $\hat{\lambda}_{t, j} = \hat{\zeta}^{T}_{j}\hat{A}_{t}\hat{\xi}_{j}$ and we have:
\begin{equation*}
    \begin{split}
    |\hat{\lambda}_{t, j} - \lambda_{t, \tau(j)}| &= |\hat{\zeta}^{T}_{j}\hat{A}_{t}\hat{\xi}_{j} - \zeta^{T}_{j}A_t\xi_{j}|\\
        &= |\hat{\zeta}_{j}^{T}\hat{A}_{t}(\hat{\xi}_j - \xi_j) + (\hat{\zeta}_j^{T} - \zeta_{j}^{T})A_t \xi_j + \hat{\zeta}_{j}^{T}(\hat{A}_t - A_t)\xi_j|\\
        &\leq \lVert \hat{\zeta}^{T}_{j}\rVert \lVert \hat{A}_t\rVert \lVert \hat{\xi}_j - \xi_j\rVert + \lVert \hat{\zeta}_{j}^{T} - \zeta_{j}^{T} \rVert\lVert A_t\xi_j\rVert + \lVert\hat{\zeta}_{j}^{T}\lVert\lVert \xi_j\rVert\varepsilon_{A}\\
        \lVert\zeta^{T}_j\lVert &= \lVert e_j^{T}P_{\tau}R\rVert \leq \lVert P_{\tau}R \rVert = \lVert R \rVert\\
        \lVert \hat{\zeta}_j^{T} - \zeta_j^{T}\lVert &= \lVert e_j^{T}(\hat{R} - P_{\tau}R)\rVert \leq \lVert\hat{R} - P_{\tau}R \rVert = \varepsilon_R\\
        \lVert\xi_j \rVert &= \lVert (P_{\tau}R)^{-1}e_j \rVert\leq \lVert (P_{\tau}R)^{-1}\rVert = \lVert R^{-1}\rVert\\
        \lVert \hat{\xi}_j - \xi_j \rVert &= \lVert(\hat{R}^{-1} - (P_{\tau}R)^{-1})e_j \rVert \leq \lVert \hat{R}^{-1} - (P_{\tau}R)^{-1}\rVert\\
        &\leq \frac{\lVert R^{-1}\rVert^{2}\varepsilon_R}{1 - \varepsilon_R\lVert R^{-1}\rVert} \text{ (by Lemma~37 of \cite{ACG21})}\\
        \lVert A_t\rVert &=\lVert R^{-1}diag(\lambda_{t, .})R\rVert \leq \lambda_{\max}\kappa(R)\\
        \lVert A_{t}\xi_j\rVert &= \lVert\lambda_{t, \tau(j)}\xi_j\rVert \leq \lambda_{\max} \lVert R^{-1}\rVert\\
        \lVert \hat{\zeta}^{T}_j\rVert &= \lVert \hat{\zeta}^{T}_j - \zeta^{T}_j + \zeta^{T}_j\rVert \leq \lVert \hat{\zeta}^{T}_j - \zeta^{T}_j \rVert + \lVert \zeta_j^{T} \rVert \leq \varepsilon_R + \lVert R\rVert\\
        \lVert\hat{A}_t\rVert &\leq  \varepsilon_A  + \lVert A_t\rVert\leq \varepsilon_A + \lambda_{\max}\kappa(R).
    \end{split}
\end{equation*}
Thus, by the inequalities above:
\begin{equation*}
    \begin{split}
        |\hat{\lambda}_{t, j} - \lambda_{t, \tau(j)}|
        &\leq (\varepsilon_R + \lVert R \rVert)(\varepsilon_A + \lambda_{\max}\kappa(R))\frac{\lVert R^{-1}\rVert^{2}\varepsilon_R}{1 - \varepsilon_R\lVert R^{-1}\rVert}\\
        &\quad+ \lambda_{\max}\varepsilon_R\lVert R^{-1}\rVert + \lVert R^{-1}\rVert(\varepsilon_R + \lVert R \rVert)\varepsilon_A\\
        &\leq 2(\varepsilon_R + \lVert R \rVert)(\varepsilon_A + \lambda_{\max}\kappa(R))\lVert R^{-1}\rVert^{2}\varepsilon_R\\
        &\quad+ \lambda_{\max}\lVert R^{-1}\rVert\varepsilon_R + \lVert R^{-1}\rVert(\varepsilon_R + \lVert R \rVert)\varepsilon_A\\
        &\leq 2\lVert R^{-1}\rVert^{2}\varepsilon_R^{2}\varepsilon_A + 2\kappa(R)\lVert R^{-1}\rVert\varepsilon_A\varepsilon_R + 2\lambda_{\max} \kappa(R)\lVert R^{-1}\rVert^{2}\varepsilon_R^{2}\\
        &\quad+2\lambda_{\max}\kappa(R)^{2}\lVert R^{-1}\rVert\varepsilon_R + \lambda_{\max}\lVert R^{-1}\rVert\varepsilon_R + \kappa(R)\varepsilon_A + \lVert R^{-1}\rVert\varepsilon_R\varepsilon_A\\
        &\leq 2\lVert R^{-1}\rVert^{2}\varepsilon_{R}^{2}\varepsilon_{A} + (2\kappa(R) + 1)\lVert R^{-1} \rVert\varepsilon_A\varepsilon_R\\
        &\quad+ 2\lambda_{\max}\kappa(R)\left((\lVert R^{-1}\rVert \varepsilon_R)^{2}
        + \kappa(R)\left(\lVert R^{-1}\rVert\varepsilon_R\right)\right) + \lambda_{\max}\lVert R^{-1}\rVert\varepsilon_{R} + \kappa(R)\varepsilon_A\\
        &\leq 2 \left(\frac{1}{2}\right)^{2}\varepsilon_{A} + \frac{3}{2}\kappa(R)\varepsilon_{A} + 2\lambda_{\max}\kappa(R)\lVert R^{-1}\rVert\varepsilon_{R}\left(\kappa(R) + \frac{1}{2}\right)\\
        &\quad+ \lambda_{\max}\lVert R^{-1}\rVert\varepsilon_{R} + \kappa(R)\varepsilon_{A}\\
        &\leq 3\kappa(R)\varepsilon_A + \lambda_{\max}(1 + 3 \kappa(R)^{2})\lVert R^{-1}\rVert\varepsilon_{R}\\
        &\leq 4\kappa(R)\left[\varepsilon_A + \lambda_{\max}\kappa(R)\lVert R^{-1}\rVert\varepsilon_{R}\right]
    \end{split}
\end{equation*}
where we have used $\lVert R^{-1}\rVert\varepsilon_{R}\leq 1/2$ and $\kappa(R) \geq 1$.
\end{proof}

\section{Proof of \texorpdfstring{\cite[Lemma~\ref*{main-lem:fastrates}]{GKN2025main}}{[4, Lemma 1]}}
\label{sec:proof-fastrate}

Let define $p_{n, i}(\theta):=\max_{k} \PP_\theta\left(X_i=k \mid Y_{1: n}\right)=\PP_\theta\left(X_i=h_{\theta, i}^{\star} \mid Y_{1: n}\right)$; defining $h_{\theta,i}^{\star}$ realizing the maximum. Suppose $p_{n, i}(\theta) \geq \frac{1}{2}+\gamma$ for some $0<\gamma \leq 1 / 2$. Then,
$$
\begin{aligned}
p_{n, i}(\theta)-\max _{k \neq h_{\theta, i}^{\star}} \mathbb{P}_\theta\left(X_i=k \mid Y_{1: n}\right) & \geq p_{n, i}(\theta)-\sum_{k \neq h_{\theta, i}^{\star}} \mathbb{P}_\theta\left(X_i=k \mid Y_{1: n}\right) \\
& =p_{n, i}(\theta)-\left[1-p_{n, i}(\theta)\right] \\
& =2 p_{n, i}(\theta)-1 \\
& \geq 2 \gamma .
\end{aligned}
$$
Consequently if $p_{n, i}(\theta) \geq \frac{1}{2}+\gamma$,
$$
\begin{aligned}
\mathbb{P}_{\hat{\theta}}\left(X_i=h_{\theta, i}^{\star} \mid Y_{1: n}\right) & \geq p_{n, i}(\theta)-\left\|\phi_{\theta, i \mid n}-\phi_{\hat{\theta}, i \mid n}\right\|_{\mathrm{TV}} \\
& \geq \max _{k \neq h_{\theta, i}^{\star}} \mathbb{P}_\theta\left(X_i=k \mid Y_{1: n}\right)+2 \gamma-\left\|\phi_{\theta, i \mid n}-\phi_{\hat{\theta}, i \mid n}\right\|_{\mathrm{TV}} \\
& \geq \max _{k \neq h_{\theta, i}^{\star}} \mathbb{P}_{\hat{\theta}}\left(X_i=k \mid Y_{1: n}\right)+2 \gamma-2\left\|\phi_{\theta, i \mid n}-\phi_{\hat{\theta}, i \mid n}\right\|_{\mathrm{TV}}
\end{aligned}
$$
We have shown that on the intersection of the two events
$$
E_{n, i}:=\left\{p_{n, i}(\theta) \geq \frac{1}{2}+\gamma\right\}, \quad F_{n, i}:=\left\{\left\|\phi_{\theta, i \mid n}-\phi_{\hat{\theta}, i \mid n}\right\|_{\mathrm{TV}}<\gamma\right\}
$$
the plug-in rule $h_{\hat{\theta}}^{\star}$ maximizing $k \mapsto \mathbb{P}_{\hat{\theta}}\left(X_i=k \mid Y_{1: n}\right)$ is unique and it must be that $h_{\hat{\theta}, i}^{\star}=h_{\theta, i}^{\star}$. Then, we bound the risk as follows,
$$
\begin{aligned}
\ClassifRisk(\theta, h_{\hat{\theta}}^{\star}) & \leq \EE_{\theta}\left[\frac{1}{n} \sum_{i=1}^n \1_{\left\{h_{\hat{\theta}, i}^{\star} \neq X_i\right\}} \1_{E_{n, i} \cap F_{n, i}}\right]+\EE_{\theta}\left[\frac{1}{n} \sum_{i=1}^n \1_{\left\{h_{\hat{\theta}, i}^{\star} \neq X_i\right\}} \1_{E_{n, i}^c}\right]\\
&\quad+\EE_{\theta}\left[\frac{1}{n} \sum_{i=1}^n \1_{\left\{h_{\hat{\theta}, i}^{\star} \neq X_i\right\}} \1_{F_{n, i}^c}\right] \\
& \leq \EE_{\theta}\left[\frac{1}{n} \sum_{i=1}^n \mathbb{P}_\theta\left(h_{\theta, i}^{\star} \neq X_i\right) \1_{E_{n, i}}\right]+\frac{1}{n} \sum_{i=1}^n \mathbb{P}_\theta\left(E_{n, i}^c\right)+\frac{1}{n} \sum_{i=1}^n \mathbb{P}_\theta\left(F_{n, i}^c\right)
\end{aligned}
$$
Finally, notice that,
$$
\begin{aligned}
\mathbb{P}_\theta\left(E_{n, i}^c\right) & =\mathbb{P}_\theta\left(p_{n, i}(\theta)<\frac{1}{2}+\gamma\right) \\
& =\mathbb{P}_\theta\left(\mathbb{P}_\theta\left(X_i=h_{\theta, i}^{\star} \mid Y_{1: n}\right)<\frac{1}{2}+\gamma\right) \\
& =\mathbb{P}_\theta\left(\mathbb{P}_\theta\left(X_i \neq h_{\theta, i}^{\star} \mid Y_{1: n}\right)>\frac{1}{2}-\gamma\right) \\
& \leq \frac{1}{1 / 2-\gamma} \EE_{\theta}\left(\mathbb{P}_\theta\left(h_{\theta, i}^{\star} \neq X_i \mid Y_{1: n}\right) \1_{E_{n, i}^c}\right).
\end{aligned}
$$
Hence the result. 

\section{Equivalence of the definitions of the risk of clustering}


\begin{lemma}\label{lem:matching}
     The risk of clustering of $\pi_n \circ h$ can be rewritten as
\begin{equation}
  \ClusterRisk(\theta,\pi_n \circ h)%
  \coloneqq \EE_{\theta}\Bigg[\min_{\tau \in \mathcal{S}_{J}}\frac{1}{n}\sum_{i=1}^n\1_{h_i(Y_{1:n}) \ne \tau(X_i)} \Bigg]
\end{equation}
\end{lemma}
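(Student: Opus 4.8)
The plan is to reduce the claim to a purely combinatorial identity that holds pointwise, and only then take expectations. Since $\ClusterRisk(\theta,\pi_n\circ h) = \EE_{\theta}[\ell(\pi_n(h(Y_{1:n})),\Pi_n)]$ with $\Pi_n = \pi_n(X_{1:n})$, it suffices to prove that for every pair of label vectors $a,b\in\mathbb{X}^n$,
\[
\ell(\pi_n(a),\pi_n(b)) = \min_{\tau\in\mathcal{S}_{J}}\frac1n\sum_{i=1}^n\1_{a_i\ne\tau(b_i)}.
\]
Applying this with $a = h(Y_{1:n})$ and $b = X_{1:n}$ and taking $\EE_{\theta}$ then yields the lemma.

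To set up both sides, for each label $x\in\mathbb{X}$ I would write $C_x^A \coloneqq \{i : a_i = x\}$ and $C_x^B \coloneqq \{i : b_i = x\}$, so that the nonempty $C_x^A$ (resp. $C_x^B$) are exactly the blocks of $A\coloneqq\pi_n(a)$ (resp. $B\coloneqq\pi_n(b)$). On the right-hand side I would rewrite, via the substitution $\sigma=\tau^{-1}$ and the observation that $\sum_i\1_{a_i=\tau(b_i)} = \sum_{x}\card{(C_x^A\cap C_{\tau^{-1}(x)}^B)}$,
\[
\min_{\tau\in\mathcal{S}_{J}}\frac1n\sum_{i=1}^n\1_{a_i\ne\tau(b_i)} = 1 - \frac1n\max_{\sigma\in\mathcal{S}_{J}}\sum_{x\in\mathbb{X}}\card{(C_x^A\cap C_{\sigma(x)}^B)}.
\]
By definition, $\ell(\pi_n(a),\pi_n(b))$ equals $1-\tfrac1n$ times the maximum-weight matching in the complete bipartite graph on the blocks of $A$ and $B$, with edge weights given by the overlaps $\card{(C\cap C')}$. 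Thus the identity reduces to showing that the maximum overlap over matchings of the nonempty blocks coincides with the maximum overlap over full permutations $\sigma$ of $\mathbb{X}$.

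The heart of the argument is this equality of two optima, which I would prove by two inequalities. First, every matching of blocks is a partial injection from a subset of labels to another subset of equal cardinality, hence extends to a full permutation $\sigma\in\mathcal{S}_{J}$; the added pairs carry nonnegative overlaps, so each matching value is bounded by a permutation value, giving $\sup_{\text{matchings}}\le\max_{\sigma}$. Conversely, given an optimal $\sigma$, I would keep only the pairs $(x,\sigma(x))$ with positive overlap; a positive overlap forces both $C_x^A$ and $C_{\sigma(x)}^B$ to be nonempty, and since $x\mapsto\sigma(x)$ is a bijection these pairs form a genuine matching of blocks whose total weight equals $\sum_{x}\card{(C_x^A\cap C_{\sigma(x)}^B)}$, giving $\max_{\sigma}\le\sup_{\text{matchings}}$. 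Combining the two bounds yields the equality, and hence the pointwise identity.

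The main obstacle is the bookkeeping around empty blocks and labels not appearing in $a$ or $b$: I must confirm that a partial bijection between equal-size label subsets always completes to an element of $\mathcal{S}_{J}$, and that this completion never lowers the objective because the omitted pairs have weight zero. This is precisely where the block-matching formulation of $\ell$ and the label-permutation formulation are reconciled; everything else is routine rewriting. A minor point to record is that the supremum defining $\ell$ is attained, which is immediate as there are finitely many matchings.
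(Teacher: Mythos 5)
Your proof is correct and takes essentially the same route as the paper's: both identify the supremum over block matchings in the definition of $\ell$ with a maximum over full label permutations, via the identity $\sum_{i=1}^n\1_{a_i=\tau(b_i)}=\sum_{x\in\mathbb{X}}\card{(C_x^A\cap C_{\tau^{-1}(x)}^B)}$. If anything, your two-inequality argument (extending a partial matching of nonempty blocks to a full permutation, and conversely discarding zero-weight pairs from an optimal permutation) rigorously fills in the step the paper dispatches with the informal remark that the supremum is attained on matchings with $J$ edges ``with possibly empty clusters.''
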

\begin{proof}
    It suffices to show that 
    $$
    \sup_{\substack{M \subseteq \mathcal{E}(\pi_n \circ h(Y_{1:n}),\Pi_n)\\M\, \textrm{is\, a matching} } 
  } \sum_{ \{C,C' \} \in M }\card{(C \cap C' )} = \sup_{\tau \in \mathcal{S}_{J}}\frac{1}{n}\sum_{i=1}^n\1_{h_i(Y_{1:n}) = \tau(X_i)}
  $$
Let $C_k = \left\{i\in[n]\mid h_{i}(Y_{1:n}) = k\right\}$ and $C_k ^{\prime} = \left\{i\in[n]\mid X_i = k\right\}$. Since the two partitions $\Pi_n$ and $\pi_n\circ h(Y_{1:n})$ have the same number of clusters $J$ (with possibly empty clusters), the supremum is reached on matchings with $J$ edges. Using this fact, it follow that the matching reaching the supremum is of the form:
$$
M = \left\{(C_k, C_{\tau(k)}')\mid 1\leq k \leq J\right\}
$$
where $\tau$ is a permutation of $\left\{1, .., J\right\}$. One obtains:
\begin{equation}
    \begin{split}
        \sup_{\substack{M \subseteq \mathcal{E}(\pi_n \circ h(Y_{1:n}),\Pi_n)\\M\, \textrm{is\, a matching}}}
        \sum_{ \{C,C' \} \in M }\card{(C \cap C' )} &= \sup_{\tau\in\mathcal{S}_{J}}\sum_{k=1}^{J}\card{(C_k \cap C_{\tau(k)}')}\\
        &= \sup_{\tau\in\mathcal{S}_{J}}\sum_{k=1}^{J}\sum_{i=1}^{n}\1_{\tau^{-1}(X_i) = h_{i}(Y_{1:n}) = k }\\
        &= \sup_{\tau\in\mathcal{S}_{J}}\sum_{i=1}^{n}\1_{\tau(X_i) = h_{i}(Y_{1:n})}
    \end{split}
\end{equation}
\end{proof}

\section{Proof of \texorpdfstring{Lemma~\ref{lem:max_dev:Scully}}{Lemma S1.1}}
\label{sec:proof:max_dev}
Without loss of generality, let $\alpha_i \geq \frac{1}{2}$ for all $i$. When $p_i = \alpha_i$, we will say that $i$ is given positive bias, and similarly when $p_i = 1 - \alpha_i$ we say it has negative bias. We show that unless we give all positive bias or all negative bias, we can flip the bias of some $i$ to increase the expectation. This suffices to conclude. Let $\left(Z_i\right)_{i\in[n]}$  be a sequence of independent Bernoulli random variables such that $Z_n\sim \mathcal{B}(\alpha_i)$, and let $\beta_i \in \{-1,+1\}$ be the bias we give $i$. We then let
\[
Y_i = 2Z_i - 1 \in \{-1,+1\}
 \text{ and }
X_i = \frac{1 + \beta_i Y_i}{2} \in \{0,1\}.
\]
Consequently, $X_i \sim \mathcal{B}\left(\alpha_i \1_{\beta_i = 1} + (1-\alpha_i)\1_{\beta_i = -1}\right)$ and $\sum_{i=1}^{n} X_i - \frac{n}{2} = \frac{1}{2} \sum_{i=1}^{n} \beta_i Y_i$.
Letting $S_n = \sum_{i=1}^{n} \beta_i Y_i$, we intend to choose the $\beta_i$ to maximize $\EE[|S_n|]$.\\
Let $S_{\neq k} = \sum_{i \neq k} \beta_i Y_i$ and define:
\[
\text{sign}(x) = \left\{
    \begin{array}{ll}
        1 & \mbox{if } x < 0 \\
        0 & \mbox{if } x =0 \\
       -1 & \mbox{if } x > 0
    \end{array}
\right.
\]
Then,
\[
\begin{split}
    \left|S_n\right| &= S_n \text{sign}(S_n)\\
    &= S_n \text{sign}\left(S_{\neq k}\right) + S_n \left(\text{sign}(S_n) - \text{sign}(S_{\neq k})\right)\\
    &= S_{\neq k}\text{sign}\left(S_{\neq k}\right) + \beta_k Y_k \text{sign}\left(S_{\neq k}\right) + S_{n}\left(\text{sign}\left(S_n\right) - \text{sign}\left(S_{\neq k}\right)\right)\\
    &= \left|S_{\neq k}\right| + \beta_k Y_k \text{sign}\left(S_{\neq k}\right) + \1_{S_{\neq k} = 0}
\end{split}
\]
By the computation above and the fact that $S_{\neq k}$ and $Y_k$ are independent:
\[
\EE[|S_n|] = \EE[|S_{\neq k}|] + \PP(S_{\neq k} = 0) + \EE[\text{sign}(S_{\neq k})]\beta_k \EE[Y_k].
\]
Since $\EE[Y_k] \geq 0$, we conclude that if we fix the values of $\left(\beta_i\right)_{i\neq k}$, then the value of $\beta_k$ that maximizes $\EE[|S_n|]$ is:
\begin{equation}\label{eq:optim:condition}
    \beta_k = \text{sign}\left(\EE[\text{sign}(S_{\neq k})]\right).
\end{equation}

Assume for the moment that all the $\alpha_i$ are distinct and obey $\alpha_i > \frac{1}{2}$.
This assumption will guarantee that for non-same-sign biases, there is always at least one bias we can flip to strictly increase $\EE[|S_n|]$. We will remove this assumption at the end of the proof. Consider any assignment of the biases $\beta_1, \dots, \beta_n$.
\begin{lemma}\label{lem:fact:1}
     If all the $\alpha_i$ are distinct and ensure $\alpha_i > \frac{1}{2}$, the values $\left(\EE[\text{sign}(S_{\neq j})]\right)_{j\in[n]}$ are distinct, so there exists $k$ such that $\EE[\text{sign}(S_{\neq k})] \neq 0$.
\end{lemma}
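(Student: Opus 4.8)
The plan is to prove the stronger distinctness claim directly: for every pair $j\neq k$ one has $\EE[\operatorname{sign}(S_{\neq j})]\neq\EE[\operatorname{sign}(S_{\neq k})]$. Once this is in hand the $n$ numbers $(\EE[\operatorname{sign}(S_{\neq j})])_{j\in[n]}$ are pairwise distinct, so at most one of them can vanish; since we are in the case $n\geq 2$ (for $n=1$ the outer Lemma~\ref{lem:max_dev:Scully} is trivial), at least one is nonzero, which is exactly the assertion together with the optimality condition \eqref{eq:optim:condition}.

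To compare two such quantities I would fix $j\neq k$ and set $W_i\coloneqq \beta_i Y_i\in\{-1,+1\}$, so that $W_i=1$ with probability $p_i\coloneqq \alpha_i\1_{\beta_i=1}+(1-\alpha_i)\1_{\beta_i=-1}\in\{\alpha_i,1-\alpha_i\}$. Writing $T\coloneqq \sum_{i\in[n]\setminus\{j,k\}}W_i$, the three variables $T$, $W_j$, $W_k$ are independent and $S_{\neq j}=T+W_k$, $S_{\neq k}=T+W_j$. Conditioning on $T$ and using independence gives
\begin{equation*}
  \EE[\operatorname{sign}(S_{\neq j})]-\EE[\operatorname{sign}(S_{\neq k})]
  =(p_k-p_j)\,\EE\bigl[\operatorname{sign}(T+1)-\operatorname{sign}(T-1)\bigr],
\end{equation*}
so it suffices to show that both factors on the right are nonzero.

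For the first factor I would run the four-way case analysis on the pair $(\beta_j,\beta_k)$: since the $\alpha_i$ are distinct one never has $\alpha_j=\alpha_k$, and since $\alpha_j,\alpha_k>\tfrac12$ one has $\alpha_j+\alpha_k>1$, which rules out $\alpha_j=1-\alpha_k$; in all four cases $p_j\neq p_k$. For the second factor, a direct evaluation of the integer-valued map $t\mapsto \operatorname{sign}(t+1)-\operatorname{sign}(t-1)$ (with the convention fixed above) shows it vanishes for $t\notin\{-1,0,1\}$ and is strictly negative on $\{-1,0,1\}$, so $\EE[\operatorname{sign}(T+1)-\operatorname{sign}(T-1)]$ is a strictly negative combination of $\PP(T=0)$, $\PP(T=1)$, $\PP(T=-1)$ and vanishes only when $\PP(T\in\{-1,0,1\})=0$.

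The main obstacle is thus to certify $\PP(T\in\{-1,0,1\})>0$, i.e. that $T$ can be made small. Here the distinctness hypothesis is used once more: a summand $W_i$ is degenerate only when $\alpha_i=1$, and two indices with $\alpha_i=1$ would coincide, so at most one of the $n-2$ signs in $T$ is frozen, contributing a constant $c$ with $|c|\leq 1$. The remaining free signs range over all integers of the appropriate parity in a symmetric interval, and a short parity count shows this range always meets the target set $\{-1-c,-c,1-c\}$; hence $T\in\{-1,0,1\}$ with positive probability. This makes the second factor nonzero and completes the proof of the lemma under its stated hypotheses.
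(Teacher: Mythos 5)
Your proof is correct and follows essentially the same route as the paper's: you write $S_{\neq j}=T+W_k$ and $S_{\neq k}=T+W_j$ with $T=S_{\notin\{j,k\}}$, factor the difference as $(p_k-p_j)\,\EE\bigl[\text{sign}(T+1)-\text{sign}(T-1)\bigr]$, and rule out $p_j=p_k$ by the same four-case analysis on $(\beta_j,\beta_k)$ using distinctness and $\alpha_j+\alpha_k>1$. The only difference is that you explicitly verify $\PP\left(T\in\{-1,0,1\}\right)>0$ via the at-most-one-degenerate-summand parity argument (and note the $n\geq 2$ caveat), a positivity step the paper's proof leaves implicit when asserting ``$\neq 0$''; this is a welcome tightening rather than a divergence.
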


\begin{lemma}\label{lem:fact:2}
    Suppose $\beta_j = 1$ and $\beta_k = -1$. Then $\EE[\text{sign}(S_{\neq k})] \geq \EE[\text{sign}(S_{\neq j})]$.
\end{lemma}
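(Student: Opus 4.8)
The plan is to isolate the single random summand by which $S_{\neq k}$ and $S_{\neq j}$ differ and then condition on everything else. Write $T \coloneqq \sum_{i \neq j,k}\beta_i Y_i$ for the common part of the two truncated sums, and note that $T$ is independent of the pair $(Y_j, Y_k)$. Because $\beta_j = 1$ and $\beta_k = -1$, the index $j$ survives in $S_{\neq k}$ while the index $k$ survives in $S_{\neq j}$, so that
\[
S_{\neq k} = T + \beta_j Y_j = T + Y_j, \qquad S_{\neq j} = T + \beta_k Y_k = T - Y_k.
\]

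First I would condition on $T$ and use independence together with the laws of $Y_j$ and $Y_k$ (each equal to $+1$ with probability $\alpha_j$, resp. $\alpha_k$, and $-1$ otherwise) to obtain
\begin{align*}
\EE[\text{sign}(S_{\neq k}) \mid T] &= \alpha_j \,\text{sign}(T+1) + (1-\alpha_j)\,\text{sign}(T-1),\\
\EE[\text{sign}(S_{\neq j}) \mid T] &= (1-\alpha_k)\,\text{sign}(T+1) + \alpha_k \,\text{sign}(T-1).
\end{align*}
Subtracting, the coefficients of $\text{sign}(T+1)$ and of $\text{sign}(T-1)$ are $\alpha_j - (1-\alpha_k)$ and $(1-\alpha_j) - \alpha_k$, which equal $+(\alpha_j + \alpha_k - 1)$ and $-(\alpha_j + \alpha_k - 1)$ respectively. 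Hence the cross-terms collapse into a single common factor:
\[
\EE\big[\text{sign}(S_{\neq k}) - \text{sign}(S_{\neq j}) \mid T\big] = (\alpha_j + \alpha_k - 1)\big(\text{sign}(T+1) - \text{sign}(T-1)\big).
\]

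To conclude, observe that $\alpha_j + \alpha_k - 1 \geq 0$ since we are in the regime $\alpha_i \geq \tfrac12$, and that $\text{sign}$ is nondecreasing with $T+1 > T-1$, so $\text{sign}(T+1) - \text{sign}(T-1) \geq 0$. The conditional difference is therefore nonnegative for every value of $T$, and taking the expectation over $T$ yields $\EE[\text{sign}(S_{\neq k})] \geq \EE[\text{sign}(S_{\neq j})]$. There is essentially no analytic obstacle here; the only point demanding care is the bookkeeping of which of the two flipped indices survives in each truncated sum, since a sign slip at that stage would reverse the inequality. The decomposition $S_{\neq k} = T + Y_j$, $S_{\neq j} = T - Y_k$ is precisely what makes the cross-terms factor through $\alpha_j + \alpha_k - 1$, after which monotonicity of $\text{sign}$ closes the argument.
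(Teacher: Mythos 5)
Your proof is correct and takes essentially the same route as the paper: you use the identical decomposition $S_{\neq k} = T + Y_j$, $S_{\neq j} = T - Y_k$, and your conditional computation — with the factor $\alpha_j + \alpha_k - 1 \geq 0$ — is exactly the paper's stochastic-dominance step ($Y_j \geq_{\mathrm{st}} -Y_k$ applied to the nondecreasing map $x \mapsto \EE[\text{sign}(T+x)]$) written out explicitly for two-point laws. One remark: the paper's displayed definition of $\text{sign}$ is inverted (a typo, since it contradicts the identity $|S_n| = S_n\,\text{sign}(S_n)$ used in the main argument), so your reading of $\text{sign}$ as the standard nondecreasing function is indeed the intended one.
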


Both facts will be proved at the end of this section. With these facts in hand, consider any non-same-sign biases $\beta_1, \dots, \beta_n$. By Lemma~\ref{lem:fact:1}, there exists $k$ such that:
\[
\EE[\text{sign}(S_{\neq k})] \neq 0.
\]
Without loss of generality, suppose $\beta_k = -1$. There are two cases to consider.

\begin{itemize}
    \item If $\EE[\text{sign}(S_{\neq k})] > 0$, then $\beta_k$ disobeys the condition \eqref{eq:optim:condition}, so swapping to $\beta_k = 1$ increases $\EE_{\theta}[|S_n|]$.
    \item If $\EE[\text{sign}(S_{\neq k})] < 0$, then $\beta_k$ obeys the condition \eqref{eq:optim:condition}, so we need to find another bias to swap. Because the assignment is non-same-sign, there exists $j$ such that $\beta_j = 1$. And by Lemma~\ref{lem:fact:2} , $\EE[\text{sign}(S_{\neq j})] < 0$. This means $\beta_j$ disobeys the condition \ref{eq:optim:condition}, so swapping to $\beta_j = -1$ increases $\EE[|S_n|]$.
\end{itemize}

We have shown that any non-same-sign bias assignment is suboptimal for maximizing $\EE[|S_n|]$, so only the same-sign cases can be optimal. And it is clear by symmetry that they both yield the same $\EE[|S_n|]$ value, so both are optimal. Note that $\EE[|S_n|]$ is a polynomial in the parameters $\left(\alpha_i\right)_{i\in[n]}$. When the assumption that all $\alpha_i$ are distinct and obey $\alpha_i > \frac{1}{2}$ does not hold, the result is still true thanks to continuity with respect to $\left(\alpha_i\right)_{i\in[n]}$.

\subsection{Proof of \texorpdfstring{Lemma~\ref{lem:fact:1}}{Lemma S14.14}}
Let $S_{\notin \{j,k\}} = \sum_{i \notin \{j,k\}} \beta_i Y_i$ and $j\neq k\in[n]$.
\begin{equation*}
    \begin{split}
        &\EE\left[\text{sign}(S_{\neq j)}\right] - \EE\left[\text{sign}(S_{\neq k})\right] = \PP\left(S_{\neq j} > 0\right) - \PP\left(S_{\neq j} < 0\right) - \PP\left(S_{\neq k} > 0\right) + \PP\left(S_{\neq k} < 0\right)\\
        &= \PP\left(S_{\notin \{j,k\}} + \beta_k Y_k > 0\right) - \PP\left(S_{\notin \{j,k\}} + \beta_k Y_k < 0\right) - \PP\left(S_{\notin \{j,k\}} + \beta_j Y_j > 0\right) + \PP\left(S_{\notin \{j,k\}} + \beta_j Y_j < 0\right)\\
        &= \alpha_k\PP\left(S_{\notin \{j,k\}} > - \beta_k\right) + (1 - \alpha_k)\PP\left(S_{\notin \{j,k\}} > \beta_k\right) - \alpha_k\PP\left(S_{\notin \{j,k\}} < - \beta_k\right) - (1 - \alpha_k)\PP\left(S_{\notin \{j,k\}} < \beta_k\right)\\
        & -\alpha_j\PP\left(S_{\notin \{j,k\}} > - \beta_j\right) - (1 - \alpha_j)\PP\left(S_{\notin \{j,k\}} > \beta_j\right) + \alpha_j\PP\left(S_{\notin \{j,k\}} < - \beta_j\right) + (1 - \alpha_j)\PP\left(S_{\notin \{j,k\}} < \beta_j\right)
    \end{split}
\end{equation*}
When $\beta_j = \beta_k = 1$,
\begin{equation*}
    \EE\left[\text{sign}(S_{\neq j)}\right] - \EE\left[\text{sign}(S_{\neq k})\right] = (\alpha_k - \alpha_j) \left(\PP\left(S_{\notin \{j,k\}}\in \left\{0, 1\right\}\right) + \PP\left(S_{\notin \{j,k\}}\in \left\{0, -1\right\}\right)\right) \neq 0.
\end{equation*}
The remaining cases can be analyzed similarly.

\subsection{Proof of \texorpdfstring{Lemma~\ref{lem:fact:2}}{Lemma S14.15}}
We want to show that:
\begin{equation*}
    \EE\left[\text{sign}(S_{\notin \{j,k\}} + Y_j)\right] \geq \EE\left[\text{sign}(S_{\notin \{j,k\}} - Y_k)\right]
\end{equation*}
The key observation is that we have a stochastic dominance $+Y_j\geq_{st} -Y_k$. This means that for any nondecreasing function $f$:
\begin{equation*}
    \EE\left[f(Y_j)\right] \geq \EE\left[f(-Y_k)\right].
\end{equation*}
The desired result follows by applying the above to
\begin{equation*}
    f(x) = \EE\left[\text{sign}(S_{\notin \{j,k\}} + x)\right]
\end{equation*}
which is clearly nondecreasing.

\begin{acks}[Acknowledgments]
 The authors would like to thank Ziv Scully for providing a proof for Lemma \ref{lem:max_dev:Scully} and the associate editor for pointing out the question of the relationship between the Bayes classifier and the Bayes clusterer.
\end{acks}
\begin{funding}
{\'E}lisabeth Gassiat is supported by \textit{Institut Universitaire de France}. All the authors are supported by the \textit{Agence Nationale de la Recherche} under projects ANR-21-CE23-0035-02 and ANR-23-CE40-0018-02.
\end{funding}
   
  \putbib%
\end{bibunit}


\begin{thebibliography}{31}

\bibitem{ACG21}
\begin{barticle}[author]
\bauthor{\bsnm{Abraham},~\bfnm{Kweku}\binits{K.}},
  \bauthor{\bsnm{Castillo},~\bfnm{Isma{\"e}l}\binits{I.}} \AND
  \bauthor{\bsnm{Gassiat},~\bfnm{Elisabeth}\binits{E.}}
(\byear{2022}).
\btitle{Multiple testing in nonparametric hidden Markov models: An empirical
  Bayes approach}.
\bjournal{J. Mach. Learn. Res.}
\bvolume{23}
\bpages{4061--4117}.
\end{barticle}
\endbibitem

\bibitem{AGN23}
\begin{barticle}[author]
\bauthor{\bsnm{Abraham},~\bfnm{Kweku}\binits{K.}},
  \bauthor{\bsnm{Gassiat},~\bfnm{Elisabeth}\binits{E.}} \AND
  \bauthor{\bsnm{Naulet},~\bfnm{Zacharie}\binits{Z.}}
(\byear{2023}).
\btitle{Frontiers to the learning of nonparametric hidden Markov models}.
\bjournal{arXiv preprint}.
\bdoi{10.48550/arXiv.2306.16293}
\end{barticle}
\endbibitem

\bibitem{AGN21}
\begin{barticle}[author]
\bauthor{\bsnm{Abraham},~\bfnm{Kweku}\binits{K.}},
  \bauthor{\bsnm{Gassiat},~\bfnm{Elisabeth}\binits{E.}} \AND
  \bauthor{\bsnm{Naulet},~\bfnm{Zacharie}\binits{Z.}}
(\byear{2023}).
\btitle{Fundamental limits for learning hidden {M}arkov model parameters}.
\bjournal{IEEE Trans. Inform. Theory}
\bvolume{69}
\bpages{1777--1794}.
\bdoi{10.1109/tit.2022.3213429}
\bmrnumber{4564681}
\end{barticle}
\endbibitem

\bibitem{GHA16}
\begin{barticle}[author]
\bauthor{\bsnm{Alexandrovich},~\bfnm{G.}\binits{G.}},
  \bauthor{\bsnm{Holzmann},~\bfnm{H.}\binits{H.}} \AND
  \bauthor{\bsnm{Leister},~\bfnm{A.}\binits{A.}}
(\byear{2016}).
\btitle{{Nonparametric identification and maximum likelihood estimation for
  hidden Markov models}}.
\bjournal{Biometrika}
\bvolume{103}
\bpages{423-434}.
\bdoi{10.1093/biomet/asw001}
\end{barticle}
\endbibitem

\bibitem{MR3509896}
\begin{barticle}[author]
\bauthor{\bsnm{Alexandrovich},~\bfnm{G.}\binits{G.}},
  \bauthor{\bsnm{Holzmann},~\bfnm{H.}\binits{H.}} \AND
  \bauthor{\bsnm{Leister},~\bfnm{A.}\binits{A.}}
(\byear{2016}).
\btitle{Nonparametric identification and maximum likelihood estimation for
  hidden {M}arkov models}.
\bjournal{Biometrika}
\bvolume{103}
\bpages{423--434}.
\bdoi{10.1093/biomet/asw001}
\bmrnumber{3509896}
\end{barticle}
\endbibitem

\bibitem{anandkumar2014tensor}
\begin{barticle}[author]
\bauthor{\bsnm{Anandkumar},~\bfnm{Animashree}\binits{A.}},
  \bauthor{\bsnm{Ge},~\bfnm{Rong}\binits{R.}},
  \bauthor{\bsnm{Hsu},~\bfnm{Daniel~J}\binits{D.~J.}},
  \bauthor{\bsnm{Kakade},~\bfnm{Sham~M}\binits{S.~M.}},
  \bauthor{\bsnm{Telgarsky},~\bfnm{Matus}\binits{M.}} \betal{et~al.}
(\byear{2014}).
\btitle{Tensor decompositions for learning latent variable models.}
\bjournal{J. Mach. Learn. Res.}
\bvolume{15}
\bpages{2773--2832}.
\end{barticle}
\endbibitem

\bibitem{CMT05}
\begin{bbook}[author]
\bauthor{\bsnm{Capp\'{e}},~\bfnm{Olivier}\binits{O.}},
  \bauthor{\bsnm{Moulines},~\bfnm{Eric}\binits{E.}} \AND
  \bauthor{\bsnm{Ryden},~\bfnm{Tobias}\binits{T.}}
(\byear{2005}).
\btitle{Inference in Hidden Markov Models (Springer Series in Statistics)}.
\bpublisher{Springer-Verlag}.
\end{bbook}
\endbibitem

\bibitem{YEC16}
\begin{barticle}[author]
\bauthor{\bsnm{Castro},~\bfnm{Yohann~De}\binits{Y.~D.}},
  \bauthor{\bsnm{Gassiat},~\bfnm{{\'E}lisabeth}\binits{{\'E}.}} \AND
  \bauthor{\bsnm{Lacour},~\bfnm{Claire}\binits{C.}}
(\byear{2016}).
\btitle{Minimax Adaptive Estimation of Nonparametric Hidden Markov Models}.
\bjournal{J. Mach. Learn. Res.}
\bvolume{17}
\bpages{1-43}.
\end{barticle}
\endbibitem

\bibitem{YohannGH}
\begin{bmisc}[author]
\bauthor{\bsnm{De~Castro},~\bfnm{Yohann}\binits{Y.}}
(\byear{2016}).
\btitle{Nonparametric HMM}.
\bhowpublished{\url{https://github.com/ydecastro/nonparametric-hmm}}.
\end{bmisc}
\endbibitem

\bibitem{YES17}
\begin{barticle}[author]
\bauthor{\bsnm{De~Castro},~\bfnm{Yohann}\binits{Y.}},
  \bauthor{\bsnm{Gassiat},~\bfnm{{\'E}lisabeth}\binits{{\'E}.}} \AND
  \bauthor{\bsnm{Le~Corff},~\bfnm{Sylvain}\binits{S.}}
(\byear{2017}).
\btitle{Consistent Estimation of the Filtering and Marginal Smoothing
  Distributions in Nonparametric Hidden Markov Models}.
\bjournal{IEEE Trans. Inform. Theory}
\bvolume{63}
\bpages{4758-4777}.
\bdoi{10.1109/TIT.2017.2696959}
\end{barticle}
\endbibitem

\bibitem{DGL96}
\begin{bbook}[author]
\bauthor{\bsnm{Devroye},~\bfnm{Luc}\binits{L.}},
  \bauthor{\bsnm{Györfi},~\bfnm{László}\binits{L.}} \AND
  \bauthor{\bsnm{Lugosi},~\bfnm{Gábor}\binits{G.}}
(\byear{1996}).
\btitle{A Probablistic Theory of Pattern Recognition}
\bvolume{31}.
\bdoi{10.1007/978-1-4612-0711-5}
\end{bbook}
\endbibitem

\bibitem{MR3845014}
\begin{barticle}[author]
\bauthor{\bsnm{Gao},~\bfnm{Chao}\binits{C.}},
  \bauthor{\bsnm{Ma},~\bfnm{Zongming}\binits{Z.}},
  \bauthor{\bsnm{Zhang},~\bfnm{Anderson~Y.}\binits{A.~Y.}} \AND
  \bauthor{\bsnm{Zhou},~\bfnm{Harrison~H.}\binits{H.~H.}}
(\byear{2018}).
\btitle{Community detection in degree-corrected block models}.
\bjournal{Ann. Statist.}
\bvolume{46}
\bpages{2153--2185}.
\bdoi{10.1214/17-AOS1615}
\bmrnumber{3845014}
\end{barticle}
\endbibitem

\bibitem{GKN2025SM}
\begin{barticle}[author]
\bauthor{\bsnm{Gassiat},~\bfnm{\'{E}lisabeth}\binits{E.}},
  \bauthor{\bsnm{Kaddouri},~\bfnm{Ibrahim}\binits{I.}} \AND
  \bauthor{\bsnm{Naulet},~\bfnm{Zacharie}\binits{Z.}}
(\byear{2025}).
\btitle{Clustering risk in non-parametric Hidden Markov and i.i.d. models:
  supplementary material}.
\bjournal{Ann. Statist.}
\end{barticle}
\endbibitem

\bibitem{GKN2025main}
\begin{barticle}[author]
\bauthor{\bsnm{Gassiat},~\bfnm{\'{E}lisabeth}\binits{E.}},
  \bauthor{\bsnm{Kaddouri},~\bfnm{Ibrahim}\binits{I.}} \AND
  \bauthor{\bsnm{Naulet},~\bfnm{Zacharie}\binits{Z.}}
(\byear{2025}).
\btitle{Clustering risk in non-parametric Hidden Markov and i.i.d. models}.
\bjournal{Ann. Statist.}
\end{barticle}
\endbibitem

\bibitem{GGM14}
\begin{barticle}[author]
\bauthor{\bsnm{Ghassempour},~\bfnm{Shima}\binits{S.}},
  \bauthor{\bsnm{Girosi},~\bfnm{Federico}\binits{F.}} \AND
  \bauthor{\bsnm{Maeder},~\bfnm{Anthony}\binits{A.}}
(\byear{2014}).
\btitle{Clustering Multivariate Time Series Using Hidden Markov Models}.
\bjournal{International Journal of Environmental Research and Public Health}
\bvolume{3}.
\bdoi{10.3390/ijerph110302741}
\end{barticle}
\endbibitem

\bibitem{G21}
\begin{bbook}[author]
\bauthor{\bsnm{Giraud},~\bfnm{Christophe}\binits{C.}}
(\byear{2021}).
\btitle{Introduction to high-dimensional statistics},
\bedition{Second} ed.
\bpublisher{Chapman and Hall/CRC}.
\end{bbook}
\endbibitem

\bibitem{GV19}
\begin{barticle}[author]
\bauthor{\bsnm{Giraud},~\bfnm{Christophe}\binits{C.}} \AND
  \bauthor{\bsnm{Verzelen},~\bfnm{Nicolas}\binits{N.}}
(\byear{2018}).
\btitle{Partial recovery bounds for clustering with the relaxed K-means}.
\bjournal{Math. Stat. Learn.}
\bvolume{3}
\bpages{317–374}.
\end{barticle}
\endbibitem

\bibitem{MR3889693}
\begin{bincollection}[author]
\bauthor{\bsnm{Gr\"{u}n},~\bfnm{Bettina}\binits{B.}}
(\byear{2019}).
\btitle{Model-based clustering}.
In \bbooktitle{Handbook of mixture analysis}.
\bseries{Chapman \& Hall/CRC Handb. Mod. Stat. Methods}
\bpages{157--192}.
\bpublisher{CRC Press, Boca Raton, FL}.
\bmrnumber{3889693}
\end{bincollection}
\endbibitem

\bibitem{KSH05}
\begin{binproceedings}[author]
\bauthor{\bsnm{Kannan},~\bfnm{Ravindran}\binits{R.}},
  \bauthor{\bsnm{Salmasian},~\bfnm{Hadi}\binits{H.}} \AND
  \bauthor{\bsnm{Vempala},~\bfnm{Santosh}\binits{S.}}
(\byear{2005}).
\btitle{The spectral method for general mixture models}.
In \bbooktitle{International conference on computational learning theory}
\bpages{444--457}.
\bpublisher{Springer}.
\end{binproceedings}
\endbibitem

\bibitem{KG17}
\begin{binproceedings}[author]
\bauthor{\bsnm{Khiatani},~\bfnm{Diksha}\binits{D.}} \AND
  \bauthor{\bsnm{Ghose},~\bfnm{Udayan}\binits{U.}}
(\byear{2017}).
\btitle{Weather forecasting using Hidden Markov Model}.
In \bbooktitle{2017 International Conference on Computing and Communication
  Technologies for Smart Nation (IC3TSN)}
\bpages{220-225}.
\bdoi{10.1109/IC3TSN.2017.8284480}
\end{binproceedings}
\endbibitem

\bibitem{MR3862446}
\begin{barticle}[author]
\bauthor{\bsnm{Leh\'{e}ricy},~\bfnm{Luc}\binits{L.}}
(\byear{2018}).
\btitle{State-by-state minimax adaptive estimation for nonparametric hidden
  {M}arkov models}.
\bjournal{J. Mach. Learn. Res.}
\bvolume{19}
\bpages{Paper No. 39, 46}.
\bmrnumber{3862446}
\end{barticle}
\endbibitem

\bibitem{MR4347381}
\begin{barticle}[author]
\bauthor{\bsnm{Leh\'{e}ricy},~\bfnm{Luc}\binits{L.}}
(\byear{2021}).
\btitle{Nonasymptotic control of the {MLE} for misspecified nonparametric
  hidden {M}arkov models}.
\bjournal{Electron. J. Stat.}
\bvolume{15}
\bpages{4916--4965}.
\bdoi{10.1214/21-ejs1890}
\bmrnumber{4347381}
\end{barticle}
\endbibitem

\bibitem{LH16}
\begin{barticle}[author]
\bauthor{\bsnm{Lu},~\bfnm{Yu}\binits{Y.}} \AND
  \bauthor{\bsnm{H.~Zhou},~\bfnm{Harrison}\binits{H.}}
(\byear{2016}).
\btitle{Statistical and Computational Guarantees of Lloyd’s Algorithm and Its
  Variants}.
\bjournal{arXiv preprint}.
\bdoi{10.48550/arXiv.1612.02099}
\end{barticle}
\endbibitem

\bibitem{MRR23}
\begin{barticle}[author]
\bauthor{\bsnm{Marandon},~\bfnm{Ariane}\binits{A.}},
  \bauthor{\bsnm{Rebafka},~\bfnm{Tabea}\binits{T.}} \AND
  \bauthor{\bsnm{Roquain},~\bfnm{Etienne}\binits{E.}}
(\byear{2023}).
\btitle{False clustering rate control in mixture models}.
\bjournal{arXiv preprint}.
\bdoi{10.48550/arXiv.2203.02597}
\end{barticle}
\endbibitem

\bibitem{meila2005comparing}
\begin{binproceedings}[author]
\bauthor{\bsnm{Meil{\u{a}}},~\bfnm{Marina}\binits{M.}}
(\byear{2005}).
\btitle{Comparing clusterings: an axiomatic view}.
In \bbooktitle{Proceedings of the 22nd international conference on Machine
  learning}
\bpages{577--584}.
\end{binproceedings}
\endbibitem

\bibitem{meila2001experimental}
\begin{barticle}[author]
\bauthor{\bsnm{Meil{\u{a}}},~\bfnm{Marina}\binits{M.}} \AND
  \bauthor{\bsnm{Heckerman},~\bfnm{David}\binits{D.}}
(\byear{2001}).
\btitle{An experimental comparison of model-based clustering methods}.
\bjournal{Machine learning}
\bvolume{42}
\bpages{9--29}.
\end{barticle}
\endbibitem

\bibitem{MVW17}
\begin{barticle}[author]
\bauthor{\bsnm{Mixon},~\bfnm{Dustin~G}\binits{D.~G.}},
  \bauthor{\bsnm{Villar},~\bfnm{Soledad}\binits{S.}} \AND
  \bauthor{\bsnm{Ward},~\bfnm{Rachel}\binits{R.}}
(\byear{2017}).
\btitle{Clustering subgaussian mixtures by semidefinite programming}.
\bjournal{Information and Inference: A Journal of the IMA}
\bvolume{6}
\bpages{389--415}.
\end{barticle}
\endbibitem

\bibitem{Ndaoud18}
\begin{barticle}[author]
\bauthor{\bsnm{Ndaoud},~\bfnm{Mohamed}\binits{M.}}
(\byear{2022}).
\btitle{{Sharp optimal recovery in the two component Gaussian mixture model}}.
\bjournal{Ann. Statist.}
\bvolume{50}
\bpages{2096 -- 2126}.
\bdoi{10.1214/22-AOS2178}
\end{barticle}
\endbibitem

\bibitem{SSS03}
\begin{barticle}[author]
\bauthor{\bsnm{Schliep},~\bfnm{Alexander}\binits{A.}},
  \bauthor{\bsnm{Schönhuth},~\bfnm{Alexander}\binits{A.}} \AND
  \bauthor{\bsnm{Steinhoff},~\bfnm{Christine}\binits{C.}}
(\byear{2003}).
\btitle{Using hidden Markov models to analyze gene expression time course
  data}.
\bjournal{Bioinformatics}.
\bdoi{10.1093/bioinformatics/btg1036}
\end{barticle}
\endbibitem

\bibitem{SBY09}
\begin{barticle}[author]
\bauthor{\bsnm{Shi},~\bfnm{Tao}\binits{T.}},
  \bauthor{\bsnm{Belkin},~\bfnm{Mikhail}\binits{M.}} \AND
  \bauthor{\bsnm{Yu},~\bfnm{Bin}\binits{B.}}
(\byear{2009}).
\btitle{Data spectroscopy: Eigenspaces of convolution operators and
  clustering}.
\bjournal{The Annals of Statistics}
\bpages{3960--3984}.
\end{barticle}
\endbibitem

\bibitem{MR3546450}
\begin{barticle}[author]
\bauthor{\bsnm{Zhang},~\bfnm{Anderson~Y.}\binits{A.~Y.}} \AND
  \bauthor{\bsnm{Zhou},~\bfnm{Harrison~H.}\binits{H.~H.}}
(\byear{2016}).
\btitle{Minimax rates of community detection in stochastic block models}.
\bjournal{Ann. Statist.}
\bvolume{44}
\bpages{2252--2280}.
\bdoi{10.1214/15-AOS1428}
\bmrnumber{3546450}
\end{barticle}
\endbibitem

\end{thebibliography}
\end{document}